\newtheorem{thm}{Theorem}[section]
\newtheorem{cor}[thm]{Corollary}
\newtheorem{lem}[thm]{Lemma}
\newtheorem{prop}[thm]{Proposition}
\newtheorem{conj}[thm]{Conjecture}
\newtheorem{cond}[thm]{Condition}
\newtheorem{thmintro}{Theorem}
\newtheorem{condintro}[thmintro]{Condition}
\newtheorem{conjintro}[thmintro]{Conjecture}
\theoremstyle{definition}
\newtheorem{rem}[thm]{Remark}
\newtheorem{ex}[thm]{Example}
\newcommand{\N}{\mathbb N}
\newcommand{\Z}{\mathbb Z}
\newcommand{\Q}{\mathbb Q}
\newcommand{\R}{\mathbb R}
\newcommand{\C}{\mathbb C}
\newcommand{\mf}{\mathfrak}
\newcommand{\mc}{\mathcal}
\newcommand{\mb}{\mathbf}
\newcommand{\mh}{\mathbb}
\def\Irr{{\rm Irr}}
\newcommand{\mr}{\mathrm}
\newcommand{\enuma}[1]{\begin{enumerate}[\textup{(}a\textup{)}] {#1} \end{enumerate}}
\newcommand{\Fr}{\mathrm{Frob}}
\newcommand{\Sc}{\mathrm{sc}}
\newcommand{\ad}{\mathrm{ad}}
\newcommand{\cusp}{\mathrm{cusp}}
\newcommand{\nr}{\mathrm{nr}}
\newcommand{\Wr}{\mathrm{wr}}
\newcommand{\cpt}{\mathrm{cpt}}
\newcommand{\Rep}{\mathrm{Rep}}
\newcommand{\Mod}{\mathrm{Mod}}
\newcommand{\Hom}{\mathrm{Hom}}
\newcommand{\Aut}{\mathrm{Aut}}
\newcommand{\unip}{\mathrm{unip}}
\newcommand{\der}{\mathrm{der}}
\newcommand{\matje}[4]{\left(\begin{smallmatrix} #1 & #2 \\ 
#3 & #4 \end{smallmatrix}\right)}
\def\cE{{\mathcal E}}
\newcommand{\lmapsto}{\mathrel{\rotatebox[origin=c]{180}{$\mapsto$}}}
\begin{document}

\title{Langlands parameters, functoriality\\ and Hecke algebras}
\author{Maarten Solleveld}
\address{IMAPP, Radboud Universiteit Nijmegen, Heyendaalseweg 135, 
6525AJ Nijmegen, the Netherlands}
\email{m.solleveld@science.ru.nl}
\date{\today}
\thanks{The author is supported by a NWO Vidi grant "A Hecke algebra approach to the 
local Langlands correspondence" (nr. 639.032.528).}
\subjclass[2010]{Primary 20G25; Secondary 11S37, 20C08}
\maketitle

\begin{abstract}
Let $G$ and $\tilde G$ be reductive groups over a local field $F$. Let $\eta : \tilde G \to
G$ be a $F$-homomorphism with commutative kernel and commutative cokernel. We
investigate the pullbacks of irreducible admissible $G$-representations $\pi$ along $\eta$. 
Following Borel, Adler--Korman and Xu, we pose a conjecture on the decomposition of 
the pullback $\eta^* \pi$. It is formulated in terms of enhanced Langlands parameters
and includes multiplicities. This can be
regarded as a functoriality property of the local Langlands correspondence.

We prove this conjecture for three classes: 
principal series representations of split groups (over non-archimedean local fields), 
unipotent representations (also with $F$ non-archimedean) and inner twists of 
$GL_n, SL_n, PGL_n$. 

Our main techniques involve Hecke algebras associated to Langlands parameters.
We also prove a version of the pullback/functoriality conjecture for those.
\end{abstract}

\thispagestyle{empty}
\tableofcontents

\section*{Introduction}

Let $F$ be a local field, let $\mc G$ be a connected reductive algebraic
$F$-group and write $G = \mc G (F)$. The (conjectural) local Langlands correspondence asserts
that there exists a ``nice'' map 
\begin{equation}\label{eq:1}
\Irr (G) \to \Phi (G) 
\end{equation}
from the set of irreducible admissible $G$-representations to the set of Langlands parameters for $G$.
This map is supposed to satisfy several ``nice'' conditions, listed in \cite{Bor,Vog}. In this
paper we discuss the functoriality of \eqref{eq:1} with respect to homomorphisms of reductive
groups. Any $F$-homomorphism $\eta : \tilde{\mc G} \to \mc G$ gives rise to a pullback functor
\[
\eta^* : \mr{Rep}(G) \to \mr{Rep}(\tilde G) , 
\]
and to a map 
\[
\Phi (\eta) : \Phi (G) \to \Phi (\tilde G) . 
\]
For $\pi \in \Irr (G)$ with L-parameter $\phi$, we would like to decompose $\eta^* \pi \in 
\mr{Rep}(\tilde G)$ and to relate it to the L-packet $\Pi_{\Phi (\eta) \phi}(\tilde G)$.
Of course, this gets exceedingly difficult when $\eta$ is far from surjective. Also, simple
factors in the kernel of $\eta$ are hardly relevant. Taking these restrictions into account,
Borel \cite[\S 10.3.5]{Bor} conjectured:
\begin{equation}\label{eq:3}
\begin{split}
\text{if } \eta \text{ has commutative kernel and cokernel and } \pi \in \Pi_\phi (G), \\
\text{ then } \eta^* \pi \text{ is a finite direct sum of members of } 
\Pi_{\Phi (\eta) \phi} (\tilde G) .
\end{split}
\end{equation}
That $\eta^* \pi$ is completely reducible and has finite length was shown around the same
time by Silberger \cite{Sil}. 

For a more precise version of this conjecture, we involve enhancements of L-parameters. 
Let $\mc S_\phi$ be the component group associated to $\phi$ in \cite{Art1,HiSa}. As usual,
an enhancement of $\phi$ is an irreducible representation $\rho$ of $\mc S_\phi$. 
Let $\Phi_e (G)$ be the set of $G$-relevant enhanced L-parameters. It is expected 
\cite{Vog,ABPSLLC} that \eqref{eq:1} can be enhanced to a bijection
\begin{equation}\label{eq:2}
\begin{array}{ccc}
\Irr (G) & \longrightarrow & \Phi_e (G) \\
\pi (\phi,\rho) & \longleftrightarrow & (\phi,\rho)
\end{array} .
\end{equation}
In particular the L-packet $\Pi_\phi (G)$ is then parametrized by the set of irreducible
$\mc S_\phi$-representations that are $G$-relevant (see Section \ref{sec:1} for details).
In general the map \eqref{eq:2} will not be unique, but the desired conditions render
it close to canonical.

In Borel's conjecture one must be careful with inseparable homomorphisms. These can be 
surjective as morphisms of algebraic groups, yet at the same time have a large, noncommutative 
cokernel as homomorphisms between groups of $F$-rational points. To rule that out, we impose:

\begin{condintro}\label{cond:1}
The homomorphism of connected reductive $F$-groups $\eta : \tilde{\mc G} \to \mc G$ satisfies
\begin{itemize}
\item[(i)] the kernel of d$\eta : \mr{Lie}(\tilde{\mc G}) \to \mr{Lie}(\mc G)$ is central;
\item[(ii)] the cokernel of $\eta$ is a commutative $F$-group.
\end{itemize}
\end{condintro}

Let ${}^L \eta = \eta^\vee \rtimes \mr{id} : G^\vee \rtimes \mb W_F \to \tilde{G}^\vee 
\rtimes \mb W_F$ be a L-homomorphism dual to $\eta$. (It is unique up to
$\tilde{G}^\vee$-conjugation.) For any $\phi \in \Phi (G)$ we get 
\[
\tilde \phi := {}^L \eta \circ \phi \in \Phi (\tilde G) .
\]
Then $\eta$ gives rise to an injective algebra homomorphism
\begin{equation}\label{eq:4}
{}^S \eta : \C [\mc S_\phi] \to \C [\mc S_{\tilde \phi}] , 
\end{equation}
which under mild assumptions is canonical. It is a twist of the injection 
\begin{equation}\label{eq:5}
{}^L \eta : \mc S_\phi \to \mc S_{\tilde \phi} 
\end{equation}
by a character of $\mc S_\phi$ (see Proposition \ref{prop:5.1}). Combining ideas from
\cite{Bor,AdPr,Xu1}, we pose:

\begin{conjintro}\label{conj:A}
Suppose that $\eta : \tilde{\mc G} \to \mc G$ satisfies Condition \ref{cond:1}. Assume that
a local Langlands correspondence exists for sufficiently large classes of representations
of $G$ and $\tilde G$. Then, for any $(\phi,\rho) \in \Phi_e (G)$:
\[
\eta^* (\pi (\phi,\rho)) = \bigoplus_{\tilde \rho \in \Irr (\mc S_{{}^L \eta \circ \phi})} 
\mr{Hom}_{\mc S_\phi} \big( \rho, {}^S \eta^* (\tilde \rho) \big) 
\otimes \pi ({}^L \eta \circ \phi, \tilde \rho) .
\]
\end{conjintro}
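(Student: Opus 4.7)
\emph{Overview.} I read the right-hand side as a Frobenius reciprocity: since ${}^S\eta:\C[\mc S_\phi]\to\C[\mc S_{\tilde\phi}]$ is an injective algebra map (up to the canonical character twist recorded in Proposition \ref{prop:5.1}), the space $\mr{Hom}_{\mc S_\phi}(\rho,{}^S\eta^*\tilde\rho)$ records the multiplicity of $\rho$ in the pullback of $\tilde\rho$ to $\mc S_\phi$. With $\tilde\phi={}^L\eta\circ\phi$, Conjecture \ref{conj:A} therefore asserts
\[
\eta^*\pi(\phi,\rho) \;=\; \bigoplus_{\tilde\rho\in\Irr(\mc S_{\tilde\phi})} [{}^S\eta^*\tilde\rho:\rho]\cdot \pi(\tilde\phi,\tilde\rho),
\]
which is the expected form of Clifford theory between the groups $\eta(\tilde G)$ and $G$, transferred to the component-group side through the LLC.

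\emph{Structural reduction.} I would factor $\eta$ in the way suggested by Condition \ref{cond:1}: first the central isogeny $\tilde{\mc G}\to\eta(\tilde{\mc G})$, coming from (i), and then the inclusion $\eta(\tilde{\mc G})\hookrightarrow\mc G$ with commutative cokernel, coming from (ii). Silberger's theorem, quoted after \eqref{eq:3}, makes $\eta^*\pi$ a finite semisimple $\tilde G$-module, so it is enough to identify its isotypic components. The character group of the cokernel acts on $\Irr(G)$ by twisting and on $\Irr(\tilde G)$ through the Clifford orbit, and dually the quotient $\mc S_{\tilde\phi}/{}^L\eta(\mc S_\phi)$ acts on enhancements $\tilde\rho$. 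Matching these two actions, via the LLC and the twist of Proposition \ref{prop:5.1}, is what reduces the content of the conjecture to the stated multiplicity formula.

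\emph{Hecke-algebra implementation.} For each of the three tractable classes I would realise $\pi(\phi,\rho)$ and the relevant block of $\Rep(\tilde G)$ as simple modules over an (extended) affine Hecke algebra: the Iwahori–Hecke algebra with Bernstein presentation for principal series of split groups, the affine Hecke algebras of Lusztig for unipotent representations, and the type-theoretic Hecke algebras underlying the Hiraga–Saito description for $GL_n,SL_n,PGL_n$. In each case, $\eta$ induces a morphism between the relevant Hecke algebras, under which Hecke-module pullback realises $\eta^*$ on the representation side. The asserted identity then reduces to an algebraic multiplicity statement, which I would verify by comparing, on both sides, the decompositions arising from Clifford theory applied to the underlying Hecke-algebra extension, and matching the resulting component-group representations with $\rho$ and $\tilde\rho$ via the explicit form of the LLC in each class.

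\emph{Main obstacle.} The hard part will be the twist separating ${}^S\eta$ from ${}^L\eta$ in Proposition \ref{prop:5.1}. A priori the natural map $\C[\mc S_\phi]\to\C[\mc S_{\tilde\phi}]$ coming from ${}^L\eta$ agrees with the one induced by the Hecke-algebra pullback only up to a character of $\mc S_\phi$, so a wrong choice would shift every $\rho$ on the right by that character. One must therefore pin down the normalisations of the enhancements on $G$ and $\tilde G$ compatibly — essentially, choose the base points of the bijection \eqref{eq:2} on both sides so that the discrepancy is absorbed exactly into ${}^S\eta$ rather than remaining as a global twist in the decomposition. Once this bookkeeping is carried out canonically in each of the three classes, the remainder of the proof is a fairly mechanical translation between the Hecke-algebraic and the representation-theoretic sides.
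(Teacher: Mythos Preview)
Your overall strategy---factor $\eta$, transport pullback through Hecke-algebra morphisms, and read off the answer via Clifford theory on component groups---is the paper's. But your factorization differs in a way that hides the hardest step. You propose (central isogeny) $\circ$ (inclusion with commutative cokernel). The paper instead writes \eqref{eq:5.1}:
\[
\tilde{\mc G}\;\xrightarrow{f_1}\;\tilde{\mc G}\times\mc T\;\xrightarrow{f_2}\;(\tilde{\mc G}\times\mc T)/\ker(f\times\mr{id}_{\mc T})\;\xrightarrow{f_3}\;\mc G,
\]
so the inclusion becomes trivial (direct factor alongside a torus), the central-quotient piece $f_2$ is handled via intermediate Hecke algebras (Section~\ref{sec:4}), and the residual \emph{isomorphism} $f_3$ is further split as $\mr{Ad}(g)\circ\eta_{\mc R(f_3),\mc G}$ with $g\in G_\ad$ (Proposition~\ref{prop:3.4}, equation \eqref{eq:3.9}). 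The character twist separating ${}^S\eta$ from ${}^L\eta$---which you correctly flag as the main obstacle---comes entirely from this $\mr{Ad}(g)$ piece; the paper devotes all of Section~\ref{sec:2} to constructing $\tau_\phi(g)$ and the induced Hecke-algebra automorphisms $\alpha_g$, $\mr{Ad}(x_g)$. In your two-step factorization the isomorphism is never isolated, so it is not clear where you would actually confront and resolve this twist. Your ``inclusion with commutative cokernel'' step is also not directly tractable as stated; the paper's torus-complement trick is precisely what makes it trivial.

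One further correction: for inner forms of $GL_n,SL_n,PGL_n$ the paper does \emph{not} use Hecke algebras (see the end of the introduction and Section~\ref{sec:8}). The LLC there is explicit enough that $\eta^*\pi$ is decomposed directly from the Jacquet--Langlands and Hiraga--Saito constructions, with the action of $G_\ad/G$ on enhancements computed by hand (Proposition~\ref{prop:8.3}).
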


Here ${}^S \eta^* (\tilde \rho) = \tilde \rho \circ {}^S \eta$ denotes the pullback along 
\eqref{eq:4}. When \eqref{eq:4} is just the $\C$-linear extension of \eqref{eq:5} (which happens 
often), Conjecture \ref{conj:A} can be reformulated as 
\[
\eta^* (\pi (\phi,\rho)) = \bigoplus_{\tilde \rho \in \Irr (\mc S_{\tilde \phi})} 
\mr{Hom}_{\mc S_{\tilde \phi}} \big( \mr{ind}_{\mc S_\phi}^{\mc S_{\tilde \phi}} \rho, \tilde \rho
\big) \otimes \pi (\tilde \phi, \tilde \rho) . 
\]
Briefly, Conjecture \ref{conj:A} says that the LLC is functorial with respect to homomorphisms
with commutative (co)kernel. With the Langlands classification \cite{BoWa,Ren,SiZi,ABPSnontemp} 
one can see that validity for tempered representations and enhanced bounded L-parameters would
imply the conjecture in general \cite[\S 4]{AdPr}.

Let us list some interesting applications. Firstly, Conjecture \ref{conj:A} readily entails 
(Corollary \ref{cor:5.8}) that $\Pi_{{}^L \eta \circ \phi}(\tilde G)$ consists precisely of the 
irreducible direct summands of the $\eta^* \pi$ with $\pi \in \Pi_\phi (G)$. In particular this 
implies Borel's conjecture \eqref{eq:3}.

It is believed that every tempered L-packet $\Pi_\phi (G)$ supports a unique (up to scalars)
stable distribution $J(\phi)$ on $G$, a linear combination of the traces of the members of
$\Pi_\phi (G)$. Then $\eta^* J(\phi)$ is a stable distribution on $\tilde G$ and Conjecture
\ref{conj:A} implies, as checked in \cite[\S 2]{AdPr}, that $\eta^* J(\phi)$ is a scalar multiple
of $J({}^L \eta \circ \phi)$.

Further, Conjecture \ref{conj:A} can be used to quickly find multiplicity one results for the 
pullback of $G$-representations to $\tilde G$. Namely, $\eta^* \pi (\phi,\rho)$ is 
multiplicity-free precisely when 
\[
{}^S \eta^* : \mr{Rep} (\mc S_{\tilde \phi}) \to  \mr{Rep} (\mc S_\phi)
\]
(or equivalently ${}^L \eta^*$) is multiplicity-free on $\tilde G$-relevant irreducible 
representations of $\mc S_{\tilde \phi}$. This happens in particular when $\mc S_{\tilde \phi}$
is abelian, as is the case for many groups \cite[\S 5]{AdPr}.

Granting some comparison results between $\mc S_\phi$ and appropriate R-groups
(see \cite{ABPSnontemp,BaGo}), one can reformulate the multiplicity aspect of Conjecture \ref{conj:A} 
entirely in terms of $p$-adic groups (without L-parameters). This has been investigated in 
\cite{Cho,Key,BCG}.

It is interesting to speculate about global versions of \eqref{eq:3} and Conjecture \ref{conj:A}.
Assume that $\mc G, \tilde{\mc G}$ and $\eta$ are defined over a global field $k$ and satisfy
Condition \ref{cond:1} with $k$ instead of $F$. Let $\mh A_k$ be the ring of adeles of $k$.
Recall \cite[\S 5]{Kna} that every irreducible admissible representation $\pi$ of $\mc G (\mh A_k)$ 
factorizes as a restricted tensor product $\bigotimes'_v \pi_v$, where $v$ runs over the places 
of $k$ and $\pi_v$ is an irreducible admissible representation of $\mc G (k_v)$.
Moreover almost every $\pi_v$ is unramified, in the sense it contains a nonzero vector fixed
by a special maximal compact subgroup of $\mc G (k_v)$. Then 
\[
\eta^* (\pi) = \bigotimes'\nolimits_v \eta^* (\pi_v) \qquad
\text{as } \tilde{\mc G}(\mh A_k) \text{-representations,}
\]
We already know from \cite{Sil} that every $\eta^* \pi_v$ is a finite direct sum of irreducible
$\tilde{\mc G} (k_v)$-representations, so $\eta^* \pi$ is also completely reducible. One can 
determine the decomposition of $\eta^* \pi$ by applying Conjecture \ref{conj:A} at every
place of $k$. But, since $k$ has infinitely many places, 
it is possible that in this generality $\eta^* (\pi)$ has infinite length.

Let us suppose in addition that $\pi$ is automorphic. Given the particular shape of $\eta$,
it is easy to check, with the standard characterizations
\cite[\S 7]{Kna}, that every vector of $\eta^* \pi$ is an automorphic
form for $\tilde{\mc G}(\mh A_k)$. The best one can hope for is that the automorphicity
implies that $\eta^* \pi_v$ is irreducible for almost all $v$ (or equivalently,
at almost all places $v$ where $\pi_v$ is unramified). Assuming that, $\eta^* \pi$
is a finite direct sum of irreducible automorphic representations of $\tilde{\mc G}(\mh A_k)$.

Suppose further that a global Langlands correspondence exists for $\mc G (k)$ and 
$\tilde{\mc G}(k)$, and that $\phi$ is a L-parameter (of some kind) 
associated to $\pi$. Then one may conjecture that every (or just one) irreducible constituent 
of $\eta^* \pi$ has L-parameter $\Phi (\eta) \phi$. This is an instance of global functoriality 
\cite[\S 10]{Kna}, and it appears to be wide open. Of course, if the Langlands correspondences
would satisfy a nice local-global compability, such a conjecture might be a consequence of
\eqref{eq:3}. Nevertheless, it seems unlikely that the decomposition of $\eta^* \pi$ can be 
described with global component groups for $\phi$ and $\Phi (\eta) \phi$.\\

The main results of the paper can be summarized as follows:

\begin{thmintro}\label{thm:B} 
(see Theorem \ref{thm:6.3}, Theorem \ref{thm:6.10} and Paragraphs \ref{par:GLn}--\ref{par:SLn}) \\
Conjecture \ref{conj:A} holds for the following 
classes of $F$-groups and representations. (That is, whenever $\mc G$ and $\tilde{\mc G}$
belong to one of these classes, $\eta : \tilde{\mc G} \to \mc G$ satisfies Condition 
\ref{cond:1} and the admissible representations are of the indicated kind).
\enuma{
\item Split reductive groups over non-archimedean local fields and irreducible representations
in the principal series, with the LLC from \cite{ABPSprin}.
\item Unipotent representations of groups over non-archimedean local fields which split over 
an unramified extension, with the LLC from \cite{LusUni1,LusUni2,FOS,SolLLCunip}.
\item Inner twists of $GL_n, SL_n$ and $PGL_n$ over local fields, following \cite{HiSa,ABPSSL1}.
}
\end{thmintro}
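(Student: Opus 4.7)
\medskip

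\noindent\textbf{Proof plan.}
The strategy is to treat the three classes (a), (b), (c) through a common Hecke-algebra approach. In each case, an irreducible representation $\pi(\phi,\rho)$ of $G$ appears as a simple module over a Hecke algebra $\mc H(G,\mf s)$ attached to a Bernstein block $\mf s$, and the same algebra arises independently as the endomorphism algebra of a projective generator on the Galois side for the corresponding block of $\Phi_e(G)$. My first step is to establish a Hecke-algebra analogue of Conjecture~\ref{conj:A}: ${}^L \eta$ induces an algebra homomorphism $\mc H(\tilde G,\tilde{\mf s}) \to \mc H(G,\mf s)$, and restriction of simple modules along it should realize the multiplicity formula of Conjecture~\ref{conj:A}. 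The twist between \eqref{eq:4} and \eqref{eq:5} described by Proposition~\ref{prop:5.1} feeds into this restriction and accounts for the pullback ${}^S \eta^*(\tilde\rho)$ appearing in the conjecture.

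For case~(a), the main input is the ABPS construction \cite{ABPSprin}, which shows that principal-series Bernstein components of a split $G$ are Morita equivalent to extended affine Hecke algebras. I would check that the Hecke algebras produced on the Galois side are isomorphic to those on the $p$-adic side, and that $\eta$ acting on Bernstein tori corresponds under this isomorphism to ${}^L \eta$ on the dual Bernstein variety. The decomposition of $\eta^* \pi(\phi,\rho)$ then reduces to Clifford theory for the component-group inclusion $\mc S_\phi \hookrightarrow \mc S_{\tilde\phi}$, producing the multiplicities $\dim \mr{Hom}_{\mc S_\phi}(\rho,{}^S \eta^*(\tilde\rho))$ predicted by Conjecture~\ref{conj:A}.

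Case~(b) proceeds analogously using \cite{LusUni1,LusUni2,FOS,SolLLCunip}: every unipotent Bernstein block of $G$ is Morita equivalent to an affine Hecke algebra built from a cuspidal unipotent datum, and the same Hecke algebra reappears on the Galois side via Lusztig's equivariant K-theory construction applied to a centralizer in $G^\vee$. The real work is to verify that $\eta$ and ${}^L \eta$ induce matching maps on these algebras, including agreement of the parameter functions and of the $2$-cocycles twisting $\C[\mc S_\phi]$. Case~(c) is simpler: for $GL_n$ the component groups are trivial and L-packets are singletons, so Conjecture~\ref{conj:A} reduces to decomposing $\eta^* \pi(\phi)$ into $\tilde G$-irreducibles, which is precisely the Hiraga--Saito restriction formula \cite{HiSa}; the remaining homomorphisms among $GL_n$, $SL_n$, $PGL_n$ and their inner twists are handled by composing with the central projection and citing \cite{ABPSSL1}. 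In all three classes, the passage from tempered representations to all of $\Irr$ goes through the Langlands classification, following \cite[\S 4]{AdPr}.

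The main obstacle lies in case~(b): matching the parameter functions and the $2$-cocycles twisting $\C[\mc S_\phi]$ on the two sides of the LLC as $\eta$ varies requires descending into Lusztig's geometric constructions. This is the step where functoriality of the LLC uses more than its existence as a bijection, and where the bulk of the technical effort lies.
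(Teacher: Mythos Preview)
Your plan captures the broad architecture—Hecke algebras for (a) and (b), direct representation-theoretic arguments for (c)—but it misses the paper's central structural device and misidentifies where the real difficulties lie.

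The paper does \emph{not} try to build a single algebra map $\mc H(\tilde G,\tilde{\mf s})\to\mc H(G,\mf s)$ for a general $\eta$. As the introduction warns (around \eqref{eq:6}), such a map need not exist for central quotient maps; one is forced through intermediate algebras $\mc H(\mf s^\vee,W_{\mf s_i^\vee},\vec{\mb z})$ that are only Morita equivalent to $\mc H(\mf s^\vee,\vec{\mb z})$. Instead, the paper factors every $\eta$ as in \eqref{eq:5.1} into four elementary pieces—inclusion into a product with a torus, a central quotient, $\mr{Ad}(g)$ with $g\in G_\ad$, and a pinned isomorphism $\eta_{\tau,\mc G}$—and verifies Conjecture~\ref{conj:A} separately for each piece in each of the three classes. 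Transitivity (Proposition~\ref{prop:5.1}.c) then assembles the pieces. Your plan has no analogue of this factorization, so the step ``${}^L\eta$ induces an algebra homomorphism'' is where it would break down.

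Relatedly, the twist between ${}^S\eta$ and ${}^L\eta$ is not a side remark: it is produced entirely by the $\mr{Ad}(g)$ factor, via the characters $\tau_\phi(g)$ of Paragraph~\ref{par:2.1}. Computing how $\mr{Ad}(g)^*$ acts on Hecke-algebra modules (Sections~\ref{sec:2} and the $g_c$/$g_x$ analysis in the proofs of Theorems~\ref{thm:6.3} and~\ref{thm:6.10}) is a substantial and unavoidable part of the argument that your plan does not address.

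Your ``main obstacle'' is misplaced. For unipotent representations one has $\mf R_{\mf s^\vee}=1$ (see \eqref{eq:6.23}), so there are \emph{no} nontrivial 2-cocycles to match; the genuine work in case~(b) lies in the $G_\ad/G$ action (Lemmas~\ref{lem:6.7}, \ref{lem:6.8}) and the central quotients (Lemmas~\ref{lem:6.4}--\ref{lem:6.6}). Finally, case~(c) is not just a citation: the $\mr{Ad}(g)$ action on $SL_m(D)$ (Proposition~\ref{prop:8.3}) and the compatibility of the LLC with the isomorphisms $SL_m(D)\leftrightarrow SL_m(D^{op})$ (Lemma~\ref{lem:8.4}) require explicit work with the intertwining operators $I_s$ from \cite{HiSa}, including a delicate normalization issue when $D=D^{op}$.
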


For quasi-split classical groups over local fields of characteristic zero, Conjecture
\ref{conj:A} has been established with endoscopic methods. To provide a proper perspective, 
we collect all those instances in Paragraph \ref{par:classical}. We also mention that Conjecture
\ref{conj:A} for real reductive groups should be related to parts of \cite{ABV}, which however
rather treat Arthur packets.

The maps \eqref{eq:4} depend multiplicatively on $\eta$, hence Conjecture \ref{conj:A} is 
transitive in $\eta$. This enables us (cf. Section \ref{sec:5}) to reduce the verification 
to four classes of homomorphisms, which we discuss now. 
\begin{itemize}
\item \textbf{Inclusions $\tilde{\mc G} \to \tilde{\mc G} \times \mc T$, where $\mc T$ is a $F$-torus.}\\
This case is trivial. 

\item \textbf{Quotient maps $q : \tilde{\mc G} \to \mc G = \tilde{\mc G} / \mc N$, where $\mc N
\subset \tilde{\mc G}$ is central.} \\
Together with inclusions $\tilde{\mc G} \to \tilde{\mc G} \times \mc T$, these account for all
inclusions. For instance, $SL_n \hookrightarrow GL_n$ can be factorized as 
$SL_n \to SL_n \times Z(GL_n) \to GL_n$, where the second map is surjective as a homomorphism
of algebraic groups. 

The failure of $q : \tilde G \to G$ to be surjective (in general) entails that $q^*$ need
not preserve irreducibility and that it may enlarge the finite groups attached to Bernstein
components. That makes this case very technical.

\item \textbf{Inner automorphisms $\mr{Ad}(g)$ with $g \in \mc G_\ad (F) = G_\ad$.}\\
Regarded as automorphisms of the abstract group $G = \mc G (F)$, these are not necessarily
inner, so in principle they can act nontrivially on $\Irr (G)$. When $F$ is non-archimedean
and $g$ lies in a compact subgroup of $G_\ad$, $\mr{Ad}(g)^*$ typically acts on $\Irr (G)$
via permutations of the cuspidal supports. In general, for a parabolic subgroup $P = MU$ 
the action of $\mr{Ad}(g)^*$ on constituents of $I_P^G (\sigma)$ with $\sigma \in \Irr (M)$
essentially discrete series will also involve a character of the arithmetic R-group 
associated to $I_P^G (\sigma)$ \cite[\S 1]{ABPSnontemp}. 

The corresponding action on $\Phi_e (G)$ stabilizes all L-parameters, it only permutes the
enhancements. This could be expected, as L-packets should be the minimal subsets of
$\Irr (G)$ that are stable under conjugation by $G_\ad$. To any $g \in G_\ad$ and
$\phi \in \Phi (G)$ we canonically associate (in Paragraph \ref{par:2.1}) a character 
$\tau_\phi (g)$ of $Z_{G^\vee}(\phi (\mb W_F))$, which induces a character of $\mc S_\phi$.
In all cases considered in this paper:
\[
\mr{Ad}(g)^* \pi (\phi,\rho) = \pi (\phi, \rho \otimes \tau_\phi (g)^{-1}) . 
\]
This equality is responsible for the character twists in the definition of 
${}^S \mr{Ad}(g) \in \mr{Aut} \, \C [\mc S_\phi]$. See Example \ref{ex:SLn} for characters
$\tau_\phi (g)$ of high order.
\item \textbf{Isomorphisms of reductive $F$-groups}

Recall that to any connected reductive $F$-group $\mc G$ one can associate a based root
datum $\mc R (\mc G,\mc T)$, endowed with an action of the Weil group $\mb W_F$. The
$\mb W_F$-automorphisms of $\mc R (\mc G,\mc T)$ are the source of all 
elements of the outer automorphism group of $\mc G$, and $\mb W_F$-equivariant isomorphisms
of based root data give rise to isomorphisms between reductive groups. 
This is well-known for split reductive groups, we make it precise in general. 
\end{itemize}

\begin{thmintro}\label{thm:C}
(see Theorem \ref{thm:3.2} and Proposition \ref{prop:3.4}) \\
Let $\mc G$ be an inner twist of a quasi-split reductive $F$-group $\mc G^*$. Let 
$\zeta \in \Irr (Z({G^\vee}_\Sc)^{\mb W_F})$ be the Kottwitz parameter of $\mc G$.
Assume that analogous objects are given for $\tilde{\mc G}$, with tildes. Let
\[
\tau : \mc R (\tilde{\mc G}^* , \tilde{\mc T}^* ) \to \mc R (\mc G^*,\mc T^*)
\]
be a $\mb W_F$-equivariant isomorphism of based root data. The following are equivalent:
\begin{itemize}
\item[(i)] $\tau (\tilde \zeta) = \zeta$;
\item[(ii)] there exists an isomorphism of $F$-groups $\eta : \tilde{\mc G} \to \mc G$
which lifts $\tau$. 
\end{itemize}
Every isomorphism $\tilde{\mc G} \to \mc G$ arises in this way.
\enuma{
\item When (ii) holds, the group $\mc G_\ad (F)$ acts simply transitively on the
collection of such $\eta$ (by composition).
\item When $\mc G$ and $\tilde{\mc G}$ are quasi-split, (i) and (ii) hold for every $\tau$.
The isomorphism $\eta$ can be determined uniquely by requiring that it sends a chosen
$\mb W_F$-stable pinning of $\tilde{\mc G}$ to a chosen $\mb W_F$-stable pinning of $\mc G$.
}
\end{thmintro}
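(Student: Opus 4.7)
The plan is to dispose of the quasi-split case first via Chevalley--Demazure, then reduce the general case to it through inner twisting and Galois cohomology, matching the cocycle classes with the Kottwitz parameters by naturality.

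For part (b), fix $\mb W_F$-stable pinnings on $\mc G^*$ and $\tilde{\mc G}^*$. The isomorphism theorem of Chevalley--Demazure over $\bar F$ provides a unique isomorphism $\eta : \tilde{\mc G}^*_{\bar F} \to \mc G^*_{\bar F}$ lifting $\tau$ and sending the chosen pinning to the chosen pinning. For each $w \in \mb W_F$, the conjugate $w \cdot \eta$ is another pinning-preserving lift of $\tau$ (using $\mb W_F$-equivariance of $\tau$ and $\mb W_F$-stability of the pinnings), so $w \cdot \eta = \eta$ by uniqueness. Hence $\eta$ is defined over $F$. Condition (i) then holds trivially, since the Kottwitz parameter of a quasi-split group is trivial.

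In general, encode the inner twistings by 1-cocycles $\psi : \mb W_F \to \mc G^*_\ad (\bar F)$ and $\tilde \psi : \mb W_F \to \tilde{\mc G}^*_\ad (\bar F)$ whose classes in $H^1(F,\mc G^*_\ad)$ and $H^1(F,\tilde{\mc G}^*_\ad)$ correspond under Kottwitz to $\zeta$ and $\tilde\zeta$. Applying part (b) produces a pinning-preserving $F$-isomorphism $\eta^* : \tilde{\mc G}^* \to \mc G^*$ lifting $\tau$. An $F$-isomorphism $\tilde{\mc G} \to \mc G$ whose base change to $\bar F$ agrees with $\eta^*$ up to inner automorphism exists iff the cocycles $\psi$ and $\eta^*_\ad \circ \tilde\psi$ are cohomologous in $\mc G^*_\ad$, i.e.\ $[\psi] = (\eta^*_\ad)_*[\tilde\psi]$ in $H^1(F,\mc G^*_\ad)$. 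Now use the naturality of Kottwitz's isomorphism: the pushforward $(\eta^*_\ad)_*$ on $H^1$ corresponds, on the character side, to the map $\Irr(Z(\tilde G^\vee_\Sc)^{\mb W_F}) \to \Irr(Z(G^\vee_\Sc)^{\mb W_F})$ induced by the dual of $\tau$. Thus the cocycle matching is equivalent to $\tau(\tilde\zeta) = \zeta$, proving (i)$\Leftrightarrow$(ii). The concluding ``every isomorphism arises this way'' follows by transporting $\tilde{\mc T}$ and $\mc T$ through the inner twists, extracting from any given $\eta$ the $\mb W_F$-equivariant $\tau$ on based root data, and then adjusting by an inner automorphism of $\mc G$ so as to recover the canonical pinned form.

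For part (a), if $\eta_1$ and $\eta_2$ both lift $\tau$, then $\alpha := \eta_1 \eta_2^{-1}$ is an $F$-automorphism of $\mc G$ inducing the identity on $\mc R(\mc G^*,\mc T^*)$; the isomorphism theorem shows $\alpha_{\bar F} = \mr{Ad}(g)$ for some $g \in \mc G_\ad(\bar F)$, and the descent condition forces $g \in \mc G_\ad(F)$. This gives a transitive action of $\mc G_\ad(F)$ on the fibre of lifts. Simple transitivity is immediate: if $\mr{Ad}(g) \circ \eta = \eta$ then $\mr{Ad}(g) = \mr{id}$, and since $\mc G_\ad$ is centre-free, $g = 1$ in $\mc G_\ad(F)$.

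The main obstacle will be making the naturality of Kottwitz's isomorphism in the third paragraph fully precise: one has to track the passage $\mc G^*_\ad \leftrightarrow Z(G^\vee_\Sc)$ under isomorphisms of based root data, including the switch from adjoint to simply connected side and the compatibility of the $\mb W_F$-actions, so that the ``dual of $\tau$'' acting on characters is correctly the one appearing in the statement of (i).
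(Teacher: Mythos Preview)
Your proposal is correct and follows essentially the same route as the paper. The paper also first treats the quasi-split case (Theorem~\ref{thm:3.2}.a, citing Springer's \cite[Lemma 16.3.8]{Spr} for the unique pinned lift), then passes to inner twists via Galois cohomology of $\mc G^*_\ad$ and the naturality of the Kottwitz homomorphism (Corollary~\ref{cor:3.3} and Proposition~\ref{prop:3.4}), and obtains the simply transitive action of $\mc G_\ad (F)$ from \cite[Lemma 16.4.6]{Spr}, which is precisely your ``descent condition forces $g \in \mc G_\ad (F)$'' argument exploiting that $\mc G_\ad$ has trivial centre.
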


With Theorem \ref{thm:C} we reduce the verification of Conjecture \ref{conj:A} for
isomorphisms to inner automorphisms and to one $\eta$ for every $\tau$ as above. For
such $\eta$ it is usually easy, because one can choose them so that they preserve 
the entire setup.\\

Now we focus on non-archimedean local fields $F$.
The main players in our proof of cases of Conjecture \ref{conj:A} are Hecke algebras.
In previous work \cite{AMS3} a twisted affine Hecke algebra $\mc H (\mf s^\vee, 
\vec{\mb z})$ was attached to every Bernstein component of enhanced L-parameters 
$\Phi_e (G)^{\mf s^\vee}$. Its crucial property is that, for every specialization of
$\vec{\mb z}$ to an array $\vec z$ of parameters in $\R_{\geq 1}$, there exists a
canonical bijection 
\[
\Phi_e (G)^{\mf s^\vee} \longleftrightarrow
\Irr \big( \mc H (\mf s^\vee, \vec{\mb z}) / (\vec{\mb z} - \vec z) \big) .
\]
The entire study of the algebras $\mc H (\mf s^\vee, \vec{\mb z})$ takes place in the
realm of complex groups with a $\mb W_F$-action, it is not conditional on the existence
of types or a LLC. 

It is expected that every Bernstein block $\mr{Rep}(G)^{\mf s}$ in $\mr{Rep}(G)$
is equivalent to the module category of an affine Hecke algebra (or a very similar
kind of algebra), say $\mc H (\mf s)$. The above Hecke algebras for L-parameters
essentially allow one to reduce a proof of the LLC to two steps:
\begin{itemize}
\item a LLC on the cuspidal level, which in particular matches $\mf s$ with a
unique $\mf s^\vee$;
\item for all matching inertial equivalence classes $\mf s$ and $\mf s^\vee$, a Morita
equivalence between $\mc H (\mf s)$ and $\mc H (\mf s^\vee, \vec{\mb z}) / 
(\vec{\mb z} - \vec z)$ (for suitable parameters $\vec z$).
\end{itemize}
With this in mind, Conjecture \ref{conj:A} can be translated to a statement about
representations of the algebras $\mc H (\mf s^\vee, \vec{\mb z})$. We note that 
the pullback ${}^L \eta (\mf s^\vee)$ is in general not a single inertial equivalence
class for $\Phi_e (\tilde G)$, rather a finite union thereof. Consequently
$\mc H ({}^L \eta (\mf s^\vee),\vec{\mb z})$ is a finite direct sum of (twisted) affine
Hecke algebras associated to parts of $\Phi_e (\tilde G)$. This reflects that an L-packet
$\Pi_{\tilde \phi}(\tilde G)$ need not be contained in a single Bernstein component.

\begin{thmintro}\label{thm:D}
Let $\eta : \tilde{\mc G} \to \mc G$ be as in Condition \ref{cond:1}, and let 
$\Phi_e (G)^{\mf s^\vee}$ be a Bernstein component of $\Phi_e (G)$. Assume that for
every involved $\tau$ (as in Theorem \ref{thm:C}):
\begin{itemize}
\item there exists a canonical choice of $\eta$ (also as in Theorem \ref{thm:C}),
\item the group $W_{\mf s^\vee}$ attached to $\mf s^\vee$ fixes a point of the
torus $\mf s_L^\vee$ attached to $\mf s^\vee$ (see Section \ref{sec:1} for background),
\item $(w - 1) X^* (X_\nr (L_{AD})) \subset X^* (\mf s_L^\vee)$ for all $w \in
W_{\mf s^\vee}$ (confer Lemma \ref{lem:2.7}).
\end{itemize}
Then Conjecture \ref{conj:A} holds for representations of $\mc H (\mf s^\vee, \vec{\mb z})$
and $\mc H ({}^L \eta (\mf s^\vee), \vec{\mb z})$. 
\end{thmintro}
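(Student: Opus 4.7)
The plan is to exploit the transitivity of Conjecture \ref{conj:A} in $\eta$, which follows from the multiplicativity of ${}^S\eta$, in order to reduce the verification to the four classes of homomorphisms listed in the introduction, and to treat each class directly at the level of the twisted affine Hecke algebras $\mc H(\mf s^\vee,\vec{\mb z})$ from \cite{AMS3}. Since the target statement is formulated purely in terms of modules over these algebras and the maps ${}^S\eta$ on component groups, no LLC is needed. The first three cases are routine. Inclusions $\tilde{\mc G} \to \tilde{\mc G} \times \mc T$ factor L-parameters and Hecke algebras as tensor products with the abelian factor corresponding to $\mc T$. For isomorphisms, the canonical $\eta$ supplied by Theorem \ref{thm:C} (whose existence for every relevant $\tau$ is the first hypothesis of Theorem \ref{thm:D}) transports the pinning and hence the entire datum $(\mf s^\vee,\vec{\mb z})$, identifying ${}^S\eta$ with the identity. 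For inner automorphisms $\mr{Ad}(g)$ with $g \in G_\ad$, the induced automorphism of $\mc H(\mf s^\vee,\vec{\mb z})$ acts on irreducible modules by the canonical twist $\rho \mapsto \rho \otimes \tau_\phi(g)^{-1}$, matching the definition of ${}^S\mr{Ad}(g)$.

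The real work concerns the quotient case $q : \tilde{\mc G} \to \mc G = \tilde{\mc G}/\mc N$ with $\mc N$ central. Here the dual map $q^\vee : G^\vee \to \tilde G^\vee$ is injective with commutative cokernel by a finite abelian group $A$ built from characters of $\mc N$, and a single inertial class $\mf s^\vee$ on the $G$-side corresponds, via ${}^L \eta$, to an $A$-orbit of Bernstein components on the $\tilde G$-side. Accordingly $\mc H({}^L\eta(\mf s^\vee),\vec{\mb z})$ is a finite direct sum of twisted affine Hecke algebras indexed by this orbit. The strategy is to realize this direct sum as an algebra built canonically from $\mc H(\mf s^\vee,\vec{\mb z})$ and $A$: the torus $\tilde{\mf s}_L^\vee$ is a finite cover of $\mf s_L^\vee$ with Galois group $A$, the finite groups $W_{\mf s^\vee}$ on both sides are to be identified via this cover, and the root labels together with the $q$-parameters $\vec{\mb z}$ and the twisting 2-cocycle should descend. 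The hypothesis that $W_{\mf s^\vee}$ fixes a point of $\mf s_L^\vee$ provides a basepoint through which the $W_{\mf s^\vee}$-action lifts unambiguously to $\tilde{\mf s}_L^\vee$; the condition $(w-1)X^*(X_\nr(L_{AD})) \subset X^*(\mf s_L^\vee)$ then guarantees that the characters distinguishing $\mc S_\phi$ from $\mc S_{\tilde\phi}$ become trivial on the cover, so the twisting cocycle on $\mc H({}^L\eta(\mf s^\vee),\vec{\mb z})$ is inherited from that on $\mc H(\mf s^\vee,\vec{\mb z})$ without any further obstruction.

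Once this structural compatibility is established, pullback of modules along $\eta$ translates into the natural restriction functor between the two Hecke algebras, and the decomposition of an irreducible module reduces to Clifford theory for the inclusion $\mc S_\phi \hookrightarrow \mc S_{\tilde\phi}$. Frobenius reciprocity then yields the formula $\bigoplus_{\tilde\rho} \mr{Hom}_{\mc S_\phi}(\rho, {}^S\eta^*(\tilde\rho)) \otimes \pi(\tilde\phi,\tilde\rho)$ of Conjecture \ref{conj:A}. I expect the hard part to be the detailed verification of the cocycle and parameter inheritance step: one must trace through the construction of $\mc H(\mf s^\vee,\vec{\mb z})$ in \cite{AMS3}, in particular the behaviour of root labels and sign characters under the central extension, and check that the second and third hypotheses of Theorem \ref{thm:D} are precisely what is needed to make the descent work cleanly. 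The first two cases of Theorem \ref{thm:B} should then be deduced by verifying these hypotheses for the relevant Bernstein components, reducing the paper's main examples to this single structural theorem.
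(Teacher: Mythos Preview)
Your overall reduction strategy---use transitivity of ${}^S\eta$ to factorize $\eta$ and treat each factor separately---is exactly what the paper does (see the factorization (5.1) and Proposition~\ref{prop:5.1}). The treatment of inclusions into a product with a torus, and of the canonical isomorphism $\eta_{\tau,\mc G}$ from Theorem~\ref{thm:C}, is also essentially as in the paper.

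However, there are two genuine problems.

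\textbf{Misattribution of the hypotheses.} The second and third bullet points of Theorem~\ref{thm:D} (which together are Condition~\ref{cond:2}) are \emph{not} used in the quotient case at all. They are needed precisely to make the inner-automorphism step work at the level of affine Hecke algebras: for $g\in G_{\ad}$ one wants an automorphism $\mr{Ad}(x_g)$ of $\mc H(\mf s^\vee,\vec{\mb z})$, but $x_g$ lies in $X^*(X_\nr(T_{AD}))$, which in general is not a sublattice of $X^*(T_{\mf s^\vee})$. The condition $(w-1)X^*(X_\nr(L_{AD}))\subset X^*(\mf s_L^\vee)$ is exactly what makes the formula $N_w\mapsto N_{w(w^{-1}(x_g)-x_g)}$ well-defined (Proposition~\ref{prop:2.8}); the fixed-point condition lets one lift the $\mf R_{\mf s^\vee}$-action to $T_{\mf s^\vee}$ so that an Iwahori--Matsumoto presentation is available. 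Your account of ``induced automorphism acts by $\rho\mapsto\rho\otimes\tau_\phi(g)^{-1}$'' hides the fact that this automorphism does not exist without those hypotheses.

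\textbf{Gap in the quotient case.} Your claim that ``the finite groups $W_{\mf s^\vee}$ on both sides are to be identified'' is false in general: the paper shows (Lemma~\ref{lem:4.1} and the Remark after it) only that $W_{\mf s^\vee}\subset W_{\mf s_i^\vee}$, and this inclusion can be proper, with the $W_{\mf s_i^\vee}$ even varying with $i$. Consequently there is \emph{no} direct homomorphism $\mc H({}^L q(\mf s^\vee),\vec{\mb z})\to\mc H(\mf s^\vee,\vec{\mb z})$ along which one can simply restrict modules, and your proposed ``cocycle/parameter descent'' via the cover $\tilde{\mf s}_L^\vee\to\mf s_L^\vee$ cannot be carried out as stated. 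The paper instead constructs, for each $i$, an intermediate algebra $\mc H(\mf s^\vee,W_{\mf s_i^\vee},\vec{\mb z})$ containing $\mc H(\mf s^\vee,\vec{\mb z})$ as a Morita-equivalent subalgebra, together with a homomorphism from $\mc H(\mf s_i^\vee,\vec{\mb z})$ into it. The effect on irreducible modules is then computed by passing to twisted \emph{graded} Hecke algebras via $\exp_{\phi_b}^*$ (diagram~\eqref{eq:4.27}) and invoking \cite[Proposition~1.5]{AMS3}, which the paper calls ``the crucial, highly-nontrivial step which justifies the entire setup'' (proof of Lemma~\ref{lem:4.3}). Clifford theory for $\mc S_\phi\subset\mc S_{\tilde\phi}$ does enter, but only after these reductions (Lemma~\ref{lem:4.6}), not as a direct consequence of a restriction functor. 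Also, $q^\vee$ need not have finite cokernel (take $\mc N$ a torus), so your group $A$ is not available as described.
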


For principal series representation (of $F$-split groups) and unipotent representations
(of groups splitting over an unramified field extension), Theorem \ref{thm:D} constitutes
a large part of the proof of Conjecture \ref{conj:A}.\\

We conclude the introduction with some clarification of the structure of the paper. 
Throughout Sections \ref{sec:1}--\ref{sec:2}, \ref{sec:4} and \ref{sec:principal}--\ref{sec:unip}, 
the local field $F$ is supposed to be non-archimedean. In general all statements involving
Hecke algebras only apply when $F$ is non-archimedean, whereas most other results will
be established over all local fields.

In the first section we recall some notions and results about enhanced L-parameters and the 
associated Hecke algebras (affine and graded). In Sections \ref{sec:2}--\ref{sec:4} we 
investigate the action of homomorphisms $\tilde G \to G$ on these algebras. To every
Ad$(g) \in \mr{Aut}(G)$ with $g \in G_\ad$ we associate (in Paragraph \ref{par:2.3}) an
algebra isomorphism 
\[
\mc H (\mf s^\vee, \vec{\mb z}) \to \mc H (\mf s^\vee \otimes \tau_{\phi_L}(g),\vec{\mb z}) ,
\]
which has the desired effect on representations. For every central quotient map
$q : \tilde{\mc G} \to \mc G$ we would like to construct a homomorphism
\begin{equation}\label{eq:6}
\mc H ({}^L q (\mf s^\vee), \vec{\mb z}) \to \mc H (\mf s^\vee,\vec{\mb z}) .
\end{equation}
Unfortunately, this is in general not possible directly, only via some intermediate algebras.
We show that nevertheless there is a canonical notion of pullback of modules along
\eqref{eq:6}.

In Section \ref{sec:5} we provide solid footing to formulate Conjecture \ref{conj:A} precisely.
We also wrap up the findings from the previous sections to establish Theorem \ref{thm:D}. Up to 
this point, our results do not use any knowledge of a local Langlands correspondence (beyond
the case of tori). 

The remaining sections are dedicated to the proofs of our main results. Building upon
\cite{ABPSprin}, we verify Theorem \ref{thm:B}.a in Section \ref{sec:principal}. 
Hecke algebras for unipotent representations were studied mainly in \cite{LusUni1,SolLLCunip}.
In Section \ref{sec:unip} we combine these sources with the first half of the paper to prove 
Theorem \ref{thm:B}.b.
In Section \ref{sec:8} we first recall some background of the LLC for inner twists of
$GL_n, PGL_n$ and $SL_n$. With the appropriate formulations at hand, we settle Theorem
\ref{thm:B}.c. This case is easier than the previous two, no Hecke algebras are required.\\

\textbf{Acknowledgements.}
The author thanks Dipendra Prasad, Jeff Adler, Santosh Nadimpalli and the referee 
for some useful comments.

\numberwithin{equation}{section}
\renewcommand{\theequation}{\arabic{section}.\arabic{equation}}

\section{Hecke algebras for Langlands parameters}
\label{sec:1}

Let $F$ be a non-archimedean local field with ring of integers $\mf o_F$ and
a uniformizer $\varpi_F$. Let $k_F = \mf o_F / \varpi_F \mf o_F$ be its residue
field, of cardinality $q_F$. We fix a separable closure $F_s$ and assume that all
finite extensions of $F$ are realized in $F_s$. 
Let $\mb W_F \subset \mr{Gal}(F_s / F)$ be the Weil group of $F$ and let $\Fr$ be a
geometric Frobenius element. Let $\mb I_F  \subset \mb W_F$ be the 
inertia subgroup, so that $\mb W_F / \mb I_F \cong \Z$ is generated by $\Fr$.

Let $\mc G$ be a connected reductive $F$-group. Let $\mc T$ be a maximal torus 
of $\mc G$, and let $\Phi (\mc G, \mc T)$ 
be the associated root system. We also fix a Borel subgroup $\mc B$ of $\mc G$ 
containing $\mc T$, which determines a basis $\Delta$ of $\Phi (\mc G, \mc T)$.
Let $\mc S$ be a maximal $F$-split torus in $\mc G$. By \cite[Theorem 13.3.6.(i)]{Spr} 
applied to $Z_{\mc G}(\mc S)$, we may assume that $\mc T$ is defined over $F$ and 
contains $\mc S$. Then $Z_{\mc G}(\mc S)$ is a minimal Levi $F$-subgroup 
of $\mc G$ and $\mc B Z_{\mc G}(\mc S)$ is a minimal parabolic $F$-subgroup of $\mc G$.

We denote the complex dual group of $\mc G$ by $G^\vee$ or $\mc G^\vee$. 
Let ${G^\vee}_\ad$ be the adjoint group of $G^\vee$, and let 
${G^\vee}_\Sc = (\mc G_\ad)^\vee$ be its simply connected cover. 

We write $G = \mc G(F)$ and similarly for other $F$-groups. 
Recall that a Langlands parameter for $G$ is a homomorphism 
\[
\phi : \mb W_F \times SL_2 (\C) \to {}^L G = G^\vee \rtimes \mb W_F ,
\]
with some extra requirements. In particular $\phi |_{SL_2 (\C)}$ has to be algebraic, 
$\phi (\mb W_F)$ must consist of semisimple elements and $\phi$ must respect the
projections to $\mb W_F$. 

We say that a L-parameter $\phi$ for $G$ is 
\begin{itemize}
\item discrete if there does not exist any proper L-Levi subgroup of ${}^L G$ containing the
image of $\phi$;
\item bounded if $\phi (\Fr) = (s,\Fr)$ with $s$ in a bounded subgroup of $G^\vee$;
\item unramified if $\phi (w) = (1,w)$ for all $w \in \mb I_F$.
\end{itemize}
Let $\mc G^*$ be the unique $F$-quasi-split inner form of $\mc G$. 
We consider $\mc G$ as an inner twist of $\mc G^*$, so endowed with a $F_s$-isomorphism
$\mc G \to \mc G^*$. Via the Kottwitz homomorphism $\mc G$ is labelled by character
$\zeta_{\mc G}$ of $Z({G^\vee}_\Sc)^{\mb W_F}$ (defined with respect to $\mc G^*$).

Both ${G^\vee}_\ad$ and ${G^\vee}_\Sc$ act on $G^\vee$ by conjugation. As
\[
Z_{G^\vee}(\text{im } \phi) \cap Z(G^\vee) = Z(G^\vee)^{\mb W_F} ,
\]
we can regard $Z_{G^\vee}(\text{im } \phi) / Z(G^\vee)^{\mb W_F}$ as a subgroup of 
${G^\vee}_\ad$.  Let $Z^1_{{G^\vee}_\Sc}(\text{im } \phi)$ be its inverse image in
${G^\vee}_\Sc$ (it contains $Z_{{G^\vee}_\Sc}(\text{im } \phi)$ with finite index). The
S-group of $\phi$ is
\begin{equation}\label{eq:1.1}
\mc S_\phi := \pi_0 \big( Z^1_{{G^\vee}_\Sc}(\text{im } \phi) \big) .
\end{equation}
An enhancement of $\phi$ is an irreducible representation $\rho$ of $\mc S_\phi$.
Via the canonical map $Z({G^\vee}_\Sc)^{\mb W_F} \to \mc S_\phi$, $\rho$ determines
a character $\zeta_\rho$ of $Z({G^\vee}_\Sc)^{\mb W_F}$. 
We say that an enhanced L-parameter $(\phi,\rho)$ is relevant for $G$ if $\zeta_\rho =
\zeta_{\mc G}$. This can be reformulated with $G$-relevance of $\phi$ in terms of Levi
subgroups \cite[Lemma 9.1]{HiSa}. To be precise, in view of \cite[\S 3]{Bor}
there exists an enhancement $\rho$ such that $(\phi,\rho)$ is $G$-relevant if and only if 
every L-Levi subgroup of ${}^L G$ containing the image of $\phi$ is $G$-relevant. 
The group $G^\vee$ acts naturally on the collection of $G$-relevant enhanced L-parameters, by 
\[
g \cdot (\phi,\rho) = (g \phi g^{-1},\rho \circ \mr{Ad}(g)^{-1}) .
\]
We denote the set of $G^\vee$-equivalence classes of $G$-relevant L-parameters by $\Phi (G)$.
The subset of unramified (resp. bounded, resp. discrete) $G$-relevant L-parameters is
denoted by $\Phi_\nr (G)$ (resp. $\Phi_{\mr{bdd}}(G)$, resp. $\Phi_{\mr{disc}}(G)$). 

For certain topics, the above gives too many enhancements. To fix that, we choose
an extension $\zeta_{\mc G}^+ \in \Irr (Z({G^\vee}_\Sc))$ of $\zeta_{\mc G}$.
Let $\mc Z_\phi$ be the image of $Z({G^\vee}_\Sc)$ in $\mc S_\phi$. Via
$Z({G^\vee}_\Sc) \to \mc Z_\phi \to \mc S_\phi$, any enhancement $\rho$ also determines
a character $\zeta_\rho^+$ of $Z({G^\vee}_\Sc)$. In the more precise sense, we say that 
\begin{equation}\label{eq:1.4}
(\phi,\rho) \text{ is relevant for } (G,\zeta_{\mc G}^+) \text{ if } 
\zeta_\rho^+ = \zeta_{\mc G}^+ .
\end{equation}
This can be interpreted in terms of rigid inner forms of $\mc G$ \cite{Kal2}.

The set of $G^\vee $-equivalence classes of relevant L-parameters should be denoted 
$\Phi_e (G,\zeta_{\mc G}^+)$. However in practice, we will usually omit $\zeta_{\mc G}^+$ 
from the notation, and we write simply $\Phi_e (G)$. A local Langlands correspondence 
for $G$ (in its modern interpretation) should be a bijection between $\Phi_e (G)$ and 
the set of irreducible smooth $G$-representations, with several nice properties.

Let $H^1 (\mb W_F, Z(G^\vee))$ be the first Galois cohomology group of $\mb W_F$ with
values in $Z(G^\vee)$. It acts on $\Phi (G)$ by
\begin{equation}\label{eq:2.9}
(z \phi) (w,x) = z' (w) \phi (w,x) \qquad \phi \in \Phi (G), w \in \mb W_F, x \in SL_2 (\C) ,
\end{equation}
where $z' : \mb W_F \to Z(G^\vee)$ represents $z \in H^1 (\mb W_F, Z(G^\vee))$. 
This extends to an action of $H^1 (\mb W_F, Z(G^\vee))$ on $\Phi_e (G)$, which does 
nothing to the enhancements.

Let us focus on cuspidality for enhanced L-parameters \cite[\S 6]{AMS1}. Consider
\[
G^\vee_\phi := Z^1_{{G^\vee}_\Sc}( \phi |_{\mb W_F}),
\]
a possibly disconnected complex reductive group. Then $u_\phi := \phi \big( 1, 
\big( \begin{smallmatrix} 1 & 1 \\ 0 & 1 \end{smallmatrix} \big)\big)$ can be regarded as a 
unipotent element of $(G^\vee_\phi )^\circ$ and 
\begin{equation}\label{eq:1.3}
\mc S_\phi \cong \pi_0 (Z_{G^\vee_\phi}(u_\phi)).
\end{equation}
We say that $(\phi,\rho) \in \Phi_e (G)$ is cuspidal if $\phi$ is discrete and $(u_\phi,\rho)$ 
is a cuspidal pair for $G^\vee_\phi$. The latter means that $(u_\phi,\rho)$ determines a 
$G^\vee_\phi$-equivariant cuspidal local system on the $(G^\vee_\phi)^\circ$-conjugacy class 
of $u_\phi$. Notice that a L-parameter alone does not contain enough information to detect 
cuspidality, for that we really need an enhancement. Therefore we will often say 
"cuspidal L-parameter" for an enhanced L-parameter which is cuspidal. 

The set of $G^\vee$-equivalence classes of $G$-relevant cuspidal L-parameters is denoted
$\Phi_\cusp (G)$. It is conjectured that under the LLC $\Phi_\cusp (G)$ corresponds to
the set of supercuspidal irreducible smooth $G$-representations.

The cuspidal support of any $(\phi,\rho) \in \Phi_e (G)$ is defined in \cite[\S 7]{AMS1}. 
It is unique up to $G^\vee$-conjugacy and consists of a $G$-relevant L-Levi subgroup
${}^L L$ of ${}^L G$ and a cuspidal L-parameter  $(\phi_v, q\epsilon)$ for ${}^L L$. 
By \cite[Corollary 1.3]{SolLLCunip} this ${}^L L$ corresponds to a unique (up to $G$-conjugation)
Levi $F$-subgroup $\mc L$ of $\mc G$. This allows us to express the aforementioned 
cuspidal support map as
\begin{equation}\label{eq:2.1}
\mb{Sc}(\phi,\rho) = (\mc L (F), \phi_v, q \epsilon), \quad \text{where }
(\phi_v, q \epsilon) \in \Phi_\cusp (\mc L (F)) .
\end{equation}
It is conjectured that under the LLC this map should correspond to Bernstein's cuspidal
support map for irreducible smooth $G$-representations.

Sometimes we will be a little sloppy and write that $L = \mc L (F)$ is a Levi subgroup of $G$. 
Let $X_\nr (L)$ be the group of unramified characters $L \to \C^\times$.
As worked out in \cite[\S 3.3.1]{Hai}, it is naturally isomorphic to 
$(Z (L^\vee)^{\mb I_F} )_\Fr^\circ \subset H^1 (\mb W_F, Z(L^\vee))$. As such it acts on 
$\Phi_e (L)$ and on $\Phi_\cusp (L)$ by \eqref{eq:2.9}.
A cuspidal Bernstein component of $\Phi_e (L)$ is a set of the form
\[
\Phi_e (L)^{\mf s_L^\vee} := 
X_\nr (L) \cdot (\phi_L,\rho_L) \text{ for some } (\phi_L,\rho_L) \in \Phi_\cusp (L) . 
\]
The group $G^\vee$ acts on the set of cuspidal Bernstein components for all Levi subgroups
of $G$. The $G^\vee$-action is just by conjugation, but to formulate it precisely, more 
general L-Levi subgroups of ${}^L G$ are necessary. We prefer to keep those out of the notations, 
since we do not need them to get all classes up to equivalence. With that convention, we can 
define an inertial equivalence class for $\Phi_e (G)$ as 
\[
\mf s^\vee \text{ is the } G^\vee \text{-orbit of } 
(L, X_\nr (L) \cdot (\phi_L,\rho_L) ), \text{ where } (\phi_L,\rho_L) \in \Phi_\cusp (L).
\]
The underlying inertial equivalence class for $\Phi_e (L)$ is $\mf s_L^\vee = (L,X_\nr (L) \cdot
(\phi_L,\rho_L))$. Here it is not necessary to take the $L^\vee$-orbit, for 
$(\phi_L,\rho_L) \in \Phi_e (L)$ is fixed by $L^\vee$-conjugation.

We denote the set of inertial equivalence classes for $\Phi_e (G)$ by $\mf{Be}^\vee (G)$.
Every $\mf s^\vee \in \mf{Be}^\vee (G)$ gives rise to a Bernstein component in $\Phi^e (G)$
\cite[\S 8]{AMS1}, namely
\begin{equation}\label{eq:2.10}
\Phi_e (G)^{\mf s^\vee} = \{ (\phi,\rho) \in \Phi_e (G) : \mb{Sc}(\phi,\rho) \in \mf s^\vee \} .
\end{equation}
The set of such Bernstein components is also parametrized by $\mf{Be}^\vee (G)$, and forms a 
partition of $\Phi_e (G)$.

Notice that $\Phi_e (L)^{\mf s^\vee_L} \cong \mf s_L^\vee$ has a canonical topology, 
coming from the transitive action of $X_\nr (L)$. More precisely, let $X_\nr (L,\phi_L)$ 
be the stabilizer in $X_\nr (L)$ of $\phi_L$. Then the complex torus 
\[
T_{\mf s_L^\vee} := X_\nr (L) / X_\nr (L,\phi_L)
\] 
acts simply transitively on $\mf s_L^\vee$. This endows $\mf s_L^\vee$ with the structure of an 
affine variety. (There is no canonical group structure on $\mf s_L^\vee$ though, for that one 
still needs to choose a basepoint.) 

To $\mf s^\vee$ we associate a finite group $W_{\mf s^\vee}$, in many
cases a Weyl group. For that, we choose $\mf s_L^\vee = (L, X_\nr (L) \cdot (\phi_L,\rho_L) )$
representing $\mf s^\vee$ (up to isomorphism, the below does not depend on this choice). We define 
$W_{\mf s^\vee}$ as the stabilizer of $\mf s_L^\vee$ in $N_{G^\vee}(L^\vee \rtimes \mb W_F) / L^\vee$. 
In this setting we write $T_{\mf s^\vee}$ for $T_{\mf s_L^\vee}$. Thus $W_{\mf s^\vee}$ acts on 
$\mf s_L^\vee$ by algebraic automorphisms and on $T_{\mf s^\vee}$ by group automorphisms (but the 
bijection $T_{\mf s^\vee} \to \mf s_L^\vee$ need not be $W_{\mf s^\vee}$-equivariant). 

Next we quickly review the construction of an affine Hecke algebra from a Bernstein component
of enhanced Langlands parameters. We fix a basepoint $\phi_L$ for $\mf s_L^\vee$ as in
\cite[Proposition 3.9]{AMS3}, and use that to identify $\mf s_L^\vee$ with $T_{\mf s_L^\vee}$.
Consider the possibly disconnected reductive group
\[
G^\vee_{\phi_L} = Z^1_{{G^\vee}_\Sc} (\phi_L |_{\mb W_F}) .
\]
Let $L_c^\vee$ be the Levi subgroup of ${G^\vee}_\Sc$ determined by $L^\vee$. There
is a natural homomorphism 
\begin{equation}\label{eq:2.2}
Z(L_c^\vee)^{\mb W_F,\circ} \to X_\nr (L) \to T_{\mf s_L^\vee}
\end{equation}
with finite kernel \cite[Lemma 3.7]{AMS3}. Using that and \cite[Lemma 3.10]{AMS3}, 
$\Phi (G_{\phi_L}^\circ, Z(L_c^\vee)^{\mb W_F,\circ})$ gives rise to a reduced root system 
$\Phi_{\mf s^\vee}$ in $X^* (T_{\mf s^\vee})$. The coroot system $\Phi^\vee_{\mf s^\vee}$ 
is contained in $X_* (T_{\mf s^\vee})$. That gives a root datum $\mc R_{\mf s^\vee}$, whose
basis can still be chosen arbitrarily. The group $W_{\mf s^\vee}$ acts naturally on 
$\mc R_{\mf s^\vee}$ and contains the Weyl group of $\Phi_{\mf s^\vee}$. 

The construction of label functions $\lambda$ and $\lambda^*$ for $\mc R_{\mf s^\vee}$ 
consists of several steps. The numbers $\lambda (\alpha), \lambda^* (\alpha) \in \Z_{\geq 0}$
will be defined for all $\alpha \in \Phi_{\mf s^\vee}$. First, we pick
$t \in (Z(L_c^\vee)^{\mb I_F})^\circ_\Fr$ such that the reflection $s_\alpha$ fixes 
$t \phi_L (\Fr)$. Then $q \alpha$ lies in $\Phi \big( (G^\vee_{t \phi_L})^\circ,
Z(L_c^\vee)^{\mb W_F,\circ} \big)$ for some $q \in \Q_{>0}$. The labels
$\lambda (q \alpha), \lambda^* (q \alpha)$ are related $\Q$-linearly to the labels 
$c(q \alpha), c^* (q \alpha)$ for a graded Hecke algebra \cite[\S 1]{AMS3} associated to
\begin{equation}\label{eq:2.3}
(G^\vee_{t \phi_L})^\circ = Z_{{G^\vee}_\Sc}(t \phi_L (\mb W_F))^\circ, 
Z(L_c^\vee)^{\mb W_F,\circ}, u_{\phi_L} \text{ and } \rho_L .
\end{equation}
These integers $c(q \alpha), c^* (q \alpha)$ were defined in \cite[Propositions 2.8, 2.10 and 
2.12]{LusCusp}, in terms of the adjoint action of $\log (u_{\phi_L})$ on
\[
\mr{Lie} (G^\vee_{t \phi_L})^\circ = \mr{Lie} \big( Z_{{G^\vee}_\Sc}(t \phi_L (\mb W_F)) \big) .
\]
In \cite[Proposition 3.13 and Lemma 3.14]{AMS3} it is described which 
$t \in (Z(L_c^\vee)^{\mb I_F})^\circ_\Fr$ we need to determine all labels: just one with 
$\alpha (t) = 1$, and sometimes one with $\alpha (t) = -1$. 

Finally, we choose an array $\vec{\mb z}$ of $d$ invertible variables, one $\mb z_j$ for every 
$W_{\mf s^\vee}$-orbit of irreducible components of $\Phi_{\mf s^\vee}$. To these data one can 
attach an affine Hecke algebra $\mc H (\mc R_{\mf s^\vee}, \lambda, \lambda^*, \vec{\mb z})$, 
as in \cite[\S 2]{AMS3}. 

The group $W_{\mf s^\vee}$ acts on $\Phi_{\mf s^\vee}$ and contains the Weyl group 
$W_{\mf s^\vee}^\circ$ of that root system. It admits a semidirect factorization
\[
W_{\mf s^\vee} = W_{\mf s^\vee}^\circ \rtimes \mf R_{\mf s^\vee} ,
\]
where $\mf R_{\mf s^\vee}$ is the stabilizer of a chosen basis of $\Phi_{\mf s^\vee}$.

Using the above identification of $T_{\mf s^\vee}$ with $\mf s_L^\vee$, we can reinterpret 
$\mc H (\mc R_{\mf s^\vee}, \lambda, \lambda^*, \vec{\mb z})$ as an algebra 
$\mc H (\mf s_L^\vee, W_{\mf s^\vee}^\circ, \lambda, \lambda^*,\vec{\mb z})$ 
whose underlying vector space is 
\[
\mc O (\mf s_L^\vee) \otimes \C [W_{\mf s^\vee}^\circ] \otimes \C [ \vec{\mb z}, \vec{\mb z}^{-1}].
\]
Here $\C [ \vec{\mb z}, \vec{\mb z}^{-1}]$ is a central subalgebra, generated by elements
$\mb z_j, \mb z_j^{-1} (j = 1, \ldots,d)$. The group $\mf R_{\mf s^\vee}$ acts naturally on the 
based root datum $\mc R_{\mf s^\vee}$, and hence on \\
$\mc H (\mf s_L^\vee, W_{\mf s^\vee}^\circ, \lambda, \lambda^*,\vec{\mb z})$ by algebra 
automorphisms \cite[Proposition 3.15.a]{AMS3}. From  \cite[Proposition 3.15.b]{AMS3} 
we get a 2-cocycle $\natural : \mf R_{\mf s^\vee}^2 \to \C^\times$ and a twisted group 
algebra $\C[\mf R_{\mf s^\vee},\kappa_{\mf s^\vee}]$. By definition, such a twisted group 
algebra has a vector space basis $\{N_r : r \in \mf R_{\mf s^\vee}\}$ 
and multiplication rule 
\[
N_r N_{r'} = \kappa_{\mf s^\vee}(r,r') T_{r r'} .
\]
Now we can define the twisted affine Hecke algebra
\begin{equation}\label{eq:2.4}
\mc H (\mf s^\vee, \vec{\mb z}) := \mc H (\mf s_L^\vee, W_{\mf s^\vee}^\circ, \lambda, 
\lambda^*, \vec{\mb z}) \rtimes \C[\mf R_{\mf s^\vee},\kappa_{\mf s^\vee}] .
\end{equation}
Up to isomorphism it depends only on $\mf s^\vee$ \cite[Lemma 3.16]{AMS3}.

The multiplication relations in $\mc H (\mf s^\vee, \vec{\mb z})$ are based on the Bernstein
presentation of affine Hecke algebras, let us make them explicit. The vector space
\[
\C [W_{\mf s^\vee}^\circ] \otimes \C [\vec{\mb z}, \vec{\mb z}^{-1}] \subset 
\mc H (\mf s^\vee, \vec{\mb z})
\]
is the Iwahori--Hecke algebra $\mc H (W_{\mf s^\vee}^\circ, \vec{\mb z}^{\, 2 \lambda})$, where 
$\vec{\mb z}^{\, \lambda} (\alpha) = \mb z_j^{\lambda (\alpha)}$ for the entry $\mb z_j$ of 
$\vec{\mb z}$ specified by $\alpha$. The conjugation action of $\mf R_{\mf s^\vee}$ on 
$W_{\mf s^\vee}^\circ$ induces an action on $\mc H (W_{\mf s^\vee}^\circ, \vec{\mb z}^{\, 2 \lambda})$.

The vector space $\mc O (\mf s_L^\vee) \otimes \C [ \vec{\mb z}, \vec{\mb z}^{-1}]$ is embedded in 
$\mc H (\mf s^\vee, \vec{\mb z})$ as
a maximal commutative subalgebra. The group $W_{\mf s^\vee}$ acts on it via its action
of $\mf s_L^\vee$, and every root $\alpha \in \Phi_{\mf s^\vee} \subset X^* (T_{\mf s^\vee})$ 
determines an element $\theta_\alpha \in \mc O (\mf s_L^\vee )^\times$, which does not
depend on the choice of the basepoint $\phi_L$ of $\mf s_L^\vee$ by \cite[Proposition 3.9.b]{AMS3}. 
For $f \in \mc O (\mf s_L^\vee)$ and a simple reflection $s_\alpha \in W_{\mf s^\vee}^\circ$ 
the following version of the Bernstein--Lusztig--Zelevinsky relation holds:
\[
f N_{s_\alpha} - N_{s_\alpha} s_\alpha (f) = \big( (\mb z_j^{\lambda (\alpha)} \! - 
\mb z_j^{-\lambda (\alpha)}) + \theta_{-\alpha} (\mb z_j^{\lambda^* (\alpha)} \! - 
\mb z_j^{-\lambda^* (\alpha)}) \big) (f - s_\alpha \cdot f) / (1 - \theta^{\, 2}_{-\alpha}) .
\]
Thus $\mc H (\mf s^\vee, \vec{\mb z})$ depends on the following objects:
$\mf s_L^\vee, W_{\mf s^\vee}$ and the simple reflections therein, the label functions
$\lambda, \lambda^*$ and the functions $\theta_\alpha : \mf s_L^\vee \to \C^\times$ for 
$\alpha \in \Phi_{\mf s^\vee}$. When $W_{\mf s^\vee} \neq W_{\mf s^\vee}^\circ$, 
we also need the 2-cocycle $\kappa_{\mf s^\vee}$ on $\mf R_{\mf s^\vee}$.

As in \cite[\S 3]{Lus-Gr}, the above relations entail that the centre of 
$\mc H (\mf s^\vee, \vec{v})$ is $\mc O (\mf s_L^\vee)^{W_{\mf s^\vee}}$. In other words,
the space of central characters for $\mc H (\mf s^\vee, \vec{v})$-representations
is $\mf s_L^\vee / W_{\mf s^\vee}$.

We note that when $\mf s^\vee$ is cuspidal,
\begin{equation}\label{eq:2.11}
\mc H (\mf s^\vee, \vec{\mb z}) = \mc O (\mf s^\vee)
\end{equation}
and every element of $\mf s^\vee$ determines a character of $\mc H (\mf s^\vee,\vec{\mb z})$.

For $\vec{z} \in (\C^\times)^d$ we let $\Irr_{\vec{z}} \big( \mc H (\mf s^\vee, \vec{\mb z}) \big)$
be the subset of $\Irr \big( \mc H (\mf s^\vee, \vec{\mb z}) \big)$ on which every $\mb z_j$ acts
as $z_j$. The main reason for introducing $\mc H (\mf s^\vee, \vec{\mb z})$ is the next result.
(See \cite[Definition 2.6]{AMS3} for the definition of tempered and essentially 
discrete series representations.)

\begin{thm}\label{thm:1.1} \textup{\cite[Theorem 3.18]{AMS3}} \\
Let $\mf s^\vee$ be an inertial equivalence class for $\Phi_e (G)$ and fix
parameters $\vec{z} \in \R_{>1}^d$. Then there exists a canonical bijection
\[
\begin{array}{ccc}
\Phi_e (G)^{\mf s^\vee} & \to & \Irr_{\vec{z}} \big( \mc H (\mf s^\vee, \vec{\mb z}) \big) \\
(\phi,\rho) & \mapsto & \bar M (\phi,\rho,\vec{z}) 
\end{array}
\]
with the following properties.
\begin{itemize}
\item $\bar M (\phi,\rho,\vec{z})$ is tempered if and only if $\phi$ is bounded.
\item $\phi$ is discrete if and only if $\bar M (\phi,\rho,\vec{z})$ is essentially
discrete series and the rank of $\Phi_{\mf s^\vee}$ equals 
$\dim_\C (T_{\mf s^\vee} / X_\nr (G))$.
\item The central character of $\bar M (\phi,\rho,\vec{z})$ is the product of
$\phi (\Fr)$ and a term depending only on $\vec{z}$ and a cocharacter associated to $u_\phi$.
\item Suppose that $\mb{Sc}(\phi,\rho) = (L, \chi_L \phi_L, \rho_L)$, where 
$\chi_L \in X_\nr (L)$. Then $\bar M (\phi,\rho,\vec{z})$ is a constituent of 
$\mr{ind}_{\mc H (\mf s^\vee_L, \vec{v})}^{\mc H (\mf s^\vee, \vec{z})} 
(L, \chi_L \phi_L,\rho_L)$.
\end{itemize}
\end{thm}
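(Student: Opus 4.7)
The plan is to construct the bijection by reducing questions about the twisted affine Hecke algebra $\mc H(\mf s^\vee, \vec{\mb z})$ to questions about graded affine Hecke algebras (one for each central character, or rather for each $W_{\mf s^\vee}$-orbit of semisimple parts), then using the geometric classification of irreducibles of the latter to match with L-parameters in $\Phi_e(G)^{\mf s^\vee}$, and finally checking the four asserted properties via well-established criteria on the Hecke-algebra side.

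First I would exploit that the centre of $\mc H(\mf s^\vee, \vec{\mb z})$, after specialisation at $\vec z$, is $\mc O(\mf s_L^\vee)^{W_{\mf s^\vee}}$; every irreducible representation has a central character which, after a choice of representative, is of the form $t \phi_L(\Fr)$ for some $t \in (Z(L_c^\vee)^{\mb I_F})^\circ_\Fr$. Adapting Lusztig's reduction procedure \cite[\S 8--9]{Lus-Gr} to the twisted/disconnected setting of \cite[\S 2]{AMS3}, one obtains an equivalence between the block of $\mc H(\mf s^\vee,\vec{\mb z})$-modules at $\vec z$ with central character in the $W_{\mf s^\vee}$-orbit of $t\phi_L(\Fr)$ and a suitable block of the graded affine Hecke algebra attached to the data \eqref{eq:2.3}. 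This is precisely where the definition of $\lambda,\lambda^*$ in terms of $c,c^*$ and the identifications in \cite[Proposition 3.13, Lemma 3.14]{AMS3} earn their keep.

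Next, for the graded Hecke algebra associated to $\big((G^\vee_{t\phi_L})^\circ, Z(L_c^\vee)^{\mb W_F,\circ}, u_{\phi_L}, \rho_L\big)$, I would invoke the geometric parametrisation of irreducible representations developed by Lusztig \cite{LusCusp} and extended in \cite{AMS1}: such irreducibles, with prescribed central character, are in canonical bijection with conjugacy classes of triples consisting of a nilpotent element $y$ (giving, via Jacobson--Morozov, the $SL_2(\C)$-part of a parameter) and an irreducible representation of a certain component group that restricts to $\rho_L$ on its cuspidal piece. Exponentiating $t$ via \eqref{eq:2.2} to a point of $\mf s_L^\vee$ and assembling this with $\phi_L|_{\mb W_F}$ and the $SL_2(\C)$-component produced from $y$ yields a genuine enhanced L-parameter $(\phi,\rho) \in \Phi_e(G)^{\mf s^\vee}$, whose cuspidal support \eqref{eq:2.1} is tautologically $(L, \chi_L \phi_L,\rho_L)$ for the $\chi_L$ recording the semisimple part.

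Finally, the four properties would be established as follows. Temperedness of $\bar M(\phi,\rho,\vec z)$ is equivalent to its central character lying in the compact part of $\mf s_L^\vee$, by Opdam's criterion for ordinary affine Hecke algebras applied to the underlying $\mc H(W_{\mf s^\vee}^\circ, \vec{\mb z}^{\,2\lambda})$ and extended across the $\mf R_{\mf s^\vee}$-twist; under the bijection this exactly corresponds to $\phi(\Fr)$ being bounded. Essential discrete series corresponds to central characters in the interior of the negative chamber, and the rank condition $\mr{rk}\,\Phi_{\mf s^\vee} = \dim_\C(T_{\mf s^\vee}/X_\nr(G))$ ensures that no non-trivial unramified twist fixes the representation; together these translate to the discreteness of $\phi$. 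The central character formula is immediate from the polar decomposition used in the reduction: $t\phi_L(\Fr)$ is precisely the product of $\phi(\Fr)$ and the image of $\vec z$ under the cocharacter attached to $u_\phi$. The cuspidal-support bullet follows because Lusztig's reduction is compatible with parabolic induction, realising $\bar M(\phi,\rho,\vec z)$ as the Langlands quotient of $\mr{ind}_{\mc H(\mf s_L^\vee,\vec{\mb z})}^{\mc H(\mf s^\vee,\vec{\mb z})}(L,\chi_L \phi_L,\rho_L)$. The main obstacle I expect is \emph{canonicality}: gluing the local bijections obtained at the different orbits $W_{\mf s^\vee} \cdot t \phi_L(\Fr)$ into a single well-defined bijection on the whole of $\mf s^\vee$, independent of the choice of basepoint $\phi_L$ and of the uniformising $t$, requires delicate tracking of the 2-cocycle $\kappa_{\mf s^\vee}$ and of the isomorphisms constructed in \cite[\S 3]{AMS3}.
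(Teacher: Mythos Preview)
Your proposal is essentially correct and matches the approach sketched in the paper (which in turn summarises \cite[Theorem 3.18]{AMS3}): the paper explicitly records that $\mc H(\mf s^\vee,\vec{\mb z})$ is ``glued'' from the twisted graded Hecke algebras $\mh H(t\phi_b,q\cE,\vec{\mb r})$ with $t \in X_\nr(L)$ unitary, that Theorem~\ref{thm:1.2} supplies the bijection on the graded side, and that $\overline{M}(\phi,\rho,\vec{z})$ is obtained from $M(\phi,\rho,\log\vec{z})$ by unravelling this gluing via \cite[Theorems 2.5 and 2.9]{AMS3}. One small correction: temperedness is not equivalent to the \emph{central character} lying in the compact part of $\mf s_L^\vee$ but rather to all $\mc O(\mf s_L^\vee)$-\emph{weights} satisfying the appropriate bound; the paper's route via the graded side and Theorem~\ref{thm:1.2} handles this cleanly since temperedness there is a condition on $\mf t_\R$-weights.
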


The irreducible module $\overline{M} (\phi,\rho,\vec{z})$ in Theorem \ref{thm:1.1}
is a quotient of a ``standard module" $\overline{E} (\phi,\rho,\vec{z})$, also studied
in \cite[Theorem 3.18]{AMS3}. By \cite[Lemma 3.19.a]{AMS3} every such standard module
is a direct summand of a module obtained by induction from a standard module associated
to a discrete enhanced L-parameter for a Levi subgroup of $G$.\\

Suppose that $(\phi_b, q\epsilon)$ is a bounded cuspidal L-parameter for a Levi subgroup
$L = \mc L (F)$ of $G$. (It can be related to the above be requiring that $\phi_b = t \phi_L$
with $t \in X_\nr (L)$ and $q\cE = \rho_L$.) In \cite[\S 3.1]{AMS3} we associated to 
$(G,L,\phi_b ,q\epsilon)$ a twisted graded Hecke algebra
\begin{equation}\label{eq:2.24}
\mh H (\phi_b ,q\epsilon, \vec{\mb r}) = \mh H (G^\vee_{\phi_b},M^\vee, q \cE, \vec{\mb r})
\otimes \mc O (X_\nr (G)) .
\end{equation}
Let us describe this algebra in some detail. Firstly, $M^\vee = L^\vee \cap G^\vee_{\phi_b}$ 
is a quasi-Levi subgroup of $G^\vee_{\phi_b}$ and
\begin{equation}\label{eq:1.2}
Z_{G^\vee \rtimes \mb W_F}(Z(M^\vee)^\circ) = L^\vee \rtimes \mb W_F ,\qquad
Z(L^\vee_c )^{\mb W_F,\circ} = Z(M^\vee)^\circ . 
\end{equation}
From $u_\phi$ we get a unipotent class in $(M^\vee )^\circ$, and $q\cE$ denotes the canonical
extension of $q\epsilon \in \Irr \big( \pi_0 (Z_{M^\vee}(u_\phi)) \big)$ to an $M^\vee$-equivariant
cuspidal local system on the $M^\vee$-conjugacy class of $u_\phi$. We write 
\[
\mf t = \text{Lie}(Z(M^\vee)^\circ) = X_* (Z(M^\vee)^\circ) \otimes_\Z \C 
\quad \text{and} \quad \mf t_\R =  X_* (Z(M^\vee)^\circ) \otimes_\Z \R .
\]
The algebra \eqref{eq:2.24} comes with a root system $R_{q\cE} =  
R \big( (G^\vee_{\phi_b})^\circ, Z(M^\vee)^\circ \big)$ with Weyl group 
$W_{q\cE}^\circ = N_{(G^\vee_{\phi_b})^\circ} ((M^\vee)^\circ) / (M^\vee)^\circ$. 
It is a normal subgroup of the finite group $W_{q\cE} = N_{G^\vee_{\phi_b}}(M^\vee, q\cE) / 
M^\vee$. There exists a subgroup $\mf R_{q\cE} \subset W_{q\cE}$ such that
\begin{equation}\label{eq:2.23}
W_{q\cE} = W^\circ_{q\cE} \rtimes \mf R_{q\cE} .
\end{equation} 
If $(\phi_b,q\epsilon) \in \mf s_L^\vee$, then $W^\circ_{q\cE} \subset W_{\mf s^\vee}^\circ$ 
and $W_{q\cE} \subset W_{\mf s^\vee}$ are the subgroups stabilizing $\phi_b$.

The array of complex parameters $\mb{\vec r}$ yields a function on $R_{q\cE}$, which is
constant on irreducible components. As vector spaces
\begin{equation}\label{eq:2.22}
\mh H (\phi_b,q\epsilon, \mb{\vec r}) = \C [\mf R_{q\cE}] \otimes \C [W^\circ_{q\cE}] 
\otimes S (\mf t^*) \otimes \C[\vec{\mb r}] \otimes S (\mr{Lie}^* (X_\nr (G))) .
\end{equation}
Here the first tensor factor on the right hand side is embedded in 
$\mh H (\phi_b,q\epsilon, \vec{\mb r})$ as a twisted group algebra 
$\C [\mf R_{q\cE},\natural_{q\cE}]$, the second and third factors are embedded as
subalgebras, while the fourth and fifth tensor factors are central subalgebras.
The cross relations between $S(\mf t^*)$ and $\C[W^\circ_{q\cE}]$ are those of a standard 
graded Hecke algebra \cite[\S 4]{Lus-Gr}, for a simple reflection $s_\alpha$:
\begin{equation}\label{eq:2.25}
f T_{s_\alpha} - T_{s_\alpha} s_\alpha (f) = \mb r_j (f - s_\alpha (f)) \alpha^{-1} 
\qquad f \in \mc O ( \mf t) ,
\end{equation}
where $\alpha$ lies in the component of the root system 
$R \big( (G^\vee_{\phi_b})^\circ, Z(M^\vee)^\circ \big)$ labelled by the $j$-th entry of 
$\vec{\mb r}$. For $r \in \mf R_{q\cE}$:
\begin{equation}\label{eq:2.26}
T_r f T_r^{-1} = r (f) \qquad f \in \mc O ( \mf t). 
\end{equation}
Finally, inside $\mh H (\phi_b,q\epsilon, \vec{\mb r})$ there is an identification
\begin{equation}\label{eq:2.27}
\C [\mf R_{q\cE},\natural_{q\cE}] \otimes \C [W^\circ_{q\cE}] \cong
\C [W_{q\cE},\natural_{q\cE}] ,
\end{equation}
where the 2-cocycle $\natural_{q\cE}$ is lifted to $W_{q\cE}^2 \to \C^\times$ via \eqref{eq:2.23}.

We write $X_\nr (L)_{rs} = \Hom (L, \R_{>0})$, the real split part of the complex torus $X_\nr (L)$.
When $\mb{Sc}(\phi,\rho) \in X_\nr (L)_{rs} \cdot (\phi_b,q\epsilon)$, we defined in \cite[p. 37]{AMS3}
a ``standard" $\mh H (\phi_b,q\epsilon, \vec{\mb r})$-module $E(\phi,\rho,\vec{r})$. It has one
particular irreducible quotient called $M(\phi,\rho,\vec{r})$.
The main feature of \eqref{eq:2.24} is

\begin{thm}\label{thm:1.2} \textup{\cite[Theorem 3.8]{AMS3}} \\ 
Fix $\vec r \in \C^d$. The map $(\phi,\rho) \mapsto M(\phi,\rho,\vec{r})$ is natural bijection between:
\begin{itemize}
\item $\{ (\phi,\rho) \in \Phi_e (G) : \mb{Sc}(\phi,\rho) \in X_\nr (L)_{rs} \cdot (\phi_b,q\epsilon) \}$
\item $\big\{ \pi \in \Irr \big( \mh H (\phi_b,q\epsilon, \vec{\mb r}) \big) : \vec{\mb r}$ acts as
$\vec{r}$ and all $\mc O ( \mf t \times \mr{Lie}(X_\nr (G)))$-weights of $\pi$ are contained in 
$\mf t_\R \times \mr{Lie} (X_\nr (G)_{rs}) \big\}$.
\end{itemize}
\end{thm}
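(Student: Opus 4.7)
The plan is to follow the geometric approach of \cite{LusCusp}, extended to possibly disconnected reductive groups in \cite{AMS1} and adapted to L-parameters in \cite{AMS3}; indeed, the statement is a direct quotation of \cite[Theorem 3.8]{AMS3}. The key idea is to translate enhanced L-parameters into geometric data on $G^\vee_{\phi_b}$ and identify irreducible modules of the graded Hecke algebra $\mh H(\phi_b, q\epsilon, \vec{\mb r})$ through equivariant (co)homology of Springer-type varieties attached to the cuspidal local system $q\cE$.

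First I would set up the dictionary between L-parameters and conjugacy class data. For $(\phi,\rho)$ with $\mb{Sc}(\phi,\rho) \in X_\nr(L)_{rs} \cdot (\phi_b, q\epsilon)$, one can arrange (up to $G^\vee$-conjugation) that $\phi|_{\mb W_F}$ differs from $\phi_b|_{\mb W_F}$ by a hyperbolic element corresponding to some $y \in \mf t_\R \oplus \mr{Lie}(X_\nr(G)_{rs})$; the unipotent element $u_\phi$ lies in the $M^\vee$-orbit prescribed by $q\cE$; and $\rho$ becomes an irreducible representation of $\pi_0(Z_{G^\vee_{\phi_b}}(y, u_\phi))$ occurring in the local-system geometry. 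The weight restriction $\mf t_\R \times \mr{Lie}(X_\nr(G)_{rs})$ on the module side reflects precisely the hyperbolicity condition $X_\nr(L)_{rs}$ on the parameter side. Next I would construct the standard modules $E(\phi,\rho,\vec{r})$ as the $\rho$-isotypic components of the $G^\vee_{\phi_b}$-equivariant cohomology of varieties of Borel-type subalgebras compatible with $(y, u_\phi, \vec{r})$, on which $\mh H(\phi_b,q\epsilon,\vec{\mb r})$ acts by convolution as in \cite{Lus-Gr}. Their canonical irreducible quotients $M(\phi,\rho,\vec{r})$ are singled out by a Langlands-type argument. Surjectivity and injectivity then follow from an explicit inverse: from an irreducible module with the stated weight constraint, the $S(\mf t^*) \otimes \mc O(\mr{Lie}(X_\nr(G)))$-weights recover $y$ (hence $\phi|_{\mb W_F}$ up to $W_{q\cE}$-conjugacy), and a standard support analysis on the associated Steinberg-type variety recovers $(u_\phi, \rho)$.

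The main obstacle is the disconnectedness of $G^\vee_{\phi_b}$ and the accompanying twisting. Lusztig's classification in \cite{LusCusp} assumes a connected ambient group and handles $(M^\vee)^\circ$-equivariant cuspidal local systems; to promote it to the $M^\vee$-equivariant setting needed here, one replaces $W^\circ_{q\cE}$ by $W_{q\cE} = W^\circ_{q\cE} \rtimes \mf R_{q\cE}$, which introduces the 2-cocycle $\natural_{q\cE}$ and forces one to work throughout with the twisted group algebra $\C[\mf R_{q\cE}, \natural_{q\cE}]$ from \eqref{eq:2.27}. Tracking this cocycle carefully through the geometric construction, and verifying that the projective $\mf R_{q\cE}$-action on the equivariant homology matches the component-group action on enhancements, is the technical core of \cite{AMS1}, on which Theorem \ref{thm:1.2} ultimately rests. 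The extra tensor factor $\mc O(X_\nr(G))$ in \eqref{eq:2.24} is benign: it is central, acts by the hyperbolic part of the $X_\nr(G)$-twist, and pairs cleanly with the $\mr{Lie}(X_\nr(G)_{rs})$-weight condition on the module side.
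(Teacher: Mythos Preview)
The paper does not give its own proof of this statement: Theorem~\ref{thm:1.2} is quoted verbatim from \cite[Theorem~3.8]{AMS3}, with no argument supplied here beyond the citation. So there is nothing in this paper to compare your proposal against at the level of proof details.

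That said, your sketch is a faithful summary of the strategy in \cite{AMS3} and its antecedents \cite{LusCusp,AMS1,AMS2}: translate $(\phi,\rho)$ into a triple $(y,u_\phi,\rho)$ for the possibly disconnected group $G^\vee_{\phi_b}$, build standard modules from equivariant homology of Springer-type fibres, extract the irreducible quotient $M(\phi,\rho,\vec r)$, and handle the disconnectedness via the twisted algebra $\C[\mf R_{q\cE},\natural_{q\cE}]$. One small correction of emphasis: in \cite[Theorem~3.8]{AMS3} the module $E(\phi,\rho,\vec r)$ is defined via the Iwahori--Matsumoto involution applied to a standard module $E_{\log(u_\phi),\sigma_L,\vec r,\rho}$ (see \eqref{eq:2.55} and \eqref{eq:4.19} in the present paper), and the parameter $\sigma_L$ incorporates both the hyperbolic twist and the cocharacter coming from $u_\phi$, not just the $X_\nr(L)_{rs}$-part. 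Also, the heavy lifting on the disconnected side is done in \cite{AMS2} rather than \cite{AMS1}; the latter sets up the cuspidal support map and the generalized Springer correspondence, while the classification of irreducibles for the twisted graded Hecke algebra is \cite[Theorem~4.6]{AMS2}. With those adjustments your outline matches the cited proof.
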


The algebras \eqref{eq:2.24} and Theorem \ref{thm:1.2} play an important role in the proof of 
Theorem \ref{thm:1.1}. Namely, $\mc H (\mf s^\vee,\vec{\mb z}) = 
\mc H (L,X_\nr (L) \phi_b, q\epsilon,\vec{\mb z})$ is ``glued" from
the algebras $\mh H (t \phi_b ,q\cE,\vec{\mb r})$ with $t \in X_\nr (L)$ unitary. In particular
$W_{q\cE} \subset W_{\mf s^\vee}$ for every $t \phi_b$, and the 2-cocycle $\natural_{q\cE}$ for
$\phi_L$ (not necessarily for $\phi_b$) is the restriction of $\kappa_{\mf s^\vee}$ to $W_{q\cE}$.

The modules $\overline{M} (\phi,\rho,\vec{z})$ and $\overline{E} (\phi,\rho,\vec{z})$ mentioned 
in and after Theorem \ref{thm:1.1} are obtained from, respectively, $M(\phi,\rho,\log \vec{z})$ 
and $E(\phi,\rho,\log \vec{z})$ by unravelling the glueing procedure 
(see \cite[Theorem 2.5 and 2.9]{AMS3}).
\vspace{4mm}

\section{Automorphisms from $G_\ad / G$}
\label{sec:2}

Let $\mc G_\ad$ be the adjoint group of $\mc G$. 
The conjugation action of $G_\ad$ on $G$ induces an action of $G_\ad$
on $\Irr (G)$. Although this action comes from conjugation in $\mc G (\overline F)$,
that is not necessarily conjugation in $\mc G (F)$, and therefore $G_\ad$ can
permute $\Irr (G)$ nontrivially. When $G$ is quasi-split, it is expected that this 
replaces a generic (with respect to a certain Whittaker datum) member of an L-packet 
by a member which is generic with respect to another Whittaker datum \cite[(1.1)]{Kal}.

Although $\mc G \to \mc G_\ad$ is an epimorphism of algebraic groups, 
the map on $F$-rational points,
\[
G = \mc G (F) \to \mc G_\ad (F) = G_\ad ,
\]
need not be surjective. More precisely, the machine of Galois cohomology yields an exact
sequence
\begin{equation}\label{eq:2.5}
1 \to Z(\mc G)(F) \to \mc G (F) \to \mc G_\ad (F) \to H^1 (F,Z(\mc G)) .
\end{equation}
We abbreviate the group
\[
\mc G_\ad (F) / \mr{im}(\mc G (F) \to \mc G_\ad (F)) \text{ to } G_\ad / G .
\]
Notice that the action of $G_\ad$ on $\Irr (G)$ factors through $G_\ad / G$.
Let $\mc T_{AD} = \mc T / Z(\mc G)$ be the image of $\mc T$ in $\mc G_\ad$. 
It is known \cite[Lemma 16.3.6]{Spr} that the group
\begin{equation}\label{eq:2.33}
T_{AD} / T := \mc T_{AD}(F) / \mr{im}(\mc T(F) \to \mc T_{AD}(F))
\end{equation}
is naturally isomorphic to $G_\ad / G$. Since $\mc T$ centralizes the 
maximal $F$-split torus $\mc S$ of $\mc G$, $\mc T$ is contained in any standard
(w.r.t. $\mc S$) Levi $F$-subgroup of $\mc G$. Consequently $G_\ad / G$ can
be represented by elements that lie in every standard Levi $F$-subgroup of $\mc G_\ad$.

When $G$ is quasi-split, $G_\ad / G$ acts simply transitively on the set of $G$-orbits
of Whittaker data for $G$ \cite{Kal}. For general $G$ we are not aware of such a precise
characterization, but we do note that $G_\ad / G$ acts naturally on the collection of
$G$-orbits of vertices in the Bruhat--Tits building $\mc B (\mc G,F)$. From the 
classification of simple $p$-adic groups \cite{Tit} one can deduce that this action of 
$G_\ad / G$ is transitive on the $G$-orbits of hyperspecial vertices and on the 
$G$-orbits of special, non-hyperspecial vertices.

Denote the (unique) maximal compact subgroup of $T$ by $T_\cpt$,
and let $X_* (T)$ be its latttice of $F$-rational cocharacters.
The fixed uniformizer $\varpi_F$ of $F$ determines a group isomorphism
\begin{equation}\label{eq:2.51}
\begin{array}{ccc}
T_\cpt \times X_* (T) & \to & T \\
(t,\lambda) & \mapsto & t \lambda (\varpi_F) 
\end{array}.
\end{equation}
The same goes for $T_{AD}$, so
\begin{multline}\label{eq:2.53}
T_{AD} / T := T_{AD} / \mr{im}(T \to T_{AD}) \cong \\
T_{AD,\cpt} / \mr{im}(T_\cpt \to T_{AD}) 
\times X_* (T_{AD}) / \mr{im}(X_* (T) \to X_* (T_{AD})) .
\end{multline}
Accordingly, we can write any $g \in T_{AD}$ as $g = g_c g_x$ with a compact part
$g_c \in T_{AD,\cpt}$ and an "unramified" part $g_x \in X_* (T_{AD})$. 

Then $g_c$ fixes an apartment of $\mc B (\mc G,F)$ pointwise, so it stabilizes relevant
vertices in that building. Still, $g_c$ may permute Bernstein components of $\Irr (G)$.
On the other hand, we expect that the action of $g_x$ on $\Irr (G)$ stabilizes all
Bernstein components.

\subsection{Action on enhanced L-parameters} \
\label{par:2.1}

If one believes in the local Langlands correspondence, then the action of $G_\ad / G$
on $\Irr (G)$ should be reflected in an action of $G_\ad / G$ on $\Phi (G)$, and even on
$\Phi_e (G)$. But $G_\ad / G$ does not act in any interesting way on $G^\vee$. Indeed,
representing $G_\ad / G$ inside $T_{AD}$, it fixes the root datum of $(\mc G,\mc T)$,
so it should also fix the root datum of $(\mc G^\vee, \mc T^\vee)$. Any automorphism of
$\mc G^\vee$ fixing that root datum is inner, and L-parameters are only considered up to
$G^\vee$-conjugation. Therefore the only reasonable action $G_\ad / G$ on $\Phi (G)$
is the trivial action.
As the action on $\Irr (G)$ can be nontrivial, the desired functoriality in the LLC tells
us that $G_\ad / G$ should act on $\Phi_e (G)$ by fixing L-parameters and permuting their
enhancements. 

A way to achieve the above in general can be found in \cite[\S 3]{Kal} and \cite[\S 4]{Cho}. 
(We formulate this only for non-archimedean local fields, but apart from Lemmas \ref{lem:2.1}.b
and \ref{lem:2.3} all the results in this paragraph are just as well valid over $\R$ and $\C$.)

Fix a L-parameter
\[
\phi : \mb W_F \times SL_2 (\C) \to G^\vee \rtimes \mb W_F
\]
and let $\mb W_F$ act on $G^\vee$ via $\phi$ and conjugation in $G^\vee \rtimes \mb W_F$.
Similarly $w \mapsto \mr{Ad}(\phi (w))$ defines an action of $\mb W_F$ on $G^\vee_\Sc$.
Consider the short exact sequences of $\mb W_F$-modules
\begin{equation}\label{eq:2.6}
\begin{array}{c@{\;\to\;}c@{\;\to\;}c@{\;\to\;}c@{\;\to\;}c}
1 & Z (G^\vee) & G^\vee & {G^\vee}_\ad & 1 ,\\
1 & Z({G^\vee}_\Sc) & {G^\vee}_\Sc & {G^\vee}_\ad & 1 .
\end{array}
\end{equation}
They induce exact sequences in Galois cohomology:
\begin{equation}\label{eq:2.7}
\begin{array}{c@{\;\to\;}c@{\;\to\;}c@{\;\to\;}c}
H^0 (\mb W_F, Z (G^\vee)) & H^0 (\mb W_F ,G^\vee) & H^0 (\mb W_F ,{G^\vee}_\ad) &
H^1 (\mb W_F, Z(G^\vee)) ,\\
H^0 (\mb W_F, Z({G^\vee}_\Sc )) & H^0(\mb W_F, {G^\vee}_\Sc) & H^0 (\mb W_F ,{G^\vee}_\ad) & 
H^1 (\mb W_F , Z({G^\vee}_\Sc )) .
\end{array}
\end{equation}
Splicing these, we get a map
\begin{equation}\label{eq:2.8}
H^0 (\mb W_F,G^\vee) = (G^\vee )^{\phi (\mb W_F)} \to H^1 (\mb W_F , Z({G^\vee}_\Sc)) 
\end{equation}
which factors via $H^0 (\mb W_F,{G^\vee}_\ad)$. Sometimes we will make use of the explicit
construction of this map, which involves the connecting maps in Galois cohomology. Namely,
choose a map $\phi_\Sc : \mb W_F \to {G^\vee}_\Sc \rtimes \mb W_F$ which lifts $\phi |_{\mb W_F}$. 
For $h \in (G^\vee )^{\phi (\mb W_F)}$
\begin{equation}\label{eq:2.16}
\mb W_F \to Z({G^\vee}_\Sc) : \gamma \mapsto h \phi_\Sc (\gamma) h^{-1} \phi_\Sc (\gamma)^{-1} 
\end{equation}
is well-defined (since the conjugation action of ${G^\vee}_\Sc \rtimes \mb W_F$ on itself 
descends to an action of $G^\vee$). This determines an element 
$c_h \in H^1 (\mb W_F , Z({G^\vee}_\Sc))$, which does not depend on the choice of the lift 
of $\phi$ and is the image of $h$ under \eqref{eq:2.8}.

By \eqref{eq:2.7} the postcomposition of 
\eqref{eq:2.8} with the natural map $H^1 (\mb W_F , Z({G^\vee}_\Sc)) \to H^1 (\mb W_F , Z(G^\vee))$ 
is trivial, as is the precomposition of \eqref{eq:2.8} with $H^0 (\mb W_F, Z({G^\vee}_\Sc)) \to
H^0 (\mb W_F,G^\vee)$. This enables us to rewrite \eqref{eq:2.8} as 
\begin{multline*}
(G^\vee )^{\phi (\mb W_F)} \to
(G^\vee )^{\phi (\mb W_F)} \big/ \mr{im} \big( ({G^\vee}_\Sc)^{\phi (\mb W_F)} \to 
(G^\vee )^{\phi (\mb W_F)} \big) \\
\to \ker \big( H^1 (\mb W_F , Z({G^\vee}_\Sc)) \to H^1 (\mb W_F , Z(G^\vee)) \big) .
\end{multline*}
Recall the natural homomorphism \cite[\S 10.2]{Bor} 
\begin{equation}\label{eq:2.12}
\begin{array}{ccc}
H^1 (\mb W_F, Z(G^\vee)) & \longrightarrow & \Hom (G,\C^\times) \\
c & \mapsto & \big( g \mapsto \langle g, c \rangle \big)
\end{array} .
\end{equation}
On a semisimple element $g \in G$, one can evaluate $\pi_c$ by regarding $g$ as an element of a 
torus $T' \subset G$, $c$ as a L-parameter for $T'$ and applying the LLC for tori.
From \eqref{eq:2.12} for $G_\ad$ and $G$, we get a natural homomorphism \cite[Lemma 3.1]{Kal}
\begin{equation}\label{eq:2.13}
\ker \big( H^1 (\mb W_F , Z({G^\vee}_\Sc)) \to H^1 (\mb W_F , Z(G^\vee)) \big) \longrightarrow
\Hom (G_\ad / G, \C^\times ) .
\end{equation}
Thus \eqref{eq:2.8} can also be regarded as a homomorphism
\begin{equation}\label{eq:2.14}
\begin{array}{ccc}
(G^\vee )^{\phi (\mb W_F)} \big/ \mr{im} \big( ({G^\vee}_\Sc)^{\phi (\mb W_F)} \to
(G^\vee )^{\phi (\mb W_F)} \big) & \longrightarrow & \Hom (G_\ad / G, \C^\times ) \\
h & \mapsto & \big( g \mapsto \langle g, c_h \rangle \big)
\end{array}.
\end{equation}
By duality, we get a natural homomorphism
\begin{equation}\label{eq:2.15}
\tau_{\phi,\mc G} : G_\ad / G \to \Hom \big( (G^\vee )^{\phi (\mb W_F)}, \C^\times \big) ,
\end{equation}
which can be expressed as $\tau_{\phi,\mc G}(g)(h) = \langle g, c_h \rangle$.

\begin{lem}\label{lem:2.1}
\enuma{
\item The image of $\tau_{\phi,\mc G}$ consists of characters of \\
$Z_{G^\vee}(\phi (\mb W_F)) = (G^\vee )^{\mb W_F}$ that are trivial on:
\begin{itemize}
\item $Z(G^\vee)^{\mb W_F}$,
\item the image of $({G^\vee}_\Sc)^{\phi (\mb W_F)} \to (G^\vee )^{\phi (\mb W_F)}$,
\item the identity component $Z_{G^\vee}(\phi (\mb W_F))^\circ$. 
\end{itemize}
\item Let $t \mapsto z_t$ be a continuous map
\[
[0,1] \to X_\nr \cong Z (G^\vee)^{\mb I_F})^\circ_{\mb W_F} ,
\]
and consider the path of L-parameters $t \mapsto z_t \phi$.
For every $g \in G_\ad / G$ the characters $\tau_{z_0 \phi, \mc G}(g)$ and 
$\tau_{z_1 \phi, \mc G}(g)$ agree on $\cap_{t \in [0,1]} (G^\vee)^{z_t \phi (\mb W_F)}$.
}
\end{lem}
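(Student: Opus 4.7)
The plan is to work directly with the cocycle formula $c_h(\gamma) = h\phi_\Sc(\gamma) h^{-1}\phi_\Sc(\gamma)^{-1}$ from \eqref{eq:2.16}, which underlies the definition $\tau_{\phi,\mc G}(g)(h) = \langle g, c_h\rangle$, and to see that each required triviality in (a), as well as the $t$-independence in (b), is already visible on the level of cocycles. One point requiring care throughout: ``$\phi_\Sc$ lifts $\phi|_{\mb W_F}$" cannot be a literal lift along ${G^\vee}_\Sc \rtimes \mb W_F \to G^\vee \rtimes \mb W_F$ (which need not exist), but rather the weaker requirement that $\phi_\Sc$ induces on ${G^\vee}_\Sc$ the same $\mb W_F$-action as $\mr{Ad}\circ\phi$, i.e.\ $\phi_\Sc$ has the same image as $\phi$ in ${G^\vee}_\ad \rtimes \mb W_F$.

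For (a), I verify the three bullets in turn. If $h \in Z(G^\vee)^{\mb W_F}$, then the image of $h$ in ${G^\vee}_\ad$ is trivial, so conjugation by $h$ is the identity on ${G^\vee}_\Sc \rtimes \mb W_F$ and $c_h \equiv 1$. If $h$ lies in the image of $({G^\vee}_\Sc)^{\phi(\mb W_F)}$, I use that \eqref{eq:2.8} factors as $H^0(\mb W_F,G^\vee) \to H^0(\mb W_F,{G^\vee}_\ad) \to H^1(\mb W_F,Z({G^\vee}_\Sc))$, where the last arrow is the connecting homomorphism of the bottom row of \eqref{eq:2.7}; by exactness that connecting map vanishes on the image of $H^0(\mb W_F,{G^\vee}_\Sc)$, hence so does the composition. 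Finally, for $h \in Z_{G^\vee}(\phi(\mb W_F))^\circ$, I exploit that $Z({G^\vee}_\Sc)$ is a finite abelian group: for each fixed $\gamma$ the assignment $h \mapsto h\phi_\Sc(\gamma) h^{-1}\phi_\Sc(\gamma)^{-1}$ is a morphism from the connected complex algebraic group $Z_{G^\vee}(\phi(\mb W_F))^\circ$ to the finite discrete group $Z({G^\vee}_\Sc)$, hence constant, equal to its value $1$ at $h=e$; therefore the cocycle itself is trivial.

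For (b), the key observation is that $z'_t$ takes values in $Z(G^\vee)$, which acts trivially by conjugation. Hence the $\mb W_F$-action on $G^\vee$ (and on ${G^\vee}_\Sc$) induced by $\mr{Ad}(z_t\phi(w))$ coincides with that induced by $\mr{Ad}(\phi(w))$ for every $t$; in particular $(G^\vee)^{z_t\phi(\mb W_F)} = (G^\vee)^{\phi(\mb W_F)}$, so the intersection in the statement is simply this common centralizer. A single fixed $\phi_\Sc$ then serves as a lift in the sense of \eqref{eq:2.16} for every $z_t\phi$ simultaneously, and the resulting cocycle $h\phi_\Sc(\gamma) h^{-1}\phi_\Sc(\gamma)^{-1}$ does not involve $z_t$ at all. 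Consequently $\tau_{z_t\phi,\mc G}(g)(h)$ is independent of $t$, which is slightly stronger than the claimed agreement at the endpoints. No substantive obstacle arises: the only delicate points are the interpretation of ``lift" noted above and the use of the finiteness of $Z({G^\vee}_\Sc)$ together with the connectedness of $Z_{G^\vee}(\phi(\mb W_F))^\circ$ for the third bullet of (a).
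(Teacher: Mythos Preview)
Your argument is correct, and for the first two bullets of (a) it matches the paper's use of the exact sequences \eqref{eq:2.7}. For the third bullet of (a) and for (b), however, you take a noticeably more direct route than the paper.

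For the third bullet of (a), the paper argues by duality: it invokes that $G_\ad/G$ is compact and totally disconnected, so that the characters produced by \eqref{eq:2.14} take values in roots of unity, and hence (by duality) $\tau_{\phi,\mc G}(g)$ kills the identity component. Your argument instead stays on the cocycle side: for fixed $\gamma$, the map $h \mapsto h\phi_\Sc(\gamma)h^{-1}\phi_\Sc(\gamma)^{-1}$ is a regular map from the connected group $Z_{G^\vee}(\phi(\mb W_F))^\circ$ to the finite group $Z({G^\vee}_\Sc)$, hence constant and equal to $1$. This is cleaner and avoids any appeal to topological properties of $G_\ad/G$.

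For (b), the paper shows that $t \mapsto c_{a_t}(w)$ is continuous with values in the finite group $Z({G^\vee}_\Sc)$, hence constant. You observe something sharper: since each $z'_t$ takes values in $Z(G^\vee)$, the conjugation action $\mr{Ad}(z_t\phi(w))$ is literally independent of $t$, so all the centralizers $(G^\vee)^{z_t\phi(\mb W_F)}$ coincide and a single lift $\phi_\Sc$ serves for every $z_t\phi$. The cocycle is then visibly $t$-independent, not merely locally constant. This renders the intersection in the statement redundant and yields a slightly stronger conclusion. Your remark about the correct interpretation of ``lift'' (same image in ${G^\vee}_\ad \rtimes \mb W_F$, not a genuine lift to ${G^\vee}_\Sc \rtimes \mb W_F$) is well taken and is exactly what makes this uniform choice possible.
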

\begin{proof}
(a) By the exactness of \eqref{eq:2.7}, $Z(G^\vee)^{\mb W_F}$ lies in the kernel of
\eqref{eq:2.14}. It is also clear \eqref{eq:2.14} that the image of 
$({G^\vee}_\Sc)^{\phi (\mb W_F)} \to (G^\vee )^{\phi (\mb W_F)}$ lies in the kernel of
$\tau_{\phi,\mc G}(g)$, for every $g \in G_\ad / G$.

The group $G_\ad / G \cong T_{AD} / T$ is totally disconnected and compact, so in 
\eqref{eq:2.7} the image consists of characters $G_\ad / G \to \C^\times$ with finite
image. Hence in \eqref{eq:2.8} the image consists of continuous characters from
$Z_{G^\vee}(\phi (\mb W_F))$ to the group of finite order elements in $\C^\times$.
The latter group is totally disconnected, so every such character
$Z_{G^\vee}(\phi (\mb W_F)) \to \C^\times$ is trivial on the identity component 
$Z_{G^\vee}(\phi (\mb W_F))^\circ$.\\
(b) Let $a \in \cap_{t \in [0,1]} (G^\vee)^{z_t \phi (\mb W_F)}$ and let 
$c_{a_t} \in H^1 (\mb W_F,Z( {G^\vee}_\Sc ))$ be its image under \eqref{eq:2.8}
for the L-parameter $z_t \phi$. From \eqref{eq:2.16} we see that, for any $w \in \mb W_F$,
\[
[0,1] \to Z({G^\vee}_\Sc) : t \mapsto c_{a_t} (w)
\]
is a continuous map. Since $Z({G^\vee}_\Sc)$ is finite, $c_{a_t}(w)$ does not depend on 
$t \in [0,1]$, and hence $c_{a_0} = c_{a_1} \in H^1 (\mb W_F, Z({G^\vee}_\Sc))$. In view of
\eqref{eq:2.14} and \eqref{eq:2.15}, this implies $\tau_{z_0 \phi, \mc G}(g) (a) =
\tau_{z_1 \phi, \mc G}(g) (a)$.
\end{proof}

Now we construct an action of $G_\ad / G$ on the set of enhancements of $\phi$. Recall that
$G^\vee_\phi =  Z^1_{{G^\vee}_\Sc}(\phi |_{\mb W_F})$ is the inverse image of 
$(G^\vee)^{\phi (\mb W_F)} / Z(G^\vee )^{\mb W_F}$ in ${G^\vee}_\Sc$. By Lemma \ref{lem:2.1}.a 
$\tau_{\phi,\mc G}(g)$ can be lifted to a character of $G^\vee_\phi$, which is trivial on 
$(G^\vee_\phi )^\circ$ and on $Z({G^\vee}_\Sc)$. In this
way $\tau_{\phi,\mc G}(g)$ naturally determines a character $\tau_{\mc S_\phi}(g)$ of
\begin{equation}\label{eq:2.17}
\mc S_\phi \cong \pi_0 \big( Z_{G^\vee_\phi}(u_\phi) \big) = 
Z_{G^\vee_\phi}(u_\phi) / Z_{G^\vee_\phi}(u_\phi )^\circ .
\end{equation}
Then we can define an action $\tau_{\mc G}$ of $G_\ad / G$ on enhanced L-parameters by
\begin{equation}\label{eq:2.20}
\tau_{\mc G}(g) (\phi,\rho) = (\phi, \rho \otimes \tau_{\mc S_\phi}(g)) .
\end{equation}
We note that $\tau_{\mc S_\phi}(g)$ factors through 
\[
\mc S_\phi / Z({G^\vee}_\Sc) \cong \pi_0 ( Z_{{G^\vee}_\ad} (\phi) ) ,
\]
the component group for $\phi$ as L-parameter for the quasi-split inner form of $\mc G (F)$.
Hence the action \eqref{eq:2.20} preserves the $Z({G^\vee}_\Sc)$-character of any enhancement,
which, as explained after \eqref{eq:1.1}, means that the action of $G_\ad / G$ stabilizes the
set $\Phi_e (G)$ of $G$-relevant enhanced L-parameters.

Next we will investigate the compatibility of the characters $\tau_{\phi,\mc G}(g)$ with Levi
subgroups and with the cuspidal support map for enhanced L-parameters. Let $\mc L$ be a Levi
$F$-subgroup of $\mc G$, such that the image of $\phi$ is contained in $L^\vee \rtimes \mb W_F$.
Write $\mc L_\ad = \mc L / Z(\mc L)$ and $\mc L_{AD} = \mc L / Z(\mc G)$, so that $\mc L_\ad$
is a quotient of $\mc L_{AD}$ and $L_{AD} := \mc L_{AD}(F)$ maps naturally to $L_\ad :=
\mc L_\ad (F)$.
Then \eqref{eq:2.15} for $\mc L$ gives 
\[
\tau_{\phi, \mc L} : L_\ad / L \to \Hom ( Z_{L^\vee}(\phi (\mb W_F)), \C^\times ) .
\]
Hence there is a canonical homomorphism
\begin{equation}\label{eq:2.18}
G_\ad / G \cong L_{AD} \big/ \big( \mc L (F) / Z(\mc G)(F) \big) 
\longrightarrow L_\ad \big/ \big( \mc L (F) / Z(\mc L)(F) \big) = L_\ad / L .
\end{equation}

\begin{lem}\label{lem:2.2}
Let $g \in G_\ad / G$ and let $g_{\mc L} \in L_\ad / L$ be its image under \eqref{eq:2.18}.
Then
\[
\tau_{\phi,\mc L}(g_{\mc L}) = \tau_{\phi,\mc G}(g) \big|_{Z_{L^\vee}(\phi (\mb W_F))} .
\]
\end{lem}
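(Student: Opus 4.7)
The plan is to unravel the construction of both characters from \eqref{eq:2.15} and check that they come from the same cocycle under naturality of the Kottwitz pairing \eqref{eq:2.12}.

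First I would fix a lift $\phi_\Sc : \mb W_F \to {G^\vee}_\Sc \rtimes \mb W_F$ of $\phi |_{\mb W_F}$, chosen so that it factors through $L_c^\vee \rtimes \mb W_F$ (this is possible because $\phi$ itself factors through $L^\vee \rtimes \mb W_F$, and $L_c^\vee \to L^\vee$ is surjective up to a central subgroup of ${G^\vee}_\Sc$). This single lift then simultaneously serves as the $\Sc$-lift for both the $G$-version and the $L$-version of the recipe \eqref{eq:2.16}.

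Next, for $h \in Z_{L^\vee}(\phi (\mb W_F)) \subset (G^\vee)^{\phi (\mb W_F)}$, the very same formula $\gamma \mapsto h \phi_\Sc (\gamma) h^{-1} \phi_\Sc (\gamma)^{-1}$ produces both cocycles: read inside $L_c^\vee$ it lands in $Z(L_c^\vee)$ and gives the $L$-cocycle $c_h^{\mc L}$, and read inside ${G^\vee}_\Sc$ it lands in $Z({G^\vee}_\Sc)$ and gives $c_h^{\mc G}$. Since $Z({G^\vee}_\Sc) \subseteq Z(L_c^\vee)$ (the centre of a Levi contains the centre of the ambient group), the class $c_h^{\mc L} \in H^1 (\mb W_F, Z(L_c^\vee))$ is exactly the image of $c_h^{\mc G} \in H^1 (\mb W_F, Z({G^\vee}_\Sc))$ under the map induced by this inclusion.

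Then I would invoke the naturality of the Kottwitz pairing \eqref{eq:2.12}. For the inclusion $L \hookrightarrow G$ and the dual restriction $Z(G^\vee) \hookrightarrow Z(L^\vee)$, the square
\[
\begin{array}{ccc}
H^1 (\mb W_F, Z(G^\vee)) & \longrightarrow & \Hom (G, \C^\times) \\
\downarrow & & \downarrow \\
H^1 (\mb W_F, Z(L^\vee)) & \longrightarrow & \Hom (L, \C^\times)
\end{array}
\]
commutes, the right vertical map being restriction of characters along $L \hookrightarrow G$. This is standard: evaluation of the pairing on a semisimple element is computed via the LLC for tori, and does not care whether that element is regarded as lying in $L$ or in $G$. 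The same naturality, applied through the splicing \eqref{eq:2.7} for the simply connected covers, descends to a commutative square
\[
\begin{array}{ccc}
\ker \big( H^1 (\mb W_F, Z({G^\vee}_\Sc)) \to H^1 (\mb W_F, Z(G^\vee)) \big) & \longrightarrow &
\Hom (G_\ad / G, \C^\times) \\
\downarrow & & \downarrow \\
\ker \big( H^1 (\mb W_F, Z(L_c^\vee)) \to H^1 (\mb W_F, Z(L^\vee)) \big) & \longrightarrow &
\Hom (L_\ad / L, \C^\times)
\end{array}
\]
whose right vertical is precomposition with the homomorphism \eqref{eq:2.18}.

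Finally, combining the two steps: for $h \in Z_{L^\vee}(\phi (\mb W_F))$ the image of $c_h^{\mc G}$ in $H^1 (\mb W_F, Z(L_c^\vee))$ is $c_h^{\mc L}$, so by the commutative square the character $\tau_{\phi,\mc G}(g)$ evaluated on $h$ equals the character $\tau_{\phi,\mc L}(g_{\mc L})$ evaluated on $h$. The only non-formal ingredient is the naturality of the Kottwitz pairing for Levi inclusions, and that is precisely the content I would expect to be the main (but standard) obstacle; everything else is bookkeeping about the lift $\phi_\Sc$ and the centres $Z({G^\vee}_\Sc) \subseteq Z(L_c^\vee)$, $Z(G^\vee) \subseteq Z(L^\vee)$.
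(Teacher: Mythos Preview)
Your overall strategy---compute both cocycles from a common lift and then invoke naturality of the Kottwitz pairing---is the right one and is in the same spirit as the paper's proof. But there is a genuine gap: you have conflated $L_c^\vee$ (the preimage of $L^\vee$ in ${G^\vee}_\Sc$) with ${L^\vee}_\Sc$ (the simply connected cover of ${L^\vee}_\ad$). These are different groups: $L_c^\vee$ is a Levi subgroup of ${G^\vee}_\Sc$ and in general has a positive-dimensional central torus, while ${L^\vee}_\Sc = [L_c^\vee,L_c^\vee]$ is semisimple. The recipe \eqref{eq:2.16} for $\tau_{\phi,\mc L}$ requires a lift of $\phi|_{\mb W_F}$ to ${L^\vee}_\Sc \rtimes \mb W_F$, not to $L_c^\vee \rtimes \mb W_F$; so your ``single lift'' $\phi_\Sc$ into $L_c^\vee$ does \emph{not} compute the $L$-cocycle, and the object you call $c_h^{\mc L}\in H^1(\mb W_F,Z(L_c^\vee))$ is not the class entering the definition of $\tau_{\phi,\mc L}$. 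Correspondingly, the bottom row of your second square is not what \eqref{eq:2.13} gives: the Kottwitz map out of $H^1(\mb W_F,Z(L_c^\vee))$ lands in $\Hom(L_{AD},\C^\times)$ (since $(\mc L_{AD})^\vee = L_c^\vee$), not in $\Hom(L_\ad,\C^\times)$. And note that precomposition with \eqref{eq:2.18} sends $\Hom(L_\ad/L,\C^\times)$ \emph{to} $\Hom(G_\ad/G,\C^\times)$, so your right vertical arrow points the wrong way.

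The missing ingredient---and this is exactly what the paper's diagram \eqref{eq:2.19} supplies---is to keep all three groups ${G^\vee}_\Sc$, $L_c^\vee$ and ${L^\vee}_\Sc$ in play simultaneously. There is no direct map of coefficient modules $Z({G^\vee}_\Sc)\to Z({L^\vee}_\Sc)$ in general (for $\mc G = SL_3$ with a $GL_2$-Levi one would need a map $\mu_3\to\mu_2$), so the comparison must go through the intermediate short exact sequence $1\to Z({G^\vee}_\Sc)\to L_c^\vee\to L^\vee/Z(G^\vee)\to 1$. The paper realises $\tau_{\phi,\mc L}(g_{\mc L})$ via the path through $H^0(L^\vee/Z(G^\vee))\to H^0({L^\vee}_\ad)\to H^1(Z({L^\vee}_\Sc))$, then uses that $g_{\mc L}$ comes from $g\in G_\ad/G \cong L_{AD}/L$ to replace the pairing at $L_\ad$ by one at $G_\ad$, which corresponds to the path through $H^0(L^\vee/Z(G^\vee))\to H^1(Z({G^\vee}_\Sc))$; commutativity of the diagram then identifies this with $\tau_{\phi,\mc G}(g)$ restricted from $H^0(G^\vee)$ to $H^0(L^\vee)$. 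Your argument can be repaired along these lines, but as written it skips precisely the step that carries the content.
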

\begin{proof}
We write
\begin{equation}
L_c^\vee = \text{ inverse image of } L^\vee \text{ in } {G^\vee}_\Sc .
\end{equation}
Consider the short exact sequence of $\mb W_F$-modules
\[
1 \to Z({G^\vee}_\Sc) \to L^\vee_c \to L^\vee / Z(G^\vee) = L^\vee_c / Z({G^\vee}_\Sc) \to 1 .
\]
That and \eqref{eq:2.7} give rise to a diagram
\begin{equation}\label{eq:2.19}
\xymatrix{
H^0 (\mb W_F, L^\vee) \ar[d] \ar[dr] & H^0 (\mb W_F, L^\vee_c) \ar[d] \ar[r] &
H^0 (\mb W_F, {L^\vee}_\Sc) \ar[d] \\
H^0 (\mb W_F, G^\vee) \ar[d] & H^0 (\mb W_F, L^\vee / Z(G^\vee)) \ar[r] \ar[dl] \ar[d] &
H^0 (\mb W_F, {L^\vee}_\ad) \ar[d] \\
H^0 (\mb W_F, {G^\vee}_\ad) \ar[r] & H^1 (\mb W_F, Z({G^\vee}_\Sc)) \ar[r] & 
H^1 (\mb W_F, Z({L^\vee}_\Sc))
}
\end{equation}
The middle column of \eqref{eq:2.19} is a subobject of \eqref{eq:2.7}, so the left and middle
column form a commutative diagram. By the functoriality of Galois cohomology with respect to
morphisms of short exact sequences, the middle and right columns of the diagram
also commute, so the entire diagram is commutative.

The character $\tau_{\phi,\mc L}(l)$ corresponds to first the path 
$(1,1) \to (2,2) \to (2,3) \to (3,3)$ in the diagram, and then pairing with $l \in L_\ad / L$,
using \eqref{eq:2.13}. Since $g_{\mc L}$ comes form $g \in G_\ad / G$, for $l = g_{\mc L}$ the
pairing with $H^1 (\mb W_F, Z({L^\vee}_\Sc))$ can also be realized as retraction along
$H^1 (\mb W_F, Z({G^\vee}_\Sc)) \to H^1 (\mb W_F, Z({L^\vee}_\Sc))$ and then pairing with $g$.
In effect, this means that $\tau_{\phi,\mc L}(g_{\mc L})$ can be constructed as following the
path $(1,1) \to (2,2) \to (3,2)$ in \eqref{eq:2.19}, and then applying \eqref{eq:2.13} with 
input $g$. 

On the other hand, $\tau_{\phi,\mc G}(g)$ can be seen in the diagram as the path 
$(2,1) \to (3,1) \to (3,2)$ and then pairing with $g$. Restricting $\tau_{\phi,\mc G}(g)$
to $Z_{L^\vee}(\phi (\mb W_F))$ means that we should start at position (1,1), map to position
(2,1) and then perform $\tau_{\phi, \mc G}(g)$. By the commutativity of the diagram 
\eqref{eq:2.19}, that is the same as the above procedure for $\tau_{\phi,\mc L}(g_{\mc L})$. 
\end{proof}

In view of Lemma \ref{lem:2.2} we may abbreviate $\tau_{\mc \phi,\mc G}(g),
\tau_{\phi,\mc L}(g_{\mc L})$ and $\tau_{\mc S_\phi}(g)$ to $\tau_\phi (g)$.

Suppose that $(\phi,\rho) \in \Phi_e (G)$ has cuspidal support
\[
\mb{Sc}(\phi,\rho) = (\mc L (F), \phi_v, q\epsilon) .
\]
For $g \in G_\ad / G$ we have the characters $\tau_{\mc S_\phi}(g) = \tau_\phi (g)$ and 
$\tau_{\mc S_{\phi_v}}(g_{\mc L}) = \tau_{\mc S_{\phi_v}}(g) = \tau_{\phi_v}(g)$.

\begin{lem}\label{lem:2.3}
$\mb{Sc}(\phi, \rho \otimes \tau_{\phi} (g)) = 
(\mc L (F), \phi_v, q\epsilon \otimes \tau_{\phi_v}(g))$.
\end{lem}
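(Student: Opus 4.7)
The plan is to trace through the definition of the cuspidal support map $\mb{Sc}$ from \eqref{eq:2.1} and show that tensoring the enhancement by $\tau_\phi(g)$ on the $G$-side corresponds to tensoring the cuspidal enhancement by $\tau_{\phi_v}(g)$ on the $\mc L$-side, using Lemma \ref{lem:2.2} as the bridge between the two characters.

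First I would unwind the construction of $\mb{Sc}$: the pair $(u_\phi,\rho)$ is viewed as an enhancement of a unipotent class in the (possibly disconnected) complex group $G^\vee_\phi$ via \eqref{eq:1.3}, and the cuspidal support map coming from the generalized Springer correspondence of \cite{AMS1} assigns to it a quasi-Levi $M^\vee \subset G^\vee_\phi$ with a cuspidal pair $(v,q\epsilon)$. The Levi $\mc L$ is then recovered from $M^\vee$ via $L^\vee \rtimes \mb W_F = Z_{G^\vee \rtimes \mb W_F}(Z(M^\vee)^\circ)$ as in \eqref{eq:1.2}, and $\phi_v$ packages $\phi|_{\mb W_F}$ together with the cocharacter determined by $v$.

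Next I would use Lemma \ref{lem:2.1}(a) to lift $\tau_{\mc S_\phi}(g)$ to a character $\tilde\tau$ of the full group $G^\vee_\phi$ which is trivial on $(G^\vee_\phi)^\circ$. Restriction to $M^\vee$ yields a character trivial on $(M^\vee)^\circ$, hence a character on $\pi_0(Z_{M^\vee}(v))$. The key step is then to invoke the equivariance of the (extended) generalized Springer correspondence under tensoring by characters of the disconnected ambient group: for any such $\tilde\tau$, the cuspidal support of $(u_\phi,\rho \otimes \tilde\tau|_{\mc S_\phi})$ is $(M^\vee, v, q\epsilon \otimes \tilde\tau|_{\pi_0(Z_{M^\vee}(v))})$. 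This is because $\tilde\tau$ acts on all local systems appearing in the induction procedure by a single constant character on each geometric piece, and therefore commutes with the induction and restriction operations used to define $\mb{Sc}$.

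Finally I would identify $\tilde\tau|_{\pi_0(Z_{M^\vee}(v))}$ with $\tau_{\phi_v}(g)$. The construction \eqref{eq:2.16}--\eqref{eq:2.15} of $\tau_{\phi,\mc G}$ depends on $\phi$ only through $\phi|_{\mb W_F}$; the same is true for $\tau_{\phi_v,\mc L}$ since $\phi_v|_{\mb W_F}=\phi|_{\mb W_F}$. Under the identification $\mc S_{\phi_v} \cong \pi_0(Z_{M^\vee}(v))$ this matches Lemma \ref{lem:2.2} applied to $g \in G_\ad/G$ and its image $g_{\mc L} \in L_\ad / L$. The main obstacle is the equivariance statement in the middle step: it is intuitive but requires carefully checking inside the framework of \cite{AMS1} that the cuspidal support functor commutes with tensoring enhancements by characters pulled back from the full disconnected group, rather than just with characters of the component groups at unipotent elements. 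Once this compatibility is in place, the lemma follows by assembling the pieces.
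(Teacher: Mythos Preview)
Your strategy matches the paper's: lift $\tau_{\mc S_\phi}(g)$ to a character of $G^\vee_\phi$ via Lemma~\ref{lem:2.1}(a), argue that the cuspidal (quasi-)support is equivariant under tensoring by this character, and use Lemma~\ref{lem:2.2} to pass from the $\mc G$-side to the $\mc L$-side.

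There is one factual error. Your claim $\phi_v|_{\mb W_F} = \phi|_{\mb W_F}$ is false in general: the construction in \cite[Definition~7.7 and Lemma~7.6]{AMS1} adjusts $\phi|_{\mb W_F}$ by the composition of the norm on $\mb W_F$ with a cocharacter of $Z(M^\vee)^\circ$, so $\phi_v|_{\mb W_F}$ is typically a nontrivial central twist of $\phi|_{\mb W_F}$. What the paper actually uses is that this cocharacter depends only on $\phi|_{\mb W_F}$ and on $v$, not on the enhancement, and hence $\phi_v$ is unchanged when $\rho$ is replaced by $\rho\otimes\tau_\phi(g)$. Your identification $\tilde\tau|_{\pi_0(Z_{M^\vee}(v))} = \tau_{\phi_v}(g)$ is still correct, but the justification must go through a continuity argument along the path of central twists from $\phi$ to $\phi_v$ (in the spirit of Lemma~\ref{lem:2.1}(b)), not through equality of the $\mb W_F$-restrictions.

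For your acknowledged ``main obstacle,'' the paper gives a concrete two-step argument rather than invoking a general equivariance statement. It first passes to an irreducible constituent $\rho^\circ$ of $\rho|_{\mc S_\phi^\circ}$ and applies the characterization of cuspidal support for the connected group $(G^\vee_\phi)^\circ$ from \cite[\S 6.2]{LusCusp} and \cite[\S 0.4]{LuSp}, yielding $\mb{Sc}(u_\phi,\rho^\circ\otimes\tau_\phi(g)) = (v,\epsilon\otimes\tau_\phi(g))$; in particular the unipotent element in the quasi-support is still $v$. It then invokes the characterization of the cuspidal quasi-support \cite[(64)]{AMS1} to upgrade this to the disconnected statement \eqref{eq:2.21}.
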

\begin{proof}
Let us recall some aspects of the construction of the cuspidal support map
from \cite[\S 7]{AMS1}. Firstly, $(\phi,\rho)$ is determined up to $G^\vee$-conjugacy
by $\phi |_{\mb W_F}, u_\phi$ and $\rho$. Here $u_\phi = \phi \big( 1, 
\big( \begin{smallmatrix} 1 & 1 \\ 0 & 1 \end{smallmatrix} \big)\big) \in G^\vee_\phi$
and $\rho \in \Irr (\mc S_\phi) = \Irr \big( \pi_0 (Z_{G^\vee_\phi} (u_\phi)) \big)$.
The cuspidal quasi-support of $(u_\phi,\rho)$, as defined in \cite[\S 5]{AMS1} with respect 
to the complex reductive group $G^\vee_\phi$, is of the form 
$(M^\vee,v,q\epsilon)$, where $M^\vee$ is a quasi-Levi subgroup of 
$G^\vee_\phi = Z^1_{{G^\vee}_\Sc}(\phi |_{\mb W_F})$ and $(v,q\epsilon)$ is a cuspidal
unipotent pair for $M^\vee$. Then $L^\vee \rtimes \mb W_F = Z_{G^\vee \rtimes \mb W_F}(
Z(M^\vee)^\circ)$. 

Recall that $\tau_{\mc S_\phi}(g)$ extends to the character $\tau_{\phi}(g)$ defined on the
whole of $G^\vee_\phi$. Let $\rho^\circ$ be an irreducible constituent of the restriction of
$\rho$ to
\begin{equation}\label{eq:2.30}
\mc S^\circ_\phi := \pi_0 ( Z_{(G^\vee_\phi )^\circ} (u_\phi)) . 
\end{equation} 
Then $(v,\epsilon)$ is a refinement of the cuspidal support $\mb{Sc}(u_\phi,\rho^\circ) = 
(v,\epsilon)$, with respect to the connected reductive group $(G^\vee_\phi)^\circ$ and as 
defined in \cite{LusCusp}. The characterization of the cuspidal support, as in 
\cite[\S 6.2]{LusCusp} and \cite[\S 0.4]{LuSp}, shows immediately that 
\[
\mb{Sc}(u_\phi, \rho^\circ \otimes \tau_\phi (g) ) = (v, \epsilon \otimes \tau_\phi (g)) ,
\]
where $\tau_\phi (g)$ is restricted, respectively, to the domain of $\rho^\circ$ and the
domain of $\epsilon$. This implies in particular that the unipotent element (defined up to
conjugacy) in $\mb{Sc}(u_\phi, \tau_\phi (g) \otimes \rho)$ is $v$. 
Then the characterization of the cuspidal quasi-support in \cite[(64)]{AMS1} shows that
\begin{equation}\label{eq:2.21}
\mb{Sc}(u_\phi, \rho \otimes \tau_\phi (g)) = (v, q\epsilon \otimes \tau_\phi (g)) .
\end{equation}
By Lemma \ref{lem:2.2} the right hand side can be written as 
$(v, q\epsilon \otimes \tau_{\phi,\mc L} (g_{\mc L}) )$. The cuspidal support of $(\phi,\rho)$
is built from $(\mc L (F), \phi |_{\mb W_F}, v, q\epsilon)$ by adjusting $\phi |_{\mb W_F}$
with a certain representation of $\mb W_F / \mb I_F$ \cite[Definition 7.7]{AMS1}. This
representation arises as the composition of the norm on $\mb W_F$ and a cocharacter of 
$Z (M^\vee)^\circ$ \cite[Lemma 7.6]{AMS1}. That cocharacter depends only on $\phi |_{\mb W_F}$
and $v$, so it is the same for $(\phi,\rho)$ as for $(\phi, \rho \otimes \tau_{\phi}(g))$.
Combine this with \eqref{eq:2.21}.
\end{proof}

\subsection{Action on graded Hecke algebras} \
\label{par:2.2}

\begin{prop}\label{prop:2.4}
Let $g \in G_\ad / G$ and let $(\phi_b,q\epsilon) \in \Phi_\cusp (L)$ be bounded. 
The character $\tau_{\phi_b}(g)$ induces an algebra isomorphism
\[
\alpha_g : \mh H (\phi_b ,q\epsilon, \vec{\mb r}) \to 
\mh H (\phi_b ,q\epsilon \otimes \tau_{\phi_b} (g), \vec{\mb r})
\]
such that, in the notation of \eqref{eq:2.22}:
\begin{itemize}
\item $\alpha_g$ is the identity on $\C [W^\circ_{q\cE}] \otimes S (\mf t^*) \otimes 
S (\mr{Lie}^* (X_\nr (G)))$;
\item there exists a set of representatives $\dot{\mf R_{q\cE}} \subset N_{G^\vee_{\phi_b}}(M^\vee,
q\cE) / (M^\vee)^\circ$ for $\mf R_{q\cE}$, such that 
$\alpha_g (T_r) = \tau_{\phi_b} (g) (\dot r) T_r$ for all $r \in \mf R_{q\cE}$.
\end{itemize}
\end{prop}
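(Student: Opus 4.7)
The strategy is to observe that most of the data defining the graded Hecke algebra are insensitive to the twist by $\tau_{\phi_b}(g)$, so that the two algebras differ only through the 2-cocycle on $\mf R_{q\cE}$; the content of the statement is then that the two cocycles differ precisely by the coboundary built from the scalars $\tau_{\phi_b}(g)(\dot r)$.

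\textbf{Step 1: comparison of underlying data.} I would first note, using the proof of Lemma \ref{lem:2.3}, that $(v, q\epsilon \otimes \tau_{\phi_b}(g))$ is again a cuspidal unipotent pair on the same $(M^\vee)^\circ$-orbit of $v$ inside the same quasi-Levi $M^\vee \subset G^\vee_{\phi_b}$. Consequently $Z(M^\vee)^\circ$, $\mf t$, the root system $R_{q\cE}$, the Weyl group $W^\circ_{q\cE}$ and the R-group $\mf R_{q\cE}$ all coincide for $q\cE$ and $q\cE \otimes \tau_{\phi_b}(g)$. The labels $c(\alpha), c^*(\alpha)$ from \cite{LusCusp} are defined in terms of the adjoint action of $\log u_{\phi_b}$ on Lie algebra weight spaces and make no reference to the equivariant structure on the local system, so they also match. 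Hence the two algebras share the same subalgebra $S(\mf t^*) \otimes \C[W^\circ_{q\cE}] \otimes \C[\vec{\mb r}] \otimes S(\mr{Lie}^* (X_\nr (G)))$; the only piece of data that can change is the 2-cocycle on $\mf R_{q\cE}$.

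\textbf{Step 2: define $\alpha_g$ and check the easy relations.} Choose representatives $\dot r$ of $\mf R_{q\cE}$ in $N_{G^\vee_{\phi_b}}(M^\vee, q\cE) / (M^\vee)^\circ$. These same representatives lie in $N_{G^\vee_{\phi_b}}(M^\vee, q\cE \otimes \tau_{\phi_b}(g)) / (M^\vee)^\circ$, because $\tau_{\phi_b}(g)$ is a character of $G^\vee_{\phi_b}$ and so is invariant under all inner conjugations. Because $\tau_{\phi_b}(g)$ is trivial on $(G^\vee_{\phi_b})^\circ$ by Lemma \ref{lem:2.1}.a, the scalar $\tau_{\phi_b}(g)(\dot r)$ depends only on the class $r \in \mf R_{q\cE}$. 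Setting $\alpha_g$ to be the identity on the common subalgebra and $\alpha_g(T_r) = \tau_{\phi_b}(g)(\dot r) T_r$, the relations \eqref{eq:2.25} are preserved (both sides involve only elements fixed by $\alpha_g$), and \eqref{eq:2.26} is preserved because rescaling $T_r$ by a nonzero scalar does not affect the conjugation $T_r f T_r^{-1}$.

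\textbf{Step 3: the 2-cocycle identity (the main obstacle).} Writing $\natural$ and $\natural'$ for the 2-cocycles of $q\cE$ and $q\cE \otimes \tau_{\phi_b}(g)$, compatibility of $\alpha_g$ with the products $T_r T_{r'}$ reduces to the identity
\begin{equation*}
\natural'(r, r') \;=\; \natural(r, r') \cdot \frac{\tau_{\phi_b}(g)(\dot{r r'})}{\tau_{\phi_b}(g)(\dot r)\, \tau_{\phi_b}(g)(\dot{r'})}.
\end{equation*}
This is where the real work lies. I would unpack the construction of $\natural_{q\cE}$ in \cite{AMS3} (built on \cite{LusCusp}), in which the cocycle measures the failure of a chosen family of intertwiners $\dot r^* q\cE \xrightarrow{\sim} q\cE$ to be multiplicative. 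Tensoring the cuspidal local system by the one-dimensional character $\tau_{\phi_b}(g)$ rescales each such intertwiner by $\tau_{\phi_b}(g)(\dot r)$, which yields exactly the coboundary on the right hand side. The main difficulty, which I expect to be the most delicate point, is the careful bookkeeping of these scalars through the existing definition of $\natural_{q\cE}$, together with the verification that the same choice of representatives $\dot r$ can be used to construct $\natural'$. Granting this identity, $\alpha_g$ is a well-defined algebra homomorphism, and $\alpha_{g^{-1}}$ (defined by the same recipe, using $\tau_{\phi_b}(g^{-1}) = \tau_{\phi_b}(g)^{-1}$) provides a two-sided inverse, so $\alpha_g$ is an isomorphism.
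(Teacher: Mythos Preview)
Your overall strategy matches the paper's: reduce to comparing the two 2-cocycles on $\mf R_{q\cE}$, then verify the cocycle identity by tracking intertwiners through the construction of $\natural_{q\cE}$. But there is a genuine error in Step~2 that, if taken at face value, would collapse Step~3.

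You claim that because $\tau_{\phi_b}(g)$ is trivial on $(G^\vee_{\phi_b})^\circ$, the scalar $\tau_{\phi_b}(g)(\dot r)$ depends only on the class $r\in\mf R_{q\cE}$. This is false. The representatives $\dot r$ live in $\widetilde{W_{q\cE}} = N_{G^\vee_{\phi_b}}(M^\vee,q\cE)/(M^\vee)^\circ$, and two lifts of the same $r$ differ by an element of $M^\vee_{\cE}/(M^\vee)^\circ$. While $\tau_{\phi_b}(g)$ is trivial on $(M^\vee)^\circ$, it need \emph{not} be trivial on $M^\vee_{\cE}$; the paper says this explicitly in its proof, and remarks immediately afterward that $w\mapsto\tau_{\phi_b}(g)(\dot w)$ is in general not a character of $W_{q\cE}$. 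If your claim held, the coboundary factor in your Step~3 identity would be $1$, forcing $\natural'=\natural$, and there would be nothing left to prove; but the two cocycles are in general only cohomologous, not equal.

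The paper therefore does precisely what you gesture at in Step~3, but with the dependence on representatives built in. It recalls how $\C[W_{q\cE},\natural_{q\cE}]$ arises from intertwiners $I^{\dot w m}=I^{\dot w}\circ\rho_{M^\vee}(m)$, then observes that after the twist one must replace $\rho_{M^\vee}$ by $\rho_{M^\vee}\otimes\tau_{\phi_b}(g)^{-1}$ while keeping the same $I^{\dot w}$; the modified intertwiners $\tilde I^{\dot w m}=I^{\dot w}\circ\rho_{M^\vee}(m)\,\tau_{\phi_b}(g)^{-1}(m)$ produce $\natural_{q\cE\otimes\tau_{\phi_b}(g)}$, and the comparison gives exactly $\alpha_g(T_w)=\tau_{\phi_b}(g)(\dot w)\,T_w$. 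Your sketch ``tensoring rescales each intertwiner by $\tau_{\phi_b}(g)(\dot r)$'' is not quite right either: the $I^{\dot w}$ themselves are unchanged; the rescaling enters only through the $m$-part, which is why the choice of $\dot r$ matters.
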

\begin{proof}
Notice that $N_{G^\vee_{\phi_b}}(M^\vee,q\cE) = N_{G^\vee_{\phi_b}}(M^\vee,q\cE \otimes \tau_{\phi_b} (g))$
because $\tau_{\phi_b} (g)$ extends to a character of $G^\vee_{\phi_b}$. Hence $W_{q\cE} = 
W_{q\cE \otimes \tau_{\phi_b} (g)^{-1}}$ and our asserted maps are well-defined linear bijections.

Regarding the first bullet as given, we focus on the second bullet. If we can establish that one, the 
multiplication rules \eqref{eq:2.26} and \eqref{eq:2.27} immediately show that $\alpha_g$ is an
algebra homomorphisms. 

The algebra $\C [W_{q\cE},\natural_{q\cE}]$ is realized in $\mh H (\phi_b ,q\epsilon, \vec{\mb r})$
as the endomorphism algebra of a $G^\vee_{\phi_b}$-equivariant perverse sheaf $q\pi_* (\widetilde{q\cE})$
on a complex algebraic variety \cite[Lemma 5.4]{AMS1}. To construct the action on 
$q\pi_* (\widetilde{q\cE})$, several steps are needed:
\begin{itemize}
\item Choose an irreducible constituent $\epsilon$ of $q\epsilon |_{Z_{(G^\vee_{\phi_b})^\circ}(u_\phi)}$.
(It is unique up to conjugacy.)
\item Let $\cE$ be the $(M^\vee)^\circ$-equivariant local system associated to $\epsilon$ on the 
unipotent orbit $\mc C_{u_\phi}^{(M^\vee)^\circ}$, and let $M^\vee_\cE$ be its stabilizer $M^\vee$. 
\item The generalized Springer correspondence for $(M^\vee,\cE)$ matches
$(\mc C_{u_\phi}^{(M^\vee)^\circ},q\cE)$ with an irreducible representation $\rho_{M^\vee}$ of
$\C [M^\vee_\cE / (M^\vee )^\circ, \natural_\cE]$.
\item By the equivariance of the generalized Springer correspondence $N_{G^\vee_{\phi_b}}(M^\vee,q\cE)$
equals the stabilizer of $(M^\vee, \rho_{M^\vee})$ in $G^\vee_{\phi_b}$. 
\item Write $\widetilde{W_{q\cE}} = N_{G^\vee_{\phi_b}}(M^\vee,q\cE) / (M^\vee)^\circ$, so that
$\rho_{M^\vee}$ can be extended to a projective representation of $\widetilde{W_{q\cE}}$.
\item The group $W_{q\cE}^\circ$ is a subgroup of $\widetilde{W_{q\cE}}$. Extend that to a set of
representatives $\dot{W_{q\cE}}$ for $W_{q\cE} = \widetilde{W_{q\cE}} / (M^\vee)_\cE$ in 
$\widetilde{W_{q\cE}}$, such that $\dot{rw} = \dot{r}w$ for $r \in \mf R_{q\cE}, w \in W_{q\cE}^\circ$.
\item For every $\dot w \in \dot{W_{q\cE}}$ we pick an intertwiner
$I^{\dot w} \in \Hom_{M^\vee_\cE / (M^\vee)^\circ}(\dot w \cdot \rho_{M^\vee}, \rho_{M^\vee})$.
For $m \in M^\vee_\cE / (M^\vee)^\circ$ we define $I^{\dot{w} m} = I^{\dot w} \circ \rho_{M^\vee}(m)$.
\item The operators $I^{\dot w m}$ determine a 2-cocycle $\natural_{q\cE}$ for 
$\widetilde{W_{q\cE}}$, which factors through $(W_{q\cE})^2$. This gives twisted group algebras
$\C [\widetilde{W_{q\cE}},\natural_{q\cE}]$ and $\C [W_{q\cE},\natural_{q\cE}]$. 
Here we use the conventions as in \cite[\S 1]{AMS1}:
\[
\natural_{q\cE}(w,w') = T_w T_{w'} T_{w w'}^{-1} = I^{w w'} (I^{w'})^{-1} (I^w)^{-1} .
\]
\item For a representation $(\tau,V_\tau)$ of $\C [W_{q\cE},\natural_{q\cE}]$, 
$V_\tau \otimes V_{\rho_{M^\vee}}$ becomes a representation of 
$\C [\widetilde{W_{q\cE}},\natural_{q\cE}]$ by defining
\[
\tilde{T}_{\dot w m} (v_1 \otimes v_2) = \tau (T_w) v_1 \otimes I^{\dot w} \rho_{M^\vee}(T_m) v_2 .
\]
\item For the remainder of the construction see the proof of \cite[Lemma 5.4]{AMS1}.
\end{itemize}
Now we investigate what happens when we tensor everything with $\tau_{\phi_b}(g)$. Recall from 
Lemma \ref{lem:2.1}.a that $\tau_{\phi_b}(g)$ is trivial on $(M^\vee)^\circ$ (but not necessarily
on $M^\vee_\cE$). Thus $\cE$ is stable under $\tau_{\phi_b}(g)$, and $q\cE$ is replaced by 
$q\cE \otimes \tau_{\phi_b}(g)$. Then $\rho_{M^\vee}$ is replaced by 
$\rho_{M^\vee} \otimes \tau_{\phi_b}(g)^{-1}$, see \cite[Theorem 4.7]{AMS1}. We can keep the 
$I^{\dot w}$ for $\dot w \in \dot{W_{q\cE}}$, for $\tau_{\phi_b}(g)$ extends to a character of
$\widetilde{W_{q \cE}}$, so every $\dot w$ stabilizes $\tau_{\phi_b}(g)$. However, we must 
replace $I^{\dot{w} m}$ by 
\[
\tilde{I}^{\dot{w} m} = I^{\dot w} \circ \rho_{M^\vee}(m) \tau_{\phi_b}(g)^{-1}(m) .
\]
This determines a new 2-cocycle $\natural_{q\cE \otimes \tau_{\phi_b}(g)}$ of $W_{q\cE}$, and an
algebra isomorphism
\begin{equation}\label{eq:2.28}
\begin{array}{cccc}
\alpha_g : & \C [W_{q\cE},\natural_{q\cE}] & \to & 
\C [W_{q\cE},\natural_{q\cE \otimes \tau_{\phi_b}(g)}] \\
& T_w & \mapsto & \tau_{\phi_b}(g)(\dot w) T_w 
\end{array} .
\end{equation}
For $w \in W_{q\cE}^\circ$ we chose $\dot w = w \in N_{(G^\vee_{\phi_b})^\circ} (M^\vee) / 
(M^\vee)^\circ$, so $\tau_{\phi_b}(\dot w) = 1$ (by Lemma \ref{lem:2.1}.a) and $\alpha_g (T_w) = T_w$.
\end{proof}

We note that $w \mapsto \tau_{\phi_b}(g)(\dot w)$ is not necessarily a character of $W_{q\cE}$, 
because it involves the choice of $\dot{W_{q\cE}}$. Nevertheless, $\natural_{q\cE}$ and
$\natural_{q\cE \otimes \tau_{\phi_b}(g)}$ are equivalent in $H^2 (W_{q\cE},\C^\times)$.

Consider $(\phi,\rho) \in \Phi_e (G)$ with cuspidal support in $(\mc L (F), X_\nr (L)_{rs} \phi_b 
,q\epsilon)$. As explained in the proof of \cite[Theorem 3.8]{AMS3}, upon replacing $(\phi,\rho)$ by
an equivalent parameter, we may assume that 
\[
\phi |_{\mb I_F} = \phi_b |_{\mb I_F}, \textup{d}\phi |_{SL_2 (\C)} \matje{1}{0}{0}{-1} \in \mf t 
\text{ and } \phi (\Fr) \phi_b (\Fr )^{-1} \in X_\nr (L)_{rs} .
\]
We abbreviate $\sigma = \phi (\Fr) \phi_b (\Fr )^{-1}$ and regard it both as an element of\\
$\big( Z(L^\vee)^{\mb I_F} \big)^\circ_\Fr$ and an element of $X_\nr (L)_{rs}$. As $\sigma$
lies in the real split part of a torus, $\sigma^t \in X_\nr (L)_{rs}$ is well-defined for every
$t \in \R$. Then
\[
[0,1] \to \Hom (\mb W_F, L^\vee \rtimes \mb W_F) : t \mapsto \sigma^t \phi_b |_{\mb W_F}
\]
is a path between $\phi_b |_{\mb W_F}$ and $\phi |_{\mb W_F}$, We note that $\tau_\phi (g)$ 
depends only on $\phi |_{\mb W_F}$ (and similarly for $\phi_b$), and that
\[
Z_{G^\vee}(\phi (\mb W_F)) = Z_{G^\vee}(\sigma^t \phi (\mb W_F)) \subset
Z_{G^\vee}(\phi_b (\mb W_F)) \qquad \text{ for all } t \in (0,1] .
\]
Now Lemmas \ref{lem:2.1}.b and \ref{lem:2.2} say that
\begin{equation}\label{eq:2.29}
\tau_\phi = \tau_{\phi_b} \big|_{Z_{G^\vee}(\phi (\mb W_F))} .
\end{equation}
This allows us replace $\tau_\phi$ and $\tau_{\mc S_\phi}$ by $\tau_{\phi_b}$ whenever we please.

The algebra isomorphism $\alpha_g$ from Proposition \ref{prop:2.4} gives an equivalence of
categories
\[
\begin{array}{cccc}
\alpha_g^* : & \mr{Mod} \big( \mh H (\phi_b, q\epsilon \otimes \tau_{\phi_b}(g), \vec{\mb r}) \big) & 
\to & \mr{Mod} \big( \mh H (\phi_b, q\epsilon, \vec{\mb r}) \big)  \\
& (\pi,V) & \mapsto & (\pi \circ \alpha_g,V)
\end{array} .
\]

\begin{lem}\label{lem:2.5}
Suppose that $(\phi,\rho) \in \Phi_e (G)$ with cuspidal support in \\
$(L, X_\nr (L)_{rs} \phi_b ,q\epsilon \otimes \tau_{\phi_b}(g))$. Fix $\vec{r} \in \C^d$ 
and recall the $\mh H (\phi_b, q\epsilon \otimes \tau_{\phi_b}(g), \vec{\mb r})$-modules
$M(\phi,\rho,\vec{r})$ and $E(\phi,\rho,\vec{r})$ from Theorem \ref{thm:1.2}. Then
\[
\alpha_g^* E(\phi,\rho,\vec{r}) = E(\phi,\rho \otimes \tau_{\phi}(g)^{-1},\vec{r}) 
\quad \text{and} \quad 
\alpha_g^* M(\phi,\rho,\vec{r}) = M(\phi,\rho \otimes \tau_{\phi}(g)^{-1},\vec{r}) .
\]
\end{lem}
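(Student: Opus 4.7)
The plan is to unwind the explicit geometric construction of $E(\phi,\rho,\vec{r})$ and $M(\phi,\rho,\vec{r})$ from \cite[\S 3.1]{AMS3} and verify that, at each step, the algebra isomorphism $\alpha_g$ corresponds exactly to replacing the enhancement $\rho$ (which labels the Springer constituent attached to $q\epsilon \otimes \tau_{\phi_b}(g)$) by $\rho \otimes \tau_\phi(g)^{-1}$ (which labels the Springer constituent attached to $q\epsilon$). Concretely, $E(\phi,\rho,\vec{r})$ is realized as the multiplicity space of a Springer-type irreducible representation of $\C[W_{q\cE},\natural_{q\cE \otimes \tau_{\phi_b}(g)}]$ sitting inside a stalk of $q\pi_*(\widetilde{q\cE \otimes \tau_{\phi_b}(g)})$ at the point $(\sigma,u_\phi)$, together with the $S(\mf t^*)$-action encoded by $\sigma$ and the $\vec{\mb r}$-action encoded by $\vec r$. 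Since $\alpha_g$ is the identity on $\C[W^\circ_{q\cE}] \otimes S(\mf t^*) \otimes S(\mr{Lie}^*(X_\nr(G)))$, the only thing to check is what happens on the twisted group algebra $\C[\mf R_{q\cE},\natural_{q\cE}]$.

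First I would invoke Lemma \ref{lem:2.3} together with \eqref{eq:2.29}: since the cuspidal support of $(\phi,\rho)$ is $(\mc L(F),\chi_L\phi_b,q\epsilon \otimes \tau_{\phi_b}(g))$, that of $(\phi,\rho \otimes \tau_\phi(g)^{-1})$ is $(\mc L(F),\chi_L\phi_b,q\epsilon)$; the restriction identity \eqref{eq:2.29} ensures this makes sense at the level of $\mc S_\phi$ versus $\mc S_{\phi_b}$. Thus $E(\phi,\rho\otimes\tau_\phi(g)^{-1},\vec r)$ is indeed an $\mh H(\phi_b,q\epsilon,\vec{\mb r})$-module and the statement is type-correct.

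Next I would compare the bulleted Springer-theoretic construction in the proof of Proposition \ref{prop:2.4} against the definition of $E(\phi,\rho,\vec r)$. The untwisted and twisted pictures share the same underlying variety and perverse sheaf; the only difference is the passage from the intertwiners $I^{\dot w m}$ to $\tilde I^{\dot w m}=I^{\dot w}\circ\rho_{M^\vee}(m)\tau_{\phi_b}(g)^{-1}(m)$. By \cite[Theorem 4.7]{AMS1}, this is precisely the effect of tensoring $\rho$ (resp.\ $\rho_{M^\vee}$) with $\tau_{\phi_b}(g)^{-1}$ under the generalized Springer correspondence. Combining this with the explicit formula $\alpha_g(T_r)=\tau_{\phi_b}(g)(\dot r)T_r$ from Proposition \ref{prop:2.4}, the pullback $\alpha_g^* E(\phi,\rho,\vec r)$ is obtained from the same underlying vector space by composing the $\mf R_{q\cE}$-action with the inverse character twist, which is exactly the Springer datum defining $E(\phi,\rho\otimes\tau_\phi(g)^{-1},\vec r)$.

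For the irreducible quotient, since $\alpha_g^*$ is an equivalence of module categories, it sends the unique irreducible quotient of $E(\phi,\rho,\vec r)$ to the unique irreducible quotient of $\alpha_g^* E(\phi,\rho,\vec r)$; by the previous step and the characterization of $M$ in Theorem \ref{thm:1.2}, this quotient is $M(\phi,\rho\otimes\tau_\phi(g)^{-1},\vec r)$. The main obstacle will be the bookkeeping with the cocycles $\natural_{q\cE}$ and $\natural_{q\cE\otimes\tau_{\phi_b}(g)}$ and the choice of section $\dot{W_{q\cE}}$: one must check that the character-twist description of $\alpha_g$ matches the Springer-theoretic effect of tensoring by $\tau_{\phi_b}(g)^{-1}$ on the nose, not just up to an inner automorphism of the twisted group algebra. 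Once that identification is made (using the same $\dot{W_{q\cE}}$ in both constructions, as in Proposition \ref{prop:2.4}), the rest of the argument is formal.
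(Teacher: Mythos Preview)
Your argument for the standard modules $E$ is essentially the paper's argument, only less explicit. The paper makes the ``bookkeeping with the cocycles'' you worry about precise by invoking the Clifford decomposition $\rho = \rho^\circ \rtimes \tau$ (with $\rho^\circ \in \Irr(\mc S_\phi^\circ)$ and $\tau$ a projective representation of the relevant R-group) and the formula $E_{\log(u_\phi),\sigma_L,\vec r,\rho} = \tau^* \ltimes E^\circ_{\log(u_\phi),\sigma_L,\vec r,\rho^\circ}$ from \cite[Lemma 3.18]{AMS2}. Since $\alpha_g$ is the identity on the algebra generated by $\C[W^\circ_{q\cE}]$ and $S(\mf t^*)$, it fixes $E^\circ_{\log(u_\phi),\sigma_L,\vec r,\rho^\circ}$ and twists only $\tau^*$ to $\tau^* \otimes \tau_{\phi_b}(g)$; one then reads off $\rho^\circ \rtimes (\tau \otimes \tau_{\phi_b}(g)^{-1}) = \rho \otimes \tau_\phi(g)^{-1}$. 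This is exactly the mechanism you describe, but the Clifford formula is what makes the identification hold ``on the nose'' rather than up to an inner automorphism.

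There is, however, a genuine gap in your treatment of $M$. You assert that $M(\phi,\rho,\vec r)$ is the \emph{unique} irreducible quotient of $E(\phi,\rho,\vec r)$ and then use that equivalences preserve unique irreducible quotients. But the paper (and \cite{AMS2,AMS3}) only says that $M$ is a \emph{distinguished} irreducible quotient; for general $\vec r \in \C^d$ uniqueness is not claimed, and your shortcut is unjustified. The paper avoids this by appealing to \cite[(69)--(70)]{AMS2}, which give the analogue of the Clifford formula $M_{\log(u_\phi),\sigma_L,\vec r,\rho} = \tau^* \ltimes M^\circ_{\log(u_\phi),\sigma_L,\vec r,\rho^\circ}$ directly for the irreducible modules; then the same computation as for $E$ goes through verbatim. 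You should replace your quotient argument by this parallel formula.
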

\begin{proof}
Let IM denote the Iwahori--Matsumoto involution of a (twisted) graded Hecke algebra.
(By definition, it restricts to the identity on $\C [\mf R_{q\cE}, \natural_{q\cE}]$.)
By \cite[Theorem 3.8]{AMS3}, for a certain $\sigma_L \in \mr{Lie}(X_\nr (L)_{rs})$:
\begin{equation}\label{eq:2.55}
E(\phi,\rho,\vec{r}) = \mr{IM}^* E_{\log (u_\phi), \sigma_L, \vec{r}, \rho} .
\end{equation}
Let $\rho^\circ$ be an irreducible constituent of $\rho |_{\mc S_\phi^\circ}$, as in \eqref{eq:2.30}.
By \cite[Lemma 3.13]{AMS2} there is a unique 
\[
\tau \in \Irr (\C [\mf R_{\cE,u_{\phi_b},\sigma},
\natural_{q\cE \otimes \tau_{\phi_b}(g))}^{-1}]) \cong
\Irr (\C [(\mc S_\phi)_{\rho^\circ} / \mc S_\phi^\circ, 
\natural_{q\cE \otimes \tau_{\phi_b}(g))}^{-1} ]) 
\]
such that $\rho = \rho^\circ \rtimes \tau$. Let $\tau^* \in \Irr (\C [\mf R_{\cE,u_{\phi_b},
\sigma},\natural_{q\cE \otimes \tau_{\phi_b}(g))}])$ be the contragredient of $\tau$.
By \cite[Lemma 3.18]{AMS2}, which is applicable in this generality by \cite[Theorem 1.4]{AMS3},
\begin{equation}\label{eq:2.31}
E_{\log(u_\phi),\sigma_L,\vec{r}, \rho^\circ \rtimes \tau} =
\tau^* \ltimes E^\circ_{\log (u_\phi),\sigma_L,\vec{r}, \rho^\circ} ,
\end{equation}
where $E^\circ_{\log(u_\phi),\sigma_L,\vec{r}, \rho^\circ}$ is a standard module for
\[
\mh H ((G^\vee_{\phi_b})^\circ, (M^\vee)^\circ, \cE,\vec{\mb r}) 
\otimes \mc O \big( \mr{Lie}(X_\nr (G)) \big) .
\]
Applying $\alpha_g$ in the form \eqref{eq:2.28} to \eqref{eq:2.55} and \eqref{eq:2.31}, we obtain 
\[
\alpha_g^* E(\phi,\rho,\vec{r}) = \mr{IM}^* (\tau^* \otimes \tau_{\phi_b}(g)) 
\ltimes E^\circ_{\log (u_\phi),\sigma_L,\vec{r}, \rho^\circ} .
\]
Again by \eqref{eq:2.31} this equals
\[
\mr{IM}^* E_{\log(u_\phi),\sigma_L,\vec{r}, \rho^\circ \rtimes (\tau \otimes \tau_{\phi_b}(g)^{-1})} 
= \mr{IM}^* E_{\log(u_\phi),\sigma_L,\vec{r}, \rho \otimes \tau_{\phi_b}(g)^{-1}} = 
E(\phi,\rho \otimes \tau_{\phi}(g)^{-1},\vec{r}) .
\]
By \cite[(69) and (70)]{AMS2} the analogue of \eqref{eq:2.31} for $M(\phi,\rho,\vec{r})$ also holds.
Knowing that, the above argument for $E(\phi,\rho,\vec{r})$ applies to $M(\phi,\rho,\vec{r})$. 
\end{proof}

We would like to show that, for every bounded $\phi_b$, the isomorphism
$\alpha_g$ from Proposition \ref{prop:2.4} is induced by an isomorphism
\[
\mc H (\mf s^\vee, \vec{\mb z}) \to \mc H (\mf s^\vee \otimes \tau_{\phi_b}(g),\vec{\mb z}) . 
\]
However, it seems that this cannot be realized with (twisted) graded Hecke algebras.
To approach the desired situation, we replace $\mh H (\phi_b,q\epsilon,\vec{\mb r})$ by a larger 
algebra, which has the same irreducible representations but admits more inner automorphisms. 

Recall from \cite[Lemma 2.3]{AMS2} that 
\begin{equation}\label{eq:2.32}
S \big( \mf t^* \oplus \mr{Lie}^* (X_\nr (G)) \oplus (\C^*)^d \big)^{W_{q\cE}} = 
\mc O \big( \mf t \times \mr{Lie}(X_\nr (G)) \big)^{W_{q\cE}} \oplus \C [\vec{\mb r}]
\end{equation}
is a central subalgebra of $\mh H (\phi_b,q\epsilon,\vec{\mb r})$. More precisely, since 
$W_{q\cE}$ acts faithfully on $\mf t$ (the pointwise stabilizer of $\mf t$ in $G^\vee_{\phi_b}$ 
is $M^\vee$, but the image of $M^\vee$ in $W_{q\cE}$ is 1), \eqref{eq:2.32} is the full centre of 
$\mh H (\phi_b,q\epsilon,\vec{\mb r})$. 

Let $C^{an}(U)$ be the algebra of complex analytic functions on a complex variety $U$. 
We define the algebras
\[
\mh H^{an} (\phi_b,q\epsilon,\vec{\mb r}) = C^{an} \big( \mf t \times \mr{Lie}(X_\nr (G))
\oplus \C^d \big)^{W_{q\cE}} \underset{\mc O \big( \mf t \times \mr{Lie}(X_\nr (G)) \times \C^d 
\big)^{W_{q\cE}}}{\otimes} \mh H (\phi_b,q\epsilon,\vec{\mb r}) .
\]
As observed in \cite[\S 1.5]{SolAHA}, based on \cite[Proposition 4.3]{Opd-Sp}, pullback along
the inclusion
\begin{equation}\label{eq:2.41}
\mh H (\phi_b,q\epsilon,\vec{\mb r}) \to \mh H^{an} (\phi_b,q\epsilon,\vec{\mb r}) 
\end{equation}
provides an equivalence between the respective categories of finite dimensional modules. 
We note that 
\begin{multline}
C^{an} \big( \mf t \times \mr{Lie}(X_\nr (G)) \times \C^d \big) = \\ 
C^{an} \big( \mf t \times \mr{Lie}(X_\nr (G)) \times \C^d \big)^{W_{q\cE}} 
\underset{\mc O \big( \mf t \times \mr{Lie}(X_\nr (G)) \times \C^d \big)^{W_{q\cE}}}{\otimes} 
\mc O \big( \mf t \times \mr{Lie}(X_\nr (G)) \big)
\end{multline}
is a (commutative, but usually not central) subalgebra of $\mh H^{an} (\phi_b,q\epsilon,\vec{\mb r})$.
As vector spaces
\begin{equation}\label{eq:2.45}
\mh H^{an} (\phi_b,q\epsilon,\vec{\mb r}) = C^{an} \big( \mf t \times \mr{Lie}(X_\nr (G)) \times \C^d \big) 
\otimes_\C \C [W_{q \cE},\natural_{q\cE}] .
\end{equation}
We extend $\alpha_g$ to an isomorphism
\[
\mh H^{an} (\phi_b,q\epsilon,\vec{\mb r}) \to 
\mh H^{an} (\phi_b,q\epsilon \otimes \tau_{\phi_b}(g),\vec{\mb r})  
\]
by letting it act trivially on $C^{an} \big( \mf t \times \mr{Lie}(X_\nr (G)) \big)$.

In view of \eqref{eq:2.33} we may assume that $g$ is represented in $T_{AD} = (\mc T / Z(\mc G))(F)$.
Evaluating unramified characters at $g$ determines an element 
\[
x_g \in X^* (X_\nr (T_{AD}))
\]
which can also be regarded as an algebraic character of the subtorus $X_\nr (L_{AD}) \cong 
\big( Z(L_c^\vee)^{\mb I_F} \big)^\circ_\Fr$ of $X_\nr (T_{AD})$. Via the covering
\[
Z(M^\vee)^\circ = Z(L_c^\vee)^{\mb W_F,\circ} \to X_\nr (L_{AD}),
\] 
$x_g$ lifts to a character of $Z(M^\vee)^\circ$.
We note that $-x_g = x_{g^{-1}}$ (in all the aforementioned interpretations).

Composition with the exponential map $\mf t = \mr{Lie}(X_\nr (L_{AD})) \to X_\nr (L_{AD})$ 
yields an analytic character $e^{x_g}$ of $\mf t$. It can be regarded as an invertible element of 
\[
C^{an} \big( \mf t \times \mr{Lie}(X_\nr (G)) \times \C^d \big) \subset 
\mh H^{an} (\phi_b,q\epsilon,\vec{\mb r}).
\]
The automorphism Ad$(e^{x_g})$ of $\mh H^{an} (\phi_b,q\epsilon,\vec{\mb r})$ is inner, so it
acts trivially on representations up to equivalence. Obviously Ad$(e^{x_g})$ can also be considered 
as an inner automorphism of $\mh H^{an} (\phi_b,q\epsilon \otimes \tau_{\phi_b}(g),\vec{\mb r})$.

\begin{lem}\label{lem:2.6}
The algebra isomorphism 
\[
\alpha_g \circ \mr{Ad}(e^{x_g}) : \mh H^{an} (\phi_b,q\epsilon,\vec{\mb r}) \to 
\mh H^{an} (\phi_b,q\epsilon \otimes \tau_{\phi_b}(g),\vec{\mb r})   
\]
equals $\mr{Ad}(e^{x_g}) \circ \alpha_g$. Up to equivalence, it has the same effect on modules
as $\alpha_g$ (as in Lemma \ref{lem:2.5}).
\end{lem}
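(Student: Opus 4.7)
The plan is to treat the lemma's two claims separately: the equality of algebra isomorphisms reduces to a one-line calculation using that $\alpha_g$ fixes $e^{x_g}$, while the equivalence of the induced pullback functors follows from the general triviality of inner automorphisms on module categories. First I would verify that $\alpha_g(e^{x_g}) = e^{x_g}$. The element $e^{x_g}$ lies in the commutative analytic subalgebra $C^{an}(\mf t) \subset \mh H^{an}(\phi_b, q\epsilon, \vec{\mb r})$. By Proposition~\ref{prop:2.4} the automorphism $\alpha_g$ restricts to the identity on $S(\mf t^*) \subset \mh H(\phi_b, q\epsilon, \vec{\mb r})$, and the extension to the analytic completion described just before the lemma was defined to act trivially on the whole of $C^{an}\bigl(\mf t \times \mr{Lie}(X_\nr(G))\bigr)$. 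Hence $\alpha_g$ fixes $e^{x_g}$. The general identity $\psi \circ \mr{Ad}(u) = \mr{Ad}(\psi(u)) \circ \psi$, valid for any algebra homomorphism $\psi$ and invertible element $u$ in its source, now yields $\alpha_g \circ \mr{Ad}(e^{x_g}) = \mr{Ad}(\alpha_g(e^{x_g})) \circ \alpha_g = \mr{Ad}(e^{x_g}) \circ \alpha_g$.

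For the effect on modules, I would invoke the standard principle that inner automorphisms act trivially on module categories up to natural isomorphism: for any algebra $A$ and invertible $u \in A$, and any $(\pi, V) \in \mr{Mod}(A)$, the operator $\pi(u) \colon V \to V$ is an isomorphism from $(\pi, V)$ to $(\pi \circ \mr{Ad}(u), V) = \mr{Ad}(u)^* V$. Applied with $A = \mh H^{an}(\phi_b, q\epsilon, \vec{\mb r})$ and $u = e^{x_g}$ to $W = \alpha_g^* V$, this gives
\[
(\alpha_g \circ \mr{Ad}(e^{x_g}))^* V \;=\; \mr{Ad}(e^{x_g})^* (\alpha_g^* V) \;\cong\; \alpha_g^* V ,
\]
naturally in $V$, for every finite-dimensional module $V$ of $\mh H^{an}(\phi_b, q\epsilon \otimes \tau_{\phi_b}(g), \vec{\mb r})$. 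Transferring along the equivalence of finite-dimensional module categories attached to the inclusion \eqref{eq:2.41}, and combining with Lemma~\ref{lem:2.5}, one recovers the same effect on the standard and irreducible modules $E(\phi, \rho, \vec{r})$ and $M(\phi, \rho, \vec{r})$ described there.

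The only conceptual subtlety, and the whole reason for passing to $\mh H^{an}$ in the first place, is that $e^{x_g}$ is transcendental in the polynomial coordinates on $\mf t$ and so does not belong to $\mh H(\phi_b, q\epsilon, \vec{\mb r})$ itself; once one works in the analytic completion there is no genuine obstacle and both claims are formal. The single bookkeeping point worth flagging is that $\mr{Ad}(e^{x_g})$ has two avatars—as an inner automorphism of the source $\mh H^{an}(\phi_b, q\epsilon, \vec{\mb r})$ and of the target $\mh H^{an}(\phi_b, q\epsilon \otimes \tau_{\phi_b}(g), \vec{\mb r})$—but since $e^{x_g}$ belongs to the common commutative subalgebra $C^{an}(\mf t)$, these two versions are canonically identified, which is exactly what makes the calculation $\alpha_g \circ \mr{Ad}(e^{x_g}) = \mr{Ad}(e^{x_g}) \circ \alpha_g$ meaningful and the inner-triviality argument applicable on either side.
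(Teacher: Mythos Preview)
Your proof is correct, and for the commutativity claim it is more economical than the paper's. You use the general identity $\psi \circ \mr{Ad}(u) = \mr{Ad}(\psi(u)) \circ \psi$ together with the single observation $\alpha_g(e^{x_g}) = e^{x_g}$, which follows immediately from the definition of the analytic extension of $\alpha_g$. The paper instead verifies the equality on a spanning set: it introduces the subalgebra $A$ generated by $C^{an}\bigl(\mf t \times \mr{Lie}(X_\nr(G))\bigr)$ and $\C[W_{q\cE}^\circ]$, notes that $\alpha_g$ is the identity on $A$ (so commutes with $\mr{Ad}(e^{x_g})$ there trivially), and then carries out an explicit computation on each $T_r$ for $r \in \mf R_{q\cE}$, arriving at $\tau_{\phi_b}(g)(\dot r)\, T_r\, r^{-1}(e^{x_g}) e^{-x_g}$ from both sides. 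Your route avoids this case split entirely; the paper's explicit formula is not reused later, so nothing is lost. For the module statement both proofs invoke the same principle that inner automorphisms act trivially on modules up to isomorphism.
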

\begin{proof}
The statement about modules is clear from the above. Let $A$ be the subalgebra of 
$\mh H^{an} (\phi_b,q\epsilon,\vec{\mb r})$ generated by 
$C^{an} \big( \mf t \times \mr{Lie}(X_\nr (G)) \big)$ and $\C [W_{q\cE}^\circ]$. Then 
\begin{equation}\label{eq:2.34}
\mh H^{an} (\phi_b,q\epsilon,\vec{\mb r}) = A \otimes \C [\mf R_{q\cE},\natural_{q\cE}] 
\end{equation}
as vector spaces, and $A$ can also be regarded as a subalgebra of 
$\mh H^{an} (\phi_b,q\epsilon \otimes \tau_{\phi_b}(g),\vec{\mb r})$.
With that identification $\alpha_g$ is the identity on $A$, and in particular it commutes with
Ad$(e^{x_g})$ on $A$. For $r \in \mf R_{q\cE}$, represented by $\dot r \in \widetilde{W_{q\cE}}$ 
as in \eqref{eq:2.28}:
\begin{align*}
\alpha_g \circ \mr{Ad}(e^{x_g}) (T_r) & = \alpha_g (e^{x_g} T_r e^{-x_g}) = 
\alpha_g (T_r r^{-1}(e^{x_g}) e^{-x_g}) \\
& = \alpha_g (T_r) r^{-1}(e^{x_g}) e^{-x_g} =
\tau_{\phi_b}(g)(\dot r) T_r r^{-1}(e^{x_g}) e^{-x_g} \\
& = e^{x_g} \tau_{\phi_b}(g)(\dot r) T_r e^{-x_g} =
e^{x_g} \tau_{\phi_b}(g)(\dot r) T_r e^{-x_g} \\
& = \mr{Ad}(e^{x_g}) \circ \alpha_g (T_r) .
\end{align*}
In view of \eqref{eq:2.34}, this shows that $\alpha_g \circ \mr{Ad}(e^{x_g}) = \mr{Ad}(e^{x_g})
\circ \alpha_g$ on the whole of $\mh H^{an} (\phi_b,q\epsilon,\vec{\mb r})$.
\end{proof}

\subsection{Action on affine Hecke algebras} \
\label{par:2.3}

Recall the twisted affine Hecke algebra $\mc H (\mf s^\vee, \vec{\mb z})$ from \eqref{eq:2.4}.
For every $\phi_b$ with $(L,\phi_b,q\epsilon) \in \mf s^\vee_L$, we can consider the subalgebra
$\mc H (\mf s^\vee,\phi_b,\vec{\mb z})$ with as data the torus $\mf s^\vee_L$, roots
$\{ \alpha \in \Phi_{\mf s^\vee} : s_\alpha (\phi_b) = \phi_b\}$, the finite group
$W_{q\cE} = W_{\mf s^\vee, \phi_b, q\epsilon}$, parameters $\lambda, \lambda^*$ and the 2-cocycle 
$\natural_{q\cE}$. As explained in the proofs of \cite[Theorems 2.5.a and 3.18.a]{AMS3}, 
there is a natural equivalence between the following categories:
\begin{itemize}
\item[(i)] finite dimensional $\mc H (\mf s^\vee,\vec{\mb z})$-modules with weights in 
$W_{q\cE} X_\nr (L)_{rs} \phi_b \times \R_{>0}^d$;
\item[(ii)] finite dimensional $\mc H (\mf s^\vee,\phi_b,\vec{\mb z})$-modules with weights in 
$X_\nr (L)_{rs} \phi_b \times \R_{>0}^d$.
\end{itemize}
(The weights are meant with respect to the commutative subalgebras from the 
Bernstein presentation.) The map
\[
\begin{array}{ccccc}
\exp_{\phi_b} : & \mf t \times \mr{Lie}(X_\nr (G)_{rs}) \times \C^d & \to & 
X_\nr (L) \phi_b \times (\C^\times )^d & = \mf s^\vee_L \times (\C^\times )^d \\
& (\lambda, \vec{r}) & \mapsto & (\exp (\lambda) \phi_b, \exp (\vec r)) 
\end{array} 
\]
induces a $W_{q\cE}$-equivariant homomorphism
\[
\begin{array}{cccc}
\exp_{\phi_b}^* : & \mc O \big( \mf s_L^\vee \times (\C^\times)^d \big) & \to & 
C^{an} \big( \mf t \times \mr{Lie}(X_\nr (G)) \times \C^d \big) \\
& f & \mapsto & f \circ \exp_{\phi_b} 
\end{array}.
\]
By \cite[Theorem 2.1.4]{SolAHA} this extends to an injective algebra homomorphism
\begin{equation}\label{eq:2.35}
\exp_{\phi_b}^* : \mc H (\mf s^\vee, \phi_b, \vec{\mb z}) \to \mh H^{an}(\phi_b, q\epsilon, \vec{\mb r}) . 
\end{equation}
By \cite[Corollary 2.15]{SolAHA} (see the explanation in the proof of \cite[Theorem 3.18.a]{AMS3}),
\eqref{eq:2.35} induces an equivalence between (ii) and the category of
\begin{itemize}
\item[(iii)] \begin{tabular}{l}            
finite dimensional $\mh H^{an}(\phi_b, q\epsilon, \vec{\mb r})$-modules (or, equivalently,\\
$\mh H (\phi_b, q\epsilon, \vec{\mb r})$-modules)
with weights in $\mf t_\R \times \mr{Lie}(X_\nr (G)_{rs}) \times \R_{>0}^d$.
\end{tabular}
\end{itemize}
Next we would like to define an analogue of conjugation by $x_g$ on $\mc H (\mf s^\vee,\vec{\mb z})$.
However, $x_g \in X^* (X_\nr (T_{AD}))$ does in general not define a character of $T_{\mf s^\vee}$ 
or $\mf s_L^\vee$. The best approximation we found is:

\begin{lem}\label{lem:2.7}
Let $x \in X^* (X_\nr (L_{AD}))$ and $w \in N_{{G^\vee}_\Sc}(L^\vee_c \rtimes \mb W_F) / 
L^\vee_c \cong N_{G^\vee}(L^\vee \rtimes \mb W_F) / L^\vee$. Then $w(x) - x$ 
naturally defines an algebraic character of $X_\nr (L)$, trivial on $X_\nr (G)$.
\end{lem}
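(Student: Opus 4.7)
The plan is to use the standard dual descriptions $X_\nr(G) \cong (Z(G^\vee)^{\mb I_F})^\circ_\Fr$, $X_\nr(L) \cong (Z(L^\vee)^{\mb I_F})^\circ_\Fr$, and $X_\nr(L_{AD}) \cong (Z(L_c^\vee)^{\mb I_F})^\circ_\Fr$ to view all three as complex tori. From the short exact sequence $1 \to Z(G^\vee) \to Z(L^\vee) \to Z(L^\vee)/Z(G^\vee) \to 1$, combined with the isogeny $Z(L_c^\vee) \twoheadrightarrow Z(L^\vee)/Z(G^\vee)$ (whose finite kernel $Z(G^\vee_\Sc)$ disappears on identity components), one gets a natural inclusion of subtori $X_\nr(G) \hookrightarrow X_\nr(L)$ by restriction of characters, together with a natural isogeny
\[
\iota : X_\nr(L_{AD}) \longrightarrow X_\nr(L)/X_\nr(G)
\]
between complex tori of equal dimension. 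Any character of the target pulls back, via the projection $X_\nr(L) \twoheadrightarrow X_\nr(L)/X_\nr(G)$, to an algebraic character of $X_\nr(L)$ which is by construction trivial on $X_\nr(G)$. So the statement reduces to showing that $(w-1)x \in X^*(X_\nr(L_{AD}))$ lies in the image of the induced injection $\iota^* : X^*(X_\nr(L)/X_\nr(G)) \hookrightarrow X^*(X_\nr(L_{AD}))$.

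The cokernel $A$ of $\iota^*$ is a finite abelian group with a natural $w$-action, and the heart of the argument is to show $w$ acts trivially on $A$. By Pontryagin duality, $A$ is dual to the finite kernel of $\iota$, and unwinding the identifications, this kernel is controlled by the abelian cokernel of $\mc L(F) \to \mc L_{AD}(F)$, which sits inside $H^1(F, Z(\mc G))$. The element $w \in N_{G^\vee_\Sc}(L_c^\vee \rtimes \mb W_F)/L_c^\vee$ corresponds, under the canonical identification with the relative Weyl group $N_G(\mc L)/\mc L$, to conjugation by some $n \in N_G(\mc L)$, which descends to an inner automorphism of $L_{AD}$. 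Since inner automorphisms act trivially on any abelian quotient, $w$ acts trivially on the cokernel of $L \to L_{AD}$, hence trivially on $A$. This gives $(w-1) X^*(X_\nr(L_{AD})) \subseteq \iota^*\bigl(X^*(X_\nr(L)/X_\nr(G))\bigr)$, and the triviality on $X_\nr(G)$ is built into the factorization through $X_\nr(L)/X_\nr(G)$.

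Alternatively, the triviality on $X_\nr(G)$ can be checked directly: since $w \in G^\vee_\Sc$ acts on the central subgroup $Z(G^\vee)$ only through an inner automorphism of $G^\vee$, it fixes $Z(G^\vee)$ pointwise, hence acts trivially on $X_\nr(G)$, so $(w-1)$ applied to any rational lift of $x$ restricts trivially to $X_\nr(G)$.

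The main obstacle I anticipate is the careful bookkeeping of the directions of the various maps and the precise identification of the finite group $A$ with a subquotient of $H^1(F, Z(\mc G))$. The conceptual crux, however, is soft: inner automorphisms act trivially on abelian quotients, and this kills the potential obstruction to integrality of $(w-1)x$.
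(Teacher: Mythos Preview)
Your framework is sound: reducing to the isogeny $\iota : X_\nr(L_{AD}) \to X_\nr(L)/X_\nr(G)$ and showing $(w-1)x$ lands in the image of $\iota^*$ is equivalent to showing $w$ acts trivially on the finite kernel of $\iota$, and this is indeed the crux. But your justification of that triviality has two problems.

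First, the claim that conjugation by $n \in N_G(\mc L)$ ``descends to an inner automorphism of $L_{AD}$'' is false: for $n \notin \mc L$, this is an outer automorphism of $\mc L_{AD}$. One can rescue the conclusion by noting instead that $n$ acts trivially on $Z(\mc G)$ (it is central in $\mc G$), hence trivially on $H^1(F,Z(\mc G))$, hence on the cokernel of $L \to L_{AD}$ via the connecting map. Second, and more seriously, the asserted identification of $\ker\iota$ with (the dual of) the cokernel of $L \to L_{AD}$ is not justified; ``unwinding the identifications'' here is nontrivial and you have not done it.

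The paper's proof avoids all of this by staying on the dual side. The multiplication map $Z(L_c^\vee) \times Z(G^\vee) \to Z(L^\vee)$ induces a finite covering $X_\nr(L_{AD}) \times X_\nr(G) \to X_\nr(L)$ whose kernel lies in the image of $Z({G^\vee}_\Sc) \cap Z(L_c^\vee)^\circ$ (in the first factor). One extends $x$ trivially across $X_\nr(G)$ and checks that $w(x)-x$ vanishes on this kernel: for $z \in Z({G^\vee}_\Sc)$ and any lift of $w$ to ${G^\vee}_\Sc$, one has $w^{-1} z w = z$ since $z$ is central, so $(w(x)-x)(z) = x(w^{-1} z w \, z^{-1}) = 1$. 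That is the whole proof. You essentially have this observation in your ``alternatively'' paragraph (applied to $Z(G^\vee)$ for the triviality on $X_\nr(G)$); the point is that the same one-line computation with $Z({G^\vee}_\Sc)$ in place of $Z(G^\vee)$ gives the full statement, with no detour through the $p$-adic side.
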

\begin{proof}
The natural map $Z(L_c^\vee) \times Z(G^\vee) \to Z(L^\vee)$ induces a finite covering of tori
\begin{equation}\label{eq:2.36}
X_\nr (L_{AD}) \times X_\nr (G) \to X_\nr (L) ,
\end{equation}
compare with \cite[Lemma 3.7]{AMS3}. Its kernel is contained in the image of
\begin{equation}\label{eq:2.56}
\big( Z({G^\vee}_\Sc) \cap Z(L_c^\vee)^\circ \big) \times Z(G^\vee)^\circ \longrightarrow
X_\nr (L_{AD}) \times X_\nr (G) .
\end{equation}
For any $z \in Z({G^\vee}_\Sc ) \cap Z(L_c^\vee)^\circ$:
\[
(w(x) - x) (z) = x (w^{-1} z w z^{-1}) = x (1) = 1 .
\]
So we may regard $w(x) - x$ as a character of $X_\nr (L_{AD}) \times X_\nr (G)$ which is
trivial on $\big( Z({G^\vee}_\Sc) \cap Z(L_c^\vee)^\circ \big) \times Z(G^\vee)^\circ$. 
In view of \eqref{eq:2.56}, $w(x)-x$ factors through \eqref{eq:2.36}.
\end{proof}

To proceed, we impose the following conditions:
\begin{cond}\label{cond:2}
\begin{itemize}
\item[(i)] The group $\mf R_{\mf s^\vee}$ fixes the basepoint $(\phi_L,\rho_L)$ of $\mf s_L^\vee$.
\item[(ii)] $w (x) - x \in X^* (T_{\mf s^\vee})$ for all $w \in W_{\mf s^\vee} ,\; x \in
X^* (X_\nr (T_{AD}))$.
\end{itemize}
\end{cond}
Assuming Condition \ref{cond:2}.i, the action of $\mf R_{\mf s^\vee}$ on $\mf s_L$ 
lifts to an action on $T_{\mf s^\vee}$, by algebraic group automorphisms.

Recall from \cite[\S 3]{Lus-Gr} that $\mc H (T_{\mf s^\vee}, W_{\mf s^\vee}^\circ, 
\lambda, \lambda^*, \vec{\mb z})$ has an Iwahori--Matsumoto presentation, with a basis 
$\{ N_w : w \in X^* (T_{\mf s^\vee}) \rtimes W_{\mf s^\vee}^\circ \}$. 
In terms of the length function $\ell$ and a simple (affine) reflection
$s$, the multiplication relations are determined by
\begin{equation}\label{eq:2.49}
N_w N_s = \left\{ 
\begin{array}{ll}
N_{ws} & \text{ if } \ell (ws) = \ell (w) + 1\\
N_{ws} + (\mb z (s) - \mb z (s)^{-1}) N_w & \text{ if } \ell (ws) = \ell (w) - 1
\end{array} \right. .
\end{equation}
Here $\mb z$ means the function on $X^* (T_{\mf s^\vee}) \rtimes W_{\mf s^\vee}^\circ$ 
derived from $\vec{\mb z}, \lambda, \lambda^*$ as in \cite[\S 3.1]{Lus-Gr}. When Condition 
\ref{cond:2}.i holds, $\mc H (\mf s^\vee, \vec{\mb z})$ also admits an Iwahori--Matsumoto 
presentation, such that $N_w N_r = N_{w r}$ for all $r \in \mf R_{\mf s^\vee}, w \in 
X^* (T_{\mf s^\vee}) \rtimes W_{\mf s^\vee}^\circ$.

\begin{prop}\label{prop:2.8}
Assume that Condition \ref{cond:2} holds and let $g \in T_{AD}$. There exists a
$\C [\vec{\mb z},\vec{\mb z}^{-1}]$-algebra 
automorphism $\mr{Ad}(x_g)$ of $\mc H (\mf s^\vee, \vec{\mb z})$, such that 
\[
\mr{Ad}(x_g)(N_w)  = N_{w (w^{-1}(x_g) - x_g)} \qquad w \in X^* (T_{\mf s^\vee}) 
\rtimes W_{\mf s^\vee} .
\]
It fixes $\mc O (T_{\mf s^\vee}) \otimes \C [\vec{\mb z},\vec{\mb z}^{-1}]$ pointwise.
\end{prop}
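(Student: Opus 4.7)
The plan is to realize $\mr{Ad}(x_g)$ as the restriction of an inner automorphism from a slightly larger affine Hecke algebra. First I would check that the prescription
\[
\sigma : X^*(T_{\mf s^\vee}) \rtimes W_{\mf s^\vee} \to X^*(T_{\mf s^\vee}) \rtimes W_{\mf s^\vee}, \qquad (\lambda, v) \mapsto (\lambda + x_g - v(x_g), v),
\]
is a well-defined group automorphism. Condition \ref{cond:2}.ii is exactly what is needed to make the translation component $x_g - v(x_g)$ lie in $X^*(T_{\mf s^\vee})$, while Condition \ref{cond:2}.i is what allows $\mf R_{\mf s^\vee}$ to act on $T_{\mf s^\vee}$, so that the formula makes sense for $v \in \mf R_{\mf s^\vee}$. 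Abstractly, $\sigma$ is conjugation by $x_g$ inside the slightly larger semidirect product $\tilde X \rtimes W_{\mf s^\vee}$ with $\tilde X = X^*(T_{\mf s^\vee}) + \Z x_g$.

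Next I would assemble an auxiliary algebra $\tilde{\mc H}$ by running the construction of Section \ref{sec:1} with the lattice $\tilde X$ in place of $X^*(T_{\mf s^\vee})$, keeping the same roots $\Phi_{\mf s^\vee}$, labels $\lambda, \lambda^*$, array $\vec{\mb z}$ and twist $\kappa_{\mf s^\vee}$. Condition \ref{cond:2} guarantees that $W_{\mf s^\vee}$ acts on $\tilde X$, so $\tilde{\mc H}$ is a well-defined twisted affine Hecke algebra and the natural inclusion exhibits $\mc H(\mf s^\vee, \vec{\mb z})$ as a $\C[\vec{\mb z}, \vec{\mb z}^{-1}]$-subalgebra. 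Inside $\tilde{\mc H}$ the element $\theta_{x_g}$ is invertible, so conjugation by it is an inner automorphism; it fixes the commutative subalgebra $\mc O(T_{\tilde X})$ pointwise and in particular fixes $\mc O(T_{\mf s^\vee})$ pointwise.

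The core of the proof is to check that $\mr{Ad}(\theta_{x_g})$ preserves the subalgebra $\mc H(\mf s^\vee, \vec{\mb z})$ and realizes the formula in the statement. Applying the Bernstein--Lusztig--Zelevinsky relation to $\theta_{x_g} N_{s_\alpha} \theta_{-x_g}$ for each simple reflection $s_\alpha \in W_{\mf s^\vee}^\circ$ produces, after rearrangement, a term $N_{s_\alpha} \theta_{s_\alpha(x_g) - x_g}$ plus a correction coming from the BLZ coefficient $C(\alpha)$. By Condition \ref{cond:2}.ii the exponent $s_\alpha(x_g) - x_g$ lies in $X^*(T_{\mf s^\vee})$, and the correction can be explicitly rewritten as an element of $\mc O(T_{\mf s^\vee})$ using the polynomial identity $(1 - \theta_{s_\alpha(x_g) - x_g}) = (1 - \theta_{-\alpha}^2) \cdot p_{\alpha,x_g}(\theta)$ that arises from the specific form of $C(\alpha)$; iterating over a reduced expression takes care of all $N_v$ for $v \in W_{\mf s^\vee}^\circ$. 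For $N_r$ with $r \in \mf R_{\mf s^\vee}$, the analogous calculation only requires $r$ to act on $T_{\mf s^\vee}$, which is Condition \ref{cond:2}.i. Unravelling the product in $\tilde X \rtimes W_{\mf s^\vee}$ then shows that $\mr{Ad}(\theta_{x_g})(N_w) = N_{x_g w x_g^{-1}} = N_{w (w^{-1}(x_g) - x_g)}$ for every $w \in X^*(T_{\mf s^\vee}) \rtimes W_{\mf s^\vee}$, as required.

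The main obstacle is the closure claim in the previous paragraph: a priori $\mr{Ad}(\theta_{x_g})$ only lands in $\tilde{\mc H}$, and one must extract from the BLZ calculation that every correction term in $\mc O(T_{\tilde X})$ actually belongs to $\mc O(T_{\mf s^\vee})$. This reduces to verifying divisibility of $(1 - \theta_{s_\alpha(x_g) - x_g})$ by the BLZ denominator $(1 - \theta_{-\alpha}^2)$ after multiplication by $C(\alpha)$; invertibility of $\mr{Ad}(x_g)$ then comes for free by considering the analogous construction for $-x_g = x_{g^{-1}}$.
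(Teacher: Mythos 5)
Your construction takes a genuinely different route from the paper's, and the divergence conceals a real error. The paper stays inside the Iwahori--Matsumoto presentation: it notes that $w \mapsto x_g w x_g^{-1}$ does not preserve the length function, realizes $\mc H (T_{\mf s^\vee}, W_{\mf s^\vee}^\circ, 1,1,\vec{\mb z})$ as the Iwahori--Hecke algebra $\mc H (G',I)$ of a $p$-adic group with $x_g \in G'_\ad$, so that $N_w \mapsto N_{x_g w x_g^{-1}}$ becomes the isomorphism $\mc H (G',I) \to \mc H (G',\mr{Ad}(x_g)(I))$, and from this extracts the compatibility of conjugation with length-additivity needed to match the relations \eqref{eq:2.49}. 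Your alternative --- conjugation by $\theta_{x_g}$ in the enlarged algebra $\tilde{\mc H}$ --- is sound up to a point: $\tilde{\mc H}$ is well defined under Condition \ref{cond:2}, $\mr{Ad}(\theta_{x_g})$ fixes $\mc O (T_{\mf s^\vee}) \otimes \C [\vec{\mb z},\vec{\mb z}^{-1}]$ pointwise, and your divisibility check on the Bernstein--Lusztig--Zelevinsky correction terms does show that the subalgebra $\mc H (\mf s^\vee,\vec{\mb z})$ is preserved. So you do obtain an automorphism with the correct behaviour on the commutative subalgebra.

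The gap is your closing claim that ``unravelling the product'' yields $\mr{Ad}(\theta_{x_g})(N_w) = N_{x_g w x_g^{-1}}$ for \emph{every} $w \in X^*(T_{\mf s^\vee}) \rtimes W_{\mf s^\vee}$. This does not follow from the computations you describe, and it is false. Take $\Phi_{\mf s^\vee}$ of type $A_1$ with equal labels, set $a = \mb z - \mb z^{-1}$, $s_1 = s_\alpha$, $s_0 = t_\alpha s_\alpha$ and $x_g = \alpha /2$ (so $s_\alpha (x_g) - x_g = -\alpha$ and Condition \ref{cond:2} holds). Then $N_{s_0} = \theta_\alpha N_{s_1}^{-1}$, $\;\theta_{x_g} N_{s_1} \theta_{x_g}^{-1} = N_{s_0}$ as expected, but
\[
\theta_{x_g}\, N_{s_0}\, \theta_{x_g}^{-1} \;=\; \theta_{2\alpha} N_{s_1}^{-1} - a\, \theta_\alpha
\;=\; N_{s_0} N_{s_1} N_{s_0} - a\, \theta_\alpha \;\neq\; N_{s_0 s_1 s_0} \;=\; N_{x_g s_0 x_g^{-1}} .
\]
The discrepancy is unavoidable: the image of $N_{s_0}$ under any algebra automorphism must satisfy $X^2 = aX + 1$, and $N_{s_0}N_{s_1}N_{s_0}$ does not. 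Structurally, $N_{t_\mu v}$ equals neither $\theta_\mu N_v$ nor $N_{t_\mu} N_v$ unless lengths add, so knowing $\mr{Ad}(\theta_{x_g})$ on $\mc O (T_{\mf s^\vee})$ and on the $N_{s_\alpha}$ does not let you ``unravel'' its value on a general basis element $N_w$ into the claimed form; conjugation by a translation scrambles exactly the length-additivity pattern on which the Iwahori--Matsumoto basis is built, which is the difficulty the paper's proof is designed to confront. If you pursue your route, the conclusion should be formulated on the Bernstein generators --- $\theta_x \mapsto \theta_x$ and $N_{s_\alpha} \mapsto N_{s_\alpha}\theta_{s_\alpha (x_g)-x_g} + g_\alpha (1-\theta_{s_\alpha (x_g)-x_g})$ --- rather than as the displayed formula on all of $X^*(T_{\mf s^\vee}) \rtimes W_{\mf s^\vee}$.
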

\begin{proof}
Notice that Condition \ref{cond:2}.ii is needed to make the formula for Ad$(x_g)$ 
well-defined. The automorphism $w \mapsto x_g w x_g^{-1} = w (w^{-1}(x_g) - x_g)$ of 
$X^* (T_{\mf s^\vee}) \rtimes W_{\mf s^\vee}$ need not preserve the length function $\ell$. 
However, we can realize $\mc H (T_{\mf s^\vee}, W_{\mf s^\vee}^\circ, 1, 1, \vec{\mb z})$
as the Iwahori--Hecke algebra $\mc H (G',I)$ of a suitable reductive $p$-adic group $G'$. Then
conjugation by $x_g$ becomes conjugation by an element of the adjoint group $G'_\ad$. 
Ad$(x_g)$ defines an automorphism of $\mc H (G')$ which restricts to an algebra isomorphism
\[
\mc H (G',I) \to \mc H (G',\mr{Ad}(x_g)(I)) : N_w \mapsto N_{x_g w x_g^{-1}} . 
\]
Comparing with the multiplication relations \eqref{eq:2.49} for these Iwahori--Hecke algebras,
we deduce that, for $w, w' \in X^* (T_{\mf s^\vee}) \rtimes W_{\mf s^\vee}^\circ$:
\[
\ell (x_g w w' x_g^{-1}) = \ell (x_g w x_g^{-1}) + \ell (x_g w' x_g^{-1})
\; \Longleftrightarrow \; \ell (w w') = \ell (w) + \ell (w') .
\]
Further, conjugation by any $r \in \mf R_{\mf s^\vee}$ defines an automorphism of 
$X^* (T_{\mf s^\vee}) \rtimes W_{\mf s^\vee}^\circ$ which preserves $\ell$, so conjugation by
$x_g r x_g^{-1}$ defines an automorphism which preserves $\ell \circ \mr{Ad}(x_g)$.
This implies that $N_w \mapsto N_{x_g w x_g^{-1}} = N_{w (w^{-1}(x_g) - x_g)}$ defines an 
automorphism of $\mc H (T_{\mf s^\vee}, W_{\mf s^\vee}, \lambda, \lambda^*, \vec{\mb z})$.

By Lemma \ref{lem:2.7} $X^* (T_{\mf s^\vee})$ and $x_g$ are contained in the commutative group
$X^* (X_\nr (L))$. Hence Ad$(x_g)$ fixes $N_w$ for every $w \in X^* (T_{\mf s^\vee})$.
Starting with the Iwahori--Matsumoto presentation, the subalgebra $\mc O (T_{\mf s^\vee})$
is constructed from the $N_w$ with $w$ in the positive part of $X^* (T_{\mf s^\vee})$, see
\cite[Lemmas 2.7 and 3.4]{Lus-Gr}. As Ad$(x_g)$ fixes $\C [\vec{\mb z},\vec{\mb z}^{-1}]$
by definition, it follows that it fixes 
$\mc O (T_{\mf s^\vee}) \otimes \C [\vec{\mb z},\vec{\mb z}^{-1}]$ pointwise.
\end{proof}

In the remainder of this paragraph we assume that Condition \eqref{cond:2} holds.
Because Ad$(x_g)$ is the identity on $\mc O (T_{\mf s^\vee})$, it stabilizes the 
subalgebra $\mc H (\mf s^\vee, \phi_b, \vec{\mb z})$ of $\mc H (\mf s^\vee, \vec{\mb z})$.
Via the injection \eqref{eq:2.35}, Ad$(x_g)$ can be extended uniquely to an 
automorphism of $\mh H^{an} (\phi_b,q\epsilon,\vec{\mb r})$ which is the identity on
$C^{an}(\mf t \times \mr{Lie}(X_\nr (G)))$. For $\phi_b = \phi_L$ it is none other than
Ad$(e^{x_g})$, but for $\phi_b \neq \phi_L$ the map $\exp_{\phi_b}$ creates more 
complications. From the proof of Proposition \ref{prop:2.8} we see that, for $w \in W_{q\cE}$:
\begin{equation}\label{eq:2.42}
\exp_{\phi_b}^* \mr{Ad}(x_g) (N_w) = (w^{-1}(x_g) - x_g) (\sigma) 
\mr{Ad}(e^{x_g}) (\exp_{\phi_b}^* N_w) \in \mh H^{an}(\phi_b,q\epsilon,\vec{\mb r}) .
\end{equation}
where $\sigma = \phi_b (\Fr) \phi_L (\Fr)^{-1} \in X_\nr (L)$. For $w \in W_{q\cE}^\circ$,
the calculations for Proposition \ref{prop:2.8}.a entail that $w^{-1}(x_g) - x_g$ is $\Z$-linear
combination of roots $\alpha$ with $\alpha (\sigma) = 1$. 
Hence $(w^{-1}(x_g) - x_g)(\sigma) = 1$ and 
\begin{equation}\label{eq:2.43}
\mr{Ad}(x_g) = \mr{Ad}(e^{x_g}) \quad \text{on} \quad
\mh H^{an} \big( (G^\vee_{\phi_b})^\circ ,(M^\vee)^\circ, \cE,\vec{\mb r} \big) ,
\end{equation}
the subalgebra of $\mh H^{an}(\phi_b,q\epsilon,\vec{\mb r})$ generated by 
$C^{an}(\mf t \times \mr{Lie}(X_\nr (G)))$ and $\C [W_{q\cE}^\circ]$.

Fix a lift $\sigma_\Sc \in Z(M^\vee)^\circ$ of $\sigma \in X_\nr (L)$.
For $w \in \mf R_{q\cE}$ we can relate the character
\begin{equation}\label{eq:2.44}
w \mapsto (w^{-1}(x_g) - x_g)(\sigma) = x_g (w \sigma_\Sc w^{-1} \sigma_\Sc^{-1} ) 
\end{equation}
to $\tau_{\phi_b}(g)(w)$. Write $g = g_c g_x$ as in \eqref{eq:2.51}, with $g_c$ compact 
and $g_x$ in the image of $X_* (T_{AD})$. Then 
\[
\tau_{\phi_b}(g) = \tau_{\phi_b}(g_c) \tau_{\phi_b}(g_x) . 
\]
As every unramified character of $T$ or $T_{AD}$ is trivial on $g_c$, 
$\tau_{\phi_b}(g_c) |_{T^\vee}$ is insensitive to twisting $\phi_L$ by
unramified characters, and $x_g$ depends only on $g_x$. 

Suppose that $\tau_{\phi_L}(g_x) = 1$ (which can be achieved for instance when 
$\phi_b$ or $\phi_L$ factors through ${}^L T$). The expression 
$\tau_{\phi_b}(g_x) (w) = \langle g_x , c_w \rangle$
from \eqref{eq:2.16} and \eqref{eq:2.13} works out as  
\[
\tau_{\phi_b}(g_x) (w) = \langle g_x, w \sigma_\Sc w^{-1} \sigma_\Sc^{-1} \rangle 
= x_g (w \sigma_\Sc w^{-1} \sigma_\Sc^{-1}) .
\]
Thus $\tau_{\phi_b}(g_x)$ equals \eqref{eq:2.44}, and we can regard that as the 
unramified part of $\tau_{\phi_b}(g)$.

By Condition \eqref{cond:2} $W_{\mf s^\vee}^\circ = W_{q\cE}^\circ$ and 
$\mf R_{\mf s^\vee} = \mf R_{q\cE}$, where $q\epsilon = q\cE |_{u_\phi}$ is regarded as 
an enhancement of $\phi_L$. Via the choice of representatives $\dot{W_{q\cE}}$ (as in 
the proof of Proposition \ref{prop:2.4}), $\tau_{\phi_L}(g)$ defines an isomorphism
\[
\begin{array}{ccc}
\C [W_{\mf s^\vee}, \kappa_{\mf s^\vee}] = \C [W_{q\cE},\natural_{q\cE}] & \to &
\C [W_{\mf s^\vee}, \kappa_{\mf s^\vee \otimes \tau_{\phi_L} (g)}] = 
\C [W_{q\cE},\natural_{q\cE \otimes \tau_{\phi_L}(g)}] \\
N_w & \mapsto & \tau_{\phi_L}(g)(\dot w) N_w 
\end{array},
\]
compare with \eqref{eq:2.28}. It extends to an algebra isomorphism
\begin{equation}\label{eq:2.54}
\alpha_g : \mc H (\mf s^\vee,\vec{\mb z}) \to 
\mc H (\mf s^\vee \otimes \tau_{\phi_L}(g),\vec{\mb z}) 
\end{equation}
which is the identity on $\mc H (T_{\mf s^\vee}, W_{\mf s^\vee}^\circ, 
\lambda, \lambda^*, \vec{\mb z})$. Consider the composition 
\begin{equation}\label{eq:2.37}
\alpha_g \circ \mr{Ad}(x_g) : \mc H (\mf s^\vee,\vec{\mb z}) \to 
\mc H (\mf s^\vee \otimes \tau_{\phi_L}(g),\vec{\mb z}) .
\end{equation}
We note that, when $g = g_c g_x$ is as in \eqref{eq:2.51} and $\tau_{\phi_L}(g_x) = 1$,
we can decompose \eqref{eq:2.37} in $\alpha_g = \alpha_{g_c}$ and 
$\mr{Ad}(x_g) = \mr{Ad}(x_{g_x})$.

\begin{thm}\label{thm:2.9}
Recall that Condition \eqref{cond:2} is in force. Let $\vec z \in \R_{>0}^d$ and let 
$(\phi,\rho) \in \Phi_e (G)^{\mf s^\vee \otimes \tau_{\phi_L} (g)}$.
The equivalence of categories
\[
\begin{array}{cccc}
(\alpha_g \circ \mr{Ad}(x_g))^* : &
\mr{Mod} \big( \mc H (\mf s^\vee \otimes \tau_{\phi_L} (g),\vec{\mb z}) \big) & 
\to & \mr{Mod} \big( \mc H (\mf s^\vee,\vec{\mb z}) \big),\\
& \pi & \mapsto & \pi \circ \alpha_g \circ \mr{Ad}(x_g)
\end{array}
\]
sends $\bar E (\phi,\rho,\vec z)$ to $\bar E (\phi,\rho \otimes \tau_\phi (g)^{-1},\vec z)$ 
and $\bar M (\phi,\rho,\vec z)$ to $\bar M (\phi,\rho \otimes \tau_\phi (g)^{-1},\vec z)$.
\end{thm}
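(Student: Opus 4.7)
The plan is to reduce Theorem~\ref{thm:2.9} to its graded Hecke algebra analogue, Lemma~\ref{lem:2.5}, via the chain of category equivalences (i)$\leftrightarrow$(ii)$\leftrightarrow$(iii) recorded earlier in this paragraph. I would first fix a bounded representative $\phi_b$ of the cuspidal support of $(\phi,\rho)$, so that $(\phi_b, q\epsilon \otimes \tau_{\phi_b}(g))$ lies in $\mf s_L^\vee \otimes \tau_{\phi_L}(g)$. Since the standard and irreducible modules depend only on the equivalence class of the basepoint $\phi_L$, I may assume (using the freedom in \cite[Proposition 3.9]{AMS3}) that $\phi_L$ factors through ${}^L T$, so that $\tau_{\phi_L}(g_x)=1$ in the decomposition $g=g_c g_x$ from~\eqref{eq:2.51}. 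By construction, $\bar E(\phi,\rho,\vec z)$ and $\bar M(\phi,\rho,\vec z)$ are obtained by unravelling, via $\exp_{\phi_b}^*$ and the equivalences (i)$\leftrightarrow$(ii)$\leftrightarrow$(iii), the graded modules $E(\phi,\rho,\log\vec z)$ and $M(\phi,\rho,\log\vec z)$. So it suffices to transport $\alpha_g\circ\mr{Ad}(x_g)$ through $\exp_{\phi_b}^*$ and compare with the graded algebra isomorphism of Proposition~\ref{prop:2.4}.

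The key computation is the following. Proposition~\ref{prop:2.8} shows that $\mr{Ad}(x_g)$ fixes $\mc O(T_{\mf s^\vee})\otimes \C[\vec{\mb z},\vec{\mb z}^{-1}]$ pointwise, as does $\alpha_g$ by its construction in \eqref{eq:2.54}; after applying $\exp_{\phi_b}^*$ this matches the trivial action on $C^{an}(\mf t\times\mr{Lie}(X_\nr(G)))$. On the Weyl-group pieces I would combine \eqref{eq:2.42}, the identification \eqref{eq:2.44} of $(w^{-1}(x_g)-x_g)(\sigma)$ with $\tau_{\phi_b}(g_x)(w)$ (valid because $\tau_{\phi_L}(g_x)=1$), the definition of $\alpha_g$ via $N_w\mapsto\tau_{\phi_L}(g)(\dot w)N_w$, and the factorisations $\tau_{\phi_b}(g)=\tau_{\phi_b}(g_c)\tau_{\phi_b}(g_x)$ and $\tau_{\phi_b}(g_c)=\tau_{\phi_L}(g_c)$. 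The upshot is the clean identity
\[
\exp_{\phi_b}^*\circ(\alpha_g\circ\mr{Ad}(x_g))\;=\;\bigl(\alpha_g^{\mr{gr}}\circ\mr{Ad}(e^{x_g})\bigr)\circ\exp_{\phi_b}^*,
\]
where $\alpha_g^{\mr{gr}}$ denotes the graded Hecke algebra isomorphism of Proposition~\ref{prop:2.4}; on $W_{\mf s^\vee}^\circ = W_{q\cE}^\circ$ this is precisely \eqref{eq:2.43}.

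From here Lemma~\ref{lem:2.6} tells me that $\mr{Ad}(e^{x_g})\circ\alpha_g^{\mr{gr}}$ has the same effect on modules as $\alpha_g^{\mr{gr}}$ alone, since $\mr{Ad}(e^{x_g})$ is inner in $\mh H^{an}(\phi_b,q\epsilon,\vec{\mb r})$. Invoking Lemma~\ref{lem:2.5} I conclude that the pullback sends $E(\phi,\rho,\log\vec z)$ (resp.\ $M(\phi,\rho,\log\vec z)$) to $E(\phi,\rho\otimes\tau_\phi(g)^{-1},\log\vec z)$ (resp.\ $M(\phi,\rho\otimes\tau_\phi(g)^{-1},\log\vec z)$). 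Transporting back through the equivalences (iii)$\to$(ii)$\to$(i) yields the asserted statement for $\bar E(\phi,\rho,\vec z)$ and $\bar M(\phi,\rho,\vec z)$. The main obstacle is the bookkeeping of twist factors in the passage between the affine and graded worlds: one has to check that the cocycles $\kappa_{\mf s^\vee}$ and $\natural_{q\cE}$ transform compatibly with the normalization choices underlying $\alpha_g$, and that the reduction to a basepoint $\phi_L$ factoring through ${}^L T$ (which secures $\tau_{\phi_L}(g_x)=1$) really costs no generality. Once the identity displayed above is in hand, everything else is an application of the already-established graded result.
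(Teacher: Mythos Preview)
Your overall strategy is exactly the paper's: restrict $\alpha_g\circ\mr{Ad}(x_g)$ to $\mc H(\mf s^\vee,\phi_b,\vec{\mb z})$, transport it via $\exp_{\phi_b}^*$ to the analytic graded Hecke algebra, identify the result with $\alpha_g^{\mr{gr}}\circ\mr{Ad}(e^{x_g})$, and then apply Lemmas~\ref{lem:2.5} and \ref{lem:2.6}. The displayed identity you aim for is precisely what the paper means when it says that \eqref{eq:2.52} ``is none other than $\alpha_g\circ\mr{Ad}(e^{x_g})$''.

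There is, however, one faulty step: your reduction to a basepoint $\phi_L$ that factors through ${}^L T$. This is not available in general. The basepoint $(\phi_L,\rho_L)$ is a \emph{cuspidal} enhanced L-parameter for $L$, hence $\phi_L$ is discrete for $L$; when $L\neq T$ such a $\phi_L$ cannot factor through any proper L-Levi of ${}^L L$, in particular not through ${}^L T$. The freedom in \cite[Proposition 3.9]{AMS3} only allows unramified twists of $\phi_L$ inside $\mf s_L^\vee$, which does not change this. So the hypothesis $\tau_{\phi_L}(g_x)=1$ cannot be arranged by a choice of basepoint, and the paper's remark before \eqref{eq:2.44} (``for instance when $\phi_b$ or $\phi_L$ factors through ${}^L T$'') is a genuine special case, not a normalisation available in general.

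Fortunately this assumption is also unnecessary. The general relation you actually need is the factorisation $\tau_{\phi_b}(g)=\tau_\sigma\cdot\tau_{\phi_L}(g)$, where $\tau_\sigma(w)=(w^{-1}(x_g)-x_g)(\sigma)$ is the character from \eqref{eq:2.44}. This follows directly from \eqref{eq:2.16}: writing $\phi_b=\sigma\phi_L$ one has $c_w^{\phi_b}(\Fr)=(w\sigma_\Sc w^{-1}\sigma_\Sc^{-1})\cdot c_w^{\phi_L}(\Fr)$ and $c_w^{\phi_b}|_{\mb I_F}=c_w^{\phi_L}|_{\mb I_F}$; pairing with $g=g_cg_x$ then gives the claimed splitting (the $g_c$-part sees only the inertial piece, so $\tau_{\phi_b}(g_c)=\tau_{\phi_L}(g_c)$, while the $g_x$-part picks up the extra factor $\tau_\sigma$). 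This is exactly what the paper invokes in its proof (``the part $\tau_{\phi_L}$ is accounted for by $\alpha_g$, whereas $\tau_\sigma$ is incorporated in \eqref{eq:2.42}''). With this factorisation in hand, your key identity holds without any hypothesis on $\phi_L$, and the rest of your argument goes through unchanged.
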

\begin{proof}
The isomorphism $\alpha_g \circ \mr{Ad}(x_g)$ fixes $\mc O (T_{\mf s^\vee})$ 
pointwise. Hence it restricts to an isomorphism
\[
\mc H (\mf s^\vee,\phi_b,\vec{\mb z}) \to 
\mc H (\mf s^\vee \otimes \tau_{\phi_L}(g),\phi_b,\vec{\mb z}),
\]
where we note that $\tau_{\phi_L}(g) = \tau_{\phi_b}(g)$ on $L^\vee$.
Via the inclusion \eqref{eq:2.41}, this extends canonically to an isomorphism 
\begin{equation}\label{eq:2.52}
\mh H^{an} (\phi_b,q\epsilon,\vec{\mb r}) \to 
\mh H^{an} (\phi_b,q\epsilon \otimes \tau_{\phi_b}(g),\vec{\mb r})
\end{equation}
Here $\phi_b = \sigma \phi_L$ and $\tau_{\phi_b} = \tau_\sigma \tau_{\phi_L}$.
The part $\tau_{\phi_L}$ is accounted for by $\alpha_g$, whereas $\tau_\sigma$
is incorporated in \eqref{eq:2.42}. A calculation analogous to \eqref{eq:2.42}--\eqref{eq:2.44} 
shows that \eqref{eq:2.52} is none other than $\alpha_g \circ \mr{Ad}(e^{x_g})$. 
Finally we apply Lemmas \ref{lem:2.5} and \ref{lem:2.6}.
\end{proof}

\section{Isomorphisms of reductive groups}
\label{sec:3}

In this section $F$ is any local field.
Consider an isomorphism of connected reductive $F$-groups
\[
\eta : \tilde{\mc G} \to \mc G .
\]
As observed in \cite[\S 1.2]{Bor}, $\eta$ and the choice of a pinnning of 
$G^\vee$ induce isomorphisms $\eta^\vee : G^\vee \to \tilde{G}^\vee$ and
\begin{equation}\label{eq:3.8}
{}^L \eta = \eta^\vee \rtimes \mr{id} : G^\vee \rtimes \mb W_F \to \tilde{G}^\vee \rtimes \mb W_F .
\end{equation}
Given $\eta$, these maps are unique up to conjugation by elements of $G^\vee$ (and adjusting the 
$\mb W_F$-action on $G^\vee$ accordingly). By the naturality of the Kottwitz isomorphism
\begin{equation}\label{eq:3.3}
\zeta_{\tilde{\mc G}} \circ \eta^\vee = \zeta_{\mc G} \in \Irr (Z({G^\vee}_\Sc)^{\mb W_F})
\end{equation}
Thus $\eta^\vee$ remembers to which inner twist of $\mc G^*$ the group $\tilde{\mc G}$ is mapped,
and ${}^L \eta$ induces a bijection $\Phi ({}^L \eta) : \Phi (G) \to \Phi (\tilde G )$.
We agree that the extensions $\zeta_{\mc G}^+, \zeta_{\tilde{\mc G}}^+$ from \eqref{eq:1.4} 
are chosen such that $\zeta_{\tilde{\mc G}}^+ \circ \eta^\vee = \zeta_{\mc G}^+$. 
Under that assumption ${}^L \eta$ also induces a bijection
\begin{equation}\label{eq:3.1}
\begin{array}{cccc}
\Phi_e ({}^L \eta) : & \Phi_e (G) & \to & \Phi_e (\tilde G ) ,\\
& (\phi,\rho) & \mapsto & ({}^L \eta \circ \phi, \rho \circ (\eta^\vee)^{-1}) .
\end{array}
\end{equation}
Composing $\eta^\vee$ by an inner automorphism does not change \eqref{eq:3.1}, so
$\Phi ({}^L \eta)$ and $\Phi_e ({}^L \eta)$ are determined uniquely by $\eta$. 

Assume for the moment that $F$ is non-archimedean. Then ${}^L \eta$ induces isomorphisms from 
all the objects associated in 
Section \ref{sec:1} to $G^\vee$ to the analogous objects for $\tilde{G^\vee}$. All the 
constructions underlying the results of \cite{AMS1,AMS2,AMS3} are functorial for algebraic
isomorphisms. For instance, ${}^L \eta$ naturally gives rise to algebra isomorphisms 
\[
\begin{array}{ccc@{\;:\;}ccc}
\mc O (\eta^\vee (\mf s_L^\vee)) & \to & \mc O (\mf s_L^\vee) & 
f & \mapsto & f \circ \Phi_e ({}^L \eta) ,\\
\mc O (\eta^\vee (\mf t) \times \mr{Lie}(X_\nr (\tilde G)) ) & \to &
\mc O (\mf t \times \mr{Lie}(X_\nr (G)) ) & f & \mapsto & f \circ \eta^\vee ,\\
\mc H \big( W_{{}^L \eta (\mf s^\vee )}^\circ, \vec{\mb z}^{2 \tilde{\lambda}} \big) & \to &
\mc H \big( W_{\mf s^\vee}^\circ, \vec{\mb z}^{2 \lambda} \big) &
N_{s_{\eta^\vee (\alpha)}} & \mapsto & N_{s_\alpha} .
\end{array}
\]
Using the notations from the proof of \cite[Proposition 3.15.b]{AMS3}, in particular
$J = Z^1_{{G^\vee}_\Sc (\phi |_{\mb I_F})}$, we get an isomorphism
\[
\begin{array}{ccccc}
\C [W_{{}^L \eta (\mf s^\vee)}, \natural_{{}^L \eta (\mf s^\vee)}] \cong &
\mr{End}_{\tilde J} \big( \widetilde{q \pi}_* (\eta^\vee_* (\widetilde{q \cE})) \big) & \to &
\mr{End}_{J} \big( q \pi_* (\widetilde{q \cE}) \big) & 
\cong \C [W_{\mf s^\vee}, \natural_{\mf s^\vee}] \\
& T & \mapsto & (\eta^\vee_*)^{-1} \circ T \circ \eta^\vee_* 
\end{array}.
\]
The above maps combine to algebra isomorphisms 
\begin{equation}\label{eq:3.2}
\begin{array}{cccc}
\mh H ({}^L \eta) : & \mh H (\Phi_e ({}^L \eta)(\phi_b,q\epsilon), \vec{\mb r}) & \to &
\mh H (\phi_b,q\epsilon, \vec{\mb r}) , \\
\mc H ({}^L \eta) : & \mc H (\Phi_e ({}^L \eta) (\mf s^\vee), \vec{\mb z}) & \to &
\mc H (\mf s^\vee, \vec{\mb z}) .
\end{array}
\end{equation}
The canonicity of the bijections in Theorems \ref{thm:1.1} and \ref{thm:1.2} allows us to conclude:

\begin{cor}\label{cor:3.1}
Let $F$ be a non-archimedean local field and assume the notations of Theorems \ref{thm:1.1} and 
\ref{thm:1.2}, for $G$ and for $\tilde G$. 
Then, for any $\vec{r} \in \C^d, \vec{z} \in (\C^\times)^d$:
\[
\begin{array}{ccc}
M(\phi,\rho,\vec{r}) \circ \mh H ({}^L \eta) & = & M (\Phi_e ({}^L \eta)(\phi,\rho), \vec{r}) ,\\
\overline{M}(\phi,\rho,\vec{z}) \circ \mc H ({}^L \eta) & = & 
\overline{M} (\Phi_e ({}^L \eta)(\phi,\rho), \vec{z}) ,
\end{array}
\]
and similarly for the standard modules $E(\phi,\rho,\vec{r}), \overline{E}(\phi,\rho,\vec{z})$.
\end{cor}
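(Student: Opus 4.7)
The corollary is a naturality statement: every ingredient in the definition of the Hecke algebras and their standard/irreducible modules depends only on the complex reductive group $G^\vee$ together with its $\mb W_F$-action, the Langlands parameter, and the cuspidal enhancement. Since $\eta^\vee : G^\vee \to \tilde G^\vee$ is an isomorphism of precisely this data, everything should transport along it, and my task is to trace the constructions through $\eta^\vee$ and verify each step.

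First I would reduce the affine case to the graded one. Recall from the discussion preceding Theorem \ref{thm:1.1} and following Theorem \ref{thm:1.2} that $\bar E(\phi,\rho,\vec z)$ and $\bar M(\phi,\rho,\vec z)$ are obtained from $E(\phi,\rho,\log \vec z)$ and $M(\phi,\rho,\log \vec z)$ by the glueing procedure of \cite[Theorems 2.5 and 2.9]{AMS3}, via the analytic embedding $\exp_{\phi_b}^*$ of (2.35). Both the glueing and $\exp_{\phi_b}^*$ are manifestly natural in $\eta^\vee$, so the affine identities reduce to the graded identities for each bounded $\phi_b$ in $\mf s^\vee$.

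For the graded case, recall that $\mh H(\phi_b, q\epsilon, \vec{\mb r})$ is determined by the triple $(G^\vee_{\phi_b}, M^\vee, q\cE)$ and the parameter array $\vec{\mb r}$, and that $E(\phi,\rho,\vec r)$ is constructed (see (2.31)) as $\mr{IM}^*(\tau^* \ltimes E^\circ_{\log u_\phi, \sigma_L, \vec r, \rho^\circ})$, where $E^\circ$ arises from equivariant cohomology of Springer-type varieties attached to $(G^\vee_{\phi_b}, u_\phi, \cE)$ through the generalized Springer correspondence. Each of these ingredients is functorial for equivariant isomorphisms of complex reductive groups. Applying $\eta^\vee$ carries the sextuple $(G^\vee_{\phi_b}, M^\vee, q\cE, u_\phi, \sigma_L, \rho^\circ, \tau)$ to the analogous data for $\Phi_e({}^L \eta)(\phi,\rho)$, so pulling back along $\mh H({}^L \eta)$ identifies $E(\Phi_e({}^L \eta)(\phi,\rho),\vec r)$ with $E(\phi,\rho,\vec r)$, and likewise for $M$ as the distinguished irreducible quotient of $E$.

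The one point that is more than bookkeeping is matching the 2-cocycles $\natural_{q\cE}$ and $\kappa_{\mf s^\vee}$, since these depend on chosen intertwiners $I^{\dot w}$ of the cuspidal local system $q\cE$. To make (3.2) an honest algebra isomorphism rather than an isomorphism up to coboundary, I would push the intertwiners chosen on the $G^\vee$ side forward along $\eta^\vee_*$ and use them as the intertwiners on the $\tilde G^\vee$ side; the resulting cocycles then agree on the nose, and both the Iwahori--Matsumoto and Bernstein presentations are preserved by (3.2). With these compatible choices in place, the canonicity of the bijections of Theorems \ref{thm:1.1} and \ref{thm:1.2} (which is the content of the main results of \cite{AMS3}) delivers the asserted equalities.
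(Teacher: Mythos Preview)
Your proposal is correct and follows the same approach as the paper, which simply appeals to the canonicity of the bijections in Theorems \ref{thm:1.1} and \ref{thm:1.2} together with the functoriality of all constructions in \cite{AMS1,AMS2,AMS3} under algebraic isomorphisms. Your version unpacks this functoriality claim in considerably more detail than the paper does (reducing affine to graded via glueing, tracing through the Springer-type construction, and transporting intertwiners to align the 2-cocycles), but the underlying argument is the same.
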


Now $F$ may again be any local field. Upon adjusting $\eta^\vee$ by an inner automorphism of 
$G^\vee$, we may assume that it sends a chosen $\mb W_F$-stable pinning of $G^\vee$ to a 
chosen $\mb W_F$-stable pinning of $\tilde{G}^\vee$. (Actually the Weil group acts via the 
absolute Galois group of $F$, but we find it notationally more convenient to stick to $\mb W_F$.)
Say these pinnings involve the maximal tori $T^\vee, \tilde{T}^\vee$ and the Borel subgroups 
$B^\vee, \tilde{B}^\vee$. Then $\eta^\vee$ determines a $\mb W_F$-equivariant isomorphism 
$\mc R(\eta^\vee)$ from the based root datum
\begin{align*}
\mc R (G^\vee,T^\vee) & = (X^* (T^\vee), \Phi (G^\vee,T^\vee), X_* (T^\vee), 
\Phi^\vee (G^\vee,T^\vee), \Delta^\vee) \\
& = (X_* (\mc T), \Phi^\vee (\mc G,\mc T), X^* (\mc T), \Phi (\mc G,\mc T), \Delta^\vee) .
\end{align*}
to $\mc R (\tilde{G}^\vee, \tilde{T}^\vee)$. 
Up to inner automorphisms of $G^\vee$, $\eta^\vee$ is determined by $\mc R (\eta^\vee)$.

Similarly, up to inner automorphisms of $F_s$-groups, $\eta$ is determined by the isomorphism
of based root data $\mc R (\eta) = \mc R (\eta^\vee)^\vee$. However, as we saw in Section
\ref{sec:2}, some inner automorphisms of $\mc G (F)$ come from elements of $\mc G (F_s)$
that do not lie in (the image of) $\mc G (F)$. As a consequence, there can exist several
isomorphisms $\tilde{\mc G}(F) \to \mc G (F)$, not equivalent up to $\mc G (F)$-conjugation,
that give rise to $G^\vee$-conjugate isomorphisms $\eta^\vee : G^\vee \to \tilde{G^\vee}$.
The remainder of this section is dedicated to making these remarks precise.

In \cite[\S 16.3--16.4]{Spr} the group of $F$-algebraic automorphisms of $\mc G$ which are the 
identity on $Z(\mc G)^\circ$ is analysed. It turns out to be a linear algebraic $F$-group with
finitely many components. We will extend Springer's analysis to the group $\Aut_F (\mc G)$ 
of all $F$-algebraic automorphisms of $\mc G$. This need not be an algebraic group, for instance
because the automorphism group of a split torus of dimension $\geq 2$ is infinite and discrete.

Let $\mc G^*$ be a quasi-split reductive $F$-group. We fix a $\mb W_F$-stable pinning
of $\mc G^*$, consisting of a maximally split maximal $F$-torus $\mc T^*$, a Borel
$F$-subgroup $\mc B^*$ containing $\mc T^*$ and for every simple root $\alpha \in 
\Phi (\mc G^*,\mc T^*)$ an element $x_\alpha \in \mc U_\alpha (F_\alpha)$, where 
$F_\alpha$ is the minimal field extension of $F$ over which $\mc U_\alpha$ is defined. Then
$\prod_{\alpha' \in \mb W_{F \alpha}} x_{\alpha'} \in \mc G^* (F)$ for every $\alpha \in \Delta$.

\begin{thm}\label{thm:3.2}
Let $\tau$ be a $\mb W_F$-equivariant automorphism of $\mc R (\mc G^*,\mc T^*)$.
\enuma{
\item There exists a unique $\eta_\tau \in \Aut_F (\mc G^*)$ which stabilizes the pinning,
commutes with $\mb W_F$ and induces $\tau$ on $\mc R (\mc G^*,\mc T^*)$.
\item $\Aut_F (\mc G^*)$ is isomorphic to the semidirect product of $\mc G^*_\ad (F)$ and the 
group of $\mb W_F$-automorphisms of $\mc R (\mc G^*,\mc T^*)$ (acting via part a).
\item Let $\mc G^*_u$ be the inner twist of $\mc G^*$ parametrized by 
$u \in Z^1 (F,\mc G^*_\ad)$. Then the automorphism $\eta_\tau$ of $\mc G^* (F_s)$ restricts 
to an automorphism of $F$-groups $\mc G^*_u \to \mc G^*_{\eta_\tau (u)}$.
}
\end{thm}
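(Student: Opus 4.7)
The plan is to treat parts (a), (b), (c) in turn, using the Isomorphism Theorem for reductive groups over the separable closure $F_s$ (e.g.\ Springer 16.3, or SGA3 Exp.~XXIII) as the main input, and descending to $F$ via the rigidity of pinning-preserving automorphisms.

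For (a), the Isomorphism Theorem produces a unique $F_s$-algebraic automorphism $\eta_\tau$ of $\mc G^*$ that preserves the chosen pinning and induces $\tau$ on $\mc R(\mc G^*, \mc T^*)$. Descent to $F$ will follow from this uniqueness: for each $w \in \mb W_F$ the conjugate $w \eta_\tau w^{-1}$ is another $F_s$-automorphism which preserves the pinning (because the pinning is $\mb W_F$-stable by assumption) and which induces $w \tau w^{-1} = \tau$ on the root datum (by $\mb W_F$-equivariance of $\tau$). Uniqueness then forces $w \eta_\tau w^{-1} = \eta_\tau$ for every $w$, so $\eta_\tau$ commutes with $\mb W_F$ and is therefore defined over $F$.

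For (b), given $\phi \in \Aut_F(\mc G^*)$, I would first note that $\phi(\textrm{pinning})$ is again $\mb W_F$-stable because $\phi$ commutes with $\mb W_F$. Since $\mc G^*_\ad$ acts simply transitively on pinnings over $F_s$, there is a unique $h \in \mc G^*_\ad(F_s)$ with $\mr{Ad}(h)^{-1} \circ \phi$ preserving the fixed pinning; the same simple transitivity applied to $w h w^{-1}$, together with $\mb W_F$-stability of both pinnings, forces $h \in \mc G^*_\ad(F)$. Now $\mr{Ad}(h)^{-1} \phi$ is $F$-algebraic and pinning-preserving, so by (a) it equals $\eta_\tau$ for a unique $\tau$, giving $\phi = \mr{Ad}(h) \eta_\tau$. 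The map $\tau \mapsto \eta_\tau$ is multiplicative (apply uniqueness in (a) to $\eta_{\tau_1} \eta_{\tau_2}$), and $\{\eta_\tau\} \cap \mc G^*_\ad(F) = 1$ because a pinning-preserving inner automorphism acts trivially on $\mc T^*$ and on every simple root subgroup, hence is the identity. The semidirect-product structure then follows, with $\tau$ acting on $\mc G^*_\ad(F)$ via $\mr{Ad}(g) \mapsto \mr{Ad}(\eta_\tau (g))$.

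For (c), I would unwind the definition of the inner twist: $\mc G^*_u(F_s) = \mc G^*(F_s)$ as an abstract group, with twisted Weil-group action $w \cdot_u x := u(w)\,(w \cdot x)\,u(w)^{-1}$. Since $\eta_\tau$ preserves $Z(\mc G^*)$, it descends to an automorphism of $\mc G^*_\ad$, and $(\eta_\tau u)(w) := \eta_\tau(u(w))$ is a 1-cocycle because $\eta_\tau$ commutes with the untwisted $\mb W_F$-action. The direct computation
\[
\eta_\tau (w \cdot_u x) \;=\; \eta_\tau(u(w))\,\bigl(w \cdot \eta_\tau(x)\bigr)\,\eta_\tau(u(w))^{-1} \;=\; w \cdot_{\eta_\tau u} \eta_\tau(x)
\]
then exhibits $\eta_\tau$ as an $F$-equivariant isomorphism $\mc G^*_u \to \mc G^*_{\eta_\tau u}$. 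The principal obstacle in all three parts is the descent from $F_s$ to $F$; in each case it reduces to the single rigidity principle that pinning-preserving lifts are unique, combined with the $\mb W_F$-stability of the chosen pinning, after which the remaining steps are a formal diagram chase.
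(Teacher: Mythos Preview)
Your proposal is correct and follows essentially the same route as the paper. The paper cites Springer's Lemma 16.3.8 for part (a) and Lemma 16.4.6 for part (b), whereas you unpack the descent argument in (a) explicitly (uniqueness forces $w\eta_\tau w^{-1}=\eta_\tau$) and in (b) use simple transitivity of $\mc G^*_\ad$ on pinnings rather than first adjusting to preserve $(\mc B^*,\mc T^*)$; part (c) is identical to the paper's computation.
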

\begin{proof}
(a) By \cite[Lemma 16.3.8]{Spr} $\tau$ lifts uniquely to an $F$-automorphism of
$\mc G^*_\ad$ stabilizing the pinning derived from the pinning of $\mc G^*$.
The same argument works for $\mc G^* (F)$, provided we omit the condition that the
connected centre must be fixed, and instead use the automorphism of $Z(\mc G^*)^\circ$
determined by $\tau$.\\
(b) By part (a) the indicated semidirect product embeds in $\Aut_F (\mc G^*)$.

Every $F$-automorphism $\eta^*$ of $\mc G^*$ induces an $F$-automorphism of the 
quasi-split adjoint group $\mc G^*_\ad$. From \cite[Lemma 16.4.6]{Spr} we see that the 
induced automorphism of $\mc R (\mc G^*_\ad, \mc T^*_{AD})$ must commute with the
$\mb W_F$-action. When $\eta^*$ stabilizes $(\mc B^*, \mc T^*)$ (which can always be
achieved by composing $\eta^*$ with an inner automorphism $c$), it restricts to an
$F$-automorphism of $\mc T^*$ and of $\Phi (\mc G^*, \mc T^*)$. That it is defined over
$F$ implies that its action on $X_* (\mc T^*)$ must be $\mb W_F$-equivariant, and
that it stabilizes $\mc B^*$ means that it maps $\Delta^\vee \subset 
\Phi (\mc G^*,\mc T^*)$ to itself. Combining these two observations, we deduce that the
automorphism $\mc R(\eta^*)$ of $\mc R (\mc G^*,\mc T^*)$ induced by $\eta^*$ is
$\mb W_F$-equivariant. \\
(c) By definition
\begin{equation}\label{eq:3.4}
\mc G^*_u (F) = \{ g \in \mc G^* (F_s) : \mr{Ad}(u(\gamma)) \circ \gamma (g) = g \;
\forall \gamma \in \mb W_F \} .
\end{equation}
For $\gamma \in \mb W_F, g \in \mc G_u (F)$, by part (a) and \eqref{eq:3.4}:
\[
\mr{Ad} \big( \eta_\tau (u(\gamma)) \big) \circ \gamma \circ \eta_\tau (g) = 
\eta_\tau \circ \mr{Ad}(u(\gamma)) \circ \gamma (g) = \eta_\tau (g) .
\]
This shows that $\eta_\tau (g) \in \mc G^*_{\eta_\tau (u)}(F)$. Hence $\eta_\tau$ 
determines an algebraic automorphism $\mc G^*_u \to \mc G^*_{\eta_\tau (u)}$, which
is defined over $F$ because it maps $F$-rational points to $F$-rational points.
\end{proof}

From Theorem \ref{thm:3.2} we see that the identity component of $\Aut_F (\mc G^*)$ 
is the group of inner automorphisms, which is an algebraic $F$-group
naturally isomorphic to $\mc G_\ad$ \cite[Lemma 16.3.7]{Spr}. The component group of  
$\Aut_F (\mc G^*)$ is canonically isomorphic to the group of 
$\mb W_F$-equivariant automorphisms of the based root datum of $\mc G^*$. When $\mc G^*$
is semisimple, this component group is finite, but for reductive groups it can be
infinite. That happens if and only if there is an irreducible representation of
$\mb W_F$ which appears with multiplicity $>1$ in $X_* (Z(\mc G^*)^\circ)$.

For $u \in Z^1 (F,\mc G^*_\ad)$ Springer defines the inner twist Inn$(\mc G^*)_u$ of
Inn$(\mc G^*) \cong \mc G^*_\ad$. It comes from the Galois action
\begin{equation}\label{eq:3.5}
\gamma * \eta = u(\gamma) (\gamma \cdot \eta) u (\gamma)^{-1} 
\qquad \gamma \in \mr{Gal}(F_s / F), \eta \in \mr{Inn}(\mc G^*) ,
\end{equation}
and Inn$(\mc G^*)_u (F)$ consists precisely of the inner automorphisms of $\mc G^*_u$
that are defined over $F$ \cite[Lemma 16.4.6]{Spr}. As in \cite[Lemma 16.3.7]{Spr},
one can show that there is a canonical isomorphism of $F$-groups
\begin{equation}\label{eq:3.6}
(\mc G^*_u)_\ad = \mc G^*_{\ad,u} \to \mr{Inn}(\mc G^*)_u . 
\end{equation}
Since $\mc G$ will typically be an inner twist of $\mc G^*$, we write $(G^*)^\vee = G^\vee$.
Let $\zeta \in \Irr (Z({G^\vee}_\Sc)^{\mb W_F})$ be the image of $u \in H^1 (F,\mc G^*_\ad)$
under the Kottwitz homomorphism. For $\tau$ as in Theorem \ref{thm:3.2}.c, the induced
automorphism $\eta_{\tau^\vee}$ of $G^\vee$ (unique up to conjugacy) enables one to define 
\[
\tau (\zeta) = \zeta \circ \eta_{\tau^\vee} \in \Irr (Z({G^\vee}_\Sc)^{\mb W_F}) 
\]
unambigously. By the naturality of the Kottwitz homomorphism, $\tau (\zeta)$ is the image
of $\eta_\tau (u)$.

\begin{cor}\label{cor:3.3}
Let $u,u' \in Z^1 (F,\mc G^*_\ad)$ with images $\zeta, \zeta' \in \Irr (Z({G^\vee}_\Sc)^{\mb W_F})$.
Let $\tau$ be a $\mb W_F$-automorphism of $\mc R (\mc G^*,\mc T^*)$. 
\enuma{
\item The following are equivalent:
\begin{itemize}
\item[(i)] $\tau (\zeta') = \zeta$;
\item[(ii)] there exists an $\eta \in \Aut_{F_s}(\mc G^*)$ with $\mc R (\eta) = \tau$, 
which restricts to an isomorphism of $F$-groups $\mc G^*_{u'} \to \mc G^*_u$.
\end{itemize}
\item When (i) and (ii) hold, the group $\mc G_{\ad,u}(F)$ acts simply transitively on the set of
such $\eta$ (by composition).
\item $\Aut_F (\mc G^*_u)$ has:
\begin{itemize}
\item identity component $\mc G^*_{u,\ad}(F)$; 
\item component group canonically isomorphic to the group of $\mb W_F$-automorphisms of 
$\mc R (\mc G^*,\mc T^*)$ that preserve $\zeta$.
\end{itemize}
}
\end{cor}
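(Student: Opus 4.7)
The strategy is to combine Theorem \ref{thm:3.2} with Kottwitz's classification of inner forms, which for local fields is injective on $H^1(F, \mc G^*_\ad)$ and lands in characters of $Z({G^\vee}_\Sc)^{\mb W_F}$. For (ii) $\Rightarrow$ (i) of (a): if $\eta \in \Aut_{F_s}(\mc G^*)$ restricts to an $F$-isomorphism $\mc G^*_{u'} \to \mc G^*_u$, then \eqref{eq:3.3} gives $\zeta' \circ \eta^\vee = \zeta$, while $\eta^\vee$ only depends on $\tau$ up to inner automorphisms of $G^\vee$, which act trivially on $Z({G^\vee}_\Sc)$. The convention $\tau(\zeta) := \zeta \circ \eta_{\tau^\vee}$ fixed just before the corollary then reads exactly as $\tau(\zeta') = \zeta$.

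For (i) $\Rightarrow$ (ii), first apply Theorem \ref{thm:3.2}(a) to produce the canonical pinning-preserving lift $\eta_\tau \in \Aut_F(\mc G^*)$. Theorem \ref{thm:3.2}(c) supplies an $F$-isomorphism $\mc G^*_{u'} \to \mc G^*_{\eta_\tau (u')}$. By the paragraph just before the corollary the Kottwitz parameter of $\eta_\tau(u')$ equals $\tau(\zeta') = \zeta$, which is also the Kottwitz parameter of $u$. Kottwitz injectivity on $H^1(F, \mc G^*_\ad)$ then yields $g \in \mc G^*_\ad(F_s)$ with $u(\gamma) = g^{-1} \eta_\tau(u')(\gamma)\, \gamma(g)$ for every $\gamma \in \mb W_F$. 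A direct cocycle computation (essentially the one behind \eqref{eq:3.4} and \eqref{eq:3.6}) shows that $\mr{Ad}(g)$ descends to an $F$-isomorphism $\mc G^*_{\eta_\tau (u')} \to \mc G^*_u$, so $\eta := \mr{Ad}(g) \circ \eta_\tau$ lies in $\Aut_{F_s}(\mc G^*)$, satisfies $\mc R(\eta) = \mc R(\eta_\tau) = \tau$, and restricts to the required $F$-isomorphism.

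For (b), given two such $\eta_1, \eta_2$, the composition $\eta_1 \circ \eta_2^{-1}$ is an $F$-automorphism of $\mc G^*_u$ inducing the identity on $\mc R(\mc G^*, \mc T^*)$. The rigidity from Theorem \ref{thm:3.2}(b) forces it to be inner in $\Aut_{F_s}(\mc G^*)$, and \eqref{eq:3.6} identifies the group of $F$-rational inner automorphisms of $\mc G^*_u$ with $\mc G_{\ad,u}(F)$; so the action on the set of such $\eta$ is free, while transitivity is built into the construction. Part (c) is the specialization $u = u'$ of (a) and (b): inner automorphisms form the identity component, parametrized by $\mc G^*_{u,\ad}(F)$ via \eqref{eq:3.6}, while (a)--(b) together identify the component group canonically with the stabilizer of $\zeta$ inside the group of $\mb W_F$-automorphisms of $\mc R(\mc G^*, \mc T^*)$.

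The principal obstacle is the invocation of Kottwitz injectivity on $H^1(F, \mc G^*_\ad)$ for an arbitrary local field $F$: this is the deep ingredient bridging the cheap pinning-based lift $\eta_\tau$ and an honest $F$-rational isomorphism between the two inner twists. Everything else amounts to careful bookkeeping with pinnings, cocycles, and the identifications already established in Section \ref{sec:3}.
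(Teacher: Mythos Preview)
Your proof is correct and follows essentially the same route as the paper's: lift $\tau$ to the canonical pinning-preserving $\eta_\tau$ via Theorem \ref{thm:3.2}(a), use Theorem \ref{thm:3.2}(c) to get an $F$-isomorphism $\mc G^*_{u'} \to \mc G^*_{\eta_\tau(u')}$, then use that $\eta_\tau(u')$ and $u$ have the same Kottwitz parameter to find the correcting inner automorphism; for (b) both you and the paper reduce to the identification \eqref{eq:3.6} of $F$-rational inner automorphisms of $\mc G^*_u$ with $\mc G^*_{\ad,u}(F)$; and (c) is the specialization $u=u'$. The only cosmetic difference is that you flag Kottwitz injectivity on $H^1(F,\mc G^*_\ad)$ as the ``principal obstacle'', whereas the paper treats the Kottwitz isomorphism as ambient background (it is invoked from the outset to define $\zeta_{\mc G}$) and simply writes ``$\eta_\tau(u')$ is equivalent to $u$ in $H^1(F,\mc G^*_\ad)$'' without further comment.
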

\begin{proof}
(a) By Theorem \ref{thm:3.2}.c, (i) necessary for (ii). 

Suppose that (i) holds. Then $\eta_\tau (u')$ is equivalent to $u$ in $H^1 (F,\mc G^*_\ad)$,
so there exists $c \in \mc G^*_\ad (F_s)$ with $c \eta_\tau (u') c^{-1} = u$ in $Z^1 (F,\mc G^*_\ad)$.
Then Ad$(c) \circ \eta_\tau$ is an $F_s$-automorphism $\mc G^*_{u'} \to \mc G^*_u$, which induces
$\tau$ on $\mc R (\mc G^*, \mc T^*)$. By construction
\[
\mr{Ad}(c) \circ \eta_\tau (\mc G^*_{u}(F)) = \mr{Ad}(c) (\mc G^*_{\eta_\tau (u)}(F)) = \mc G^*_u (F), 
\]
so (ii) holds. \\
(b) Theorem \ref{thm:3.2}.b tells us that any $\eta$ as in (ii) is determined up to postcomposition
with $c' \in \mc G^*_\ad (F_s)$ which fixes $u \in Z^1 (F,\mc G^*_\ad)$. By \cite[Lemma 16.4.6]{Spr}
and \eqref{eq:3.6} such $c'$ form the group $\mc G^*_{\ad,u}(F)$.\\
(c) This is a direct consequence of parts (a) and (b) in the case $u = u'$.
\end{proof}

We will often denote the $F$-group $\mc G^*_u$ by $\mc G^*_\zeta$. Of course this is only correct
up to isomorphism. Technically, we can make it precise by choosing a splitting of
$Z^1 (F,\mc G^*_\ad) \to \Irr (Z({G^\vee}_\Sc)^{\mb W_F})$. 

\begin{prop}\label{prop:3.4}
Consider a connected reductive $F$-group $\mc G = \mc G^*_\zeta$ where $\mc G^*$ is a quasi-split 
$F$-group and $\zeta \in \Irr (Z({G^\vee}_\Sc)^{\mb W_F})$. Let $\mc S$ be a maximal $F$-split torus
of $\mc G$, $\mc T$ a maximal $F$-torus containing $\mc S$ and $\mc P_\emptyset$ a minimal parabolic
$F$-subgroup of $\mc G$ containing $\mc S$. Let $\tilde{\mc G}$ be another such group, with analogous
objects (endowed with tildes). Let 
\[
\tau : \mc R (\tilde{\mc G}^*,\tilde{\mc T}^*) \to \mc R (\mc G^*, \mc T^*)
\]
be a $\mb W_F$-equivariant isomorphism of based root data. The following are equivalent:
\begin{itemize}
\item[(i)] $\tau (\tilde \zeta) := \tilde \zeta \circ \eta_\tau^\vee$ equals $\zeta$;
\item[(ii)] there exists an isomorphism of $F$-groups $\eta : \tilde{\mc G} \to \mc G$ with 
$\mc R (\eta) = \tau$.
\end{itemize}
Suppose now that (ii) holds. Then
\enuma{
\item The group $G_\ad = \mc G_\ad (F)$ acts simply transitively on the set of $\eta$ as in (ii).
\item The group $G = \mc G (F)$ acts naturally on the set of $\eta$ as in (ii), with 
$|G_\ad / G|$ orbits. 
\item When $F$ is non-archimedean, let $K_0$ be the parahoric subgroup of $G$ attached to the 
origin of the standard apartment of the Bruhat--Tits building. When $F$ is archimedean, let
$K_0$ be a maximal compact subgroup of $G$ such that the Iwasawa decomposition holds with
respect to $K_0, \mc S (\R_{>0})$ and the unipotent radical of $\mc P_\emptyset$.

There exists an isomorphism of $F$-groups $\eta_{\tau,\mc G} : \tilde{\mc G} \to \mc G$
such that 
\begin{itemize}
\item $\eta_{\tau,\mc G} (\tilde{\mc S}) = \mc S$; 
\item $\eta_{\tau,\mc G}(\tilde{\mc P_\emptyset}) = \mc P_\emptyset$;
\item $\eta_{\tau,\mc G}(\tilde{K}_0) = K_0$.
\end{itemize}
\item The isomorphism $\eta_{\tau,\mc G}$ is unique up to $Z_{G_\ad} (S)_\cpt$, the maximal 
compact subgroup of $(Z_{\mc G}(\mc S) / Z(\mc G))(F)$. The maximal compact subgroup 
$Z_G (S)_\cpt$ of $Z_{\mc G}(\mc S)(F)$ acts on the set of $\eta_{\tau,\mc G}$ as in (c) with
$[Z_{G_\ad} (S)_\cpt : Z_G (S)_\cpt / Z(G)_\cpt]$ orbits.
}
\end{prop}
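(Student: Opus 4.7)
My plan is to derive the equivalence of (i) and (ii), together with part (a), directly from Corollary \ref{cor:3.3}, applied to the inner twists $\mc G = \mc G^*_\zeta$ and $\tilde{\mc G} = \tilde{\mc G}^*_{\tilde \zeta}$ (after identifying $\tilde{\mc G}^*$ with $\mc G^*$ via the canonical $F$-isomorphism from Theorem \ref{thm:3.2}.a attached to $\tau$). Part (b) is then an orbit-counting observation: $G$ acts on the set of $\eta$ from (ii) by post-composition with inner automorphisms, this action factors through $\mr{im}(G \to G_\ad)$, and by (a) the number of $G$-orbits equals $[G_\ad : \mr{im}(G \to G_\ad)] = |G_\ad / G|$.

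For part (c), I start from any $\eta$ satisfying (ii) and successively normalize it by post-composing with inner automorphisms from $G_\ad$. First, $G$-conjugacy of maximal $F$-split tori lets me arrange $\eta(\tilde{\mc S}) = \mc S$. Second, the relative Weyl group $N_G(\mc S)/Z_G(\mc S)$ acts simply transitively on minimal parabolic $F$-subgroups of $\mc G$ containing $\mc S$, so a further adjustment achieves $\eta(\tilde{\mc P_\emptyset}) = \mc P_\emptyset$. At this stage, $\eta(\tilde K_0)$ is the parahoric (respectively, maximal compact in the archimedean case) subgroup attached to some point $x$ of the standard apartment of $\mc B(\mc G, F)$ (respectively, of the associated symmetric space). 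The remaining step is to move $x$ to the origin by an inner translation coming from $Z_{G_\ad}(\mc S)(F)$: its non-compact part is a free abelian group of full rank acting by translations on the apartment, and both the origin and $x$ are marked points of the same type (transported across $\eta$), so they lie in a single orbit of this lattice.

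For part (d), combining (a) and (c) shows that the set of $\eta_{\tau,\mc G}$ satisfying the three normalization conditions in (c) is a torsor over the stabilizer in $G_\ad$ of the triple $(\mc S, \mc P_\emptyset, K_0)$. I compute this stabilizer in stages: stabilizing $\mc S$ gives $N_{G_\ad}(\mc S)$; additionally stabilizing $\mc P_\emptyset$ kills the Weyl-group component and leaves $Z_{G_\ad}(\mc S)$; additionally stabilizing $K_0$ leaves precisely $Z_{G_\ad}(S)_\cpt$, since in the decomposition \eqref{eq:2.53} the non-compact part acts freely by nontrivial translations on the apartment. The action of $Z_G(S)_\cpt$ on this torsor factors through the natural map $Z_G(S)_\cpt \to Z_{G_\ad}(S)_\cpt$, whose kernel is $Z(\mc G)(F) \cap Z_G(S)_\cpt = Z(G)_\cpt$; hence the number of orbits is $[Z_{G_\ad}(S)_\cpt : Z_G(S)_\cpt/Z(G)_\cpt]$.

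The main obstacle I anticipate is the translation step in (c): one must verify that the origin and the point $x$ attached to $\eta(\tilde K_0)$ lie in the same orbit of the translation lattice coming from $Z_{G_\ad}(\mc S)(F)$. For non-archimedean $F$, this relies on the Bruhat--Tits structure together with the transitivity of $G_\ad / G$ on $G$-orbits of hyperspecial (respectively special, non-hyperspecial) vertices recorded earlier in Section \ref{sec:2}. For archimedean $F$, it follows from the Iwasawa decomposition built into the choice of $K_0$.
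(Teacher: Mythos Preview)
Your plan is correct and follows essentially the same approach as the paper: reduction to Corollary~\ref{cor:3.3} for (i)$\Leftrightarrow$(ii) and (a), orbit counting for (b), and successive normalization by inner automorphisms from $G_\ad$ for (c) and (d). The only cosmetic differences are that the paper conjugates $\mc P_\emptyset$ first and then the split torus inside it (rather than torus first, then Weyl group), and for the $K_0$ step it invokes directly the transitivity of the cocharacter lattice $X_*(\mc S / (Z(\mc G)\cap \mc S))$ on the hyperspecial (respectively special non-hyperspecial) vertices of the apartment $\mh A_{\mc S}$, which is the apartment-level statement underlying the $G_\ad/G$-orbit fact from Section~\ref{sec:2} that you cite.
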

\begin{proof}
With the same argument as for Theorem \ref{thm:3.2}.a, we can find an isomorphims of 
$F_s$-groups $\eta_s : \tilde{\mc G}^* \to \mc G^*$ with $\mc R (\eta_s) = \tau$.
In Theorem \ref{thm:3.2}.c we showed that 
\[
\eta_s (\tilde{\mc G}) = \eta_s (\tilde{\mc G}^*_{\tilde \zeta}) = \mc G^*_{u'}
\]
for a $u' \in Z^1 (F,\mc G^*_\ad)$ with image $\tau (\tilde \zeta)$ in 
$\Irr (Z({G^\vee}_\Sc)^{\mb W_F})$. Now the equivalence of (i) and (ii) follows from Corollary
\ref{cor:3.3}.\\
(a) This follows from \eqref{eq:3.6} and Corollary \ref{cor:3.3}.\\
(b) Obvious from part (a) and $G_\ad / G := \mc G_\ad (F) / \mr{im}(\mc G (F) \to \mc G_\ad (F))$.\\
(c) Notice that $\mc S$ and $\eta (\tilde{\mc S})$ are both maximal $F$-split tori of $\mc G$,
and that $\mc P_\emptyset , \eta (\tilde{\mc P}_\emptyset)$ are minimal parabolic $F$-subgroups
of $\mc G$. By \cite[Theorem 15.4.6]{Spr} all minimal parabolic $F$-subgroups of $\mc G$ are
$\mc G (F)$-conjugate, while \cite[Theorem 15.2.6]{Spr} tells us that all maximal $F$-split
tori of $\mc P_\emptyset$ are $\mc P_\emptyset (F)$-conjugate. Hence, replacing $\eta$ by
$\tilde \eta = \mr{Ad}(\tilde c) \circ \eta$ for a suitable $\tilde c \in \mc G(F)$, we can
achieve that $\tilde \eta (\tilde{\mc S}) = \mc S$ and $\tilde \eta (\tilde{\mc P}_\emptyset) =
\mc P_\emptyset$. 

Assume that $F$ is non-archimedean. Then $\tilde \eta$ sends the root subgroup 
$\mc U_{\tilde \alpha}$ for $\tilde \alpha \in \Phi (\tilde{\mc G},\tilde{\mc S})$ to 
$\mc U_{\tau (\tilde \alpha)}$. The special parahoric subgroup $K_0$ of $G$ is described in terms 
of root subgroups in \cite[\S 6]{BrTi1}. Comparing 
with the analogous description of $\tilde{K}_0$, one sees that $\tilde \eta (\tilde K_0)$ is
a special parahoric subgroup of $G$ associated to a vertex of the apartment $\mh A_{\mc S}$
of $\mc{BT}(\mc G,F)$. Looking at the same situation over field extensions of $F$, we deduce
that $\tilde \eta (\tilde{K}_0)$ is hyperspecial if and only if $K_0$ is hyperspecial. From
the classification of root data one checks that $X_* (\mc S / Z_{\mc S}(\mc G))$ acts
transitively on the hyperspecial vertices in $\mh A_{\mc S}$ and on the special non-hyperspecial
vertices in $\mh A_{\mc S}$. As $\mc S / Z_{\mc S}(\mc G)$ embeds in $\mc G_\ad$, there exists a 
\[
s \in (\mc S / Z_{\mc S}(\mc G))(F) \subset \mc G_\ad (F) \text{ such that } 
\mr{Ad}(s) \circ \tilde \eta (\tilde{K}_0) = K_0.
\]
Define $\eta_{\tau,\mc G} = \mr{Ad}(s) \circ \tilde \eta = \mr{Ad}(s \tilde c) \circ \eta$.

Suppose now that $F$ archimedean. Since $\tilde \eta$ is an isomorphism and the Iwasawa 
decomposition holds for $\tilde G$ (as in the statement), it also holds for $G$ with respect
to $\tilde \eta (\tilde K_0) ,\; \mc S (\R_{>0})$ and the unipotent radical of $\mc P_\emptyset$.
All maximal compact subgroups of $G$ are conjugate, so this implies that $\tilde \eta (\tilde K_0)
= g^{-1} K_0 g$ for some $g$ in the joint normalizer of $\mc S$ and $\mc P_\emptyset$. 
Composing $\tilde \eta$ with Ad$(g)$ yields the desired $\eta_{\tau,\mc G}$.\\
(d) In view of part (b), $\eta_{\tau,\mc G}$ is unique up to the subgroup of $G_\ad$ which
normalizes $\mc S ,\mc P_\emptyset$ and $K_0$. The joint normalizer of $\mc S$ and 
$\mc P_\emptyset$ is $Z_{\mc G}(\mc S)$, that follows for instance from the Bruhat decomposition
\cite[Theorem 16.1]{Spr}. 

When $F$ is non-archimedean, the group $Z_{G_\ad}(S) = Z_{\mc G_\ad}(\mc S)(F)$ acts on the
apartment $\mh A_{\mc S}$, and $N_{Z_{G_\ad}(S)}(K_0)$ equals the isotropy group of $0 \in 
\mh A_{\mc S}$. Since $Z_{G_\ad}(S)$ acts on $\mh A_{\mc S}$ by translations, the isotropy group
of a point of $\mh A_{\mc S}$ equals the maximal compact subgroup $Z_{G_\ad}(S)_\cpt$.

When $F$ is archimedean, the Iwasawa decomposition shows that the normalizer of $K_0$ in
$G_\ad$ equals the preimage of $K_0$ under the canonical map $G_\ad \to G$. That is a
maximal compact subgroup of $G_\ad$, which implies  that $N_{G_\ad} (K_0) \cap Z_{G_\ad}(S)$ 
equals the maximal compact subgroup of $Z_{G_\ad}(S)$. This proves the unicity part. 

The conjugation action of $Z_G (S)$ (on the collection of subgroups of $G$) factors through
$Z_{G_\ad}(S)$, and ker$(Z_{\mc G} (\mc S) \to Z_{\mc G_\ad}(\mc S)) = Z(\mc G)$. Hence
ker$(Z_G (S)_\cpt \to Z_{G_\ad}(S)_\cpt) = Z (G)_\cpt$, and the final claim follows.
\end{proof}

Proposition \ref{prop:3.4} narrows down the choice of isomorphisms of $F$-groups which give
rise to one particular isomorphism of based root data. But it does not provide a unique
unambigous choice, only up to $Z_{G_\ad} (S)_\cpt \big (Z_G (S)_\cpt / Z(G)_\cpt)$, a subgroup
of $Z_{G_\ad} (S) / (Z_G (S) / Z(G)) \cong G_\ad / G$.

Let $\eta : \tilde{\mc G} \to \mc G$ be any isomorphism of connected reductive $F$-groups. 
Under the LLC, $\Phi_e ({}^L \eta) : \Phi_e (G) \to \Phi_e (\tilde G)$ should correspond 
to a bijection $\Irr (G) \to \Irr (\tilde G)$. In view of the above ambiguity, 
it cannot always correspond to 
\[
\begin{array}{cccc}
\eta^* : & \Irr (G) & \to & \Irr (\tilde G) \\
 & \pi & \mapsto & \pi \circ \eta 
\end{array}.
\]
We expect that $\Phi_e ({}^L \eta)$ corresponds to $\eta^*_{\mc R (\eta),\mc G}$, 
for a suitable choice of $\eta_{\mc R (\eta),\mc G}$ as in Proposition \ref{prop:3.4}.c. 
Proposition \ref{prop:3.4}.a tells us that there exists a unique $g \in \mc G_\ad (F)$
such that 
\begin{equation}\label{eq:3.9}
\eta = \mr{Ad}(g) \circ \eta_{\mc R (\eta),\mc G}.
\end{equation}
In that case we expect that under the LLC
\begin{equation}\label{eq:3.7}
\eta^* : \Irr (G) \to \Irr (\tilde G) \quad \text{corresponds to} \quad
\Phi_e ({}^L \eta) \circ \tau_{\mc G}(g)^{-1} : \Phi_e (G) \to \Phi_e (\tilde G),
\end{equation}
where $\tau_{\mc G}(g)$ twists enhancements of $\phi$ by $\tau_{\mc S_\phi}(g) = 
\tau_{\phi,\mc G}(g) \big|_{\mc S_\phi}$, as in \eqref{eq:2.20}.

\section{Quotients by central subgroups}
\label{sec:4}

Let $\tilde{\mc G}$ be a connected reductive $F$-group and let $\mc N$ be a central 
$F$-subgroup. Then $\mc G : = \tilde{\mc G} / \mc N$ is again a connected reductive 
$F$-group \cite[Corollary 12.2.2]{Spr}. To the quotient map
\[
q : \tilde{\mc G} \to \mc G
\]
one can associate a dual homomorphism
\[
q^\vee : G^\vee \to \tilde{G}^\vee.
\]
Since $G^\vee$ and $\tilde{G}^\vee$ have the same root system, $q^\vee (G^\vee)$ is
a normal subgroup of $\tilde G^\vee$. We note that
\begin{equation}\label{eq:4.1}
({\tilde G}^\vee )_\Sc = ({\tilde G}_\ad )^\vee = {(\tilde{\mc G} / \mc N)_\ad}^\vee = 
{G_\ad}^\vee = {G^\vee}_\Sc .
\end{equation}
The maps 
\[
{}^L q : G^\vee \rtimes \mb W_F \to \tilde G^\vee \rtimes \mb W_F
\quad \text{and} \quad
\Phi ({}^L q) : \Phi (G) \to \Phi (\tilde G)
\]
can be defined as in \eqref{eq:3.8} and \eqref{eq:3.1}. The effect of $q^\vee$ on 
enhancements of L-parameters is more complicated. 
Let $\phi : \mb W_F \times SL_2 (\C) \to G^\vee \rtimes \mb W_F$ be a L-parameter and
write ${}^L q (\phi) = \tilde \phi$. 

\begin{lem}\label{lem:4.7}
In the above setting $\mc S_{\phi}$ is a normal subgroup of $\mc S_{\tilde \phi}$
and $\mc S_{\tilde \phi} / \mc S_\phi$ is abelian.
\end{lem}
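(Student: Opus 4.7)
My plan is to work inside the common ambient group $(G^\vee)_\Sc = (\tilde G^\vee)_\Sc$ supplied by \eqref{eq:4.1}, whose common adjoint quotient $G^\vee_\ad = \tilde G^\vee_\ad$ houses both the image $\bar H$ of $Z_{G^\vee}(\phi)$ and the image $\bar{\tilde H}$ of $Z_{\tilde G^\vee}(\tilde\phi)$. Writing $q_\Sc$ for the projection $(G^\vee)_\Sc \to G^\vee_\ad$, we have $G^\vee_\phi = q_\Sc^{-1}(\bar H)$ and $\tilde G^\vee_{\tilde \phi} = q_\Sc^{-1}(\bar{\tilde H})$, so (since both preimages contain the common kernel $Z((G^\vee)_\Sc)$) the whole question reduces to the analogous statement $\bar H \lhd \bar{\tilde H}$ with abelian quotient in $G^\vee_\ad$. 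The inclusion $\bar H \subseteq \bar{\tilde H}$ is a direct computation: for $\hat g \in Z_{G^\vee}(\phi)$ and $(h,w) \in \mathrm{im}\,\phi$, the $\mb W_F$-equivariance of $q^\vee$ gives
\[
q^\vee(\hat g)\,(q^\vee(h),w)\,q^\vee(\hat g)^{-1} = \bigl(q^\vee(\hat g\, h\, w(\hat g)^{-1}),\,w\bigr) = (q^\vee(h),w),
\]
so $q^\vee(\hat g)$ centralizes $\tilde\phi$ and has the same adjoint image as $\hat g$.

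The structural input is that $\bar H$ is the kernel of the connecting homomorphism $\delta : Z_{G^\vee_\ad}(\bar\phi) \to H^1(\mb W_F, Z(G^\vee))$ attached to the exact sequence $1 \to Z(G^\vee) \to G^\vee \to G^\vee_\ad \to 1$ (compare \eqref{eq:2.7}): for $\bar g = \bar{\check g}$ the cocycle $w \mapsto \check g\, \phi(w)\, \check g^{-1}\, \phi(w)^{-1}$ takes values in $Z(G^\vee)$, and its class in $H^1$ vanishes precisely when some lift of $\bar g$ lies in $Z_{G^\vee}(\phi)$. The same construction for $\tilde\phi$ gives $\bar{\tilde H} = \ker \tilde\delta$, and by naturality $\tilde\delta = q^\vee_* \circ \delta$. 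Hence $\bar H$ is the kernel of the restriction $\delta|_{\bar{\tilde H}}$ and so is normal in $\bar{\tilde H}$, while $\bar{\tilde H}/\bar H$ embeds into the abelian group $\ker\bigl(q^\vee_*\colon H^1(\mb W_F, Z(G^\vee)) \to H^1(\mb W_F, Z(\tilde G^\vee))\bigr)$. Pulling back via $q_\Sc$ and passing to $\pi_0$ then gives normality of the image of $\mc S_\phi$ in $\mc S_{\tilde\phi}$ together with the abelian quotient.

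For injectivity of $\mc S_\phi \hookrightarrow \mc S_{\tilde\phi}$, I would verify $(G^\vee_\phi)^\circ = (\tilde G^\vee_{\tilde\phi})^\circ$, equivalently $\bar H^\circ = \bar{\tilde H}^\circ$ in $G^\vee_\ad$. Both sit inside $Z_{G^\vee_\ad}(\bar\phi)^\circ$; for the reverse inclusion I would show that $\delta$ is trivial on $Z_{G^\vee_\ad}(\bar\phi)^\circ$ (and likewise for $\tilde\delta$), using that $\phi(\mb W_F)$ acts semisimply on $\mathrm{Lie}(Z(G^\vee))$. The resulting decomposition $\mathrm{Lie}(Z(G^\vee)) = \mathrm{Lie}(Z(G^\vee))^{\mb W_F} \oplus (\phi-1)\mathrm{Lie}(Z(G^\vee))$ allows every element of $\mathrm{Lie}(Z_{G^\vee_\ad}(\bar\phi))$ to be lifted to $\mathrm{Lie}(Z_{G^\vee}(\phi))$, which is exactly the infinitesimal vanishing of $\delta$. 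Together with the analogous argument for $\tilde G^\vee$, this forces $\bar H^\circ = \bar{\tilde H}^\circ = Z_{G^\vee_\ad}(\bar\phi)^\circ$.

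The main obstacle is precisely this identity-component step: one needs the semisimplicity of the Weil-group action on $Z(G^\vee)$ to trivialize the infinitesimal obstruction, and one has to handle carefully the case where $Z(G^\vee)$ has positive-dimensional components (so that $H^1(\mb W_F, Z(G^\vee))$ need not be discrete a priori).
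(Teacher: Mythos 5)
Your proof is correct, and its core mechanism is the same as the paper's: both identify $\mc S_{\tilde \phi}/\mc S_\phi$ with the obstruction, measured in $H^1 (\mb W_F, Z(G^\vee))$, to lifting an element of the common adjoint centralizer all the way to $Z_{G^\vee}(\phi)$ rather than merely to $Z_{\tilde G^\vee}(\tilde \phi)$. The paper phrases this by writing $Z^1_{{G^\vee}_\Sc}(\tilde \phi)$ and $Z^1_{{G^\vee}_\Sc}(\phi)$ explicitly as the sets of $g$ with $g \phi g^{-1} = a \phi$ for $a$ a cocycle, respectively a coboundary, valued in $Z(G^\vee)$, and then forms the quotient $Z^1 / B^1$; your connecting-homomorphism formulation with $\tilde \delta = q^\vee_* \circ \delta$ is the same computation in cohomological dress, and even lands in the slightly sharper target $\ker q^\vee_*$. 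The one point where you genuinely diverge is the identity-component step, which you flag as the main obstacle: the paper dispatches it in one line (its equation (4.3)), using that $Z^1_{{G^\vee}_\Sc}(\phi)$ contains $Z_{{G^\vee}_\Sc}(\phi)$ with finite index (so they share an identity component) and that $Z_{{G^\vee}_\Sc}(\phi) = Z_{{G^\vee}_\Sc}(\tilde \phi)$ outright, because $\phi (w)$ and $\tilde \phi (w) = q^\vee (\phi (w))$ have the same image in the common adjoint group and hence induce the identical conjugation action on ${G^\vee}_\Sc$. Your Lie-algebra route — surjectivity of $\mf g^{\phi} \to \mf g_{\ad}^{\phi}$ via semisimplicity of the $\mb W_F$-action on $\mr{Lie}(Z(G^\vee))$, or even more directly via the canonical $\phi$-stable splitting $\mf g = \mr{Lie}(Z(G^\vee)) \oplus \mf g_\der$ — does work, and your concern about positive-dimensional $Z(G^\vee)$ is resolved exactly as you indicate, but it is more labour than the situation requires.
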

\begin{proof}
In view of \eqref{eq:4.1} there is an inclusion
\begin{equation}\label{eq:4.2}
Z^1_{{G^\vee}_\Sc} (\phi) \subset Z^1_{{G^\vee}_\Sc} (\tilde \phi) .
\end{equation}
Since $q^\vee (G^\vee)$ is normal in $\tilde G^\vee$, $Z^1_{{G^\vee}_\Sc} (\phi)$
is a normal subgroup of $Z^1_{{G^\vee}_\Sc} (\tilde \phi)$.
These groups need not be equal, for $q^\vee$ need not be injective and the superscript
1 has a different meaning on both sides. Nevertheless, we do have equalities
\begin{equation}\label{eq:4.3}
Z^1_{{G^\vee}_\Sc} (\phi)^\circ = Z_{{G^\vee}_\Sc} (\phi)^\circ = 
Z_{{G^\vee}_\Sc} (\tilde \phi)^\circ = Z^1_{{G^\vee}_\Sc} (\tilde \phi)^\circ .
\end{equation}
From \eqref{eq:4.3} we see that \eqref{eq:4.2} induces an isomorphism
\begin{equation}\label{eq:4.6}
\mc S_{\tilde \phi} / \mc S_\phi \cong Z^1_{{G^\vee}_\Sc} (\tilde \phi) / Z^1_{{G^\vee}_\Sc} (\phi) .
\end{equation}
In particular \eqref{eq:4.2} descends to an embedding of
$\mc S_\phi$ as a normal subgroup of $\mc S_{\tilde \phi}$.
Unwinding the definitions, we find
\begin{align*}
Z^1_{{G^\vee}_\Sc} (\tilde \phi) & = \{ g \in G^\vee_\Sc : g \tilde \phi g^{-1} = 
z \tilde \phi z^{-1} \text{ for some } z \in Z(\tilde{G}^\vee) \}\\
& = \{ g \in G^\vee_\Sc : g \phi g^{-1} = 
c z \phi z^{-1} \text{ for some } z \in Z(\tilde{G}^\vee), c \in Z^1 (\mb W_F, \ker q^\vee) \} .
\end{align*}
In the second line $z \phi z^{-1} = a_z \phi$ with $a_z \in Z^1 (\mb W_F, Z(G^\vee))$.
Similarly
\begin{align*}
Z^1_{{G^\vee}_\Sc} (\phi) & = \{ g \in G^\vee_\Sc : g \phi g^{-1} = 
z \phi z^{-1} \text{ for some } z \in Z(G^\vee) \} \\
& = \{ g \in G^\vee_\Sc : g \phi g^{-1} = b_z \phi \text{ for some } b_z \in B^1 (\mb W_F, Z(G^\vee)) \} .
\end{align*}
Comparing these characterizations and using \eqref{eq:4.6}, we obtain an injection
\[
\mc S_{\tilde \phi} / \mc S_\phi \to Z^1 (\mb W_F , Z(G^\vee)) / B^1 (\mb W_F, Z(G^\vee))
= H^1 (\mb W_F, Z(G^\vee)) .
\]
Since the right hand side is abelian, so is $\mc S_{\tilde \phi} / \mc S_\phi$. 
\end{proof}

For $\rho \in \Irr (\mc S_\phi)$ we put 
\[
{}^L q^* (\rho) = \mr{ind}_{\mc S_\phi}^{\mc S_{\tilde \phi}}(\rho) 
\in \mr{Rep}(\mc S_{\tilde \phi}) .
\]
Then ${}^L q^* (\rho)$ may very well be reducible, but that is only natural, as 
\[
q^* : \mr{Rep}(G) \to \mr{Rep}(\tilde G)
\]
need not preserve irreducibility either. Let $\Phi_{e+}(G)$ the set of pairs $(\phi,\rho)$ 
with $\phi \in \Phi (G)$ and $\rho \in \mr{Rep} (\mc S_\phi)$, considered
modulo $G^\vee$-conjugacy. With that notion $q^\vee$ induces a map
\[
\begin{array}{cccc}
\Phi_{e+}({}^L q) : & \Phi_{e+}(G) & \to & \Phi_{e+}(\tilde G) , \\
& (\phi,\rho) & \mapsto & ({}^L q \circ \phi, {}^L q^* (\rho)) .
\end{array}
\]
First we investigate this map on the cuspidal level. Let $\tilde {\mc L}$ be a Levi 
$F$-subgroup of $\tilde{\mc G}$, and let $\mc L = \tilde{\mc L} / \mc N$ be its
image in $\mc G$. We consider $(\phi_L,\rho_L) \in \Phi_\cusp (L)$, and we let 
$\rho_i \in \Irr (\mc S_{\tilde \phi_L})$ be a constituent of ${}^L q^* (\rho_L)$. 
Cuspidality of $\rho_i$ depends only on $\rho_i |_{Z_{{G^\vee}_\Sc}(\phi_L)}$, 
which is a sum of $\mc S_{\tilde \phi_L}$-conjugates of $\rho_L$. 
Those conjugates are cuspidal because $\rho_L$ is, so $\rho_i$ is cuspidal as well.

We write $\mf s^\vee$ (resp. $\mf s^\vee_L$) for the inertial equivalence class for 
$\Phi_e (G)$ (resp. for $\Phi_e (L)$) containing $(\phi_L,\rho_L)$ and $\mf s_i^\vee$  (resp.
$\mf s_{i,L}^\vee$) for the inertial class of $(\tilde \phi_L,\rho_i)$ (for $\Phi_e (\tilde{G})$
and for $\Phi_e (\tilde L)$, respectively). Decompose 
\begin{equation}\label{eq:4.26}
{}^L q^* (\rho_L) = \bigoplus\nolimits_i \rho_i^{m_i} \text{ with } m_i \in \Z_{>0}.
\end{equation}
We define ${}^L q (\mf s_L^\vee)$ as the union $\bigcup_i \mf s_{i,L}^\vee$, where 
$\mf s_{i,L}^\vee$ is identified with $\mf s_{j,L}^\vee$ if they are $(\tilde L)^\vee$-conjugate. 
For every $i$ there is a canonical map (in general neither injective nor surjective)
\begin{equation}\label{eq:4.4}
\mf s_L^\vee \to \mf s_{i,L}^\vee : (t \phi_L ,\rho_L) \mapsto (t \tilde \phi_L, \rho_i)
\qquad t \in X_\nr (L) .
\end{equation}
It induces an algebra homomorphism 
\begin{equation}\label{eq:4.7}
\mc O ({}^L q,\mf s_{i,L}^\vee) : \mc O (\mf s_{i,L}^\vee) \to \mc O (\mf s_L^\vee) .
\end{equation}
Then $\Phi_{e+}(\phi_L,\rho_L) = (\tilde \phi_L, \bigoplus_i \rho_i^{m_i})$ is the parameter
of the representation
\begin{equation}\label{eq:4.5}
\bigoplus\nolimits_i \mc O ({}^L q,\mf s_{i,L}^\vee)^* (\phi_L,\rho_L )^{m_i} 
\quad \text{of} \quad \sum\nolimits_i \mc O (\mf s_{i,L}^\vee) . 
\end{equation}
\textbf{Remark.} In many cases the multiplicities $m_i$ are one, cf. \cite{AdPr1}. For a 
counterexample to that among supercuspidal representations of quasi-split groups, see
\cite[Theorem 13]{AdPr}.

\subsection{Intermediate Hecke algebras} \

Returning to our general setting, we consider
\[
\mc H ({}^L q (\mf s^\vee),\vec{\mb z}) := 
\bigoplus\nolimits_i \mc H (\mf s_i^\vee,\vec{\mb z}) ,
\]
where $\mf s_i^\vee$ is identified with $\mf s_j^\vee$ if they are $\tilde G^\vee$-conjugate.
One complication immediately arises: in general there is no good homomorphism between
$\mc H (\mf s^\vee,\vec{\mb z})$ and $\mc H ({}^L q (\mf s^\vee),\vec{\mb z})$.
To work around that, we will introduce an intermediate Hecke algebra which is Morita
equivalent with $\mc H (\mf s^\vee,\vec{\mb z})$.

\begin{lem}\label{lem:4.1}
\enuma{
\item There are canonical identifications $\Phi_{\mf s^\vee} = \Phi_{\mf s_i^\vee}$
and $W_{\mf s^\vee}^\circ = W_{\mf s_i^\vee}^\circ$, and with respect to these the
label functions $\lambda, \lambda^*$ for both are equal.
\item $W_{\mf s^\vee}$ is canonically embedded in $W_{\mf s_i^\vee}$.
With respect to that embedding, the 2-cocycle $\kappa_{\mf s^\vee}$ is
the restriction of $\kappa_{\mf s_i^\vee}$.
}
\end{lem}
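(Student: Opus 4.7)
My approach is to unwind the constructions of \cite{AMS3} and exploit the identification ${G^\vee}_\Sc = \tilde G^\vee_\Sc$ from \eqref{eq:4.1}, together with the equalities of identity components in \eqref{eq:4.3}, to show that almost all of the ingredients defining $\mc H(\mf s^\vee,\vec{\mb z})$ are literally the same as those for $\mc H(\mf s_i^\vee,\vec{\mb z})$, with only the finite groups $W_{\mf s^\vee}$ and $W_{\mf s_i^\vee}$ differing.

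For part (a), I would argue as follows. The root system $\Phi_{\mf s^\vee}$ is built from $\Phi((G^\vee_{\phi_L})^\circ, Z(L_c^\vee)^{\mb W_F,\circ})$, and the Weyl group $W_{\mf s^\vee}^\circ$ is its Weyl group. The Levi $L_c^\vee \subset {G^\vee}_\Sc$ determined by $L^\vee$ coincides with the Levi $\tilde L_c^\vee \subset \tilde G^\vee_\Sc$ determined by $\tilde L^\vee$, since $q : \tilde{\mc G} \to \mc G$ and its restriction $\tilde L \to L$ have the same derived group, and the $\mb W_F$-action on ${G^\vee}_\Sc$ coincides with the one on $\tilde G^\vee_\Sc$ (both descend from the common adjoint quotient). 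Together with $(G^\vee_{\phi_L})^\circ = (\tilde G^\vee_{\tilde \phi_L})^\circ$ from \eqref{eq:4.3}, this yields $\Phi_{\mf s^\vee} = \Phi_{\mf s_i^\vee}$ and hence $W^\circ_{\mf s^\vee} = W^\circ_{\mf s_i^\vee}$. The label functions $\lambda,\lambda^*$ are computed via the graded Hecke algebra data \eqref{eq:2.3}, which depends only on the identity components just identified, on $u_{\phi_L} = u_{\tilde\phi_L}$ (regarded inside ${G^\vee}_\Sc$), and on the cuspidal local system underlying the restriction of $\rho_L$ (resp.\ $\rho_i$) to the identity component $\mc S^\circ_{\phi_L} = \mc S^\circ_{\tilde\phi_L}$. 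Since $\rho_i$ is a constituent of $\mr{ind}_{\mc S_{\phi_L}}^{\mc S_{\tilde\phi_L}}(\rho_L)$, Frobenius reciprocity together with the cuspidality of $\rho_L$ implies that both $\rho_L|_{\mc S^\circ_{\phi_L}}$ and $\rho_i|_{\mc S^\circ_{\phi_L}}$ decompose as sums of the same cuspidal local systems, so the labels computed from either side coincide.

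For part (b), I would first observe that $q^\vee$ induces an injection $N_{G^\vee}(L^\vee \rtimes \mb W_F)/L^\vee \hookrightarrow N_{\tilde G^\vee}(\tilde L^\vee \rtimes \mb W_F)/\tilde L^\vee$, because $\ker q^\vee$ is central and already contained in $L^\vee$, and $(q^\vee)^{-1}(\tilde L^\vee) = L^\vee$ (both quotients are naturally identified with $N_{G^\vee_\Sc}(L_c^\vee \rtimes \mb W_F)/L_c^\vee$). For $w \in W_{\mf s^\vee}$ with $w\phi_L w^{-1} = t\phi_L$ ($t \in X_\nr(L)$) and $w\cdot\rho_L \cong \rho_L$, applying ${}^L q$ gives $w\tilde\phi_L w^{-1} = q^\vee(t)\tilde\phi_L$ with $q^\vee(t) \in X_\nr(\tilde L)$, so $w$ preserves the $X_\nr(\tilde L)$-orbit of $\tilde\phi_L$. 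The key remaining point is that $w\cdot\rho_i \cong \rho_i$ for each $i$. Here I use that $\mc S_{\tilde\phi_L}/\mc S_{\phi_L}$ is abelian by Lemma \ref{lem:4.7}, so Clifford theory presents each $\rho_i$ as $\rho_L$ extended by a character on a subgroup of this abelian quotient; since $w$ lies in $G^\vee$, it centralizes $Z(\tilde G^\vee)$ and hence acts trivially on the characters distinguishing the $\rho_i$ among constituents of $\mr{ind}(\rho_L)$. This is the main obstacle of the proof: one must verify that $w$ fixes each individual $\rho_i$ rather than merely permuting them.

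Finally, the comparison of 2-cocycles is a consequence of the constructions: $\kappa_{\mf s^\vee}$ and $\kappa_{\mf s_i^\vee}$ arise from intertwining operators on $G^\vee_{\phi_L}$-equivariant, resp.\ $\tilde G^\vee_{\tilde\phi_L}$-equivariant, perverse sheaves attached to the same cuspidal pair (via the identifications of part (a)). For $w$ in the common subgroup $W_{\mf s^\vee} \subset W_{\mf s_i^\vee}$, the intertwiners $I^{\dot w}$ can be chosen identically on both sides, because they only depend on the action of lifts $\dot w \in {G^\vee}_\Sc = \tilde G^\vee_\Sc$ on the underlying local system on $(M^\vee)^\circ = (\tilde M^\vee)^\circ$. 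Therefore the values $\kappa_{\mf s^\vee}(w,w') = I^{ww'}(I^{w'})^{-1}(I^w)^{-1}$ agree with $\kappa_{\mf s_i^\vee}(w,w')$, giving the desired restriction statement.
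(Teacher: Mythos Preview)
Your treatment of part (a) follows the same line as the paper (both rest on the equality of identity components $Z_{{G^\vee}_\Sc}(\phi(\mb W_F))^\circ = Z_{{G^\vee}_\Sc}(\tilde\phi(\mb W_F))^\circ$), and your discussion of the 2-cocycles in (b) is in the same spirit as the paper's perverse-sheaf comparison.

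The problem is your argument for the embedding $W_{\mf s^\vee}\hookrightarrow W_{\mf s_i^\vee}$. You correctly reduce to showing that each $w\in W_{\mf s^\vee}$ fixes every $\rho_i$, and you correctly note that $w$ permutes the constituents of $\mr{ind}_{\mc S_{\phi_L}}^{\mc S_{\tilde\phi_L}}(\rho_L)$. But your justification that the permutation is trivial---``since $w$ lies in $G^\vee$, it centralizes $Z(\tilde G^\vee)$ and hence acts trivially on the characters distinguishing the $\rho_i$''---does not hold up. Centralizing $Z(\tilde G^\vee)$ is vacuous (every element of $\tilde G^\vee$ does), and the Clifford characters $\tau_i$ you invoke live on the stabilizer quotient $(\mc S_{\tilde\phi_L})_{\rho_L}/\mc S_{\phi_L}$, not on $Z(\tilde G^\vee)$. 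There is no a~priori reason why conjugation by $\dot w$ should act trivially on that quotient or on the $\tau_i$; this is precisely the point you identify as ``the main obstacle'', and your proposed resolution does not address it.

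The paper sidesteps this entirely. Rather than analyse the Clifford data, it uses the way the indexing set was set up just before the lemma: in the definition of ${}^L q(\mf s_L^\vee)=\bigcup_i \mf s_{i,L}^\vee$ one identifies $\mf s_{i,L}^\vee$ with $\mf s_{j,L}^\vee$ whenever they are $\tilde L^\vee$-conjugate, so the surviving $\mf s_i^\vee$ are pairwise \emph{inertially inequivalent}. Now if $w$ sent $\rho_i$ to $\rho_j$ with $j\neq i$, then via $q^\vee(\dot w)\in N_{\tilde G^\vee}(\tilde L^\vee\rtimes\mb W_F)$ the classes $\mf s_i^\vee$ and $\mf s_j^\vee$ would be $\tilde G^\vee$-conjugate, a contradiction. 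Hence $w$ stabilizes each $\mf s_i^\vee$, i.e.\ $w\in W_{\mf s_i^\vee}$. This is both shorter and avoids any analysis of the $w$-action on $\mc S_{\tilde\phi_L}/\mc S_{\phi_L}$.
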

\textbf{Remark.} In general the inclusion $W_{\mf s^\vee} \subset W_{\mf s_i^\vee}$
can be proper, and $W_{\mf s_i^\vee}$ need not coincide with $W_{\mf s_j^\vee}$
(for another $\rho_j$).
\begin{proof}
(a) The root systems $\Phi_{\mf s^\vee}$ and $\Phi_{\mf s_i^\vee}$ depend only on
constructions in the group
\[
Z_{{G^\vee}_\Sc} (\phi (\mb W_F))^\circ = Z_{{G^\vee}_\Sc} (\tilde \phi (\mb W_F))^\circ ,
\]
so they can be identified. The same holds for the label functions $\lambda,\lambda^*$ 
and for the Weyl groups $W_{\mf s^\vee}^\circ$ and $W_{\mf s_i^\vee}^\circ$.\\
(b) Every $w \in W_{\mf s^\vee}$ can be represented by an element $\dot w \in N_{G^\vee}
(L^\vee \rtimes \mb W_F)$ which stabilizes $\mf s_L^\vee = (X_\nr (L) \phi_L, \rho_L)$. 
Then $\dot w$ also stabilizes ${}^L q^* (\rho_L)$, so it permutes the various $\rho_i$. 
As the different $\mf s_i^\vee$ are inertially inequivalent, this implies that 
$\dot w$ stabilizes every $\mf s_i^\vee$, and that $w \in W_{\mf s_i^\vee}$.

The twisted group algebras $\C [W_{\mf s^\vee},\kappa_{\mf s^\vee}]$ and
$\C [W_{\mf s_i^\vee},\kappa_{\mf s_i^\vee}]$ are both defined as the endomorphism algebra
of a certain perverse sheaf $q\pi_* (\widetilde{q\cE})$ \cite[Proposition 3.15.b]{AMS3}. 
The relevant perverse sheaf
for $\mf s^\vee$ is the restriction of the perverse sheaf for $\mf s_i^\vee$, from
$Z^1_{{G^\vee}_\Sc}(\tilde \phi |_{\mb I_F})$ to the finite index subgroup 
$Z^1_{{G^\vee}_\Sc}(\phi |_{\mb I_F})$. By construction, the projective actions of
$W_{\mf s^\vee}$ on these two perverse sheaves are related by the same restriction of
base spaces. Thus $\C [W_{\mf s^\vee},\kappa_{\mf s^\vee}]$ acts on both these
perverse sheaves, and embeds in $\C [W_{\mf s_i^\vee},\kappa_{\mf s_i^\vee}]$.
\end{proof}
 
Choose a set of representatives $[\mf R_{\mf s_i^\vee} / \mf R_{\mf s^\vee}]$ for 
$\mf R_{\mf s_i^\vee} / \mf R_{\mf s^\vee}$ in $\mf R_{\mf s_i^\vee}$. By Lemma 
\ref{lem:4.1}.a these are also representatives for $W_{\mf s_i^\vee} / W_{\mf s^\vee}$.
By the definition of $W_{\mf s^\vee}$, the sum
\begin{equation}\label{eq:4.9}
\sum\nolimits_{r \in [\mf R_{\mf s_i^\vee} / \mf R_{\mf s^\vee}]} 
\mc H (r \cdot \mf s^\vee, \vec{\mb z})
\end{equation}
is direct. Every $r \in \mf R_{\mf s_i^\vee}$ determines a canonical isomorphism
\[
\mc H (r' \cdot \mf s^\vee, \vec{\mb z}) \to \mc H (r r' \cdot \mf s^\vee, \vec{\mb z}) 
\]
sending $N_s$ to $N_{rsr^{-1}}$. In this way we define an algebra structure on
\begin{equation}\label{eq:4.25}
\mc H (\mf s^\vee, W_{\mf s_i^\vee}, \vec{\mb z})  :=
\bigoplus_{w \in [\mf R_{\mf s_i^\vee} / \mf R_{\mf s^\vee}]} 
\mc H (w \cdot \mf s^\vee, \vec{\mb z}) 
\underset{\C [\mf R_{\mf s^\vee}, \kappa_{\mf s^\vee}]}{\otimes}
\C [\mf R_{\mf s_i^\vee}, \kappa_{\mf s_i^\vee}] .
\end{equation}
It is easy to see that $\mc H (\mf s^\vee, W_{\mf s_i^\vee}, \vec{\mb z})$ contains
$\mc H (\mf s^\vee, \vec{\mb z})$ as a Morita equivalent subalgebra. In particular
the irreducible representations of these two algebras can be parametrized by the same set.
Fix $\vec{z} \in \R_{>1}^d$. Via Theorem \ref{thm:1.1} we associate to any 
$(\phi,\rho) \in \Phi_e (G)^{\mf s^\vee}$ the unique representation
\[
\overline{M}(\phi,\rho,\vec{z},W_{\mf s_i^\vee}) \in 
\Irr ( \mc H (\mf s^\vee, W_{\mf s_i^\vee}, \vec{\mb z}) ) 
\]
which appears in $\mr{ind}_{\mc H (\mf s^\vee, \vec{\mb z})}^{
\mc H (\mf s^\vee, W_{\mf s_i^\vee}, \vec{\mb z})} \overline{M}(\phi,\rho,\vec{z})$.
Similarly we define the standard module $\overline{E}(\phi,\rho,\vec{z},W_{\mf s_i^\vee}) 
\in \mr{Mod} ( \mc H (\mf s^\vee, W_{\mf s_i^\vee}, \vec{\mb z}) )$ by 
\[
\overline{E}(\phi,\rho,\vec{z},W_{\mf s_i^\vee})^{[\mf R_{\mf s_i^\vee} : \mf R_{\mf s^\vee}]} 
\cong \mr{ind}_{\mc H (\mf s^\vee, \vec{\mb z})}^{
\mc H (\mf s^\vee, W_{\mf s_i^\vee}, \vec{\mb z})} \overline{E}(\phi,\rho,\vec{z}).
\]
With Lemma \ref{lem:4.1} we can build a twisted affine Hecke algebra
\[
\mc H (\mf s_i^\vee, W_{\mf s_i^\vee}^\circ, \lambda, \lambda^*, \vec{\mb z})
\rtimes \C [\mf R_{\mf s_L^\vee},\kappa_{\mf s_L^\vee}] \subset
\mc H (\mf s_i^\vee, W_{\mf s_i^\vee}^\circ, \lambda, \lambda^*, \vec{\mb z})
\rtimes \C [\mf R_{\mf s_i^\vee},\kappa_{\mf s_i^\vee}] = \mc H (\mf s_i^\vee,\vec{\mb z}).
\]
The map \eqref{eq:4.7} (with $r \cdot \rho_L$ instead of $\rho_L$) 
extends to an algebra homomorphism
\begin{equation}\label{eq:4.8}
\mc H (\mf s_i^\vee, W_{\mf s_i^\vee}^\circ, \lambda, \lambda^*, \vec{\mb z})
\rtimes \C [\mf R_{\mf s^\vee},\kappa_{\mf s^\vee}] \to \mc H (r \cdot \mf s^\vee,\vec{\mb z}) ,
\end{equation}
which sends $N_w$ to $N_w$. With the homomorphisms \eqref{eq:4.8} for all 
$r \in [\mf R_{\mf s_i^\vee} / \mf R_{\mf s^\vee}]$, we map $\mc H (\mf s_i^\vee, 
W_{\mf s_i^\vee}^\circ, \lambda, \lambda^*, \vec{\mb z}) \rtimes \C [\mf R_{\mf s_L^\vee},
\kappa_{\mf s_L^\vee}]$ "diagonally" to \eqref{eq:4.9}. This extends to an algebra
homomorphism
\begin{equation}\label{eq:4.10}
\mc H (\mf s_i^\vee,\vec{\mb z}) \to \mc H (\mf s^\vee, W_{\mf s_i^\vee}, \vec{\mb z}) .
\end{equation}
We want to determine the pullbacks of $\overline{M}(\phi,\rho,\vec{z},W_{\mf s_i^\vee})$
and $\overline{E}(\phi,\rho,\vec{z},W_{\mf s_i^\vee})$
along \eqref{eq:4.10}. However, this map may change the $W_{\mf s_i^\vee}$-stabilizers of 
points of $\mf s_i^\vee$, which makes it a little more cumbersome to describe its effect on 
(irreducible) representations. To solve this problem, we will translate it to twisted graded 
Hecke algebras.

From now on we assume that $\phi_L \in \Phi (L)$ is bounded (which is hardly a restriction, 
it can always be achieved by an unramified twist). We write 
\[
M^\vee = L^\vee_c \cap Z^1_{{G^\vee}_\Sc}(\tilde \phi_L |_{\mb W_F}),
\]
a quasi-Levi subgroup of 
$\tilde{G}^\vee_{\tilde \phi_L} = Z^1_{{G^\vee}_\Sc}(\tilde \phi_L |_{\mb W_F})$. 
Recall Theorem \ref{thm:1.2} and the twisted graded affine Hecke algebras
\begin{equation}\label{eq:4.11} 
\begin{aligned}
& \mh H (\phi_L, \rho_L, \vec{\mb r}) = \mh H ( G^\vee_{\phi_L} \times
X_\nr (L), (L^\vee_c \cap G^\vee_{\phi_L}) \times X_\nr (L), \rho_L, \vec{\mb r}) , \\
& \mh H (\tilde \phi_L, \rho_i, \vec{\mb r}) = \mh H ( \tilde G^\vee_{\tilde \phi_L} \times
X_\nr (\tilde L),  M^\vee \times X_\nr (\tilde L), \rho_i, \vec{\mb r}) .
\end{aligned}
\end{equation}
Let $W_{\mf s_i^\vee, \tilde \phi_L}$ be the stabilizer of $\tilde \phi_L$ in 
$W_{\mf s_i^\vee}$, decompose it as in \eqref{eq:2.23}, with an R-group 
$\mf R_{\mf s_i^\vee, \tilde \phi_L}$ that is complementary to a Weyl group.
Analogous to the construction of $\mc H (\mf s^\vee, W_{\mf s_i^\vee},\vec{\mb z})$,
we can build an algebra
\[
\mh H (\phi_L, W_{\mf s_i^\vee, \tilde \phi_L}, \rho_L, \vec{\mb r}) := 
\sum_{r \in [\mf R_{\mf s_i^\vee, \tilde \phi_L} / \mf R_{\mf s^\vee, \phi_L}]}
\mh H (\phi_L, r \cdot \rho_L, \vec{\mb r}) 
\underset{\C [\mf R_{\mf s^\vee, \phi_L}, \kappa_{\mf s^\vee}]}{\otimes}
\C [\mf R_{\mf s_i^\vee, \tilde \phi_L}, \kappa_{\mf s_i^\vee}] . 
\]
We note that $\mh H (\phi_L, W_{\mf s_i^\vee, \tilde \phi_L}, \rho_L, \vec{\mb r})$
contains $\mh H (\phi_L, \rho_L, \vec{\mb r})$ as a Morita equivalent subalgebra.
To construct a good homomorphism from $\mh H (\tilde \phi_L, \rho_i, \vec{\mb r})$ to
$\mh H (\phi_L, W_{\mf s_i^\vee, \tilde \phi_L}, \rho_L, \vec{\mb r})$, we need more
intermediate algebras. 

Consider the twisted graded Hecke algebra $\mh H ( {\tilde G}^\vee_{\tilde \phi_L} 
\times X_\nr (L), M^\vee \times X_\nr (L), \rho_i, \vec{\mb r})$. In \cite[Theorem 4.6]{AMS2}
its irreducible representations, with $\vec{\mb r}$ acting as a fixed $\vec{r} \in \R^d$, are 
parametrized by triples $(s,v,\rho)$ such that:
\begin{itemize}
\item $s \in \mr{Lie}({\tilde G}^\vee_{\tilde \phi_L} \times X_\nr (L))$ semisimple,
\item $u \in \mr{Lie}({\tilde G}^\vee_{\tilde \phi_L} \times X_\nr (L))$ nilpotent with $[s,u]=0$,
\item $\rho \in \Irr \big( \pi_0 \big(Z_{{\tilde G}^\vee_{\tilde \phi_L}}
(s,u) \big) \big)$ with $q\Psi_{Z_{{\tilde G}^\vee_{\tilde \phi_L}}(s)}(u,\rho) =
(M^\vee,\log (u_{\phi_L}), \rho_i)$.
\end{itemize}
The natural map $X_\nr (L) \to X_\nr (\tilde L)$ induces an algebra homomorphism
\begin{equation}\label{eq:4.18}
\mh H (\tilde \phi_L, \rho_i, \vec{\mb r}) \to  \mh H ( {\tilde G}^\vee_{\tilde \phi_L} 
\times X_\nr (L), M^\vee \times X_\nr (L), \rho_i, \vec{\mb r}).
\end{equation}
We put $G^\vee_{\tilde \phi_L} = G^\vee_{\phi_L} M^\vee$, so that
\begin{equation}\label{eq:4.12}
G^\vee_{\tilde \phi_L} / G^\vee_{\phi_L} \cong M^\vee / \big( M^\vee \cap G^\vee_{\phi_L} \big) 
= \big( L^\vee_c \cap Z^1_{{G^\vee}_\Sc} (\tilde \phi_L |_{\mb W_F}) \big) \big/ 
\big( L^\vee_c \cap Z^1_{{G^\vee}_\Sc} (\phi_L |_{\mb W_F}) \big) .
\end{equation}
The idea is that on the cuspidal level $G^\vee_{\tilde \phi_L}$ comes from $\tilde \phi_L$,
while still giving the same groups $W_?$ as $\phi_L$.
With Clifford theory we can write $\rho_i = \rho_L \rtimes \tau_i$, where $\tau_i$ is a (projective)
representation of $(\mc S_{\tilde \phi_L})_{\rho_L} / \mc S_{\phi_L}$.

\begin{lem}\label{lem:4.6}
Let $(\phi,\rho) \in \Phi_e (G)^{\mf s_L^\vee}$ and write $\tilde \phi = {}^L q (\phi),
\mc S'_\phi = \pi_0 (Z_{G^\vee_{\tilde \phi_L}}(\tilde \phi))$.
\enuma{
\item There is a natural isomorphism $(\mc S'_\phi)_\rho / \mc S_\phi \cong 
(\mc S_{\tilde \phi_L})_{\rho_L} / \mc S_{\phi_L}$. The projective actions of these groups
on, respectively, $V_\rho$ and $V_{\rho_L}$ give the same 2-cocycles.
\item The decomposition of $\mr{ind}_{\mc S_\phi}^{\mc S'_\phi}(\rho)$ into irreducible
representations is $\bigoplus_i (\rho \rtimes \tau_i )^{m_i}$, where $\mr{ind}_{
\mc S_{\phi_L}}^{\mc S_{\tilde \phi_L}} (\rho_L) = \bigoplus_i (\rho_L \rtimes \tau_i )^{m_i}$.
}
\end{lem}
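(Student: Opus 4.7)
My approach is three-stage: identify the abstract quotient $(\mc S'_\phi)/\mc S_\phi$ with $\mc S_{\tilde \phi_L}/\mc S_{\phi_L}$, match the resulting stabilizers and 2-cocycles, and then invoke Clifford theory for (b).

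For the underlying group isomorphism in (a), I would combine \eqref{eq:4.12} with the analogue of \eqref{eq:4.6} applied at the cuspidal level. The proof of Lemma \ref{lem:4.7} gives
\[
\mc S_{\tilde \phi_L}/\mc S_{\phi_L} \;\cong\; Z^1_{{G^\vee}_\Sc}(\tilde \phi_L|_{\mb W_F})\big/ Z^1_{{G^\vee}_\Sc}(\phi_L|_{\mb W_F}),
\]
and by definition of $M^\vee$ this quotient coincides with $M^\vee/(M^\vee \cap G^\vee_{\phi_L})$. On the other hand, $G^\vee_{\tilde \phi_L} = G^\vee_{\phi_L} M^\vee$ implies that every element of $Z_{G^\vee_{\tilde\phi_L}}(\tilde\phi)$ is, modulo $Z_{G^\vee_{\phi_L}}(\phi)$, represented by an element of $M^\vee$ centralizing $\tilde\phi$; since $u_\phi \in (G^\vee_{\phi_L})^\circ$ and $M^\vee$ commutes with $\phi|_{SL_2(\C)}$ already (as $M^\vee$ centralizes $u_{\phi_L}$-data via the cuspidal support), I obtain a natural surjection $\mc S'_\phi/\mc S_\phi \twoheadrightarrow \mc S_{\tilde \phi_L}/\mc S_{\phi_L}$. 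Injectivity follows from $Z_{G^\vee_{\tilde \phi_L}}(\tilde\phi) \cap G^\vee_{\phi_L} = Z_{G^\vee_{\phi_L}}(\phi)$.

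Next I would check that under this isomorphism the conjugation action of $\mc S'_\phi/\mc S_\phi$ on $\rho$ matches the action of $\mc S_{\tilde \phi_L}/\mc S_{\phi_L}$ on $\rho_L$. The bridge is the cuspidal support construction of \cite[\S 7]{AMS1}: $\rho$ is recovered from the cuspidal datum $(M^\vee, u_{\phi_L}, \rho_L)$ via the generalized Springer correspondence for the pair $(G^\vee_{\phi_L})^\circ \supset (M^\vee)^\circ$, and this construction is equivariant for conjugation by $M^\vee$. Passing to stabilizers yields $(\mc S'_\phi)_\rho/\mc S_\phi \cong (\mc S_{\tilde \phi_L})_{\rho_L}/\mc S_{\phi_L}$. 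The projective extensions of $\rho$ and of $\rho_L$ to these respective stabilizers are both obtained from the \emph{same} intertwining operators on the perverse sheaf $q\pi_*(\widetilde{q\cE})$ used to construct the twisted group algebras in \cite[Proposition 3.15.b]{AMS3}; hence the two 2-cocycles on $(\mc S_{\tilde \phi_L})_{\rho_L}/\mc S_{\phi_L}$ agree on the nose, not just up to coboundary.

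Granted (a), part (b) is purely formal Clifford theory. Irreducible constituents of $\mr{ind}_{\mc S_\phi}^{\mc S'_\phi}(\rho)$ are parametrised by irreducible $\kappa^{-1}$-projective representations $\tau$ of $(\mc S'_\phi)_\rho/\mc S_\phi$ (with $\kappa$ the 2-cocycle of the projective extension of $\rho$), with multiplicity $\dim\tau$. By (a) this is the same indexing set $\{\tau_i\}$ and the same multiplicities $m_i=\dim\tau_i$ as appear in the given decomposition of $\mr{ind}_{\mc S_{\phi_L}}^{\mc S_{\tilde \phi_L}}(\rho_L)$, so writing $\rho\rtimes\tau_i$ for the corresponding constituent on the upstairs side yields $\bigoplus_i (\rho\rtimes\tau_i)^{m_i}$, as claimed. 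The main obstacle is the 2-cocycle matching in (a): while the group isomorphism is essentially a diagram chase, verifying that the projective extensions coincide exactly, rather than being merely cohomologous, requires tracing the perverse-sheaf intertwiners through the cuspidal support map at both the $\phi_L$ and the $\tilde\phi_L$ level and checking their compatibility under restriction from $Z^1_{{G^\vee}_\Sc}(\tilde\phi_L|_{\mb W_F})$ to $G^\vee_{\tilde\phi_L}$.
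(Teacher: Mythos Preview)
Your approach is more hands-on than the paper's, which simply invokes \cite[Lemma 4.4]{AMS2} (with $Q$ replaced by the possibly disconnected group $G^\vee_{\phi_L}$ and cuspidal supports by quasi-supports) as a black box. That lemma already delivers the irreducibility of $\rho \rtimes \tau_i$ as a representation of $\mc S'_\phi$, and the paper then reads off the stabilizer identification from \cite[(102)]{AMS2}: irreducibility of $\rho \rtimes \tau_i$ forces the canonical image of $(\mc S_{\tilde\phi_L})_{\rho_L}/\mc S_{\phi_L}$ in $\mc S'_\phi/\mc S_\phi$ to equal $(\mc S'_\phi)_\rho/\mc S_\phi$. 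The 2-cocycle matching falls out because $\rho \rtimes \tau_i$ being well-defined means the projective extension of $\rho$ is compatible with that of $\rho_L$ via $\tau_i$. Part (b) is then Clifford theory, exactly as you say.

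Your direct route has two gaps. First, the claim that ``$M^\vee$ commutes with $\phi|_{SL_2(\C)}$ already (as $M^\vee$ centralizes $u_{\phi_L}$-data)'' conflates $u_\phi$ with $u_{\phi_L}$: the image of $\phi|_{SL_2(\C)}$ is governed by $u_\phi$, which lies in $(G^\vee_{\phi_L})^\circ$ but has no reason to be centralized by all of $M^\vee$. Without this, your description of the map $\mc S'_\phi/\mc S_\phi \to \mc S_{\tilde\phi_L}/\mc S_{\phi_L}$ as ``represented by an element of $M^\vee$ centralizing $\tilde\phi$'' does not go through. Second, your appeal to \cite[Proposition 3.15.b]{AMS3} for the 2-cocycle match points to the wrong object: that result constructs the cocycle $\kappa_{\mf s^\vee}$ on $\mf R_{\mf s^\vee}$, not the Clifford-theoretic cocycle on $(\mc S'_\phi)_\rho/\mc S_\phi$ arising from the projective extension of $\rho$. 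The compatibility you need is encoded in the generalized Springer machinery of \cite[\S 4]{AMS2}, specifically in how $\rho \mapsto \rho \rtimes \tau_i$ intertwines the parametrizations for $G^\vee_{\phi_L}$ and $G^\vee_{\tilde\phi_L}$ --- and that is exactly the content of \cite[Lemma 4.4]{AMS2}, which is why the paper invokes it directly rather than unpacking it.
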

\begin{proof}
By the remarks after \eqref{eq:4.2}, $\mc S_\phi$ is a normal subgroup of $\mc S'_\phi$.\\ 
(a) We apply \cite[Lemma 4.4]{AMS2}, with $Q$ replaced by the possibly disconnected 
group $G^\vee_{\phi_L}$ and cuspidal supports replaced by quasi-supports. This is allowed
because all input for the proof of \cite[Lemma 4.4]{AMS2} is established in that generality
in \cite[\S 4]{AMS2}. It shows in particular that $\rho \times \tau_i$ is an irreducible 
representation of $\mc S'_\phi$. As noted in \cite[(102)]{AMS2}, the irreducibility implies 
that the canonical image of $(\mc S_{\tilde \phi_L})_{\rho_L} / \mc S_{\phi_L}$ in 
$\mc S'_\phi / \mc S_\phi$ is the stabilizer of $\rho$ in the latter group. That 
$\rho \times \tau_i$ is well-defined implies that the two 2-cocycles mentioned in part (a) 
agree, via the group isomorphism.\\
(b) This follows directly from part (a) and Clifford theory (see for instance 
\cite[Proposition 1.1]{AMS1}).
\end{proof}

Let $\vec{r} \in \C^d$. Recall from \cite[Theorem 3.8]{AMS3} 
\begin{equation}\label{eq:4.19}
\begin{aligned}
& E(\phi,\rho,\vec{r}) = \mr{IM}^* E_{\log (u_\phi), \sigma_L, \vec{r}, \rho} \in
\mr{Mod} (\mh H (\phi_L,\rho_L,\vec{\mb r})) ,\\
& \sigma_L = \log \big( \phi (\Fr)^{-1} \phi_L (\Fr) \big) + \textup{d}\vec{\phi} 
\matje{\vec r}{0}{0}{-\vec{r}} \in \mr{Lie} \big( G^\vee_{\tilde \phi_L} \times X_\nr (L) \big) .
\end{aligned}
\end{equation}

\begin{lem}\label{lem:4.2}
There exists a canonical isomorphism
\[
\mh H (\phi_L, \rho_L, \vec{\mb r}) \cong \mh H ( G^\vee_{\tilde \phi_L} \times
X_\nr (L), M^\vee \times X_\nr (L), \rho_i, \vec{\mb r}) .
\]
In the notation of \cite{AMS3}, the pullback of 
$E(\phi,\rho,\vec{r}) \in \mr{Mod} (\mh H (\phi_L, \rho_L, \vec{\mb r}) )$ along this
isomorphism is $\mr{IM}^* E_{\log (u_\phi), \sigma_L, \vec{r}, \rho \rtimes \tau_i}$, and
similarly for the irreducible modules labelled $M$ instead of $E$.
\end{lem}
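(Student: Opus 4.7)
The plan is to identify the two twisted graded Hecke algebras by checking, piece by piece, that the data defining them match canonically, and then to trace through the construction of standard/irreducible modules to obtain the second statement.

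First I would establish the algebra isomorphism. Both algebras are built from the recipe of \cite{AMS3} using a reductive group, a quasi-Levi subgroup, a cuspidal (quasi-)support and the parameters $\vec{\mb r}$. By \eqref{eq:4.3} (which applies verbatim with $\phi$ replaced by $\phi_L$), the identity components of $G^\vee_{\phi_L}$ and $Z^1_{{G^\vee}_\Sc}(\tilde\phi_L|_{\mb W_F})$ coincide, and since $M^\vee$ normalizes $(G^\vee_{\phi_L})^\circ$, also $(G^\vee_{\tilde\phi_L})^\circ = (G^\vee_{\phi_L})^\circ$ for the ad hoc group $G^\vee_{\tilde\phi_L} = G^\vee_{\phi_L} M^\vee$ used here. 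The central tori $Z(M^\vee_\phi)^\circ$ and $Z(M^\vee)^\circ$ both equal $Z(L^\vee_c)^{\mb W_F,\circ}$ by the same argument as \eqref{eq:1.2}. Consequently the root systems, their Weyl groups, and the labels $c(\alpha), c^*(\alpha)$ defined from $\log (u_{\phi_L})$ all agree canonically. The R-groups and 2-cocycles $\natural_{q\cE}$ match on the overlap by Lemma \ref{lem:4.1} (restricted to the stabilizers of $\phi_L$), while the extension from $\mc S_{\phi_L}$ to $\mc S_{\tilde\phi_L}$ is handled by Lemma \ref{lem:4.6}.a: the quotient $(\mc S_{\tilde\phi_L})_{\rho_L}/\mc S_{\phi_L}$ controls the passage from $\rho_L$ to $\rho_i$ with the \emph{same} 2-cocycle on both sides. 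Assembling these pieces gives the desired canonical isomorphism of algebras.

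Next I would compute the effect on modules. Recall from \eqref{eq:4.19} that $E(\phi,\rho,\vec{r}) = \mr{IM}^* E_{\log(u_\phi),\sigma_L,\vec{r},\rho}$ as an $\mh H(\phi_L,\rho_L,\vec{\mb r})$-module. The cuspidal-support construction of $E_{\log(u_\phi),\sigma_L,\vec{r},\rho}$ in \cite[\S 3]{AMS3} depends only on:
\begin{itemize}
\item the connected group $(G^\vee_{\phi_L})^\circ$ (which is unchanged),
\item the semisimple element $\sigma_L$ and the nilpotent $\log(u_\phi)$ (also unchanged),
\item the enhancement $\rho$, interpreted via Clifford theory relative to the connected component.
\end{itemize}
Under the algebra isomorphism of the previous step, the cuspidal quasi-support $(M^\vee_\phi,\log(u_{\phi_L}),\rho_L)$ is replaced by $(M^\vee,\log(u_{\phi_L}),\rho_i)$, and by Lemma \ref{lem:4.6}.b the enhancement $\rho$ correspondingly gets replaced by $\rho \rtimes \tau_i$. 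This is precisely the analogue of the identity \eqref{eq:2.31} used in the proof of Lemma \ref{lem:2.5}: extending the cuspidal datum by the extra R-group component amounts, on the level of standard modules, to tensoring the enhancement with $\tau_i$. Hence the pullback of $E(\phi,\rho,\vec{r})$ is $\mr{IM}^* E_{\log(u_\phi),\sigma_L,\vec{r},\rho\rtimes\tau_i}$. The identical argument, via \cite[(69)--(70)]{AMS2} referenced at the end of the proof of Lemma \ref{lem:2.5}, yields the statement for the irreducible quotients $M$.

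The main obstacle is Step 1: one needs the isomorphism of algebras to be truly canonical, which is non-obvious because the two algebras have a priori different ambient groups ($G^\vee_{\phi_L}$ versus $G^\vee_{\phi_L}M^\vee$) and different quasi-Levis ($M^\vee_\phi$ versus $M^\vee$). The reason it works is somewhat subtle: passing from $\phi_L$ to $\tilde\phi_L = {}^L q\circ\phi_L$ only enlarges the finite component groups and not the connected groups, and the enlargement is controlled by the isomorphism of Lemma \ref{lem:4.6}.a, which ensures that the 2-cocycles coming from the two distinct generalized Springer constructions agree. Once this is verified, Step 2 is essentially a bookkeeping exercise that mirrors the Clifford-theoretic calculation already carried out in the proof of Lemma \ref{lem:2.5}.
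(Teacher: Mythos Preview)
Your overall strategy matches the paper's: identify the defining data of the two twisted graded Hecke algebras and then trace the effect on standard and irreducible modules. However, two steps in your execution are incomplete, and the paper fills them with specific arguments you do not supply.

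First, the identification of the finite groups underlying the two algebras is not established by Lemma~\ref{lem:4.1} and Lemma~\ref{lem:4.6}.a in the way you indicate. Lemma~\ref{lem:4.1} concerns the inertial groups $W_{\mf s^\vee}\subset W_{\mf s_i^\vee}$ and only gives an inclusion; Lemma~\ref{lem:4.6}.a concerns the quotients $(\mc S'_\phi)_\rho/\mc S_\phi$ on the centralizer side, not the normalizer quotients $W_{\rho_L}$ and $W_{\rho_i}$. The paper instead proves $W_{\rho_i}\cong W_{\rho_L}$ by a direct computation: since $M^\vee$ normalizes $\rho_i$ (by \eqref{eq:4.12}) and $Z_{M^\vee}(u_\phi)$ acts transitively on the irreducible constituents of $\rho_i|_{\mc S_{\phi_L}}$, one has
\[
W_{\rho_i}=N_{G^\vee_{\tilde\phi_L}}(M^\vee,\rho_L,\rho_i)M^\vee/M^\vee
\cong N_{G^\vee_{\tilde\phi_L}}(M^\vee,\rho_L)/(M^\vee\cap G^\vee_{\phi_L})=W_{\rho_L}.
\]
This is the non-obvious point hidden in your Step~1.

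Second, your module argument asserts the conclusion by ``analogy with \eqref{eq:2.31}'', but \eqref{eq:2.31} handles the Clifford decomposition relative to the \emph{connected} component, whereas here the extension is from $G^\vee_{\phi_L}$ to $G^\vee_{\phi_L}M^\vee$, a different situation. Lemma~\ref{lem:4.6}.b only records how $\mr{ind}_{\mc S_\phi}^{\mc S'_\phi}(\rho)$ decomposes; it does not by itself tell you how the standard modules pull back along the algebra isomorphism. The paper instead invokes \cite[Lemma 4.4]{AMS2} (in the generality justified in the proof of Lemma~\ref{lem:4.6}) to get the bijection $\rho\mapsto\rho\rtimes\tau_i$ on parametrizing triples, and then \cite[Lemma 4.5]{AMS2} to conclude that this bijection corresponds precisely to pulling back standard modules along the isomorphism. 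That is the content you are asserting without proof. Once those two citations are in place, the passage to the irreducible quotients $M(\phi,\rho,\vec{r})$ follows as you say, via the distinguished-quotient characterization in \cite[Theorem 4.6.a]{AMS2}.
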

\begin{proof}
From \eqref{eq:4.11} we see that the only differences between the two algebras are $L^\vee_c 
\cap G^\vee_{\phi_L}$ versus $M^\vee$ and $\rho_L$ versus $\rho_i = \rho_L \rtimes \tau_i$.
In particular the torus $T = Z(M^\vee)^\circ$ is the same for both algebras, and
so is the commutative subalgebra $\mc O (\mr{Lie} \, T)$. The finite groups underlying the 
algebras are
\begin{align*}
& W_{\rho_L} = N_{G^\vee_{\phi_L}}( L^\vee_c \cap G^\vee_{\phi_L}, \rho_L ) / (L^\vee_c 
\cap G^\vee_{\phi_L} ), \\
& W_{\rho_i} = N_{G^\vee_{\tilde \phi_L}}(M^\vee, \rho_i) / M^\vee.
\end{align*}
Here the normalizer of $\rho_L$ must be interpreted as the group which 
stabilizes the $L^\vee_c \cap G^\vee_{\phi_L}$-equivariant local system on the conjugacy
class of $u_\phi$ (and similarly for the normalizer of $\rho_i$).
As $M^\vee$ normalizes $\rho_i$, \eqref{eq:4.12} implies that the entire group 
$G^\vee_{\tilde \phi_L}$ normalizes $\rho_i$.
Furthermore $Z_{M^\vee}(u_\phi)$ acts transitively on the set of inequivalent irreducible
constituents of $\rho_i |_{\mc S_{\phi_L}}$. We get
\[
W_{\rho_i} = N_{G^\vee_{\tilde \phi_L}}(M^\vee, \rho_L, \rho_i) M^\vee / M^\vee  \cong
N_{G^\vee_{\tilde \phi_L}}(M^\vee, \rho_L)/ (M^\vee \cap G^\vee_{\phi_L}) = W_{\rho_L}.
\]
As explained in the proof of Lemma \ref{lem:4.6}, we may apply \cite[Lemma 4.4]{AMS2}. 
It says that, for any fixed $\vec{r} \in \C^d ,\; \rho \mapsto \rho \rtimes \tau_i$ provides 
a bijection between the triples parametrizing 
$\Irr_{\vec r} \big(\mh H (\phi_L, \rho_L, \vec{\mb r}) \big)$ and the triples parametrizing 
$\Irr_{\vec r} \big(\mh H ( G^\vee_{\tilde \phi_L} \times X_\nr (L), 
M^\vee \times X_\nr (L), \rho_i, \vec{\mb r}) \big)$.

By \cite[Lemma 4.5]{AMS2} (generalized in the same way), this map between parameters
corresponds to pulling back the standard modules along the algebra isomorphism. With 
the notations from \cite{AMS2,AMS3}, this gives the desired description of the pullback
of $E(\phi,\rho,\vec{r})$. The construction of $M(\phi,\rho,\vec{r})$ from 
$E(\phi,\rho,\vec{r})$, namely as a distinguished irreducible quotient 
\cite[Theorem 4.6.a]{AMS2}, proceeds in the same way for standard and irreducible modules of
$\mh H ( G^\vee_{\tilde \phi_L} \times X_\nr (L), M^\vee \times X_\nr (L), \rho_i, \vec{\mb r})$.
Hence the pullback of $M(\phi,\rho,\vec{r})$ is given by the same parameters.
\end{proof}

We note that Lemma \ref{lem:4.2} works equally well with, instead of $\rho_L$, any other 
irreducible consituent of $\rho_i |_{\mc S_\phi}$. Using it for all $r \in 
[\mf R_{\mf s_i^\vee, \tilde \phi_L} / \mf R_{\mf s^\vee, \phi_L}]$, we embed 
$\mh H ( G^\vee_{\tilde \phi_L} \times X_\nr (L), M^\vee \times X_\nr (L), 
\rho_i, \vec{\mb r})$ diagonally in
\begin{equation}\label{eq:4.15}
\sum_{r \in [\mf R_{\mf s_i^\vee, \tilde \phi_L} / \mf R_{\mf s^\vee, \phi_L}]}
\mh H (\phi_L, r \cdot \rho_L, \vec{\mb r}) \subset 
\mh H (\phi_L, W_{\mf s_i^\vee, \tilde \phi_L}, \rho_L, \vec{\mb r}) .
\end{equation}
The inclusion $G^\vee_{\tilde \phi_L} \subset \tilde G^\vee_{\tilde \phi_L}$ induces
a monomorphism
\begin{equation}\label{eq:4.16}
\mh H ( G^\vee_{\tilde \phi_L} \times X_\nr (L), M^\vee \times X_\nr (L), 
\rho_i, \vec{\mb r}) \to \mh H ( \tilde G^\vee_{\tilde \phi_L} \times X_\nr (L), 
M^\vee \times X_\nr (L), \rho_i, \vec{\mb r}) .
\end{equation}
With respect to \eqref{eq:4.16}, we can extend the embedding from \eqref{eq:4.15} 
to an algebra homomorphism 
\begin{equation}\label{eq:4.13}
\mh H ( \tilde G^\vee_{\tilde \phi_L} \times X_\nr (L), M^\vee \times X_\nr (L), 
\rho_i, \vec{\mb r}) \to \mh H (\phi_L, W_{\mf s_i^\vee, \tilde \phi_L}, \rho_L, \vec{\mb r}) ,
\end{equation}
which is the identity on $\C [\mf R_{\mf s_i^\vee, \tilde \phi_L}, \kappa_{\mf s_i^\vee}]$.
We record the composition of \eqref{eq:4.18} and \eqref{eq:4.13}:
\begin{equation}\label{eq:4.17}
\mh H (\tilde \phi_L, \rho_i, \vec{\mb r}) \to 
\mh H (\phi_L, W_{\mf s_i^\vee, \tilde \phi_L}, \rho_L, \vec{\mb r}) .
\end{equation}
We collect the maps \eqref{eq:4.18}, \eqref{eq:4.16}, \eqref{eq:4.13}, \eqref{eq:4.17}
and Lemma \ref{lem:4.2} in a commutative diagram
\begin{equation}\label{eq:4.20}
\xymatrix{
\mh H (\tilde \phi_L, \rho_i, \vec{\mb r}) \ar[d] \ar@{=}[r] &
 \mh H ( \tilde G^\vee_{\tilde \phi_L} \times
X_\nr (\tilde L),  M^\vee \times X_\nr (\tilde L), \rho_i, \vec{\mb r}) \ar[d] \\
\mh H (\phi_L, W_{\mf s_i^\vee, \tilde \phi_L}, \rho_L, \vec{\mb r}) &
\mh H ( \tilde G^\vee_{\tilde \phi_L} \times X_\nr (L),  M^\vee \times X_\nr (L), 
\rho_i, \vec{\mb r}) \ar@{_{(}->}[l] \\
\mh H (\phi_L, \rho_L, \vec{\mb r}) \ar@{_{(}->}[u] \ar@{=}[r]^-{\sim} & 
\mh H ( G^\vee_{\tilde \phi_L} \times X_\nr (L),  M^\vee \times X_\nr (L), 
\rho_i, \vec{\mb r}) \ar@{_{(}->}[u]
}
\end{equation}

\subsection{Pullbacks of modules} \

\begin{lem}\label{lem:4.3}
Let $\vec{r} \in \R^d$ and recall the notations \eqref{eq:4.19}.

The pullback of $E(\phi,\rho,\vec{r},W_{\mf s_i^\vee}) \in \mr{Mod}(\mh H (\phi_L, 
W_{\mf s_i^\vee, \tilde \phi_L}, \rho_L, \vec{\mb r}))$ along \eqref{eq:4.17} is  
\[
\bigoplus\nolimits_{\tilde \rho} 
\mr{Hom}_{\pi_0 \big( Z_{G^\vee_{\tilde \phi_L}}(\sigma_L,\log u_\phi) \big)}
( \rho \rtimes \tau_i, \tilde \rho ) \otimes E(\phi,\tilde \rho, \vec{r}) ,
\]
where the sum runs over all $\tilde \rho \in \Irr (\mc S_{\tilde \phi})$ with
cuspidal quasi-support \cite[\S 5]{AMS1}\\ 
$q\Psi_{Z_{\tilde G^\vee_{\tilde \phi_L}}(\sigma_L)}
(\log u_\phi, \tilde \rho) = (M^\vee, \log (u_{\phi_L}), \rho_L \rtimes \tau_i)$.
The pullback of the irreducible module $M(\phi,\rho,\vec{r},W_{\mf s_i^\vee})$ 
can be expressed in the same way.
\end{lem}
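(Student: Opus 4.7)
The plan is to exploit the factorization provided by the commutative diagram \eqref{eq:4.20}, which decomposes the map \eqref{eq:4.17} as the composition of \eqref{eq:4.18} (induced by $X_\nr(L) \to X_\nr (\tilde L)$) followed by the embedding \eqref{eq:4.13}. Pullback along \eqref{eq:4.18} only reinterprets the $X_\nr (\tilde L)$-central character as the corresponding $X_\nr (L)$-central character of $\sigma_L$ via \eqref{eq:4.19}, so the substantive content of the lemma concerns pullback along \eqref{eq:4.13}. By the commutativity of the lower square of \eqref{eq:4.20}, this can be split into three sub-steps: (a) restrict $E(\phi,\rho,\vec r, W_{\mf s_i^\vee})$ along the Morita inclusion $\mh H(\phi_L, \rho_L, \vec{\mb r}) \hookrightarrow \mh H(\phi_L, W_{\mf s_i^\vee, \tilde \phi_L}, \rho_L, \vec{\mb r})$; (b) transport the result via the isomorphism of Lemma \ref{lem:4.2}; (c) pull back along the inclusion \eqref{eq:4.16} of the graded Hecke algebra for $G^\vee_{\tilde \phi_L}$ into that for $\tilde G^\vee_{\tilde \phi_L}$.

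For sub-step (a), the defining relation of $E(\phi,\rho,\vec r, W_{\mf s_i^\vee})$ as a summand of the induced module from $E(\phi,\rho,\vec r)$ (the graded analogue of the affine formula used before Lemma \ref{lem:4.1}) together with Morita-theoretic Frobenius reciprocity identifies its restriction to $\mh H(\phi_L,\rho_L, \vec{\mb r})$ with a direct sum of modules $E(\phi, r \cdot \rho, \vec r)$ as $r$ runs over a system of representatives for $\mf R_{\mf s_i^\vee, \tilde \phi_L} / \mf R_{\mf s^\vee, \phi_L}$. Sub-step (b) then converts each such summand, via Lemma \ref{lem:4.2}, into the standard module $\mr{IM}^* E_{\log u_\phi,\, \sigma_L,\, \vec r,\, (r \cdot \rho) \rtimes \tau_i}$ for $\mh H(G^\vee_{\tilde \phi_L} \times X_\nr(L), M^\vee \times X_\nr(L), \rho_i, \vec{\mb r})$.

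The decisive sub-step (c) requires understanding the restriction of a standard module from the algebra $\mh H(\tilde G^\vee_{\tilde \phi_L} \times X_\nr(L), M^\vee \times X_\nr(L), \rho_i, \vec{\mb r})$ to its subalgebra $\mh H(G^\vee_{\tilde \phi_L} \times X_\nr(L), M^\vee \times X_\nr(L), \rho_i, \vec{\mb r})$. Using the parametrization of irreducible modules by triples $(s, u, \tilde\rho)$ from \cite[Theorem 4.6]{AMS2}, together with the quasi-Levi generalization explained in the proof of Lemma \ref{lem:4.2}, the standard modules for the two algebras are parametrized by essentially the same triples $(\sigma_L, \log u_\phi, -)$, but with the enhancement varying over representations of two different component groups: $\pi_0(Z_{\tilde G^\vee_{\tilde \phi_L}}(\sigma_L, \log u_\phi)) \cong \mc S_{\tilde \phi}$ for the larger algebra, and $\pi_0(Z_{G^\vee_{\tilde \phi_L}}(\sigma_L, \log u_\phi)) \cong \mc S'_\phi$ for the smaller one. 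Clifford theory applied to the normal inclusion $\mc S'_\phi \trianglelefteq \mc S_{\tilde \phi}$ then expresses the restriction of $E(\phi, \tilde\rho, \vec r)$ as $\bigoplus_{\rho'} \mr{Hom}_{\mc S'_\phi}(\rho', \tilde\rho) \otimes E(\phi, \rho', \vec r)$. Combined with Lemma \ref{lem:4.6}, which identifies the relevant $\rho'$ precisely with the representations $(r \cdot \rho) \rtimes \tau_i$, and reassembled with sub-steps (a) and (b) through Frobenius reciprocity, this yields the formula stated in the lemma.

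The main obstacle is sub-step (c): justifying the Clifford-theoretic decomposition of standard modules along \eqref{eq:4.16} in the setting of graded Hecke algebras attached to the possibly disconnected reductive groups $G^\vee_{\tilde \phi_L} \subseteq \tilde G^\vee_{\tilde \phi_L}$. This rests on a careful application of \cite[Lemmas 4.4 and 4.5]{AMS2}, together with the generalizations to quasi-Levi subgroups invoked in the proof of Lemma \ref{lem:4.2}, and on verifying that Lemma \ref{lem:4.6}.a correctly matches the 2-cocycles arising on both sides of the decomposition. The corresponding statement for the irreducible modules $M(\phi, \rho, \vec r, W_{\mf s_i^\vee})$ then follows by the same argument, since the distinguished-quotient construction passing from $E$ to $M$ commutes with each of the pullback functors above, by Lemma \ref{lem:4.2} and \cite[Theorem 4.6]{AMS2}.
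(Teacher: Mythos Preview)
Your overall strategy—factor \eqref{eq:4.17} through \eqref{eq:4.18} and \eqref{eq:4.13}, then use the lower square of \eqref{eq:4.20}—matches the paper's. But the decisive step is misdirected.

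What the diagram actually gives is that the composite functor
\[
\mr{Mod}\big(\mh H(\phi_L,\rho_L,\vec{\mb r})\big)\xrightarrow{\text{Morita}}\mr{Mod}\big(\mh H(\phi_L,W_{\mf s_i^\vee,\tilde\phi_L},\rho_L,\vec{\mb r})\big)\xrightarrow{\text{pullback along }\eqref{eq:4.13}}\mr{Mod}\big(\mh H(\tilde G^\vee_{\tilde\phi_L}\times X_\nr(L),\ldots)\big)
\]
equals Lemma~\ref{lem:4.2} followed by \emph{induction} along \eqref{eq:4.16}. So starting from $E(\phi,\rho,\vec r)$ at position $(3,1)$, one transports to $\mr{IM}^*E_{\log u_\phi,\sigma_L,\vec r,\rho\rtimes\tau_i}$ at $(3,2)$ and then \emph{induces} to $(2,2)$. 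Your sub-steps (a)--(c) instead go from $(2,1)$ down to $(3,1)$, across to $(3,2)$, and then claim to ``pull back along \eqref{eq:4.16}''—but pullback along \eqref{eq:4.16} lands in $\mr{Mod}$ at $(3,2)$, not $(2,2)$, so the chain does not reach the target. What you describe in the body of sub-step (c) is a \emph{restriction} formula for standard modules of the larger algebra, whereas what is required is an \emph{induction} formula for the module at $(3,2)$.

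That induction formula is precisely \cite[Proposition~1.5]{AMS3}, which the paper singles out as ``the crucial, highly-nontrivial step which justifies the entire setup.'' It is a geometric statement about how standard modules for twisted graded Hecke algebras behave under induction along a normal-subgroup inclusion of the ambient complex groups; it is not a formal consequence of Clifford theory, and \cite[Lemmas~4.4 and 4.5]{AMS2} (which concern parametrizations and an algebra isomorphism, not decompositions of standard modules) do not supply it. Even if your claimed restriction formula for $E(\phi,\tilde\rho,\vec r)$ were granted, Frobenius reciprocity would only give information about Hom-spaces, not the module decomposition of the induced standard module in this non-semisimple setting. The fix is to replace sub-step (c) by induction along \eqref{eq:4.16} and invoke \cite[Proposition~1.5]{AMS3} directly; the rest of your outline then goes through as in the paper.
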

\begin{proof}
From the diagram \eqref{eq:4.20} we see that we have to determine the pullback
of $E(\phi,\rho,\vec{r},W_{\mf s_i^\vee})$ to $\mh H ( \tilde G^\vee_{\tilde \phi_L} 
\times X_\nr (L),  M^\vee \times X_\nr (L), \rho_i, \vec{\mb r})$. The composition
of that pullback operation with the Morita equivalence 
\begin{equation}\label{eq:4.22}
\mr{Mod} \big( \mh H (\phi_L, \rho_L, \vec{\mb r}) \big) \to
\mr{Mod} \big( \mh H (\phi_L, W_{\mf s_i^\vee, \tilde \phi_L}, \rho_L, \vec{\mb r})\big)
\end{equation}
is the route $(3,1) \to (2,1) \to (2,2)$ in the diagram. From the explicit construction
of $\mh H (\phi_L, W_{\mf s_i^\vee, \tilde \phi_L}, \rho_L, \vec{\mb r})$ and the map
\eqref{eq:4.13}, we see that this operation agrees with pullback along 
Lemma \ref{lem:4.2} followed by induction along \eqref{eq:4.16}. That is the alternative
path $(3,1) \to (3,2) \to (2,2)$ in the diagram.

By \eqref{eq:1.3} $\mc S'_\phi$ is naturally isomorphic to 
$\pi_0 (Z_{G^\vee_{\tilde \phi_L}}(\sigma_L,u_\phi))$.
Now Lemma \ref{lem:4.2} entails that we can just as well determine the induction of
$\mr{IM}^* E_{\log (u_\phi), \sigma_L, \vec{r}, \rho \rtimes \tau_i}$ along
the lower right injection in the diagram. Now we are in the right position to apply
\cite[Proposition 1.5]{AMS3}, the crucial, highly-nontrivial step which justifies the 
entire setup. It says that the induced module can be expressed as
\begin{equation}\label{eq:4.21}
\bigoplus\nolimits_{\tilde \rho} \mr{Hom}_{\mc S'_\phi}
\big( \rho \rtimes \tau_i, \tilde \rho \big) \otimes \mr{IM}^*
E_{\log (u_\phi), \sigma_L, \vec{r}, \tilde \rho} .
\end{equation}
Here $\tilde \rho$ runs over those irreducible representations of  
$\pi_0 \big( Z_{\tilde G^\vee_{\tilde \phi_L}}(\sigma_L,\log u_\phi) \big)$ with the
condition on the cuspidal quasi-support as statement. By \eqref{eq:1.3}, this component 
group can be identified with $\mc S_{\tilde \phi}$.

In view of the commutativity of the diagram \eqref{eq:4.20}, it only remains to pullback
along the upper right map \eqref{eq:4.18}. That homomorphism only changes $X_\nr (L)$ to
$X_\nr (\tilde L)$, so it sends \eqref{eq:4.21} the module expressed in almost the same
way, only $\sigma_L$ interpreted as an element of Lie$\big( {\tilde G}^\vee_{\tilde \phi_L}
\times X_\nr (\tilde L) \big)$. With \eqref{eq:4.19} we can rewrite $\mr{IM}^* 
E_{\log (u_\phi), \sigma_L, \vec{r}, \tilde \rho}$ as $E(\phi,\tilde \rho,\vec{r})$,
which transforms \eqref{eq:4.21} into the desired expression. 

The same argument applies to $M(\phi,\rho,\vec{r},W_{\mf s_i^\vee})$. 
\end{proof}

In \eqref{eq:4.5} we regarded $\bigoplus\nolimits_i \mc O ({}^L q,\mf s_{i,L}^\vee)^* 
(\phi_L,\rho_L )^{m_i}$ as the pullback, along ${}^L q$, of $(\phi_L,\rho_L)$ to $\sum_i 
\mc H (\mf s_{i,L}^\vee, \vec{\mb z})$. Here the sum runs over the isotypical components
$\rho_i$ of $\mr{ind}_{\mc S_{\phi_L}}^{\mc S_{\tilde \phi_L}} \rho_L$. To be consistent 
with that when pulling back on the non-cuspidal level, we must consider the modules from 
Lemma \ref{lem:4.3} with multiplicity $m_i$.

Since the Morita equivalence \eqref{eq:4.22} is canonical in the given setting, we can 
regard $m_i$ times the module in Lemma \ref{lem:4.3} also as the canonical pullback of 
$E(\phi,\rho,\vec{r})$ via the diagram \eqref{eq:4.20}. Analogous considerations apply 
to $M(\phi,\rho,\vec{r})$. We write
\[
\mh H \big( {}^L q (\phi_L,\rho_L), \vec{\mb r} \big) := 
\sum\nolimits_i \mh H (\tilde \phi_L, \rho_i, \vec{\mb r}) .
\]

\begin{lem}\label{lem:4.4}
Accept the above interpretations of pullbacks. The pullback of $E(\phi,\rho,\vec{r})$
to $\mh H ({}^L q (\phi_L,\rho_L), \vec{\mb r})$ is
\[
\bigoplus\nolimits_{\tilde \rho \in \Irr (\mc S_{\tilde \phi})} 
\mr{Hom}_{\mc S_{\tilde \phi}}\big( \mr{ind}_{\mc S_\phi}^{
\mc S_{\tilde \phi}} \rho, \tilde \rho \big) \otimes E(\phi,\tilde \rho, \vec{r}) .
\]
This same holds for $M(\phi,\rho,\vec{r}) \in \Irr (\mh H (\phi_L,\rho_L,\vec{\mb r}))$.
\end{lem}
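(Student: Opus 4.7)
\textbf{Proof plan for Lemma \ref{lem:4.4}.}

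The plan is to assemble the result from Lemma \ref{lem:4.3} applied to each summand $\mh H(\tilde \phi_L, \rho_i, \vec{\mb r})$ of $\mh H({}^L q(\phi_L,\rho_L), \vec{\mb r})$, weight it by the multiplicity $m_i$ dictated by the convention introduced just before the lemma, and then repackage the resulting multiplicities via Clifford theory in two stages. The first step is to note that, by the definition of $\mh H({}^L q (\phi_L,\rho_L),\vec{\mb r})$ and the convention that the pullback from a single factor is taken with multiplicity $m_i$, the pullback in question is exactly
\[
\bigoplus_{i} m_i \bigoplus_{\tilde \rho} \mr{Hom}_{\mc S'_\phi}\big(\rho \rtimes \tau_i,\tilde \rho\big) \otimes E(\phi,\tilde \rho,\vec{r}),
\]
where $\tilde \rho$ ranges over $\Irr\big(\pi_0\big(Z_{\tilde G^\vee_{\tilde \phi_L}}(\sigma_L,\log u_\phi)\big)\big) \cong \Irr(\mc S_{\tilde \phi})$ satisfying the cuspidal quasi-support condition $q\Psi(\log u_\phi,\tilde\rho) = (M^\vee,\log u_{\phi_L},\rho_L \rtimes \tau_i)$ of Lemma \ref{lem:4.3}.

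The second step is to reinterpret the coefficient of $E(\phi,\tilde \rho, \vec{r})$, for a fixed $\tilde \rho \in \Irr(\mc S_{\tilde \phi})$, as $\mr{Hom}_{\mc S_{\tilde \phi}}\big(\mr{ind}_{\mc S_\phi}^{\mc S_{\tilde \phi}}\rho,\tilde \rho\big)$. Using that $\mc S_\phi \subset \mc S'_\phi \subset \mc S_{\tilde \phi}$ (with $\mc S'_\phi$ normal in $\mc S_{\tilde \phi}$ by the argument of Lemma \ref{lem:4.7}), induction in stages and Frobenius reciprocity give
\[
\mr{Hom}_{\mc S_{\tilde \phi}}\big( \mr{ind}_{\mc S_\phi}^{\mc S_{\tilde \phi}}\rho, \tilde \rho \big) = \mr{Hom}_{\mc S'_\phi}\big( \mr{ind}_{\mc S_\phi}^{\mc S'_\phi}\rho, \tilde \rho \big).
\]
By Lemma \ref{lem:4.6}.b we have $\mr{ind}_{\mc S_\phi}^{\mc S'_\phi}\rho = \bigoplus_i (\rho \rtimes \tau_i)^{m_i}$, so the right-hand side equals $\bigoplus_i m_i \mr{Hom}_{\mc S'_\phi}(\rho \rtimes \tau_i, \tilde \rho)$.

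The third step is to check that this matches the coefficient coming from Lemma \ref{lem:4.3}: the cuspidal quasi-support of $\rho \rtimes \tau_i$ in $Z_{G^\vee_{\tilde \phi_L}}(\sigma_L)$ is (by the transitivity of cuspidal support with respect to the stages $\mc S_\phi^\circ \subset \mc S_\phi \subset \mc S'_\phi$ built into the proof of Lemma \ref{lem:4.3}) of the form $(M^\vee,\log u_{\phi_L},\rho_L \rtimes \tau_i)$. Hence for any given $\tilde \rho$ there is exactly one index $i$ with $\mr{Hom}_{\mc S'_\phi}(\rho \rtimes \tau_i,\tilde \rho) \neq 0$, namely the one whose quasi-support agrees with that of $\tilde \rho$, and for this $i$ the contributions from Lemma \ref{lem:4.3} (with the weight $m_i$) and from the Clifford decomposition above are identical. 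Summing over $\tilde \rho$ then produces exactly the stated formula.

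Finally, the argument for $M(\phi,\rho,\vec{r})$ is the same, because the constructions in Lemmas \ref{lem:4.2}--\ref{lem:4.3} and the diagram \eqref{eq:4.20} treat standard and irreducible modules in parallel; only the symbol $E$ has to be replaced by $M$. The main technical point, which is where I expect the real work to lie, is making the cuspidal-quasi-support bookkeeping precise, so that the sum over $\tilde \rho$ in Lemma \ref{lem:4.3} (restricted by its quasi-support condition to a single $i$) matches term by term with the Clifford decomposition of $\mr{ind}_{\mc S_\phi}^{\mc S'_\phi}\rho$; once this is carefully set up, the rest is two-step Frobenius reciprocity.
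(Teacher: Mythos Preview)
Your proposal is correct and follows essentially the same route as the paper's proof: start from Lemma \ref{lem:4.3} weighted by the multiplicities $m_i$, use Lemma \ref{lem:4.6}.b to identify $\bigoplus_i (\rho \rtimes \tau_i)^{m_i}$ with $\mr{ind}_{\mc S_\phi}^{\mc S'_\phi}\rho$, and then apply Frobenius reciprocity and induction in stages along $\mc S_\phi \subset \mc S'_\phi \subset \mc S_{\tilde\phi}$ to reach $\mr{Hom}_{\mc S_{\tilde\phi}}(\mr{ind}_{\mc S_\phi}^{\mc S_{\tilde\phi}}\rho,\tilde\rho)$. The only cosmetic difference is the order in which Frobenius reciprocity is applied; the paper first passes from $\mr{Hom}_{\mc S'_\phi}$ to $\mr{Hom}_{\mc S_{\tilde\phi}}$ and then collects the sum over $i$, whereas you compute the target Hom first and expand it. One small wording issue: in your third step ``exactly one $i$'' should read ``at most one $i$'', since some $\tilde\rho$ may have vanishing Hom for every $i$; the paper handles this by observing that the cuspidal quasi-support condition from Lemma \ref{lem:4.3} is automatic whenever $\mr{Hom}_{\mc S'_\phi}(\rho\rtimes\tau_i,\tilde\rho)\neq 0$, so the restricted sum over $\tilde\rho$ may be replaced by the unrestricted sum over $\Irr(\mc S_{\tilde\phi})$.
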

\begin{proof}
Lemma \ref{lem:4.6}.b and the above specification implies that the pullback of 
$E(\phi,\rho,\vec{r})$ is the sum, over all irreducible constituents $\rho_i$ of 
$\mr{ind}_{\mc S_{\phi_L}}^{\mc S_{\tilde \phi_L}} \rho_L$, of 
\begin{equation}\label{eq:4.23}
\bigoplus\nolimits_{\tilde \rho} \mr{Hom}_{\mc S'_\phi}
( (\rho \rtimes \tau_i)^{m_i}, \tilde \rho ) \otimes E(\phi,\tilde \rho, \vec{r}) ,
\end{equation}
with the conditions on $\tilde \rho$ as in Lemma \ref{lem:4.3}.
The conventions regarding representations constructed with Clifford theory \cite[\S 1]{AMS1}
say that $\rho \rtimes \tau_i$ as $\mc S_{\tilde \phi}$-representation equals 
$\mr{ind}^{\mc S_{\tilde \phi}}_{\mc S'_\phi} (\rho \rtimes \tau_i)$.
By Frobenius reciprocity we may replace $\mr{Hom}_{\mc S'_\phi}$ in
\eqref{eq:4.23} by $\mr{Hom}_{\mc S_{\tilde \phi}}$. When we sum over $i$, the condition
on the cuspidal quasi-support of $(\log u_\phi,\tilde \rho)$ from Lemma \ref{lem:4.3} becomes
that is conjugate to $(M^\vee, \log (u_{\phi_L}), \rho_L \rtimes \tau_i)$ for some $i$. 
Since $\tilde \rho$ must contain $\rho$ for \eqref{eq:4.23} to be nonzero, and since
\[
q\Psi_{Z_{G^\vee_{\phi_L}}(\sigma_L)}(\log u_\phi,\rho) = (M^\vee \cap G^\vee_{\phi_L},
\log (u_{\phi_L}), \rho_L),
\]
that condition on $\tilde \rho$ is automatic from \eqref{eq:4.23}. 
These constructions give us the pullback
\begin{equation}\label{eq:4.24}
\bigoplus\nolimits_i \bigoplus\nolimits_{\tilde \rho \in \Irr (\mc S_{\tilde \phi})} 
\mr{Hom}_{\mc S_{\tilde \phi}}
( (\rho \rtimes \tau_i )^{m_i}, \tilde \rho ) \otimes E(\phi,\tilde \rho, \vec{r}) .
\end{equation}
As $\mr{Hom}_{\mc S_{\tilde \phi}}( (\rho \rtimes \tau_i )^{m_i}, \tilde \rho )$ is nonzero
for at most one $i$, we may place $\oplus_i$ inside $\mr{Hom}_{\mc S_{\tilde \phi}}$ in
\eqref{eq:4.24}. By Lemma \ref{lem:4.6}.b 
\begin{equation}\label{eq:4.29}
\oplus_i (\rho \rtimes \tau_i )^{m_i} = \mr{ind}_{\mc S'_\phi}^{\mc S_{\tilde \phi}} 
\big( \oplus_i (\rho \rtimes \tau_i )^{m_i} \big) = \mr{ind}_{\mc S'_\phi}^{\mc S_{\tilde \phi}} 
\big( \mr{ind}_{\mc S_\phi}^{\mc S'_\phi} \rho \big) =  
\mr{ind}_{\mc S_\phi}^{\mc S_{\tilde \phi}} (\rho) ,
\end{equation}
so the above produces the desired expression.
\end{proof}

Now we are ready to formulate the pullback along ${}^L q$ in terms of 
twisted affine Hecke algebras. 
Let $\mc H (\mf s^\vee, W_{\mf s_i^\vee, \tilde \phi_L}, \phi_L, \vec{\mb z})$
be the algebra constructed from $\mc H (\mf s^\vee, \phi_L, \vec{\mb z})$ like 
$\mc H (\mf s^\vee, W_{\mf s_i^\vee}, \vec{\mb z})$ was constructed from
$\mc H (\mf s^\vee, \vec{\mb z})$ in \eqref{eq:4.25}, but this time only using the 
subgroup of $W_{\mf s_i^\vee}$ which stabilizes $\tilde \phi_L$. We note that 
\[
W_{\mf s_i^\vee, \tilde \phi_L} \supset W_{\mf s_i^\vee, \phi_L}
\]
by the $W_{\mf s_L^\vee}$-equivariance of \eqref{eq:4.4}. There are canonical homomorphisms
\begin{equation}\label{eq:4.28}
\mc H (\mf s^\vee, \phi_L, \vec{\mb z}) \to
\mc H (\mf s^\vee, W_{\mf s_i^\vee, \tilde \phi_L}, \phi_L, \vec{\mb z}) \leftarrow
\mc H (\mf s_i^\vee, \tilde \phi_L,\vec{\mb z}) ,
\end{equation}
where the first is the inclusion of a Morita equivalent subalgebra, and the second is
induced by \eqref{eq:4.4}.

\begin{thm}\label{thm:4.5}
Interpret pullbacks from $\mc H (\mf s^\vee, \vec{\mb z})$ to $\mc H ({}^L q (\mf s^\vee),
\vec{\mb z}) = \sum_i \mc H (\mf s_i^\vee,\vec{\mb z})$ as in Lemma \ref{lem:4.4} and 
\eqref{eq:4.10}, so via the algebras $\mc H (\mf s^\vee, W_{\mf s_i^\vee}, \vec{\mb z})$
and with multiplicities $m_i$ coming from the cuspidal level, as in \eqref{eq:4.5}.

Let $\vec{z} \in \R_{>0}^d$ and $(\phi,\rho) \in \Phi_e (G)^{\mf s^\vee}$. The 
pullback of $\overline{M}(\phi,\rho,\vec{z}) \in \Irr (\mc H (\mf s^\vee,\vec{\mb z}))$ is
\[
\bigoplus\nolimits_{\tilde \rho \in \Irr (\mc S_{\tilde \phi})} 
\mr{Hom}_{\mc S_{\tilde \phi}}\big( \mr{ind}_{\mc S_\phi}^{
\mc S_{\tilde \phi}} \rho, \tilde \rho \big) \otimes \overline{M}(\phi,\tilde \rho, \vec{z}) .
\]
The analogous for formula for the standard module $\overline{E}(\phi,\rho,\vec{z}) \in 
\mr{Mod} (\mc H (\mf s_L^\vee,\vec{\mb z}))$ holds as well.
\end{thm}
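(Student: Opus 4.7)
The plan is to reduce Theorem \ref{thm:4.5} to its analogue at the graded Hecke algebra level, namely Lemma \ref{lem:4.4}. The reduction goes through the chain of Morita equivalences (i)$\leftrightarrow$(ii)$\leftrightarrow$(iii) described in paragraph \ref{par:2.3}, together with the algebra embedding $\exp_{\phi_L}^*$ from \eqref{eq:2.35} that relates the affine and graded pictures.

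First I would replace $\mc H(\mf s^\vee, \vec{\mb z})$ by the Morita-equivalent larger algebra $\mc H (\mf s^\vee, W_{\mf s_i^\vee}, \vec{\mb z})$ from \eqref{eq:4.25}, together with the canonical homomorphism $\mc H(\mf s_i^\vee, \vec{\mb z}) \to \mc H(\mf s^\vee, W_{\mf s_i^\vee}, \vec{\mb z})$ of \eqref{eq:4.10}. By the definition of $\overline{M}(\phi,\rho,\vec{z},W_{\mf s_i^\vee})$ as the constituent of an induced module, and by the fact that the total pullback from $\mc H(\mf s^\vee,\vec{\mb z})$ to $\mc H({}^L q(\mf s^\vee),\vec{\mb z})$ is already defined to incorporate the multiplicity factor $m_i$ (following \eqref{eq:4.5}), it suffices to identify the pullback of $\overline{M}(\phi,\rho,\vec{z},W_{\mf s_i^\vee})$ to $\mc H(\mf s_i^\vee,\vec{\mb z})$ for each $i$, and then take the sum with multiplicities. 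Using the equivalence (i)$\leftrightarrow$(ii), this problem localizes to the subalgebras $\mc H(\mf s^\vee,\phi_L,\vec{\mb z})$ and $\mc H(\mf s_i^\vee, \tilde\phi_L, \vec{\mb z})$, with the transition map provided by \eqref{eq:4.28}.

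Next I would apply $\exp_{\phi_L}^*$ and the equivalence (ii)$\leftrightarrow$(iii) to transport the situation into the analytic completions $\mh H^{an}(\phi_L,q\epsilon,\vec{\mb r})$ and $\mh H^{an}(\tilde\phi_L,\rho_i,\vec{\mb r})$, whose finite-dimensional module categories coincide with those of the graded Hecke algebras themselves. The crucial compatibility to check is that, under $\exp_{\phi_L}^*$, the affine map \eqref{eq:4.28} becomes the graded map \eqref{eq:4.17} (up to the Morita extensions that enlarge the $\mf R$-groups from $\mf s^\vee$ to $\mf s_i^\vee$). This comes down to the observation that both maps are induced by the dual quotient $q^\vee: G^\vee \to \tilde G^\vee$, act as the identity on the common Weyl-group part $W_{\mf s^\vee}^\circ = W_{\mf s_i^\vee}^\circ$ and on the common torus $\mc O(\mf s_L^\vee) \supset \mc O(T_{\mf s^\vee})$, and extend to the $\mf R$-part in the same diagonal manner (compare \eqref{eq:4.8} with \eqref{eq:4.13}). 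Granting this compatibility, Lemma \ref{lem:4.4} immediately yields the pullback of $M(\phi,\rho,\log\vec{z})$ as the prescribed direct sum over $\tilde\rho \in \Irr(\mc S_{\tilde\phi})$, and translating back through the equivalences gives the formula for $\overline{M}(\phi,\rho,\vec{z})$; the proof for $\overline{E}(\phi,\rho,\vec{z})$ is identical, since Lemma \ref{lem:4.4} treats both $M$ and $E$ simultaneously.

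The main obstacle will be the bookkeeping in the second step: verifying that the pullback homomorphism \eqref{eq:4.10} on affine Hecke algebras really does correspond, via the canonical equivalences of Section \ref{sec:1} and the diagram \eqref{eq:4.20}, to the composite map going around the graded diagram. This amounts to checking that the weight conditions defining categories (i)--(iii) are preserved under pullback, and that the multiplicity $m_i$ from \eqref{eq:4.26} on the cuspidal level is absorbed exactly once into the global decomposition via the identity $\bigoplus_i (\rho \rtimes \tau_i)^{m_i} = \mr{ind}_{\mc S_\phi}^{\mc S_{\tilde\phi}}\rho$ from \eqref{eq:4.29}. Once this is in place, the computation is entirely mechanical.
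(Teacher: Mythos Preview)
Your proposal is correct and follows essentially the same strategy as the paper: reduce to Lemma \ref{lem:4.4} by passing through the chain of equivalences (i)$\leftrightarrow$(ii)$\leftrightarrow$(iii), using $\exp_{\phi_L}^*$ to link the affine and graded pictures, and then absorb the multiplicities via \eqref{eq:4.29}. The paper organizes this reduction into an explicit four-row commutative diagram \eqref{eq:4.27} (affine, localized affine, analytic graded, graded), which makes the compatibility check you flag as the ``main obstacle'' transparent; it also remarks that the restriction $\vec z\in\R_{>0}^d$ enters precisely because the equivalence between rows two and three only matches modules with the specified weight conditions.
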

\begin{proof}
Recall from \eqref{eq:2.45} that every twisted graded Hecke algebra $\mh H$ admits an 
"analytic" version $\mh H^{an}$, obtained by replacing the algebra of regular functions on 
the appropriate complex vector space by the algebra of analytic function on that complex 
variety. As noted in \cite[\S 1.5]{SolAHA}, $\mh H^{an}$ has the same irreducible 
representations as $\mh H$. This also applies to $\mh H (\phi_L, W_{\mf s_i^\vee, 
\tilde \phi_L}, \rho_L, \vec{\mb r})$, when we embed $\mc O (\mr{Lie}(G^\vee_{\phi_L} 
\times X_\nr (L)) \times \C^d)$ diagonally in it, and then replace that subalgebra by 
$C^{an} (\mr{Lie}(G^\vee_{\phi_L} \times X_\nr (L)) \times \C^d)$.

Consider the commutative diagram
\begin{equation}\label{eq:4.27}
\xymatrix{
\mc H (\mf s^\vee, \vec{\mb z}) \ar@{_{(}->}[r] &
\mc H (\mf s^\vee, W_{\mf s_i^\vee}, \vec{\mb z}) &
\mc H (\mf s_i^\vee,\vec{\mb z}) \ar[l] \\
\mc H (\mf s^\vee, \phi_L, \vec{\mb z}) \ar@{_{(}->}[r] \ar@{_{(}->}[d] \ar@{_{(}->}[u] &
\mc H (\mf s^\vee, W_{\mf s_i^\vee, \tilde \phi_L}, \phi_L, \vec{\mb z}) \ar@{_{(}->}[d] \ar@{_{(}->}[u] &
\mc H (\mf s_i^\vee, \tilde \phi_L,\vec{\mb z}) \ar[l] \ar@{_{(}->}[d] \ar@{_{(}->}[u] \\
\mh H^{an} (\phi_L, \rho_L, \vec{\mb r}) \ar@{_{(}->}[r] &
\mh H^{an} (\phi_L, W_{\mf s_i^\vee, \tilde \phi_L}, \rho_L, \vec{\mb r}) &
\mh H^{an} (\tilde \phi_L, \rho_i, \vec{\mb r}) \ar[l] \\
\mh H (\phi_L, \rho_L, \vec{\mb r}) \ar@{_{(}->}[r] \ar@{_{(}->}[u]  &
\mh H (\phi_L, W_{\mf s_i^\vee, \tilde \phi_L}, \rho_L, \vec{\mb r}) \ar@{_{(}->}[u] &
\mh H (\tilde \phi_L, \rho_i, \vec{\mb r}) \ar[l] \ar@{_{(}->}[u] 
} 
\end{equation}
Here the arrows from $\mc H (?)$ to $\mh H^{an}(?)$ are induced by the map $\exp_{\phi_L}$, 
as in \eqref{eq:2.35}. The two outer arrows (of those three) induce equivalences between categories
of certain finite dimensional modules, determined by the requirement that the weights with respect
to a commutative subalgebra must lie in a certain region. These categories are given as (ii) and
(iii) before and after \eqref{eq:2.35}. The same kind of equivalence of categories holds for the
middle arrow in the diagram \eqref{eq:4.27}. One can see that by walking along the path $(2,2) \to
(2,1) \to (3,1) \to (3,2)$. The two vertical steps in that path are Morita equivalences and we
just observed that $(2,1) \to (3,1)$ provides the desired kind of equivalence between categories
of some modules. 

As remarked after Theorem \ref{thm:1.2}, $\overline{M}(\phi,\rho,\vec{z})$ comes from the 
$\mh H (\phi_L,\rho_L,\vec{\mb r})$-module $M(\phi,\rho,\vec{\log z})$.
The fourth row of the diagram \eqref{eq:4.27} is the left column of
\eqref{eq:4.20}, and we know from Lemma \ref{lem:4.4} how to pull back along that. With the 
equivalence of categories of finite dimensional modules of $\mh H (?)$ and $\mh H^{an}(?)$,
we can also handle pullback along the third row. Using the aforementioned equivalence between
certain module categories between the third and second row, we can transfer the result of
Lemma \ref{lem:4.4} to the second row in the diagram. But that no longer applies to all 
$\vec{r}$, we need to assume that $\vec{r} \in \R^d$. That is accounted for by the assumption
$\vec{z} \in \R_{>0}^d$.

To get to the first row of the diagram, we use the equivalences between the categories denoted
(i) and (ii) and the start of Paragraph \ref{par:2.3}. Thus we showed that the image of
$\overline{M}(\phi,\rho,\vec{z})$ under the Morita equivalence $(1,1) \to (1,2)$ and pullback
along $(1,2) \to (1,3)$ is
\[
\bigoplus\nolimits_{\tilde \rho} \mr{Hom}_{\mc S'_\phi}
( \rho \rtimes \tau_i, \tilde \rho ) \otimes \overline{M}(\phi,\tilde \rho, \vec{z}) .
\]
Now the same argument as for Lemma \ref{lem:4.4} proves that the pullback of 
$\overline{M}(\phi,\rho,\vec{z})$ to $\sum_i \mc H (\mf s_i^\vee,\vec{\mb z})$
has the desired shape. The same reasoning works for $\overline{E}(\phi,\rho,\vec{z})$.
\end{proof}

\section{Homomorphisms of reductive groups with commutative (co)kernel}
\label{sec:5}

Let $F$ be any local field. In this section we take a closer look at Conjecture
\ref{conj:A} for a homomorphism of connected reductive $F$-groups
\begin{equation}\label{eq:5.9}
f : \tilde{\mc G} \to \mc G .
\end{equation}
We assume that $f$ satisfies Condition \ref{cond:1}, that is, the kernel of 
d$f : \mr{Lie}(\tilde{\mc G}) \to \mr{Lie}(\mc G)$ is central and the cokernel of 
$f$ is a commutative $F$-group. Let us relate this to similar conditions:

\begin{lem}\label{lem:5.5}
Under the Conditions \ref{cond:1}
\enuma{
\item The kernel of $f$ is defined over $F$ and central in $\tilde{\mc G}$.
\item The restriction of \eqref{eq:5.9} to the derived groups
\[
f_\der : \tilde{\mc G}_\der \to \mc G_\der  
\]
is a central isogeny. 
}
\end{lem}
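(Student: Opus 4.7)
My plan has two parts, following the division in the statement.

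For part (a), I would first note that $\ker f$ is automatically defined over $F$, since it is the scheme-theoretic preimage of the identity under the $F$-morphism $f$, and is normal in $\tilde{\mc G}$ as the kernel of a group homomorphism. For centrality, the key identity is $\mr{Lie}(\ker f) = \ker(\mr d f)$, which by Condition \ref{cond:1}(i) is central in $\mr{Lie}(\tilde{\mc G})$. Using the structure of connected normal subgroups of a connected reductive group $\tilde{\mc G}$ (every such subgroup is a product of a subtorus of $Z(\tilde{\mc G})^\circ$ and some simple factors of $\tilde{\mc G}_\der$), a central Lie algebra forces no simple factor of $\tilde{\mc G}_\der$ to appear, so $(\ker f)^\circ \subset Z(\tilde{\mc G})^\circ$. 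Then $\ker f / (\ker f)^\circ$ is a finite normal subgroup of the connected group $\tilde{\mc G}/(\ker f)^\circ$, hence central, and it follows that $\ker f \subset Z(\tilde{\mc G})$.

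For part (b), first I would note that $f$ maps $[\tilde{\mc G}, \tilde{\mc G}] = \tilde{\mc G}_\der$ into $[\mc G, \mc G] = \mc G_\der$, so that $f_\der$ is well-defined. To get surjectivity, Condition \ref{cond:1}(ii) tells us $\mc G / f(\tilde{\mc G})$ is commutative, so $\mc G_\der \subset f(\tilde{\mc G})$. Then
\[
\mc G_\der / f(\tilde{\mc G}_\der) \hookrightarrow f(\tilde{\mc G}) / f(\tilde{\mc G}_\der),
\]
and the right-hand side is a quotient of the commutative torus $\tilde{\mc G}/\tilde{\mc G}_\der$. Thus $\mc G_\der / f(\tilde{\mc G}_\der)$ is a commutative quotient of the perfect group $\mc G_\der$, hence trivial, and $f_\der$ is surjective. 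Finally $\ker(f_\der) = \ker(f) \cap \tilde{\mc G}_\der$ is central in $\tilde{\mc G}_\der$ by part (a), and lies inside the finite group $Z(\tilde{\mc G}) \cap \tilde{\mc G}_\der$, so it is a finite central subgroup. Combined with surjectivity, this makes $f_\der$ a central isogeny.

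The main obstacle is the characteristic-$p$ subtlety in part (a): the scheme-theoretic kernel $\ker f$ need not be smooth, so one must be careful in identifying $\mr{Lie}(\ker f)$ with $\ker(\mr d f)$ and in invoking the structure theorem for normal subgroups of reductive groups. Condition \ref{cond:1}(i) is designed precisely to exclude the inseparable pathologies mentioned before its statement; if one only assumed the kernel of $f$ itself (rather than of $\mr d f$) to be central, one could not rule out those inseparable phenomena, and centrality on $F$-points would fail.
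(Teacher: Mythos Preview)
Your proof is correct and follows essentially the same route as the paper: for (a), pass from centrality of $\ker(\mathrm{d}f)$ to $(\ker f)^\circ \subset Z(\tilde{\mc G})^\circ$, then use that a finite normal subgroup of a connected group is central; for (b), use the commutative cokernel to get $\mc G_\der \subset f(\tilde{\mc G})$, then refine to $\mc G_\der = f(\tilde{\mc G}_\der)$.

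There are two small differences worth noting. First, for the surjectivity of $f_\der$ the paper argues that $f(\tilde{\mc G}) = f(Z(\tilde{\mc G})^\circ) f(\tilde{\mc G}_\der)$ and a torus cannot surject onto a simple group, whereas you observe that $\mc G_\der / f(\tilde{\mc G}_\der)$ is an abelian quotient of a perfect group; your argument is slightly cleaner. Second, in positive characteristic the definition of central isogeny (as in Borel \S 22) requires not just finite central kernel but that $\ker(\mathrm{d}f_\der)$ be central. You flag this in your final paragraph but do not fold it into the proof of (b); the paper makes this step explicit, noting that the remaining conditions for a central isogeny are exactly part (a) together with Condition \ref{cond:1}(i) restricted to $\tilde{\mc G}_\der$. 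It would strengthen your write-up to say this directly rather than leaving it as a caveat.
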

\begin{proof}
(a) By condition (i) the Lie algebra of ker$ (f)^\circ$ is central in $\mr{Lie}(\tilde{\mc G})$.
With \cite[Lemma 22.2]{Bor2} this implies that ker$(f)^\circ$ is contained in $Z(\tilde{\mc G})^\circ$. 
Further ker$ (f)$ is $F$-closed because $f$ is defined over $F$. Then \cite[Lemma 22.1]{Bor2}
says that ker$ (f)$ is defined over $F$ and central in $\tilde{\mc G}$.\\
(b) The  condition (ii) includes that coker$(f)$ is defined as $F$-group, so
$f(\tilde{\mc G})$ must be a closed normal $F$-subgroup of $\mc G$. Thus  
$f(\tilde{\mc G})$ contains $\mc G_\der$. Since a homomorphism from a torus
(e.g. $Z(\tilde{\mc G})^\circ$) to a simple group can never be surjective, 
$f(\tilde{\mc G}_\der)$ already contains $\mc G_\der$. The other conditions for
a central isogeny (see e.g. \cite[\S 22]{Bor2}) are part (a) and condition (i), 
restricted to $\tilde{\mc G}_\der$.
\end{proof}

As in Sections \ref{sec:3} and \ref{sec:4}, the dual homomorphism $f^\vee : G^\vee \to
\tilde G^\vee$ and ${}^L f = (f^\vee, \mr{id}_{\mb W_F})$ induce a canonical map
\[
\begin{array}{cccc}
\Phi ({}^L f) : & \Phi (G) & \to & \Phi (\tilde G) \\
& \phi & \mapsto & \tilde \phi := {}^L f \circ \phi 
\end{array}.
\]
By Lemma \ref{lem:5.5}.b $f$ induces a homomorphism of $F$-groups 
$f_\ad : \tilde{\mc G}_\ad \to \mc G_\ad$. 

\begin{lem}\label{lem:5.4}
The characters $\tau_{\phi,\mc G}(g)$ from Lemma \ref{lem:2.1} are compatible with the 
maps ${}^L f$ and $f_\ad$, in the following sense:
\[
\tau_{\phi,\mc G}( f_\ad (\tilde g)) (h) = \tau_{{}^L f \circ \phi, \tilde{\mc G}}(\tilde g) 
(f^\vee (h)) \qquad \tilde g \in \tilde{\mc G}_\ad , h \in (G^\vee )^{\phi (\mb W_F)} .
\]
\end{lem}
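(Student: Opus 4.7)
The plan is to trace through the construction \eqref{eq:2.15}--\eqref{eq:2.16} of $\tau_{?,?}$ and verify naturality at each step.

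First I would invoke Lemma \ref{lem:5.5}: since $f_\der$ is a central isogeny, the induced map $f_\ad : \tilde{\mc G}_\ad \to \mc G_\ad$ is an isomorphism of $F$-groups, and dually $f^\vee_\Sc : G^\vee_\Sc \to \tilde G^\vee_\Sc$ is an isomorphism of complex reductive groups. In particular $f^\vee_\Sc$ restricts to a $\mb W_F$-equivariant isomorphism $Z(G^\vee_\Sc) \xrightarrow{\sim} Z(\tilde G^\vee_\Sc)$ (with Galois actions induced by $\phi$ and $\tilde\phi := {}^L f \circ \phi$, respectively), and hence induces an isomorphism on $H^1(\mb W_F,-)$. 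I would then choose a lift $\phi_\Sc : \mb W_F \to G^\vee_\Sc \rtimes \mb W_F$ of $\phi|_{\mb W_F}$ and take $\tilde\phi_\Sc := (f^\vee_\Sc \rtimes \mr{id}) \circ \phi_\Sc$ as a lift of $\tilde\phi|_{\mb W_F}$. For $h \in (G^\vee)^{\phi(\mb W_F)}$, equivariance of $f^\vee$ gives $f^\vee(h) \in (\tilde G^\vee)^{\tilde\phi(\mb W_F)}$, and a direct computation from \eqref{eq:2.16} shows
\[
f^\vee_\Sc\bigl(c_h(\gamma)\bigr) = f^\vee(h)\,\tilde\phi_\Sc(\gamma)\,f^\vee(h)^{-1}\,\tilde\phi_\Sc(\gamma)^{-1} = c_{f^\vee(h)}(\gamma),
\]
so the cohomology class $c_h$ is sent to $c_{f^\vee(h)}$ in $H^1(\mb W_F, Z(\tilde G^\vee_\Sc))$.

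It would then remain to invoke the functoriality of the pairing \eqref{eq:2.12}--\eqref{eq:2.13}:
\[
\langle f_\ad(\tilde g),\, c \rangle_{G_\ad} = \langle \tilde g,\, f^\vee_\Sc(c) \rangle_{\tilde G_\ad} \qquad \text{for all } c \in H^1(\mb W_F, Z(G^\vee_\Sc)),\ \tilde g \in \tilde G_\ad .
\]
Granted this, combining with the previous step yields
\[
\tau_{\phi,\mc G}(f_\ad(\tilde g))(h) = \langle f_\ad(\tilde g),\, c_h \rangle = \langle \tilde g,\, c_{f^\vee(h)} \rangle = \tau_{\tilde\phi,\, \tilde{\mc G}}(\tilde g)\bigl(f^\vee(h)\bigr),
\]
which is the desired identity.

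I expect this last step to be the main obstacle. The pairing \eqref{eq:2.12} is built from the local Langlands correspondence for tori as explained in \cite[\S 10.2]{Bor}, and the required naturality is the standard Tate-style functoriality of LLC for tori, reducing via semisimple elements to a compatibility on maximal tori. Concretely, the compatibility amounts to a diagram chase through the Galois cohomology sequences \eqref{eq:2.7} for $\mc G$ and $\tilde{\mc G}$, in the spirit of the argument used for Lemma \ref{lem:2.2}; the key point is that the connecting homomorphisms from $H^0(\mb W_F, G^\vee_\ad)$ to $H^1(\mb W_F, Z(G^\vee_\Sc))$ are functorial for the morphism of short exact sequences induced by $f^\vee_\Sc$ and $f^\vee$.
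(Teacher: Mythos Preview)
Your proposal is correct and follows essentially the same route as the paper's proof: pick a lift $\phi_\Sc$, push it forward via ${}^L f_\ad$ (your $f^\vee_\Sc \rtimes \mr{id}$) to obtain a lift of $\tilde\phi$, read off $c_{f^\vee(h)} = f^\vee_\Sc \circ c_h$ from \eqref{eq:2.16}, and conclude by functoriality of the LLC for tori. The paper dispatches your ``main obstacle'' in a single line --- ``By the functoriality of the LLC for tori, $\langle \tilde g, {}^L f_\ad \circ c_h \rangle = \langle f_\ad(\tilde g), c_h \rangle$'' --- so you are being more cautious than strictly necessary, but nothing is missing.
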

\begin{proof}
Let $\phi_\Sc : \mb W_F \to {G^\vee}_\Sc \rtimes \mb W_F$ be a lift of $\phi$. Then
${}^L f_\ad \circ \phi_\Sc$ is a lift of ${}^L f \circ \phi$. From \eqref{eq:2.16} we see
that $c_{f^\vee (h)} = {}^L f_\ad \circ c_h$. Recall from \eqref{eq:2.15} that
\[
\tau_{{}^L f \circ \phi, \tilde{\mc G}}(\tilde g) (f^\vee (h)) = 
\langle \tilde g, c_{f^\vee (h)} \rangle = \langle \tilde g, {}^L f_\ad \circ c_h \rangle .
\]
By the functoriality of the LLC for tori, the right hand side equals
$\langle f_\ad (\tilde g) , c_h \rangle = \tau_{\phi,\mc G}( f_\ad (\tilde g)) (h)$.
\end{proof}

We will show how $f$ produces, from an enhanced L-parameter for $G = \mc G (F)$, 
an L-parameter for $\tilde G = \tilde{\mc G}(F)$ with a possibly reducible enhancement. 
To make this canonical, we have to assume:
\begin{cond}\label{cond:3}
For every involved isomorphism of based root data $\tau$ there exists an isomorphism 
of reductive $F$-groups $\eta_{\tau,\mc G}$ as in Proposition \ref{prop:3.4}.c, which 
is canonical for the collection of L-parameters under consideration.
\end{cond}

The cokernel of $f$ is commutative, so $f(\tilde{\mc G})$ contains the derived group of
$\mc G$. Consequently $f(Z(\tilde{\mc G}))$ is contained in the centre of $\mc G$.
By \cite[Proposition 1.8]{BoTi}, applied to the $F$-torus $Z(\mc G)^\circ$, there
exists a central $F$-torus $\mc T \subset \mc G$ such that $\mc G = f(\tilde{\mc G}) \mc T$
and $f(\tilde{\mc G}) \cap \mc T$ is finite. It is easily seen that the homomorphism
$f \times \mr{id}_{\mc T} : \tilde{\mc G} \times \mc T \to \mc G$ also satisfies 
Condition \ref{cond:1}. Now we can factorize $f$ as
\begin{equation}\label{eq:5.1}
\tilde{\mc G} \xrightarrow{f_1} \tilde{\mc G} \times \mc T \xrightarrow{f_2} 
(\tilde{\mc G} \times \mc T) / \ker (f \times \mr{id}_{\mc T}) \xrightarrow{f_3} \mc G.
\end{equation}
Here $f_1$ is the inclusion of one factor in a direct product, $f_2$ is a quotient map for
a central $F$-subgroup and $f_3$ is an isomorphism of $F$-groups coming from 
$f \times \mr{id}_{\mc T}$.

\begin{prop}\label{prop:5.1}
Assume Condition \ref{cond:1} and let $\phi \in \Phi (G)$.
\enuma{
\item ${}^L f$ induces a canonical injection $\mc S_\phi \to \mc S_{\tilde \phi} = 
\mc S_{{}^L f (\phi)}$. The image is a normal subgroup and the cokernel is abelian.
\item Assuming Condition \ref{cond:3}, $f$ induces a canonical algebra homomorphism
${}^S f : \C [\mc S_\phi] \to \C[\mc S_{\tilde \phi}]$, which: 
\begin{itemize}
\item sends every $s \in \mc S_\phi$ to a nonzero element of $\C {}^L f (s)$,
\item is the identity on $\C [\mc Z_\phi]$.
\end{itemize}
\item Suppose that $q : \mc G \to \mc H$ is another homomorphism satisfying 
Conditions \ref{cond:1} and \ref{cond:3}. Then 
${}^L (q \circ f) = {}^L f \circ {}^L q$ and ${}^S (q \circ f) = {}^S f \circ {}^S q$.
}
\end{prop}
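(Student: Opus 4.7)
The plan is to exploit the factorization \eqref{eq:5.1} to split $f$ into three elementary types (inclusion into a product with a torus, central quotient, and isomorphism), verify parts (a) and (b) for each type, and then glue via (c). First I would check that each $f_i$ in \eqref{eq:5.1} again satisfies Condition \ref{cond:1}: this is immediate for $f_1$ and $f_2$, and for $f_3$ it follows from the construction of $\mc T$. In fact (c) is not purely formal but also serves to combine the three cases once they are proved individually, so I would treat (a), (b) and (c) in parallel across the case analysis.

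For $f_1 : \tilde{\mc G} \hookrightarrow \tilde{\mc G} \times \mc T$, the dual projection $f_1^\vee$ has connected central kernel $T^\vee$, so $\mc S_\phi = \mc S_{\tilde \phi}$ canonically, and I would take ${}^S f_1 = {}^L f_1 = \mr{id}$. For the central quotient $f_2$, Lemma \ref{lem:4.7} already provides the required injection $\mc S_\phi \hookrightarrow \mc S_{\tilde \phi}$ with normal image and abelian cokernel; I would set ${}^S f_2$ equal to the $\C$-linear extension ${}^L f_2$. Triviality on $\mc Z_\phi$ is automatic from \eqref{eq:4.1}, since $Z({G^\vee}_\Sc) = Z(\tilde G^\vee_\Sc)$ and the map on centres is the identity.

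The nontrivial case is the isomorphism $f_3$. Here Condition \ref{cond:3} together with Proposition \ref{prop:3.4}.a supplies a canonical $\eta_{\mc R(f_3),\mc G}$ and a unique $g \in \mc G_\ad(F)$ with $f_3 = \mr{Ad}(g) \circ \eta_{\mc R(f_3),\mc G}$. The map ${}^L \eta_{\mc R(f_3),\mc G}$ of \eqref{eq:3.1} is the candidate bijection $\mc S_\phi \to \mc S_{\tilde \phi}$, proving (a) for $f_3$. To account for the inner twist by $g$, whose effect on enhancements is encoded by $\tau_{\mc S_\phi}(g)$ as explained in \eqref{eq:3.7}, I would define
\[
{}^S f_3 (s) \;=\; \tau_{\phi,\mc G}(g)(s)^{-1} \cdot {}^L f_3(s) \qquad s \in \mc S_\phi .
\]
Lemma \ref{lem:2.1}.a, which says $\tau_\phi(g)$ is trivial on $Z({G^\vee}_\Sc)^{\mb W_F}$, then yields the identity on $\C[\mc Z_\phi]$. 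The fact that $\mc S_\phi \hookrightarrow \mc S_{\tilde \phi}$ has normal image with abelian cokernel in this case follows from the analogous property of the corresponding statement for $\eta_{\mc R(f_3),\mc G}$, which is an isomorphism on component groups.

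For (c), I would verify transitivity one pair of types at a time. Pairings involving $f_1$ or two central quotients are direct, with no twists to track, so both sides are ${}^L(q \circ f)$. The delicate cases involve isomorphisms, where the characters $\tau_\phi(g)$ must compose correctly along the factorization; the key input is Lemma \ref{lem:5.4}, which establishes $\tau_{\phi,\mc G}(f_\ad(\tilde g))(h) = \tau_{{}^L f \circ \phi, \tilde{\mc G}}(\tilde g)(f^\vee (h))$. The main obstacle I anticipate is verifying that the canonical choices $\eta_{\tau,\mc G}$ of Proposition \ref{prop:3.4}.c are compatible under composition, i.e.\ that the difference between $\eta_{\tau_1 \tau_2, \mc G}$ and $\eta_{\tau_1, \mc G} \circ \eta_{\tau_2, \text{intermediate}}$ is an inner twist Ad$(g_0)$ whose contribution to ${}^S(q \circ f)$ is precisely what Lemma \ref{lem:5.4} and the multiplicativity of $g \mapsto \tau_\phi(g)$ predict. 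Once this bookkeeping is in place, combining the three cases with the uniqueness of the factorization \eqref{eq:5.1} up to these types yields transitivity in general.
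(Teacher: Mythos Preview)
Your strategy matches the paper's: factor $f$ via \eqref{eq:5.1}, handle $f_1,f_2,f_3$ separately using Lemma~\ref{lem:4.7} for $f_2$ and the decomposition $f_3=\mr{Ad}(g)\circ\eta_{\mc R(f_3),\mc G}$ for the isomorphism, then assemble. Two points deserve correction or completion.

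\textbf{Sign of the twist.} Your definition ${}^S f_3(s)=\tau_{\phi,\mc G}(g)(s)^{-1}\,{}^L f_3(s)$ carries the wrong power of $\tau$. The paper sets ${}^S f_3(s)=\tau_{\mc G}(g)(s)\,f_3^\vee(s)$ (no inverse), so that the pullback satisfies ${}^S f_3^*(\tilde\rho)=(\tilde\rho\circ f_3^\vee)\otimes\tau_\phi(g)$ as in \eqref{eq:5.2}. Either sign yields a map meeting the bare requirements of part~(b), but your sign gives $\mr{Ad}(g)^*\pi(\phi,\rho)=\pi(\phi,\rho\otimes\tau_\phi(g))$ in Conjecture~\ref{conj:5.7}, contradicting the computations with Hecke algebras (Theorem~\ref{thm:2.9}) and the explicit checks in Sections~\ref{sec:principal}--\ref{sec:8}, which all produce $\rho\otimes\tau_\phi(g)^{-1}$. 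So the proposition is proved either way, but the object you construct is not the one the paper needs.

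\textbf{Canonicity.} You do not address why ${}^S f$ is independent of the choice of torus $\mc T$ in the factorization \eqref{eq:5.1}; ``canonical'' in part~(b) means exactly this. The paper handles it by comparing factorizations through $\mc T$ and through a larger $\mc T\mc T'$ via the commutative diagram \eqref{eq:5.5}, checking that the $g$ appearing in $f_3=\mr{Ad}(g)\circ\eta_{\mc R(f_3),\mc G}$ is unchanged. This step is not hard but is not automatic, and your phrase ``uniqueness of the factorization up to these types'' does not cover it.

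For (c) your outline is right and Lemma~\ref{lem:5.4} is indeed the crux. The paper makes the ``bookkeeping'' you anticipate concrete by writing down a single commutative diagram combining the factorizations of $f$ and $q$, reading off $(q\circ f)_3=q_3\circ(f_3\times\mr{id}_{\mc A})$, and then computing directly (see \eqref{eq:5.11}--\eqref{eq:5.12}) that the accumulated twist is $\tau_{\mc H}(h\,\eta_{\mc R(q_3),\mc H}(g))$. Your worry about compatibility of the canonical $\eta_{\tau,\mc G}$ under composition is thus resolved by simply exhibiting the combined factorization rather than by any abstract compatibility statement.
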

\begin{proof}(a) 
We use the factorization \eqref{eq:5.1} to establish the existence. We treat each homomorphism
$f_i$ separately.

For $f_1$ we can write $\phi = \tilde \phi \times \phi_T$ with $\phi_T \in \Phi (T)$. 
The component group of any L-parameter for the torus $T = \mc T (F)$ is trivial, so $\mc S_\phi
= \mc S_{\tilde \phi}$ and ${}^L f_1$ fixes this group pointwise.

For $f_2$ see Lemma \ref{lem:4.7}. 

For the isomorphism $f_3$ the map ${}^L f_3 : \mc S_\phi \to \mc S_{\tilde \phi}$ is just the 
bijection $f_3^\vee |_{Z^1_{{G^\vee}_\Sc}(\phi)}$. With these notions the group homomorphism
\begin{equation}\label{eq:5.3}
{}^L f = {}^L f_2 \circ {}^L f_3 : \mc S_\phi \to \mc S_{\tilde \phi}
\end{equation}
is injective and canonically determined by ${}^L f : {}^L G \to {}^L \tilde G$. The image of 
\eqref{eq:5.3} is normal with abelian cokernel because that holds for the separate factors, 
see also \cite[Lemma 5]{AdPr}.\\
(b) We define ${}^S f_1$ as the identity on $\C [\mc S_\phi]$ and ${}^S f_2 : \C[\mc S_\phi] 
\to \C [\mc S_{\tilde \phi}]$ as the $\C$-linear extension of 
${}^L f_2 : \mc S_\phi \to \mc S_{\tilde \phi}$.
Assuming Condition \ref{cond:3}, we may invoke \eqref{eq:3.9}. It rewrites 
$f_3 = \mr{Ad}(g) \circ \eta_{\mc R (f_3), \mc G}$ and defines
\[
\begin{array}{cccc}
{}^S f_3 : & \C[\mc S_\phi] & \to & \C [\mc S_{\tilde \phi}] \\
& s & \mapsto & \tau_{\mc G}(g)(s) f_3^\vee (s) 
\end{array}.
\]
By Lemma \ref{lem:2.1} ${}^S f_3 (s) = f_3^\vee (s)$ for any $s \in \mc Z_\phi$.
For later use, we record that by \eqref{eq:2.20} the 
pullback of $\tilde \rho \in \mr{Rep} (\mc S_{\tilde \phi})$ along ${}^S f_3$ is
\begin{equation}\label{eq:5.2}
{}^S f_3^* (\tilde \rho) = (\tilde \rho \circ f_3^\vee) \otimes \tau_{\phi}(g) . 
\end{equation}
The algebra monomorphism
\begin{equation}\label{eq:5.4}
{}^S f := {}^L f_2 \circ {}^L f_3 \circ \tau_{\mc G}(g) : 
\C [\mc S_\phi] \to \C [\mc S_{\tilde \phi}]
\end{equation}
could still depend on the choice of the factorization \eqref{eq:5.1}. To show its canonicity,
we consider another torus $\mc T'$ with the same properties as $\mc T$. Then $\mc T \mc T'$ 
is yet another such subtorus of $\mc G$, and we extend \eqref{eq:5.1} to a commutative diagram
\begin{equation}\label{eq:5.5}
\xymatrix{
\tilde{\mc G} \ar[r]^{f_1} \ar@{=}[d] & \tilde{\mc G} \times \mc T \ar[r]^-{f_2} \ar[d] & 
(\tilde{\mc G} \times \mc T) / \ker (f \times \mr{id}_{\mc T}) \ar[r]^-{f_3} \ar[d] & \mc G \ar@{=}[d] \\
\tilde{\mc G} \ar[r]^{f'_1} & \tilde{\mc G} \times \mc T \mc T' \ar[r]^-{f'_2} & 
(\tilde{\mc G} \times \mc T \mc T') / \ker (f \times \mr{id}_{\mc T \mc T'}) \ar[r]^-{f'_3} & \mc G
}
\end{equation}
The inclusion $\mc T \to \mc T \mc T'$ induces isomorphisms 
\begin{align*}
& \mc S_{{}^L f_3 \phi} \to \mc S_{{}^L f'_3 (\phi)} ,\\
& \mc S_{{}^L f_2 \circ {}^L f_3 \phi} \to \mc S_{{}^L f_2 \circ {}^L f'_3 (\phi)} .
\end{align*}
Conjugation with these two isomorphisms turns ${}^S f_3$ into ${}^S f'_3$ and 
${}^S f_2$ into ${}^S f'_2$, respectively. As the based root data of 
$(\tilde{\mc G} \times \mc T) / \ker (f \times \mr{id}_{\mc T})$ and
$(\tilde{\mc G} \times \mc T \mc T') / \ker (f \times \mr{id}_{\mc T \mc T'})$ can be identified
canonically, the vertical maps in \eqref{eq:5.5} transform $\eta_{\mc R (f_3),\mc G}$ into
$\eta_{\mc R (f'_3),\mc G}$. Hence
\[
\mr{Ad}(g) = f_3 \circ \eta_{\mc R (f_3),\mc G}^{-1} = f'_3 \circ \eta_{\mc R (f'_3),\mc G}^{-1}
\] 
and the diagram transforms ${}^S f_3 = {}^L f_3 \circ \tau_{\mc G}(g)$ into
${}^S f'_3 = {}^L f'_3 \circ \tau_{\mc G}(g)$. It follows that 
\[
{}^L f'_2 \circ {}^L f'_3 \circ \tau_{\mc G}(g) = {}^L f_2 \circ {}^L f_3 \circ \tau_{\mc G}(g) :
\C [\mc S_\phi] \to \C [\mc S_{\tilde \phi}] .
\]
This also holds with the roles of $\mc T$ and $\mc T'$ exchanged, so the factorizations 
\eqref{eq:5.1} from $\mc T$ and from $\mc T'$ give rise to the same map
${}^S f : \C [\mc S_\phi] \to \C [\mc S_{\tilde \phi}]$.\\
(c) Recall that $f^\vee$ is uniquely determined by $f$ and the choice of pinnings of $G^\vee$
and of $\tilde G^\vee$. The same goes for $q^\vee$. As both $f^\vee \circ q^\vee$ and 
$(q \circ f)^\vee$ send the pinning for $H^\vee$ to the pinning for $\tilde G^\vee$, they
are equal. Then also ${}^L \circ {}^L q = {}^L (q \circ f) : {}^L H \to {}^L \tilde G$.

Suppose that $\phi' \in \Phi (H)$ and $\phi = {}^L q \circ \phi'$. Part (a) says that
${}^L \circ {}^L q  = {}^L (q \circ f)$ as homomorphisms 
$\mc S_{\phi'} \to \mc S_{\tilde \phi}$. Factorize $q$ as in \eqref{eq:5.1}, with $\mc T$
renamed $\mc A$. Consider the commutative diagram
\[
\xymatrix{
\tilde{\mc G} \ar@{_{(}->}[r] \ar@{_{(}->}[d] & 
\tilde{\mc G} \times \mc T \ar@{->>}[r] \ar@{_{(}->}[dl] & 
(\tilde{\mc G} \times \mc T ) / \ker (f \times \mr{id}_{\mc T}) \ar@{_{(}->}[dl] \ar[r]^-{\sim} & 
\mc G \ar@{_{(}->}[dl] \\
\tilde{\mc G} \times \mc T \times \mc A \ar@{->>}[r] \ar@{->>}[dr] & 
\frac{\tilde{\mc G} \times \mc T }{\ker (f \times \mr{id}_{\mc T})} \times 
\mc A \ar@{->>}[d] \ar[r]^-{\sim} & \mc G \times \mc A \ar@{->>}[r] &
\frac{\mc G \times \mc A}{\ker (q \times \mr{id}_{\mc A})}  \ar[d]^-{\sim} \\
& (\tilde{\mc G} \times \mc T \times \mc A) / \mc N  \ar[urr]^-{\sim}  \ar[rr]^-{\sim}  
& & \mc H
}
\]
where $\mc N = \big( \ker (f \times \mr{id}_{\mc T}) \times \mc A \big) (f_3^{-1} \times \mr{id}_{\mc A}) 
(\ker \eta \times \mr{id}_{\mc A})$. The path from $\tilde{\mc G}$
to $\mc H$ along the upper left corner is the composition of the factorizations of $f$ and of $q$. 
The lower left track from $\tilde{\mc G}$ to $\mc H$ is a factorization of $q \circ f$ as in 
\eqref{eq:5.1}. In particular we obtain the lower horizontal map
$(q \circ f)_3 = q_3 \circ (f_3 \times \mr{id}_{\mc A})$.
Use \eqref{eq:3.9} to rewrite $q_3 = \mr{Ad}(h) \circ \eta_{\mc R (q_3), \mc H}$, so that
\begin{multline}\label{eq:5.11}
(q \circ f)_3 = \mr{Ad}(h) \circ \eta_{\mc R (q_3), \mc H} \circ 
\mr{Ad}(g) \circ \big( \eta_{\mc R (f_3), \mc G} \times \mr{id}_{\mc A} \big)\\
= \mr{Ad} \big( h \eta_{\mc R (q_3), \mc H} (g) \big) \circ 
\eta_{\mc R (q_3 \circ (f_3 \times \mr{id}_{\mc A})), \mc H} .
\end{multline}
Here $g \in G_\ad$ is also regarded as an element of the adjoint group of entry (2,4) in the above 
diagram. In other words, we identify $\eta_{\mc R (q_3), \mc H} (g)$ with 
$\eta_{\mc R (q_3), \mc H} \circ q_2 \circ q_1 (g)$. Lemma \ref{lem:5.4} tells us that 
\[
\tau_{\mc G}(g) \circ {}^L q (x) = \tau_{\phi,\mc G}(g)(q^\vee (x)) = 
\tau_{\phi',\mc H}(q_\ad (g)) (x) .
\]
Now we can compute
\begin{equation}\label{eq:5.10}
\begin{aligned}
{}^S f \circ {}^S q & = {}^L f \circ \tau_{\mc G}(g) \circ {}^L q \circ \tau_{\mc H}(h) \\
& = {}^L f \circ {}^L q \circ \tau_{\mc H}(q_\ad (g)) \circ \tau_{\mc H}(h) =
{}^L (q \circ f) \circ \tau_{\mc H}(q_\ad (g) h) . 
\end{aligned}
\end{equation}
Up to conjugation by an element of $H_\ad$, $q_\ad (g)$ equals 
$\eta_{\mc R (q_3), \mc H} \circ q_2 \circ q_1 (g)$. That conjugation does not change 
$\tau_{\mc H}(q_\ad (g))$, so the right hand side of \eqref{eq:5.10} equals
\begin{equation}\label{eq:5.12}
{}^L (q \circ f) \circ \tau_{\mc H}(h \eta_{\mc R (q_3), \mc H} \circ q_2 \circ q_1 (g)) =
{}^L (q \circ f) \circ \tau_{\mc H}(h \eta_{\mc R (q_3), \mc H} (g)) .
\end{equation}
Comparing with \eqref{eq:5.4} and \eqref{eq:5.11}, we see that \eqref{eq:5.12} 
equals ${}^S (q \circ f)$.
\end{proof}

Borel \cite[\S 10.3]{Bor} predicted that the LLC is functorial with respect to homomorphisms 
as in \eqref{eq:5.9}. This concerns the pullback of $\mc G(F)$-representations to 
$\tilde{\mc G}(F)$. For non-archimedean $F$, we will precisely formulate and prove such 
functoriality for representations of Hecke algebras associated to enhanced L-parameters. 
First we set up the homomorphisms between Hecke algebras induced by $f$ and ${}^L f$. 
Let $(\phi_L,\rho_L) \in \Phi_\cusp (L)$ be bounded and recall the notations from 
Section \ref{sec:4}.

\begin{prop}\label{prop:5.2}
Let $\rho_i \in \Irr (\mc S_{\tilde \phi})$ be such that ${}^S f^* (\rho_i)$ contains $\rho$.
Assume Conditions \ref{cond:3} and \ref{cond:1}.
\enuma{
\item There exist a twisted graded Hecke algebra $\mh H (\phi_L, W_{\mf s_i^\vee, \tilde{\phi_L}},
\rho_L, \vec{\mb r})$ and algebra homomorphisms
\[
\mh H (\phi_L, \rho_L, \vec{\mb r}) \xrightarrow{\imath_{(\phi_L,\rho_L)}} \mh H (\phi_L, 
W_{\mf s_i^\vee, \tilde{\phi_L}}, \rho_L, \vec{\mb r}) 
\xleftarrow{\mh H ({}^L f,\tilde \phi_L,\rho_i)} \mh H (\tilde \phi_L, \rho_i, \vec{\mb r})
\]
such that:
\begin{itemize}
\item[(i)] $\imath_{(\phi_L,\rho_L)}$ is the inclusion of a Morita equivalent subalgebra.
\item[(ii)] $\mh H ({}^L f,\tilde \phi_L,\rho_i)$ is induced by $f$ and ${}^L f$.
\item[(iii)] Consider the functor
\[
\mr{Mod}(\mh H (\phi_L, \rho_L, \vec{\mb r})) \to
\mr{Mod}(\mh H (\tilde \phi_L, \rho_i, \vec{\mb r})) ,
\]
obtained by composing the Morita equivalence from $\imath_{(\phi_L,\rho_L)}$ with pullback
along $\mh H ({}^L f,\tilde \phi_L,\rho_i)$. It is canonical, in the sense that it depends on 
${}^S f, f^\vee, \phi_L$ and $\rho_i$, but not on 
$\mh H (\phi_L, W_{\mf s_i^\vee, \tilde{\phi_L}}, \rho_L, \vec{\mb r})$.
\end{itemize}
\item Assume Condition \ref{cond:2} holds for $\mf s_i^\vee$. There exist a twisted affine
Hecke algebra $\mc H (\mf s^\vee, W_{\mf s_i^\vee}, \vec{\mb z})$ and algebra homomorphisms
\[
\mc H (\mf s^\vee, \vec{\mb z}) \xrightarrow{\imath_{\mf s^\vee}} 
\mc H (\mf s^\vee, W_{\mf s_i^\vee}, \vec{\mb z}) \xleftarrow{\mc H ({}^L f, \mf s_i^\vee)}
\mc H (\mf s_i^\vee, \vec{\mb z}) ,
\]
which satisfy the analogues of (i)--(iii). 
}
\end{prop}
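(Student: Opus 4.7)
The plan is to exploit the canonical factorization $f = f_3 \circ f_2 \circ f_1$ from \eqref{eq:5.1} together with the multiplicativity of $f \mapsto {}^L f$ and $f \mapsto {}^S f$ established in Proposition \ref{prop:5.1}.c, constructing the Hecke algebra maps stepwise and then composing. Each step handles one of the three basic kinds of homomorphism identified in the introduction, so we can reuse the machinery already built.

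For $f_1 : \tilde{\mc G} \to \tilde{\mc G} \times \mc T$ the S-group of any L-parameter of $\tilde{\mc G} \times \mc T$ agrees with that of its $\tilde{\mc G}$-component (tori contribute trivial S-groups), so the graded and affine algebras on the $\tilde{\mc G} \times \mc T$-side acquire only an extra central tensor factor coming from $X_\nr(T)$. A canonical evaluation/quotient map handles this step with trivial Morita inclusion $\imath$. For $f_2$, which is a central quotient, all of the required data have already been assembled in Section \ref{sec:4}: the intermediate algebras $\mh H(\phi_L, W_{\mf s_i^\vee, \tilde{\phi_L}}, \rho_L, \vec{\mb r})$ and $\mc H(\mf s^\vee, W_{\mf s_i^\vee}, \vec{\mb z})$; the inclusions of Morita equivalent subalgebras from \eqref{eq:4.15} and \eqref{eq:4.28}; and the maps \eqref{eq:4.17} and \eqref{eq:4.10} coming from ${}^L f_2$. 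The induced module pullbacks are exactly those identified in Lemma \ref{lem:4.4} and Theorem \ref{thm:4.5}, and these depend only on ${}^L f_2, \phi_L, \rho_i$, giving property (iii) for this step.

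For $f_3 : (\tilde{\mc G} \times \mc T)/\ker(f \times \mr{id}_{\mc T}) \xrightarrow{\sim} \mc G$ we invoke Condition \ref{cond:3} to pick $\eta_{\mc R(f_3), \mc G}$ and write $f_3 = \mr{Ad}(g) \circ \eta_{\mc R(f_3), \mc G}$ with the unique $g \in \mc G_\ad(F)$ from \eqref{eq:3.9}. The isomorphism $\eta_{\mc R(f_3), \mc G}$ produces the algebra isomorphisms \eqref{eq:3.2} between the Hecke algebras for source and target; the inner factor $\mr{Ad}(g)$ contributes the intertwiner $\alpha_g$ from Proposition \ref{prop:2.4} in the graded case, and $\alpha_g \circ \mr{Ad}(x_g)$ from \eqref{eq:2.37} in the affine case (which is why Condition \ref{cond:2} on $\mf s_i^\vee$ is imposed in (b)). No intermediate algebra is needed for $f_3$, so $\imath$ for it is the identity. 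Composing the three stage maps produces the desired $\imath_{(\phi_L,\rho_L)}$, $\mh H({}^L f, \tilde\phi_L,\rho_i)$ and their affine analogues, with (i) and (ii) built in by construction.

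The main obstacle, and where the real content sits, is the canonicity requirement (iii). The intermediate algebras and maps constructed above depend on several auxiliary choices: the torus $\mc T$ in \eqref{eq:5.1}, a system of representatives for $[\mf R_{\mf s_i^\vee} / \mf R_{\mf s^\vee}]$ and for $[\mf R_{\mf s_i^\vee,\tilde\phi_L} / \mf R_{\mf s^\vee,\phi_L}]$, and the lift $\eta_{\mc R(f_3), \mc G}$. Independence from $\mc T$ will be obtained by replicating the commutative diagram \eqref{eq:5.5} that underlies the canonicity of ${}^S f$ in Proposition \ref{prop:5.1}.b: the two horizontal tracks give a priori different intermediate algebras linked by canonical Morita equivalences that restrict to the identity on the module categories we care about. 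Independence from the representative systems and from the choice of $\eta_{\mc R(f_3), \mc G}$ follows from Lemmas \ref{lem:2.5} and \ref{lem:2.6}, since changes of these choices amount to inner automorphisms that are already accounted for by the $\alpha_g \circ \mr{Ad}(x_g)$ factor. What remains is to verify that the resulting functor $\mr{Mod}(\mh H(\phi_L,\rho_L,\vec{\mb r})) \to \mr{Mod}(\mh H(\tilde\phi_L,\rho_i,\vec{\mb r}))$ (and its affine analogue) depends only on ${}^S f, f^\vee, \phi_L, \rho_i$; this will be checked by unravelling the module-level formulas of Theorem \ref{thm:4.5}, Corollary \ref{cor:3.1} and Theorem \ref{thm:2.9} and observing that the composite description involves exactly these data.
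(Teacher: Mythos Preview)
Your proposal is correct and follows essentially the same route as the paper: factorize $f$ as in \eqref{eq:5.1}, treat $f_1,f_2,f_3$ separately with the Section~\ref{sec:4} machinery for $f_2$, the isomorphisms \eqref{eq:3.2} and $\alpha_g$ (resp.\ $\alpha_g\circ\mr{Ad}(x_g)$) for $f_3$, and a trivial evaluation map for $f_1$, then invoke \eqref{eq:5.5} for independence from the choice of $\mc T$. One minor redundancy: under Condition~\ref{cond:3} the isomorphism $\eta_{\mc R(f_3),\mc G}$ is already canonical, so you need not argue separately for independence from its choice.
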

\begin{proof}
We will use the factorization \eqref{eq:5.1}. An argument with the diagram \eqref{eq:5.5},
as in the proof of Proposition \ref{prop:5.1}, shows that the outcome does not depend on the 
choice of the factorization. Hence it suffices to prove the claims separately for the 
homomorphisms $f_1, f_2$ and $f_3$ from \eqref{eq:5.1}.\\
(a) For $f_2$ see Paragraphs \ref{par:2.2}--\ref{par:2.3}, in particular \eqref{eq:4.20} 
and the proof of Lemma \ref{lem:4.3}. 

For $f_1$ and $f_3$ we simply take
$\mc H (\mf s^\vee, W_{\mf s_i^\vee}, \vec{\mb z}) = \mc H (\mf s^\vee, \vec{\mb z})$, so
that (i) is automatic. We write $f_3 = \mr{Ad}(g) \circ \eta_{\mc R (f_3),\tilde{\mc G}}$
as in \eqref{eq:3.9}, which by Condition \ref{cond:3} is canonical. Then we put
\[
\mh H ({}^L f_3, \tilde \phi_L,\rho_i) = 
\mh H ({}^L \eta_{\mc R (f_3), \tilde{\mc G}}) \circ \alpha_g
\]
with $\alpha_g$ as in Proposition \ref{prop:2.4} and $\mh H ({}^L \eta_{\mc R (f_3), 
\tilde{\mc G}})$ as in \eqref{eq:3.2}. These maps are constructed in terms of $f_3$ and
${}^L f_3$, so (ii) holds. Claim (iii) for $f_3$ is a consequence of Lemma \ref{lem:2.5}
and Corollary \ref{cor:3.1}.

For $f_1$ we can write $\phi_L = \tilde \phi_L \times \phi_T$ with $\phi_T \in \Phi (T)$.
Then $\mc S_{\tilde \phi_L} = \mc S_{\phi_L}, \rho_i = \rho_L$ and
\[
\mh H (\tilde \phi_L, \rho_i , \vec{\mb r}) =
\mh H (\phi_L, \rho_L, \vec{\mb r}) \otimes \mc O \big( \mr{Lie}(X_\nr (T)) \big) .
\]
The homomorphism $\mh H ({}^L f_1,\tilde \phi_L,\rho_i)$ is just
\begin{equation}\label{eq:5.6} 
\mr{id} \otimes \mr{ev}_0 : \mh H (\phi_L, \rho_L, \vec{\mb r}) \otimes 
\mc O \big( \mr{Lie}(X_\nr (T)) \big) \to \mh H (\phi_L, \rho_L, \vec{\mb r}) .
\end{equation}
The pullback along this map is simply restriction from $\mh H (\tilde \phi_L, \rho_i , 
\vec{\mb r})$ to its subalgebra $\mh H (\phi_L, \rho_L, \vec{\mb r})$. Clearly, this
is canonical and determined by $f_1$.\\
(b) Here we can use the same argument as for part (a), only with different references.
For $f_2$ see \eqref{eq:4.25}, \eqref{eq:4.10} and Theorem \ref{thm:4.5}. The canonicity
follows from the commutative diagram \eqref{eq:4.27} and the canonicity in part (a).

For $f_3$ we replace $\alpha_g$ by $\alpha_g \circ \mr{Ad}(x_g)$ from 
\eqref{eq:2.37} and we replace Proposition \ref{prop:2.4} and Lemma \ref{lem:2.5} by
Theorem \ref{thm:2.9} (for which we need Condition \ref{cond:2}).

For $f_1$ we have
\[
\mc H (\mf s^\vee_i, \vec{\mb z}) = \mc H (\mf s^\vee, \vec{\mb z}) \otimes \mc O (\mf s_T^\vee)
\text{ where } \mf s_T^\vee = (X_\nr (T) \phi_T, 1) .
\]
As $\mc H ({}^L f_1, \mf s_i^\vee)$ we take 
\begin{equation}\label{eq:5.8}
\mr{id} \otimes \mr{ev}_{(\phi_T,1)} : \mc H (\mf s^\vee, \vec{\mb z}) \otimes 
\mc O (\mf s_T^\vee) \to \mc H (\mf s^\vee, \vec{\mb z}). \qedhere
\end{equation}
\end{proof}

Recall from Section \ref{sec:4} that for quotients by central subgroups we regarded \\
$\bigoplus\nolimits_i \mc O ({}^L q,\mf s_{i,L}^\vee)^* (\phi_L,\rho_L )^{m_i}$ as the pullback, 
along ${}^L q$, of $(\phi_L,\rho_L)$ to $\mc H ({}^L q (\mf s^\vee), \vec{\mb z})$. The same holds
for homomorphisms like $f_1$ and $f_3$ in \eqref{eq:5.1} (but then there is just one $i$ and
$m_i = 1$). In Lemma \ref{lem:4.2} and \eqref{eq:4.29} we showed that the same multiplicities
pop up in the pullbacks of enhancements on the non-cuspidal level.

For consistency, we must consider the canonical pullbacks from Proposition 
\ref{prop:5.2}.(iii) with multiplicities $m_i$ coming from 
\begin{equation}\label{eq:5.7}
\mr{ind}_{{}^L f (\mc S_\phi)}^{\mc S_{\tilde \phi}} 
\big( \rho_L \otimes \tau_{\mc G}(g) \big) = \bigoplus\nolimits_i \rho_i^{m_i} .
\end{equation}
From the construction of ${}^S f$ in Proposition \ref{prop:5.1} we see that 
\[
m_i = \dim \mr{Hom}_{\mc S_{\phi_L}}(\rho_L, {}^S f^* (\rho_i )).
\]

\begin{thm}\label{thm:5.3}
Let $\mf s^\vee$ be the inertial equivalence class for $\Phi_e (G)$ determined by
$(\phi_L, \rho_L) \in \Phi_\cusp (L)$. Let $\vec{z} \in \R_{>0}^d, \vec{r} = \log (\vec{z})$,
and $(\phi, \rho) \in \Phi_e (G)^{\mf s^\vee}$. 
Recall from Theorem \ref{thm:1.1} that 
\[
\overline{M}(\phi,\rho,\vec{z}), \overline{E}(\phi,\rho,\vec{z}) \in 
\mr{Rep}(\mc H (\mf s^\vee, \vec{\mb z}))
\]
and from Theorem \ref{thm:1.2} that (possibly after modifying $\phi_L$ by an unramified twist)
\[
M(\phi,\rho,\vec{r}), E(\phi,\rho,\vec{r}) \in \mr{Rep}(\mh H (\phi_L ,\rho_L ,\vec{\mb r})) .
\]
Assume Conditions \ref{cond:3} and \ref{cond:1} and endow the canonical pullbacks from
Proposition \ref{prop:5.2} with the multiplicities from \eqref{eq:5.7}.
\enuma{
\item The pullback of $M(\phi,\rho,\vec{r})$ to $\mh H ({}^L f (\phi_L,\rho_L), \vec{\mb r})
= \bigoplus_i \mh H (\tilde \phi_L, \rho_L, \vec{\mb r})$ is
\[
\bigoplus\nolimits_{\tilde \rho \in \Irr (\mc S_{\tilde \phi})} \Hom_{\mc S_\phi} 
\big( \rho, {}^S f^* (\tilde \rho) \big) \otimes M(\tilde \phi, \tilde \rho, \vec{r}) .
\]
\item Assume that Condition \ref{cond:2} holds for $\tilde{\mc G}$. The pullback of
$\overline{M}(\phi,\rho,\vec{z})$ to \\ $\mc H ({}^L f (\mf s^\vee),\vec{\mb z}) = 
\bigoplus\nolimits_i \mc H (\mf s_i^\vee,\vec{\mb z})$ is
\[
\bigoplus\nolimits_{\tilde \rho \in \Irr (\mc S_{\tilde \phi})} \Hom_{\mc S_\phi} 
\big( \rho, {}^S f^* (\tilde \rho) \big) \otimes \overline{M}(\tilde \phi, \tilde \rho, \vec{z}) .
\]
}
In (a) and (b) the same holds for the standard modules $E(\phi,\rho,\vec{r})$
and $\overline{E}(\phi,\rho,\vec{z})$.
\end{thm}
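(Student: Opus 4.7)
The plan is to reduce to the three elementary types of homomorphism in the factorization
\eqref{eq:5.1}, handle each separately, and then glue by transitivity. For each factor the
required statements about the pullback of standard and irreducible modules have already been
established in the preceding sections, so the main task is bookkeeping: matching the
multiplicities coming from \eqref{eq:5.7} with the homomorphism-theoretic multiplicities
$\dim \Hom_{\mc S_\phi}(\rho, {}^S f^*(\tilde\rho))$.

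First I would check the assertion for $f = f_1$, the inclusion $\tilde{\mc G} \hookrightarrow
\tilde{\mc G} \times \mc T$. Here $\phi$ decomposes as $\tilde\phi \times \phi_T$,
$\mc S_\phi = \mc S_{\tilde\phi}$ and ${}^S f_1$ is the identity, while the pullback
homomorphisms are the evaluation maps \eqref{eq:5.6} and \eqref{eq:5.8}. The module
$M(\phi,\rho,\vec r)$ (resp.\ $\overline M(\phi,\rho,\vec z)$) is manifestly the pullback
of $M(\tilde\phi,\rho,\vec r)$ (resp.\ $\overline M(\tilde\phi,\rho,\vec z)$) along these
evaluations, and on the right-hand side only the term $\tilde\rho = \rho$ contributes with
multiplicity one. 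Next, for $f = f_2$ a central quotient, Lemma \ref{lem:4.4} and Theorem
\ref{thm:4.5} give exactly the claimed formula once one observes that ${}^S f_2$ is the
$\C$-linear extension of the injection \eqref{eq:5.3}, so $\Hom_{\mc S_\phi}(\rho, {}^S
f_2^*(\tilde\rho)) = \Hom_{\mc S_{\tilde\phi}}(\mr{ind}_{\mc S_\phi}^{\mc S_{\tilde\phi}}
\rho, \tilde\rho)$ by Frobenius reciprocity; the multiplicities $m_i$ in \eqref{eq:4.26}
and in \eqref{eq:5.7} are then the same integer.

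For $f = f_3$ an isomorphism, write $f_3 = \mr{Ad}(g) \circ \eta_{\mc R(f_3),\tilde{\mc G}}$
as in \eqref{eq:3.9}. The pullback along ${}^L \eta_{\mc R(f_3),\tilde{\mc G}}$ is handled
by Corollary \ref{cor:3.1}: it simply transports $M(\tilde\phi,\tilde\rho,\vec r)$ and
$\overline M(\tilde\phi,\tilde\rho,\vec z)$ to the modules with parameters
$\Phi_e({}^L\eta)^{-1}(\tilde\phi,\tilde\rho) = (\phi,\tilde\rho \circ f_3^\vee)$. The
pullback along $\alpha_g$ (for the graded algebras) is Lemma \ref{lem:2.5}, and along
$\alpha_g \circ \mr{Ad}(x_g)$ (for the affine algebras) is Theorem \ref{thm:2.9}, giving
in both cases a further twist by $\tau_\phi(g)^{-1}$. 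Thus the irreducible constituents
of the pullback are indexed by $\tilde\rho$ with $\rho \cong \tilde\rho \circ f_3^\vee
\otimes \tau_\phi(g)^{-1}$, and each appears with multiplicity one. By \eqref{eq:5.2},
${}^S f_3^*(\tilde\rho) = (\tilde\rho \circ f_3^\vee) \otimes \tau_\phi(g)$, so this
multiplicity is precisely $\dim \Hom_{\mc S_\phi}(\rho,{}^S f_3^*(\tilde\rho))$, and again
matches the $m_i$ from \eqref{eq:5.7} (which reduces to $0$ or $1$ in this case).

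Finally, I would compose the three cases. The composition of the canonical pullback
functors from Proposition \ref{prop:5.2}(iii) for $f_1, f_2, f_3$ agrees with the
canonical pullback for $f = f_3 \circ f_2 \circ f_1$, by the canonicity assertion in that
Proposition and by the argument with the diagram \eqref{eq:5.5} used in the proof of
Proposition \ref{prop:5.1}(b). On the representation-theoretic side, Proposition
\ref{prop:5.1}(c) gives ${}^S f = {}^S f_3 \circ {}^S f_2 \circ {}^S f_1$, and an
elementary Frobenius-reciprocity/twisting manipulation shows that the composition of the
three formulas for $f_1, f_2, f_3$ yields $\bigoplus_{\tilde\rho} \Hom_{\mc S_\phi}(\rho,
{}^S f^*(\tilde\rho)) \otimes M(\tilde\phi,\tilde\rho,\vec r)$, and similarly for the
affine version and for the standard modules $E$ and $\overline E$. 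The main obstacle is
verifying that this multiplicative compatibility of ${}^S f$ with pullbacks on modules
is strict enough to preserve multiplicities when one passes through the intermediate
algebras $\mc H(\mf s^\vee, W_{\mf s_i^\vee},\vec{\mb z})$; this is essentially what
the canonicity clause of Proposition \ref{prop:5.2}(iii) is designed to handle, so once
that is invoked the assembly is formal.
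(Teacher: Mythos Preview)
Your proposal is correct and follows essentially the same route as the paper: factorize via \eqref{eq:5.1}, invoke Lemma \ref{lem:4.4} and Theorem \ref{thm:4.5} for $f_2$, Corollary \ref{cor:3.1} together with Lemma \ref{lem:2.5} and Theorem \ref{thm:2.9} for $f_3$, and handle $f_1$ directly, then use Proposition \ref{prop:5.1} for transitivity. One small slip: in the $f_1$ paragraph you wrote that $M(\phi,\rho,\vec r)$ is the pullback of $M(\tilde\phi,\rho,\vec r)$, but the direction is reversed---the pullback of $M(\phi,\rho,\vec r)$ along \eqref{eq:5.6} is $M(\tilde\phi,\rho,\vec r)$.
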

\begin{proof}
By Proposition \ref{prop:5.1} the statements are transitive for compositions of 
homomorphisms between reductive groups. Using the factorization \eqref{eq:5.1} it 
suffices to establish the theorem for the homomorphisms $f_1, f_2$ and $f_3$ separately.

For $f_3$ see \eqref{eq:5.2}, Lemma \ref{lem:2.5}, Theorem \ref{thm:2.9} and Corollary 
\ref{cor:3.1}.

For $f_2$ see Lemma \ref{lem:4.4} and Theorem \ref{thm:4.5}.

For $f_1$ we note that $\phi = \tilde \phi \times \chi \phi_T$ for some $\chi \in X_\nr (T)$.
In particular $\mc S_{\tilde \phi} = \mc S_\phi$. It is clear from \eqref{eq:5.6} that the
pullback of $M(\phi,\rho,\vec{r})$ (resp. $E(\phi,\rho,\vec{r})$) is
$M(\tilde \phi,\rho,\vec{r})$ (resp. $E(\tilde \phi,\rho,\vec{r})$). As ${}^S f_1 :
\C[ \mc S_\phi] \to \C [\mc S_{\tilde \phi}]$ is the identity,
\[
\bigoplus\nolimits_{\tilde \rho \in \Irr (\mc S_{\tilde \phi})} 
\mr{Hom}_{\mc S_\phi} \big( \rho, {}^S f_1^* (\tilde \rho) \big) = \left\{
\begin{array}{cl}
\C & \text{if } \tilde \rho = \rho ,\\
0 & \text{otherwise} .
\end{array}
\right.
\]
This matches $M(\tilde \phi,\rho,\vec{r})$ (and $E(\tilde \phi,\rho,\vec{r})$) with the asserted
pullback. When Condition \ref{cond:2} holds, we can draw the same conclusions about $f_1$ from
\eqref{eq:5.8}.
\end{proof}

We expect that in many cases a local Langlands correspondence can be described in terms of 
Hecke algebras, such that $(\phi,\rho)$ corresponds to $\overline{M}(\phi,\rho,\vec{z})$ 
or to $M(\phi,\rho,\vec{r})$, for suitable arrays of parameters $\vec{z}$ and $\vec{r}$. 
Having worked out what pullback along $f$ does to Langlands parameters and representations
of the associated Hecke algebras, we are in a good position to investigate the relations 
between a LLC and the pullback
\[
f^* : \Rep (G) \to \Rep (\tilde G) .
\]
Now $F$ may again be any local field. To get compatible notions of relevance, we extend the 
Kottwitz parameters $\zeta_{\mc G} = \zeta_{\tilde{\mc G}} \in \Irr (Z({G^\vee}_\Sc)^{\mb W_F})$ 
in the same way to $\zeta_{\mc G}^+ = \zeta_{\tilde{\mc G}}^+ \in \Irr (Z({G^\vee}_\Sc))$.
We need to assume that a LLC is known for all representations
of $G$ and of $\tilde G$ that will be involved. We denote the $G$-representation associated
to $(\phi,\rho) \in \Phi_e (G)$ by $\pi (\phi,\rho)$. Abbreviate
$\tilde \phi = {}^L f (\phi) \in \Phi (\tilde G)$ and recall ${}^S f : \C [\mc S_\phi] \to
\C [\mc S_{\tilde \phi}]$ from Proposition \ref{prop:5.1}.b. 

The preparations in this section lead to a more precise version of Conjecture \ref{conj:A}:

\begin{conj}\label{conj:5.7}
Assume that $f : \tilde{\mc G} \to \mc G$ satisfies Condition \ref{cond:1} and define 
relevance of enhanced L-parameters as above. For any $(\phi,\rho) \in \Phi_e (G)$:
\[
f^* (\pi (\phi,\rho)) = \bigoplus_{\tilde \rho \in \Irr (\mc S_{\tilde \phi})} \mr{Hom}_{\mc S_\phi} 
\big( \rho, {}^S f^* (\tilde \rho) \big) \otimes \pi (\tilde \phi, \tilde \rho) .
\]
\end{conj}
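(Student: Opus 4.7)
The plan is to prove Conjecture \ref{conj:5.7} by reducing, via the transitivity of ${}^L f$ and ${}^S f$ established in Proposition \ref{prop:5.1}.c, to the four elementary classes of homomorphisms listed in the introduction: inclusions $\tilde{\mc G} \hookrightarrow \tilde{\mc G} \times \mc T$, central quotient maps $q : \tilde{\mc G} \to \tilde{\mc G}/\mc N$, inner automorphisms $\mr{Ad}(g)$ with $g \in G_\ad$, and isomorphisms of reductive $F$-groups. The Langlands classification further allows restriction to tempered $\pi(\phi,\rho)$ (hence bounded $\phi$), as in \cite[\S 4]{AdPr}. The backbone of the approach is the Hecke-algebra identity of Theorem \ref{thm:5.3}: once one knows that the LLC on each relevant Bernstein block is realized through a Morita equivalence $\Rep(G)^{\mf s} \simeq \Mod \big( \mc H(\mf s^\vee, \vec{\mb z})/(\vec{\mb z}-\vec z) \big)$ compatible with $f$ and ${}^L f$, the conjecture will follow by transporting the Hecke-algebra decomposition through that equivalence.

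For the direct-product inclusion $f_1$ one has $\mc S_\phi = \mc S_{\tilde\phi}$ and ${}^S f_1 = \mr{id}$, so only the summand $\tilde\rho = \rho$ contributes and the claim is immediate. For an isomorphism $\eta$, I would use Proposition \ref{prop:3.4} together with the factorization $\eta = \mr{Ad}(g) \circ \eta_{\mc R(\eta),\mc G}$ of \eqref{eq:3.9} to reduce to the canonical lift, which by construction matches up the Borel subgroups, maximal split tori and special parahorics needed to pin down the LLC on both sides; then Corollary \ref{cor:3.1} yields the equality, with the residual $\mr{Ad}(g)$ handled by the inner-automorphism case. For inner automorphisms, the essential input is the identity $\mr{Ad}(g)^* \pi(\phi,\rho) = \pi(\phi, \rho \otimes \tau_\phi(g)^{-1})$, which must be verified from the explicit construction of the LLC in each of the three classes of Theorem \ref{thm:B}; granting it, Theorem \ref{thm:2.9} is the matching statement for Hecke-algebra modules, and transport through the LLC produces exactly the right-hand side of Conjecture \ref{conj:5.7}.

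The main obstacle, and the genuinely nontrivial case, is the central quotient $q : \tilde{\mc G} \to \mc G$. Here $q^* \pi$ is typically reducible, the inertial class decomposes as ${}^L q(\mf s^\vee) = \bigcup_i \mf s_i^\vee$, and the multiplicities $m_i$ arising in $\mr{ind}_{\mc S_{\phi_L}}^{\mc S_{\tilde\phi_L}} \rho_L = \bigoplus_i \rho_i^{m_i}$ are not in general one (cf.\ \cite[Theorem 13]{AdPr}). My plan is to identify $q^*$ on the Bernstein block $\Rep(G)^{\mf s}$ with the canonical pullback of Proposition \ref{prop:5.2}.b, realized as the composite of the Morita-equivalent inclusion $\imath_{\mf s^\vee}$ followed by restriction along $\mc H({}^L q, \mf s_i^\vee)$ through the intermediate algebra $\mc H(\mf s^\vee, W_{\mf s_i^\vee}, \vec{\mb z})$. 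This identification is where the three settings of Theorem \ref{thm:B} must be treated separately: in the principal-series case via the matching of Bernstein presentations for Iwahori--Hecke-type algebras, in the unipotent case via Lusztig's geometric construction combined with \cite{SolLLCunip}, and for inner twists of $GL_n/SL_n/PGL_n$ by an explicit analysis of induction/restriction of enhancements that sidesteps Hecke algebras entirely. Once this identification is in place, Theorem \ref{thm:5.3}.b transports the desired decomposition---with the correct multiplicities $\dim \Hom_{\mc S_\phi}(\rho, {}^S q^*(\tilde\rho))$---from the Hecke-algebra side to $\Rep(\tilde G)$, completing the verification of Conjecture \ref{conj:5.7} in each of the three settings.
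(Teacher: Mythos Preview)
Your plan is essentially the paper's own strategy, but one framing point deserves emphasis: Conjecture~\ref{conj:5.7} is stated in the paper as a \emph{conjecture}, not a theorem; it is proved only in the three settings of Theorem~\ref{thm:B}, and your proposal correctly targets exactly those. The reduction via Proposition~\ref{prop:5.1}.c and the factorization \eqref{eq:5.1} into the four elementary classes, with Theorem~\ref{thm:5.3} as the Hecke-algebra backbone and a separate case-by-case identification of $q^*$ with the canonical pullback of Proposition~\ref{prop:5.2}, is precisely what the paper carries out in Sections~\ref{sec:principal}--\ref{sec:8}.

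A couple of minor differences in organization are worth noting. First, the paper does not globally reduce to the tempered case via the Langlands classification for the principal-series and unipotent settings; it works directly with all $(\phi,\rho)$ there, reserving the tempered reduction for the classical-group discussion in Paragraph~\ref{par:classical}. Second, for the inner-automorphism step $\mr{Ad}(g)$ the paper does more than invoke Theorem~\ref{thm:2.9}: it splits $g = g_c g_x$ into compact and ``unramified'' parts via \eqref{eq:2.53} and handles them separately (e.g.\ Lemmas~\ref{lem:6.7}--\ref{lem:6.8} in the unipotent case, and the explicit computations around \eqref{eq:6.10}--\eqref{eq:6.18} in the principal-series case), because the compact part need not preserve the chosen type while the unramified part need not preserve the parahoric. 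Your sketch glosses over this distinction, but it is where much of the actual work lies.
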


In view of $(q \circ f)^* = f^* \circ q^*$ and Proposition \ref{prop:5.1}.c, the conjecture
is transitive in $f$. Theorem \ref{thm:5.3} says that, when $F$ is non-archimedean, it holds 
for the Hecke algebras associated to enhanced L-parameters. 
A first consequence of Conjecture \ref{conj:5.7} is:

\begin{cor}\label{cor:5.8}
Suppose that Conjecture \ref{conj:5.7} holds for $(G,\phi)$ and $(\tilde G,\tilde \phi)$
(that is, for all relevant enhancements of these L-parameters). Then the L-packet
$\Pi_{\tilde \phi}(\tilde G)$ consists precisely of the irreducible constituents of 
$f^* \pi$ with $\pi \in \Pi_\phi (G)$. 
\end{cor}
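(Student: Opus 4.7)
The plan is to deduce both inclusions directly from the decomposition formula in Conjecture \ref{conj:5.7} together with the properties of ${}^S f$ from Proposition \ref{prop:5.1}.

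First, recall that by definition
\[
\Pi_\phi (G) = \{ \pi (\phi,\rho) : \rho \in \Irr (\mc S_\phi), (\phi,\rho) \text{ is } G\text{-relevant} \} ,
\]
and similarly for $\Pi_{\tilde \phi}(\tilde G)$. Conjecture \ref{conj:5.7} applied to any $(\phi,\rho) \in \Phi_e (G)$ with L-parameter $\phi$ expresses $f^* \pi (\phi,\rho)$ as a direct sum of representations of the form $\pi (\tilde \phi, \tilde \rho)$ with $\tilde \rho \in \Irr (\mc S_{\tilde \phi})$. For each $\tilde \rho$ actually appearing, the multiplicity $\dim \Hom_{\mc S_\phi}(\rho, {}^S f^* (\tilde \rho))$ is nonzero, which in particular forces $(\tilde \phi, \tilde \rho)$ to be an enhanced L-parameter — we will verify below that the relevance condition transfers — hence $\pi(\tilde \phi, \tilde \rho) \in \Pi_{\tilde \phi}(\tilde G)$. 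This yields the inclusion of the constituents of $f^* \pi$ into $\Pi_{\tilde \phi}(\tilde G)$.

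For the reverse inclusion, fix any $\pi (\tilde \phi, \tilde \rho) \in \Pi_{\tilde \phi}(\tilde G)$. I need to produce some $G$-relevant $(\phi,\rho)$ such that $\rho$ appears in ${}^S f^* (\tilde \rho)$; then $\pi (\tilde \phi, \tilde \rho)$ will be a constituent of $f^* \pi (\phi,\rho)$ by Conjecture \ref{conj:5.7}. By Proposition \ref{prop:5.1}.a the map ${}^L f : \mc S_\phi \to \mc S_{\tilde \phi}$ is injective, and by Proposition \ref{prop:5.1}.b each ${}^S f (s)$ is a nonzero scalar multiple of ${}^L f (s)$, so ${}^S f$ is an injective algebra homomorphism $\C[\mc S_\phi] \to \C[\mc S_{\tilde \phi}]$. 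In particular ${}^S f^* (\tilde \rho)$ is a nonzero representation of $\mc S_\phi$, and we may choose an irreducible constituent $\rho$.

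It remains to check that $(\phi,\rho)$ is $G$-relevant, i.e. $\zeta_\rho^+ = \zeta_{\mc G}^+$. Since $Z({G^\vee}_\Sc)$ acts on any enhancement through its image $\mc Z_?$ in the relevant component group, and since Proposition \ref{prop:5.1}.b states that ${}^S f$ is the identity on $\C[\mc Z_\phi]$ (with $\mc Z_\phi$ mapping to $\mc Z_{\tilde \phi}$ via ${}^L f$), the action of $Z({G^\vee}_\Sc)$ on $V_{{}^S f^* (\tilde \rho)}$ agrees with its action on $V_{\tilde \rho}$. Hence $\zeta_\rho^+ = \zeta_{\tilde \rho}^+ = \zeta_{\tilde{\mc G}}^+ = \zeta_{\mc G}^+$, where the last equality is our normalization of the Kottwitz extensions for $\mc G$ and $\tilde{\mc G}$. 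The same argument applied in the forward direction shows that whenever $\Hom_{\mc S_\phi}(\rho, {}^S f^*(\tilde \rho)) \neq 0$ for a $G$-relevant $(\phi,\rho)$, the pair $(\tilde \phi, \tilde \rho)$ is $\tilde G$-relevant, completing the argument.

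No step is especially delicate; the only point requiring care is the compatibility of relevance, which is entirely controlled by the assertion in Proposition \ref{prop:5.1}.b that ${}^S f$ restricts to the identity on $\C[\mc Z_\phi]$.
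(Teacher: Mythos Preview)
Your proof is correct and follows essentially the same route as the paper's: both directions come from the decomposition in Conjecture \ref{conj:5.7}, and the relevance compatibility is handled via Proposition \ref{prop:5.1}.b (${}^S f$ is the identity on $\C[\mc Z_\phi]$) together with $Z({G^\vee}_\Sc) = Z(\tilde{G}^\vee_{\ \Sc})$ and $\zeta_{\mc G}^+ = \zeta_{\tilde{\mc G}}^+$. The paper additionally cites \cite{Sil} and \cite[Lemma 2.1]{Tad} to observe that $f^* \pi$ is a finite direct sum of irreducibles, but under the hypothesis that Conjecture \ref{conj:5.7} holds this is already contained in the formula, so your omission is harmless; conversely, you make explicit the forward-direction relevance check (that a $\tilde\rho$ with nonzero multiplicity must be $\tilde G$-relevant), which the paper leaves implicit.
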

\begin{proof}
First we note that by \cite{Sil} and \cite[Lemma 2.1]{Tad} $f^*$ preserves finite length 
and complete reducibility. In particular, $f^* \pi$ is a finite direct sum of irreducible
representations whenever $\pi$ is so. Conjecture \ref{conj:5.7} shows
that $f^*$ maps any $\pi \in \Pi_\phi (G)$ to a direct sum of members of 
$\Pi_{\tilde \phi}(\tilde G)$. 

For every $\tilde \rho \in \Irr (\mc S_{\tilde \phi})$ we can find a $\rho \in 
\Irr (\mc S_\phi)$ with $\mr{Hom}_{\mc S_\phi}(\rho, {}^S f^* (\tilde \rho)) \neq 0$. When 
$\tilde \rho$ is $\tilde G$-relevant, the $Z(\tilde{G}^\vee_\Sc)$-character is prescribed 
(namely as $\zeta_{\mc G}^+$). As $Z({G^\vee}_\Sc) = Z(\tilde{G}^\vee_{\ \,\Sc})$, 
and ${}^S f$ is the identity on $\C [\mc Z_\phi]$ (Proposition \ref{prop:5.1}.b), $\rho$ can only
appear in ${}^S f^* (\tilde \rho)$ if $\rho$ has the same $\mc Z_\phi$-character as 
$\tilde \rho$ (namely $\zeta_{\mc G}^+$). Hence the $\rho$ we found above is necessarily
$G$-relevant. In other words, every member $\pi (\tilde \phi, \tilde \rho)$ of 
$\Pi_{\tilde \phi}(\tilde G)$ appears as a constituent of $f^* \pi (\phi,\rho)$ for some
$\pi (\phi,\rho) \in \Pi_\phi (G)$.
\end{proof}

\section{The principal series of split groups} 
\label{sec:principal}

Let $\mc G$ be a connected reductive group. We fix a Haar measure on $G$, so that we can define 
convolution products of functions on $G$. For any open subgroup $U \subset G$ we let $\mc H (U)$ 
be the convolution algebra of locally constant, compactly supported functions $U \to \C$.

Throughout this section we assume that $\mc G$ is $F$-split, and we pick a $F$-split maximal 
torus $\mc S$ of $\mc G$ and a Borel subgroup $\mc B$ containing $\mc S$.
By the principal series of $G = \mc G (F)$ we understand those $G$-representations that can be 
made from subquotients of $I_B^G (\chi)$, where $\chi$ is a character of a maximal $F$-split 
torus $S$ (inflated to $B = \mc B (F))$. We denote the set of irreducible principal series
representations of $G$ by $\Irr (G,S)$. In \cite{ABPSprin} such representations were classified,
and a local Langlands correspondence for them was constructed. This relies on the types and
Hecke algebras which were exhibited and investigated by Roche \cite{Roc}. 
To make use of these results, we must impose certain mild restrictions on the residual 
characteristic $p$ of $F$, see \cite[p. 378--379]{Roc}. (Probably these conditions can be
relaxed, but we do not try that here.)

Every Bernstein component for the principal series of $G$ is given by an inertial equivalence
class $\mf s = [S,\chi]_G$. Let $\hat \chi \in \Phi (S)$ be the Langlands parameter of $\chi$.
The LLC for tori also associates to $\mf s$ an inertial equivalence class 
$\mf s^\vee = [S^\vee, \hat \chi,1]$ for $\Phi_e (G)$. From the naturality of the LLC for tori 
and the canonical isomorphism $N_G (S) / S \cong N_{G^\vee}(S^\vee) / S^\vee$,
we get a group isomorphism $W_{\mf s} \cong W_{\mf s^\vee}$. We note that 
\begin{equation}\label{eq:6.44}
X_\nr (S) \cong S^\vee \text{ acts simply transitively on } 
T_{\mf s} \text{ and on } T_{\mf s^\vee}.
\end{equation}
Let $\Phi (G,S)$ be the collection of $\phi \in \Phi (G)$ for which $\phi (\mb W_F) \subset
S^\vee \times \mb W_F$. (Recall $\phi$ is only defined up to $G^\vee$-conjugation; we mean this 
requirement should hold for a representative of $\phi$.) Let $\Phi_e (G,S)$ be the subset of
$\Phi_e (G)$ with L-parameter in $\Phi (G,S)$. As $\mc G$ is $F$-split, $G$-relevance of 
$(\phi,\rho)$ means that $\rho \in \Irr (\mc S_\phi)$ is trivial on 
the image $\mc Z_\phi$ of $Z({G^\vee}_\Sc)$. In other words, $\rho$ factors through 
$\mc S_\phi / \mc Z_\phi \cong \pi_0 (Z_{{G^\vee}_\ad} (\mr{im} \phi))$. We express the
LLC for principal series representations as
\begin{equation}\label{eq:6.1} 
\begin{array}{ccc}
\Phi_e (G,S) & \longleftrightarrow & \Irr (G,S) \\
(\phi,\rho) & \mapsto & \pi (\phi,\rho) 
\end{array}.
\end{equation}
Roche \cite{Roc} constructs a $\mf s$-type $(J_\chi ,\tau_\chi)$ in terms of a pinning 
$(\mc G,\mc B, \mc S, (x_\alpha )_{\alpha \in \Delta})$ and the character $\chi$ of $S$. 
Let $e_{\tau_\chi} \in \mc H (J_\chi)$ be the central idempotent associated to the
character $\tau_\chi$ of $J_\chi$. The Hecke algebra of this type is 
\[
\mc H (G,J_\chi,\tau_\chi) := e_{\tau_\chi} \mc H (G) e_{\tau_\chi}. 
\]
By virtue of types, there is an equivalence of categories
\begin{equation}\label{eq:6.2}
\begin{array}{ccc}
\Rep (G)^{\mf s} & \longleftrightarrow & \mr{Mod}(\mc H (G, J_\chi, \tau_\chi)) \\
V & \mapsto & e_{\tau_\chi} V \\
\mc H (G) e_{\tau_\chi} \underset{\mc H (G, J_\chi, \tau_\chi)}{\otimes} M & \lmapsto & M
\end{array}. 
\end{equation}
Let us consider the Hecke algebras associated to $\mf s$ and $\mf s^\vee$.
It was shown in \cite[\S 8]{Roc} that 
\begin{equation}\label{eq:6.3}
\mc H (G,J_\chi,\tau_\chi) \cong \mc H (Z_{G^\vee}(\hat \chi |_{\mb I_F} )^\circ) \rtimes 
\pi_0 (Z_{G^\vee}(\hat \chi |_{\mb I_F})) .
\end{equation}
In the terminology of \cite[\S 2]{AMS3}, the right hand side becomes
\begin{multline}\label{eq:6.4}
\mc H (Z_{G^\vee}(\hat \chi |_{\mb I_F} )^\circ, T^\vee, 1) / (\mb z - q_F) \rtimes 
\mf R_{\mf s^\vee} = \\ \mc H (Z_{G^\vee}(\hat \chi |_{\mb I_F} ), T^\vee, 1) / (\mb z - q_F) 
= \mc H (\mf s^\vee, \mb z) / (\mb z - q_F) .
\end{multline}
(Here $\mb z$ is derived from $\vec{\mb z}$ in Section \ref{sec:1} by setting all 
entries of $\vec{\mb z}$ equal.) In particular \eqref{eq:6.3} and \eqref{eq:6.4} show
that the R-group can be expressed as
\[
\mf R_{\mf s} \cong \mf R_{\mf s^\vee} \cong \pi_0 (Z_{G^\vee}(\hat \chi |_{\mb W_F})) .
\]
This R-group can be nontrivial, but its 2-cocycle $\kappa_{\mf s^\vee}$ \eqref{eq:2.4}
is trivial. (That follows also from \cite[Theorem 4.4]{ABPSprin}.) We record that 
\eqref{eq:6.3} and \eqref{eq:6.4} provide an algebra epimorphism
\begin{equation}\label{eq:6.5}
\mr{ev}_{\mb z = q_F} : \mc H (\mf s^\vee, \mb z) \to \mc H (G,J_\chi,\tau_\chi) .
\end{equation}
\begin{lem}\label{lem:6.2}
The torus $T_{\mf s^\vee}$ has a canonical basepoint $(\phi_1,\rho_1 = 1)$.
It is fixed by $W_{\mf s^\vee}$ and $\phi_1 (\Fr_F) = \Fr_F$.
\end{lem}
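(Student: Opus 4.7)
The plan is to exhibit $\phi_1$ as the unique L-parameter in $\mf s^\vee_L = X_\nr(S) \cdot (\hat\chi,1)$ whose Frobenius value is $(1,\Fr_F) \in S^\vee \rtimes \mb W_F$, and to use splitness of $\mc G$ to verify $W_{\mf s^\vee}$-invariance.

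First I would set up coordinates. Since $\mc G$ is $F$-split, the $\mb W_F$-action on $G^\vee$ is trivial, so $G^\vee \rtimes \mb W_F = G^\vee \times \mb W_F$. Any $\phi \in \Phi(G,S)$ has $\phi |_{\mb W_F}$ valued in $S^\vee \times \mb W_F$ and can thus be written uniquely as $\phi(w) = (\phi_0(w), w)$ with $\phi_0 : \mb W_F \to S^\vee$ a continuous homomorphism. The action of an unramified character $z \in X_\nr(S) \cong S^\vee$ on L-parameters via \eqref{eq:2.9} modifies only $\phi_0(\Fr_F)$, replacing it by $z(\Fr_F)\phi_0(\Fr_F)$, while leaving $\phi_0 |_{\mb I_F}$ and $\phi |_{SL_2(\C)}$ unchanged.

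Next, I would construct $\phi_1$. By \eqref{eq:6.44}, $X_\nr(S)$ acts simply transitively on $\mf s_L^\vee$, so there is a unique $z_0 \in X_\nr(S)$ with $(z_0 \hat\chi)(\Fr_F) = (1, \Fr_F)$; set $\phi_1 = z_0 \hat\chi$. The enhancement $\rho_1 = 1$ is the trivial character of $\mc S_{\phi_1}$. Since $\mc G$ is split, $\zeta_{\mc G}^+$ may be taken trivial, so $(\phi_1, 1) \in \mf s^\vee$ is relevant, and by construction it lies in the prescribed inertial class. This makes $(\phi_1, \rho_1)$ a canonical basepoint of $T_{\mf s^\vee} \cong \mf s_L^\vee$.

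The main verification is that $W_{\mf s^\vee}$ fixes $(\phi_1, \rho_1)$. Let $\dot w \in N_{G^\vee}(S^\vee)$ represent $w \in W_{\mf s^\vee}$. Since $w$ stabilizes $\mf s_L^\vee$, we have $\dot w \hat\chi \dot w^{-1} = z\hat\chi$ for some $z \in X_\nr(S)$, and since $z|_{\mb I_F} = 1$, conjugation by $\dot w$ fixes $\hat\chi_0(g)$ for every $g \in \mb I_F$. Consequently the same holds for $\phi_{1,0}|_{\mb I_F} = \hat\chi_0|_{\mb I_F}$. At Frobenius, using that the $\mb W_F$-action on $G^\vee$ is trivial,
\[
\dot w \phi_1(\Fr_F) \dot w^{-1} = (\dot w \cdot 1 \cdot \dot w^{-1}, \Fr_F) = (1, \Fr_F) = \phi_1(\Fr_F) .
\]
Since $\phi_1$ is trivial on $SL_2(\C)$, this shows $\dot w \phi_1 \dot w^{-1} = \phi_1$; the trivial enhancement is obviously fixed as well. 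The only step requiring any care is the last one, but it reduces to the observation that an unramified twist is, by definition, trivial on $\mb I_F$, so $W_{\mf s^\vee}$-conjugation automatically centralizes $\phi_1 |_{\mb I_F}$.
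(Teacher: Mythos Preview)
Your argument is correct. The approach differs mildly from the paper's, though the two reach the same object.

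The paper constructs $\phi_1$ by first exhibiting a canonical basepoint $t_1$ of $T_{\mf s}$ on the $p$-adic side, coming from the Hecke algebra isomorphism \eqref{eq:6.3}: namely the point with $t_1(\lambda(\varpi_F)) = 1$ for all $\lambda \in X_*(S)$. It then transfers $t_1$ to $(\phi_1,1) \in T_{\mf s^\vee}$ via \eqref{eq:6.5} and the LLC for tori, and reads off $\phi_1(\Fr_F) = \Fr_F$ from Artin reciprocity (the diagram \eqref{eq:6.42}). The $W_{\mf s^\vee}$-invariance is deduced from the $W_{\mf s}$-invariance of $t_1$ together with the $W_{\mf s}$-equivariance of $T_{\mf s} \cong T_{\mf s^\vee}$.

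You instead work entirely on the Galois side: you characterize $\phi_1$ by the condition $\phi_1(\Fr_F) = (1,\Fr_F)$ and then verify $W_{\mf s^\vee}$-invariance by a direct computation, using that $w \in W_{\mf s^\vee}$ implies $\dot w \hat\chi \dot w^{-1}$ differs from $\hat\chi$ by an unramified twist (hence $\dot w$ centralizes $\phi_1 |_{\mb I_F}$) and that $(1,\Fr_F)$ is central in $G^\vee \times \mb W_F$. This is more self-contained and avoids invoking the equivariance of the LLC for tori. The paper's detour through $t_1$ has the payoff that it identifies $\phi_1$ with the natural basepoint of the Hecke algebra \eqref{eq:6.3}, which is exactly what is used downstream in the proof of Theorem~\ref{thm:6.3} (e.g.\ when comparing $\alpha_{g_c}$ with $\mc H(\mr{Ad}(g_c))$ via \eqref{eq:6.5}); your purely Galois-side description would need to be matched with that basepoint at that later stage anyway.
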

\begin{proof}
The algebra \eqref{eq:6.3} comes with a canonical basepoint $t_1$ of $T_{\mf s}$, fixed by
$W_{\mf s}$. In terms of \eqref{eq:2.51}, it is the unique point of $T_{\mf s}$ such
that $t_1 (\lambda (\varpi_F)) = 1$ for all $\lambda \in X_* (S)$. Via \eqref{eq:6.5},
$t_1$ gives rise to a canonical basepoint $(\phi_1,\rho_1)$ of $T_{\mf s^\vee}$.
Here $\mc S_{\phi_1} = \{1\}$, so $\rho_1$ is trivial and we need not mention it. 
By the $W_{\mf s}$-equivariance of the isomorphism $T_{\mf s} \cong T_{\mf s^\vee}$ 
(an instance of the LLC for tori), $W_{\mf s^\vee}$ fixes $\phi_1$.

Artin reciprocity translates \eqref{eq:2.51} (for the split torus $S$) to 
\begin{equation}\label{eq:6.42}
\begin{array}{ccc}
\mr{Hom}(S,\C^\times) & \cong & 
\mr{Hom}(S_{\cpt},\C^\times) \times \Hom (X_* (S), \C^\times) \\
\text{\rotatebox[origin=c]{270}{$\cong$}} & & \text{\rotatebox[origin=c]{270}{$\cong$}} \\ 
\mr{Hom}(F^\times, S^\vee) & \cong &
\mr{Hom}(\mf o_F^\times, S^\vee) \times \mr{Hom}( \varpi_F^\Z, S^\vee) \\
\text{\rotatebox[origin=c]{270}{$\cong$}} & & \text{\rotatebox[origin=c]{270}{$\cong$}} \\ 
\hspace{-3mm} \mr{Hom}(\mb W_F / [\mb W_F,\mb W_F], S^\vee) \!\! & \cong & \!\!
\mr{Hom}(\mb I_F / [\mb W_F,\mb W_F], S^\vee) \times \mr{Hom}(\mb W_F / \mb I_F, S^\vee) \hspace{-4mm}
\end{array}
\end{equation}
From $t_1 \in \mr{Hom}(S_{\cpt},\C^\times)$ we get $\phi_1 \in \mr{Hom}(\mb I_F / 
[\mb W_F,\mb W_F], S^\vee)$, which by definition of the diagram means that 
$\phi_1 (\Fr_F) = 1$ in $S^\vee$ and $\phi_1 (\Fr_F) = \Fr_F$ in ${}^L S$.
\end{proof}

The construction of \eqref{eq:6.1} in \cite{ABPSprin} can be divided in steps involving
\eqref{eq:6.2}, \eqref{eq:6.5} and Theorem \ref{thm:1.1}:
\begin{equation}\label{eq:6.6}
\begin{array}{ccccccc}
\Phi_e (G)^{\mf s^\vee} & \leftrightarrow & \Irr \big( \mc H (\mf s^\vee, \mb z) / 
(\mb z - q_F) \big) & \leftrightarrow & \Irr \big( \mc H (G,J_\chi,\tau_\chi) \big) &
\leftrightarrow & \Irr (G)^{\mf s}\!\! \\
(\phi,\rho) & \mapsto & \overline{M}(\phi,\rho) & \leftrightarrow &
e_{\tau_\chi} \pi (\phi,\rho) & \lmapsto &
\pi (\phi,\rho)\!\!
\end{array}
\end{equation}
Let $f : \tilde{\mc G} \to \mc G$ be a homomorphismp of $F$-split connected reductive
groups, such that the kernel and the cokernel of $f$ are commutative. Functoriality for 
\eqref{eq:6.6} with respect to $f$ was already investigated in \cite[\S 17]{ABPSprin}.
Our desired result was proven in \cite[Proposition 17.7]{ABPSprin}. However, it uses
hidden assumptions on $f$. Namely, in \cite[Lemma 17.1]{ABPSprin} it is assumed that 
$f_\der : \tilde{\mc G}_\der \to \mc G_\der$ is a central isogeny. That is no problem
for us, because we checked in Lemma \ref{lem:5.5} that it follows from Condition
\ref{cond:1}. Furthermore, in \cite[p .57]{ABPSprin} it is claimed that the type 
$(J_{\tilde \chi}, \tau_{\tilde \chi}) = (J_{\chi \circ f}, \tau_{\chi \circ f})$ for 
$\tilde{\mf s} = [f^{-1}(S), \chi \circ f]_{\tilde G}$ equals
$(f^{-1} (J_\chi),\tau_\chi \circ f)$. This can only be guaranteed if
\begin{equation}\label{eq:6.7}
\tilde{\mc G}(\mf o_F) = f^{-1}(\mc G (\mf o_F)) . 
\end{equation}
The condition \eqref{eq:6.7} is fulfilled when $f$ is the quotient map for a
central $F$-subgroup of $\tilde{\mc G}$ and when $f$ is the inclusion of
$\tilde{\mc G}$ in $\tilde{\mc G} \times \mc T$ for some ($F$-split) torus $\mc T$.
But \eqref{eq:6.7} need not hold the conjugation action of $G_\ad$ on $G$. The classes
in $G_\ad / G \cong T_{AD}/T$ which do not come from an element of $T_{AD,\cpt}$ via
\eqref{eq:2.53} cannot be represented by an element of $G_\ad$ which stabilizes
$\mc G (\mf o_F)$.

To deal with this, we set things up more precisely. We fix a pinning\\
$(\tilde{\mc G}, \tilde{\mc B}, \tilde{\mc T}, (x_{\tilde \alpha} )_{\tilde \alpha 
\in \tilde \Delta})$ for $\tilde G$. By \cite[Lemma 17.2]{ABPSprin} the pullback 
functor $f^* : \Rep (G) \to \Rep (\tilde{G})$ sends $\Rep (G)^{\mf s}$ to
$\Rep (\tilde G )^{\tilde{\mf s}}$, which is a Bernstein component for the principal
series of $\tilde{G}$.

Like in \eqref{eq:5.1}, we decompose $f$ as
\[
\tilde{\mc G} \xrightarrow{f_1} \tilde{\mc G} \times \mc T \xrightarrow{f_2} 
\tilde{\mc G}_2 := (\tilde{\mc G} \times \mc T) / \ker (f \times \mr{id}_{\mc T}) 
\xrightarrow{f_3} \mc G.
\]
The pinning of $\tilde{G}$ determines a unique pinning of $\tilde{G}_2$.
By Proposition \ref{prop:3.4} and Theorem \ref{thm:3.2}.a there exists a unique 
$F$-isomorphism $\tilde{\mc G}_2 \to \mc G$ which respects the pinning and induces
\[
\mc R (f_3) : \mc R (\tilde{\mc G}_2, \tilde{\mc S}_2) \to \mc R (\mc G, \mc S) .
\]
As in \eqref{eq:3.9}, there exists a unique $g_3 \in G_\ad$ such that 
$f_3 = \mr{Ad}(g_3) \circ \eta_{\mc R (f_3),\mc G}$. In this way the pinnings 
serve to satisfy Condition \ref{cond:3}.

For $\phi \in \Phi (G,S)$, the L-parameter $\tilde \phi = {}^L f \circ \phi$
belongs to $\Phi (\tilde{G}, \tilde{S})$. Finally we can prove that Conjecture 
\ref{conj:5.7} holds throughout the principal series.

\begin{thm}\label{thm:6.3}
Let $f : \tilde{\mc G} \to \mc G$ be a homomorphism of $F$-split connected reductive
groups, satisfying Condition \ref{cond:1}. Assume that the residual characteristic $p$ does 
not belong to the bad cases from \cite[p. 378--379]{Roc}. 
For any $(\phi,\rho) \in \Phi_e (G,S)$:
\[
f^* (\pi (\phi,\rho)) = 
\bigoplus\nolimits_{\tilde \rho \in \Irr (\mc S_{\tilde \phi})} \mr{Hom}_{\mc S_\phi} 
\big( \rho, {}^S f^* (\tilde{\rho}) \big) \otimes \pi (\tilde \phi, \tilde \rho) .
\]
\end{thm}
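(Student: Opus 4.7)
The plan is to deduce Theorem \ref{thm:6.3} from the Hecke algebra version, Theorem \ref{thm:5.3}, via the type-theoretic equivalence \eqref{eq:6.2} and the identification \eqref{eq:6.5} of $\mc H(G, J_\chi, \tau_\chi)$ with $\mc H(\mf s^\vee, \mb z)/(\mb z - q_F)$. By Proposition \ref{prop:5.1}.c Conjecture \ref{conj:5.7} is transitive, so using the factorization \eqref{eq:5.1} it suffices to verify the theorem separately for each of $f_1, f_2, f_3$.

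First I would verify that Condition \ref{cond:2} holds in the principal series setting, so that Theorem \ref{thm:5.3}.b applies to both $\mf s^\vee$ and ${}^L f(\mf s^\vee)$. Here $L = S$, so by \eqref{eq:6.44} and Lemma \ref{lem:6.2} the torus $T_{\mf s^\vee} \cong S^\vee$ has the canonical $W_{\mf s^\vee}$-fixed basepoint $\phi_1$, giving Condition \ref{cond:2}.i. For (ii), $W_{\mf s^\vee}$ acts on $X^*(T_{\mf s^\vee}) = X_*(S)$ through the Weyl group $W(G,S)$, and for any $x \in X^*(X_\nr(T_{AD})) = X_*(T_{AD})$ any lift $\tilde x \in X_*(S) \otimes \Q$ satisfies $w(\tilde x) - \tilde x \in \Z \Phi^\vee(G,S) \subset X_*(S)$, so $w(x) - x$ lies in $X^*(T_{\mf s^\vee})$ as required.

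For $f_1$ (inclusion $\tilde{\mc G} \to \tilde{\mc G} \times \mc T$) and $f_2$ (central quotient), the identity \eqref{eq:6.7} $\tilde{\mc G}(\mf o_F) = f^{-1}(\mc G(\mf o_F))$ holds, and by \cite[Lemma 17.1]{ABPSprin} (together with Lemma \ref{lem:5.5}) the Roche type $(J_\chi, \tau_\chi)$ pulls back cleanly to a disjoint sum of types for the Bernstein components $\tilde{\mf s}_i$ of $\tilde G$ whose union matches ${}^L f(\mf s^\vee) = \bigsqcup_i \mf s_i^\vee$ on the parameter side. Through \eqref{eq:6.5} and \eqref{eq:6.2}, the functor $f^*$ corresponds, up to the Morita equivalences from $\imath_{\mf s^\vee}$ in Proposition \ref{prop:5.2}.b, to pullback along $\mc H({}^L f, \mf s_i^\vee)$ (specialized at $\mb z = q_F$). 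Invoking Theorem \ref{thm:5.3}.b then yields the decomposition formula, with the multiplicities \eqref{eq:5.7} matching the cuspidal-level decomposition of ${}^L f^*(\rho_L)$ into the $\rho_i$.

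The main obstacle is $f_3$, the isomorphism factor, since \eqref{eq:6.7} can fail when the element $g_3 \in G_\ad$ determined by $f_3 = \mr{Ad}(g_3) \circ \eta_{\mc R(f_3), \mc G}$ has a nontrivial unramified part in $T_{AD}/T_{AD,\cpt}$. The plan is to use Proposition \ref{prop:3.4}.c to pick the canonical pinning-preserving isomorphism $\eta_{\mc R(f_3),\mc G}$, for which types transport trivially, and to handle the remaining $\mr{Ad}(g_3)$ separately: on the Hecke algebra side it induces the automorphism $\alpha_{g_3} \circ \mr{Ad}(x_{g_3})$ of \eqref{eq:2.37}, whose effect on modules is described by Theorem \ref{thm:2.9} (twisting the enhancement by $\tau_\phi(g_3)^{-1}$), while on the representation side the same twist by $\tau_\phi(g_3)^{-1}$ is precisely what \eqref{eq:3.7} predicts. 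Chaining the three cases and applying the multiplicativity of ${}^S f$ in Proposition \ref{prop:5.1}.c converts Theorem \ref{thm:5.3}.b (at $\vec z = q_F$) into the asserted identity for $f^*(\pi(\phi,\rho))$.
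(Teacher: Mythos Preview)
Your overall architecture matches the paper's: factorize $f$ via \eqref{eq:5.1}, handle $f_1$ and $f_2$ through \cite[\S 17]{ABPSprin} (checking \eqref{eq:6.7}), and reduce $f_3$ to the pinning-preserving isomorphism plus $\mr{Ad}(g)$ for $g \in G_\ad$. Your verification of Condition \ref{cond:2} is also fine.

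The gap is in the $\mr{Ad}(g)$ step. You assert that ``on the Hecke algebra side $\mr{Ad}(g_3)$ induces the automorphism $\alpha_{g_3}\circ\mr{Ad}(x_{g_3})$'' and then cite \eqref{eq:3.7} for the effect on the representation side. But \eqref{eq:3.7} is explicitly stated in the paper as an \emph{expectation}, not a theorem; and Theorem \ref{thm:2.9} lives entirely on the Langlands-parameter Hecke algebra $\mc H(\mf s^\vee,\vec{\mb z})$. What is missing is the bridge: one must show that, under the identifications \eqref{eq:6.2} and \eqref{eq:6.5}, the functor $\mr{Ad}(g)^*$ on $\Rep(G)^{\mf s}$ \emph{really does} correspond to pullback along $\alpha_g\circ\mr{Ad}(x_g)$. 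This is not formal, because $\mr{Ad}(g)$ need not stabilize the type $(J_\chi,\tau_\chi)$.

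The paper supplies exactly this bridge by splitting $g=g_c g_x$ with $g_c\in S_{AD,\cpt}$ and $g_x\in X_*(S_{AD})$, and treating the two pieces differently. For $g_c$, the type is preserved, so $\mr{Ad}(g_c)$ acts directly on $\mc H(G,J_\chi,\tau_\chi)$; an explicit computation on the basis elements $N_w$ (using the $W(G,S)$-equivariance of the LLC for $S$ and the cocycle description \eqref{eq:2.16}) shows this agrees with $\alpha_{g_c}$ after specializing $\mb z=q_F$. For $g_x$, the type moves to $(J_{\tilde\chi},\tau_{\tilde\chi})$ with $\tilde\chi=\chi\circ\mr{Ad}(g_x)$; the paper composes the resulting isomorphism $\mc H(G,J_{\tilde\chi},\tau_{\tilde\chi})\to\mc H(G,J_\chi,\tau_\chi)$ with the naive $N_w\mapsto N_w$ identification, obtains $N_w\mapsto N_{g_x w g_x^{-1}}$, and recognizes this as the specialization of $\mr{Ad}(x_{g_x})$ from Proposition \ref{prop:2.8}. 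The separation is possible because one first checks $\tau_{\phi_1}(g_x)=1$ (using $\phi_1(\Fr_F)=\Fr_F$ from Lemma \ref{lem:6.2}), so that $\alpha_g=\alpha_{g_c}$ and $\mr{Ad}(x_g)=\mr{Ad}(x_{g_x})$. Without these concrete type-level computations your invocation of Theorem \ref{thm:2.9} does not connect to $\mr{Ad}(g)^*$ on $\Rep(G)$, and the argument is incomplete.
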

\begin{proof}
As discussed above, for the maps $f_1$ and $f_2$ in \eqref{eq:5.1} this was shown in
\cite[\S 17]{ABPSprin}. For $f_2$ the setup in \cite{ABPSprin} differs slightly for ours.
In effect it uses the right hand column in the diagram \eqref{eq:4.20},  
not the left hand column like we do in the proof of Theorem \ref{thm:4.5}. But in the
proof of Lemma \ref{lem:4.4} we checked that the various paths in the diagram \eqref{eq:4.20}
boil down to the same pullback operation, so \cite[\S 17]{ABPSprin} produces the same
representations as we consider.

We note though that in \cite[Proposition 17.7]{ABPSprin} the different notation
\begin{equation}\label{eq:6.8}
f^* (\pi (\phi,\rho)) = \pi \big(\tilde \phi, 
\mr{ind}_{\mc S_\phi / \mc Z_\phi}^{\mc S_{\tilde \phi} / \mc Z_{\tilde \phi}} \rho \big)
\end{equation}
is used. Let us reconcile it with the statement of the theorem. By the commutativity of 
the (co)kernel of $f$ there are canonical identifications 
\[
{G^\vee}_\Sc = \tilde{G}^\vee_{\ \Sc} ,\; Z({G^\vee}_\Sc) = Z(\tilde{G}^\vee_{\ \Sc} ) ,\;
{}^L f (\mc Z_\phi) = \mc Z_{\tilde \phi} .
\]
In particular $\mr{ind}_{\mc S_\phi / \mc Z_\phi}^{\mc S_{\tilde \phi} / \mc Z_{\tilde \phi}} 
\rho$ is identified with $\mr{ind}_{\mc S_\phi}^{\mc S_{\tilde \phi}} \rho$.
Hence \eqref{eq:6.8} equals
\[
\pi \big(\tilde \phi, \mr{ind}_{\mc S_\phi}^{\mc S_{\tilde \phi}} \rho \big) =
\bigoplus\nolimits_{\tilde \rho \in \Irr (\mc S_{\tilde \phi})} \mr{Hom}_{\mc S_{\tilde \phi}}
\big( \mr{ind}_{\mc S_\phi}^{\mc S_{\tilde \phi}} \rho , \tilde \rho \big) 
\otimes \pi (\tilde \phi, \tilde \rho) .
\]
With Frobenius reciprocity we rephrase this and \eqref{eq:6.8}:
\begin{equation}\label{eq:6.9}
\bigoplus\nolimits_{\tilde \rho \in \Irr (\mc S_{\tilde \phi})} \mr{Hom}_{\mc S_\phi} 
\big( \rho, {}^L f^* (\tilde{\rho}) \big) \otimes \pi (\tilde \phi, \tilde \rho)
\end{equation}
As ${}^S f = {}^L f : \C [\mc S_\phi] \to \C [\mc S_{\tilde \phi}]$ for $f = f_1$ and 
$f = f_2$, \eqref{eq:6.9} agrees with the formula in the theorem.

Having dealt with $f_1$ and $f_2$, we focus on $f_3$. To ease the notation, we will assume
in the remainder of the proof that $f = f_3$, i.e. that $f : \tilde{\mc G} \to \mc G$
is an isomorphism.

For a root $\alpha \in \Phi (\mc G,\mc S)$, let $\alpha^\vee \in
\Phi (\mc G, \mc S)^\vee$ be the corresponding coroot and put
$\tilde \alpha = \alpha \circ f \in \Phi (\tilde{\mc G}, \tilde{\mc S})$. Then $f$ sends
the root subgroup $\mc U_{\tilde \alpha} \subset \tilde{\mc G}$ to $\mc U_\alpha$ and
\[
\chi \circ \alpha^\vee = \chi \circ f \circ f^{-1} \circ \alpha^\vee = 
\tilde \chi \circ \tilde \alpha^\vee .
\]
Suppose now that $f$ maps the pinning of $\tilde G$ to the pinning of $G$. This implies
that $f(U_{\tilde \alpha,r}) = U_{\alpha,r}$ in the notation from \cite[p. 366]{Roc} 
(which comes from \cite[\S 6]{BrTi1}). As $\mc R (f)$ preserves positivity of roots,
one sees from \cite[\S 2]{Roc} that $f(J_{\tilde \chi}) = J_\chi$ and 
\[
(J_\chi, \tau_\chi) = (f^{-1}(J_{\tilde \chi}), \tau_{\tilde \chi} \circ f) . 
\]
Knowing this, the proof in \cite[\S 17]{ABPSprin} works again.

By Proposition \ref{prop:3.4}.a, every isomorphism $f : \tilde{\mc G} \to \mc G$ is
the composition of an isomorphism preserving the pinnings and conjugation by an element
of $G_\ad$. So it remains to consider Ad$(g) \in \mr{Aut}(G)$ for $g \in G_\ad$.
As explained in \eqref{eq:2.33}, the actions of $G_\ad$ on $\Irr (G)$ and on $\Phi_e (G)$
factor through
\[
G_\ad / G \cong S_{AD} / S . 
\]
In particular we may and will assume that $g \in S_{AD} = (\mc S / Z(\mc G))(F)$, so 
that Ad$(g)$ stabilizes $\chi$ and $\mf s = [S,\chi]_G$. We write $g = g_c g_x$ as
in \eqref{eq:2.53}, with $g_c \in S_{AD,\cpt}$ and $g_x \in X_* (S)$.
By Lemma \ref{lem:6.2}, \eqref{eq:6.42} and Lemma \ref{lem:2.7}, Condition \ref{cond:2} holds
and the automorphism Ad$(x_g)$ from Proposition \ref{prop:2.8} is available.

By Lemma \ref{lem:6.2} we can choose a lift $\phi_\Sc : \mb W_F \to S^\vee \times \mb W_F$
of $\phi_1$ with $\phi_\Sc (\Fr_F) = \Fr_F$. Then \eqref{eq:2.16} shows that $c_h (\Fr_F) = 1$
for every $h \in Z_{G^\vee}(\phi (\mb W_F))$. Since $X_* (S_{AD})$ is embedded in $S_{AD}$ 
via evaluation at $\varpi_F$ and Artin reciprocity sends $\Fr_F$ to $\varpi_F$,
\[
\tau_{\phi_1}(g_x) (h) = \langle g_x, c_h \rangle = \langle x_g, c_h (\Fr_F) \rangle = 1 .
\]
As noted after \eqref{eq:2.37}, we now have $\alpha_g = \alpha_{g_c}$ and Ad$(x_g) =
\mr{Ad}(x_{g_x})$. This enables us to consider $g_c$ and $g_x$ separately.

The compactness of $g_c$ implies that $\mr{Ad}(g_c)(J_\chi) = J_\chi$. This allows us to
define an algebra automorphism
\begin{equation}\label{eq:6.10}
\begin{array}{cccc}
\mc H (\mr{Ad}(g_c)) : & \mc H (G,J_\chi,\tau_\chi) & \to & \mc H (G,J_\chi,\tau_\chi) \\
& f & \mapsto & f \circ \mr{Ad}(g_c)^{-1}
\end{array}.
\end{equation}
We note that for any $V \in \Rep (G)$:
\begin{equation}\label{eq:6.12}
e_{\tau_\chi} (\mr{Ad}(g_c)^* V) = \mc H (\mr{Ad}(g_c))^* (e_{\tau_\chi} V) \in
\mr{Mod} \big( \mc H (G,J_\chi,\tau_\chi) \big) .
\end{equation}
Let us compare $\alpha_{g_c}$ with $\mc H (\mr{Ad}(g_c))$ via \eqref{eq:6.5}.
Both are the identity on $\mc O (T_{\mf s})$. For $w \in W_{\mf s}$, represented by
$\dot w \in N_G (S)$:
\[
\mr{Ad}(g_c)(N_w) = \mr{Ad}(g_c)( e_{\tau_\chi} N_w (\dot w) \dot{w} e_{\tau_\chi})  
= e_{\tau_\chi} N_w (\dot w) g_c \dot{w} g_c^{-1} e_{\tau_\chi} ,
\]
where $g_c \dot{w} g_c^{-1} \in G$ is regarded as a multiplier of $\mc H (G)$.
As $g_c \in S_{AD,\cpt}$,  $g_c \dot w g_c^{-1} S_\cpt = \dot w S_\cpt$, and
$\mr{Ad}(g_c) (N_w) = \lambda N_w$ for some $\lambda \in \C^\times$. For $w \in
W_{\mf s}^\circ \cong W_{\mf s^\vee}^\circ$, \cite[Proposition 14.1.4]{ABPSprin} says that 
$\lambda = 1$, which agrees with $\alpha_{g_c}(N_w) = N_w$. For $w = r \in \mf R_{\mf s}
\cong \mf R_{\mf s^\vee}$ we obtain
\begin{equation}\label{eq:6.11}
\begin{aligned}
\mr{Ad}(g_c)(N_r) & = e_{\tau_\chi} N_r (\dot r) 
\dot r \dot{r}^{-1} g_c \dot{r} g_c^{-1} e_{\tau_\chi} \\
& = e_{\tau_\chi} N_r (\dot r) \dot r e_{\tau_\chi}(\dot{r}^{-1} 
g_c \dot{r} g_c^{-1})^{-1} e_{\tau_\chi} \\
& = e_{\tau_\chi} N_r (\dot r) \dot{r} e_{\tau_\chi} \chi (\dot{r}^{-1} g_c \dot r g_c^{-1}) \\
& = N_r \langle \dot{r}^{-1} g_c \dot r g_c^{-1}, \hat \chi \rangle .
\end{aligned}
\end{equation}
Let $\hat \chi_\Sc : \mb W_F \to {S^\vee}_\Sc \times \mb W_F$ be a lift of $\hat \chi$.
Let $r^\vee \in N_{G^\vee}(S^\vee)$ be a representative for $r \in \mf R_{\mf s^\vee}$.
By the $W(\mc G,\mc S)$-equivariance of the LLC for $S$ and by \eqref{eq:2.16}, the right 
hand side of \eqref{eq:6.11} can be written as
\[
N_r \langle g_c, r^\vee \hat \chi_\Sc (r^\vee)^{-1} {\hat \chi}_\Sc^{-1} \rangle =
N_r \langle g_c, c_{r^\vee} \rangle = N_r \tau_{\hat \chi}(g_c) (r^\vee) .
\]
Once again this agrees with \eqref{eq:2.54}. With \eqref{eq:6.5} we conclude that
\begin{equation}\label{eq:6.40}
\mr{ev}_{\mb z = q_F} \circ \alpha_{g_c} = \mc H (\mr{Ad}(g_c)) \circ \mr{ev}_{\mb z = q_F} . 
\end{equation}
Now \eqref{eq:6.12}, Theorem \ref{thm:2.9} and \eqref{eq:6.6} imply that
\begin{equation}\label{eq:6.17}
\mr{Ad}(g_c)^* \pi (\phi,\rho) = \pi (\phi, \rho \otimes \tau_{\phi}(g_c)^{-1}). 
\end{equation}
Finally we look at Ad$(g_x)$, which does not stabilize $\mc G (\mf o_F)$ and $J_\chi$
(unless $g_x = 1$). The automorphism Ad$(g_x)$ of $\mc H (G)$ restricts to 
an algebra isomorphism
\begin{equation}\label{eq:6.15}
\begin{array}{ccc}
\mc H (G,J_{\tilde \chi}, \tau_{\tilde \chi}) & \to & 
\mc H (G,J_{\chi},\tau_{\chi}) \\
N_w & \mapsto & N_{g_x w g_x^{-1}} 
\end{array}.
\end{equation}
To compare the source and target of \eqref{eq:6.15}, we use the naive algebra isomorphism
\begin{equation}\label{eq:6.13}
\begin{array}{ccc}
\mc H (G,J_\chi, \tau_\chi) & \to & \mc H (G,J_{\tilde \chi},\tau_{\tilde \chi}) \\
N_w & \mapsto & N_w 
\end{array}.
\end{equation}
The Morita equivalence
\begin{equation}\label{eq:6.14}
\begin{array}{ccccc}
\hspace{-3mm} \mr{Mod}\big( \mc H (G,J_{\tilde \chi},\tau_{\tilde \chi}) \big) \!\! & 
\to & \Rep (G)^{\mf s} & \to & \mr{Mod}\big( \mc H (G,J_{\chi},\tau_{\chi}) \big) \\
M & \mapsto & \! \mc H (G) e_{\tau_{\tilde \chi}} 
\underset{\mc H (G, J_{\tilde \chi}, \tau_{\tilde \chi})}{\otimes} M \! & \mapsto & 
\! e_{\tau_\chi} \mc H (G) e_{\tau_{\tilde \chi}} 
\underset{\mc H (G, J_{\tilde \chi}, \tau_{\tilde \chi})}{\otimes} M \hspace{-5mm}
\end{array}  
\end{equation}
is just given by composing representations with \eqref{eq:6.13}.
The composition of \eqref{eq:6.13} and \eqref{eq:6.15} is the automorphism
\begin{equation}\label{eq:6.16}
\begin{array}{cccc}
\beta_{g_x} : & \mc H (G,J_{\chi}, \tau_{\chi}) & \to & \mc H (G,J_{\chi},\tau_{\chi}) \\
 & N_w & \mapsto & N_{g_x w g_x^{-1}} 
\end{array}.
\end{equation}
We can regard $g_x w g_x^{-1}$ as an element of $X^* (T_{\mf s}) \rtimes W_{\mf s} \cong
X^* (T_{\mf s^\vee}) \rtimes W_{\mf s^\vee}$. Then it can be written as
\[
g_x w g_x^{-1} = x_g w x_g^{-1} = w (w^{_1} x_g w x_g^{-1}) = w (w^{-1}(x_g) - x_g) 
\]
where $w^{-1}(x_g) - x_g \in X^* (T_{\mf s^\vee})$. Proposition \ref{prop:2.8}
says that $\beta_g$ is the specialization of Ad$(x_g)$ at $\mb z = q_F$.
From this and \eqref{eq:6.14} we deduce that, for any $V \in \Rep (G)^{\mf s}$:
\[
\mr{Ad}(g_x)^* V \cong \mc H (G) e_{\tau_\chi} \underset{\mc H (G, J_{\chi}, 
\tau_\chi)}{\otimes} \beta_{g_x}^* (e_{\tau_\chi} V) \cong 
\mc H (G) e_{\tau_\chi} \underset{\mc H (G, J_{\chi}, 
\tau_\chi)}{\otimes} \mr{Ad}(x_g)^* (e_{\tau_\chi} V) .
\]
Together with Theorem \ref{thm:2.9} and \eqref{eq:6.6} this entails
\begin{equation}\label{eq:6.18}
\mr{Ad}(g_x)^* \pi (\phi,\rho) = \pi (\phi, \rho \otimes \tau_{\phi}(g_x)^{-1}). 
\end{equation}
Combining \eqref{eq:6.17} and \eqref{eq:6.18}, we get the same formula for all
$g \in S_{AD}$, and hence for all $g \in G_\ad$. For $\tilde \rho \in 
\Irr (\mc S_{\tilde \phi})$ \eqref{eq:5.2} gives ${}^S \mr{Ad}(g)^* (\tilde \rho) = 
\tau_{\phi}(g) \otimes \tilde \rho$. Thus \eqref{eq:6.18} is equivalent
to the desired expression
\[
\bigoplus_{\tilde \rho \in \Irr (\mc S_{\tilde \phi})} \mr{Hom}_{\mc S_\phi} 
\big( \rho, {}^S \mr{Ad}(g)^* (\tilde{\rho}) \big) \otimes \pi (\phi, \tilde \rho) 
= \pi (\phi, \rho \otimes \tau_{\phi}(g)^{-1}) = \mr{Ad}(g)^* \pi (\phi,\rho) .
\qedhere
\]
\end{proof}

\section{Unipotent representations} 
\label{sec:unip}

In this paragraph $\mc G$ is a connected reductive $F$-group, which splits over an
unramified extension of $F$. We start with recalling some properties of unipotent
representations of $G = \mc G (F)$.

Let $\mc B (\mc G,F)$ be the enlarged Bruhat--Tits building of $\mc G (F)$. To every
facet $\mf f$ of $\mc B (\mc G,F)$ a parahoric subgroup $P_{\mf f}$ is attached.
Let $U_{\mf f}$ be the pro-unipotent radical of $P_{\mf f}$. The quotient $P_{\mf f} /
U_{\mf f}$ has the structure of a finite reductive group 
$\overline{\mc G^\circ_{\mf f}}(k_F)$ over $k_F = \mf o_F / \varpi_F \mf o_F$. 
An irreducible representation $\sigma$ is called cuspidal unipotent if it arises 
by inflation from a cuspidal unipotent representation of $P_{\mf f} / U_{\mf f}$.

An irreducible $G$-representation $\pi$ is called unipotent if, for some facet $\mf f$,
$\pi |_{P_{\mf f}}$ contains a cuspidal unipotent representation of $P_{\mf f}$. We denote
the set of such representations by $\Irr_\unip (G)$. 
The category of unipotent $G$-representations is defined as the product of all Bernstein
blocks $\Rep (G)_{\mf s}$ for which $\Irr (G)_{\mf s}$ consists of irreducible unipotent
$G$-representations.

Let $\hat P_{\mf f}$ be the pointwise stabilizer of $\mf f$ in $G$, a compact group 
containing $P_{\mf f}$ as open subgroup. It turns out \cite[\S 1.16]{LusUni1} that any 
cuspidal unipotent representation $(\sigma, V_\sigma)$ of $P_{\mf f}$ is can be extended to a 
representation of $\hat P_{\mf f}$ on $V_\sigma$, say $\hat \sigma$. It is known from 
\cite[Theorem 4.7]{Mor2} that $(\hat P_{\mf f}, \hat \sigma)$ is a type for a single 
Bernstein block, which we call $\Rep (G)_{(\hat P_{\mf f}, \hat \sigma)}$. By construction 
the irreducible representations therein are 
\[
\Irr (G)_{(\hat P_{\mf f}, \hat \sigma)} = \{ \pi \in \Irr (G) : 
\pi |_{\hat P_{\mf f}} \text{ contains } \hat \sigma \} .
\]
It follows from \cite[1.6.b]{LusUni1} that $\Rep (G)_{(\hat P_{\mf f}, \hat \sigma)}$ and
$\Rep (G)_{(\hat P_{\mf f'}, \hat \sigma)}$ are disjoint if $\mf f$ and $\mf f'$ lie in
different $G$-orbits.

Let $e_{\hat \sigma} \in \mc H (\hat P_{\mf f})$ be the central idempotent associated to
$\hat \sigma$. Then $e_{\hat \sigma} \mc H (G) e_{\hat \sigma}$ is a subalgebra of $\mc H (G)$,
Morita equivalent to the two-sided ideal of $\mc H (G)$ corresponding to 
$\Rep (G)_{(\hat P_{\mf f}, \hat \sigma)}$. Let $(\hat \sigma^*, V_{\sigma}^*)$ be the
contragredient of $\hat \sigma$. Recall from \cite[\S 2]{BuKu} that 
\begin{multline*}
\mc H (G,\hat P_{\mf f}, \hat \sigma) = \\ 
\big\{ f \in \mc H (G) \otimes_\C \mr{End}_\C (V_{\sigma}^*) : f (p_1 g p_2) = \hat \sigma^* 
(p_1) f(g) \hat \sigma^* (p_2) \;  \forall g \in G, p_1,p_2 \in \hat P_{\mf f} \big\} .
\end{multline*}
By \cite[(2.12)]{BuKu} there is a natural isomorphism
\[
\mc H (G,\hat P_{\mf f}, \hat \sigma) \otimes \mr{End}_\C (V_{\sigma}) \cong
e_{\hat \sigma} \mc H (G) e_{\hat \sigma} .
\]
As $(\hat P_{\mf f}, \hat \sigma)$ is a type, \cite[Theorem 4.3]{BuKu} gives equivalences of
categories
\begin{equation}\label{eq:6.21}
\begin{array}{ccccc}
\Rep (G)_{(\hat P_{\mf f}, \hat \sigma)} & \leftrightarrow & \mr{Mod} \big( e_{\hat \sigma} \mc H (G) 
e_{\hat \sigma} \big) &  \leftrightarrow & \mr{Mod} \big( \mc H (G,\hat P_{\mf f}, \hat \sigma) \big) 
\hspace{-5mm} \\
V & \mapsto & e_{\hat \sigma} V & \mapsto & \mr{Hom}_{\hat P_{\mf f}}(\hat \sigma,V) \\
\!\! \mc H (G) e_{\hat \sigma} \underset{e_{\hat \sigma} \mc H (G) e_{\hat \sigma}}{\otimes}
(M \otimes V_\sigma) & \lmapsto & M \otimes V_\sigma & \lmapsto & M
\end{array}
\end{equation}
Suppose for the moment that $\mf f$ is a minimal facet of $\mc B (\mc G,F)$, or equivalently that
$P_{\mf f}$ is a maximal parahoric subgroup of $G$. Then $\hat \sigma$ can be further extended
to a representation of $\mr{Stab}_G (\mf f) = N_G (P_{\mf f})$, say $\sigma^N$. For every 
character $\psi \in \Irr (N_G (P_{\mf f}) / \hat P_{\mf f})$, $\sigma^N \otimes \psi$ is another 
such extension, and they are all of this form. It follows from \cite[\S 2]{LusUni1} that 
\[
\mr{ind}_{N_G (P_{\mf f})}^G (\sigma^N \otimes \psi) \in \Irr_{\cusp,\unip}(G) ,
\]
where the right hand side denotes the set of irreducible supercuspidal unipotent 
$G$-representations. Moreover the map
\begin{equation}\label{eq:6.19}
\Irr (N_G (P_{\mf f}) / \hat P_{\mf f}) \to \Irr (G)_{(\hat P_{\mf f}, \hat \sigma)} :
\psi \mapsto \mr{ind}_{N_G (P_{\mf f})}^G (\sigma^N \otimes \psi)
\end{equation}
is a bijection. By allowing $\psi$ to be a character of the abelian group 
$N_G (P_{\mf f}) / P_{\mf f}$, \eqref{eq:6.19} extends to a bijection
\[
\Irr (N_G (P_{\mf f}) / P_{\mf f})) \to \{ \pi \in \Irr (G) : \pi |_{P_{\mf f}}
\text{ contains } \sigma \} ,
\]
see \cite[(1.18)]{FOS}.

For a general facet $\mf f$, let $\mc L$ be a Levi $F$-subgroup of $\mc G$ such that
$P_{L,\mf f} = P_{\mf f} \cap L$ is a maximal parahoric subgroup of $L = \mc L (F)$.
Then $P_{L,\mf f} / U_{L,\mf f} \cong P_{\mf f} / U_{\mf f}$ and any cuspidal unipotent
representation $\sigma$ of $P_{L,\mf f}$ has a natural extension to $P_{\mf f}$. 
Similarly every extension $\hat \sigma \in \Irr (\hat P_{L,\mf f})$ of $\sigma$ extends
naturally to an irreducible representation of $\hat P_{\mf f}$ on $V_\sigma$, which we 
continue to denote by $\hat \sigma$. According to \cite[Corollary 3.10]{Mor2}
\begin{equation}\label{eq:6.20}
(\hat P_{\mf f}, \hat \sigma) \text{ is a cover of } (\hat P_{L,\mf f}, \hat \sigma) .
\end{equation}
In particular $\Rep (G)_{(\hat P_{\mf f}, \hat \sigma)} = \Rep (G)_{\mf s}$, where
$\mf s = [L, \mr{ind}_{N_L (P_{L,\mf f})}^L (\sigma^N)]_G$.  
By \cite[Theorem 3.3.b]{SolLLCunip}
\begin{equation}\label{eq:6.22}
\mc H (L,\hat P_{L,\mf f},\hat \sigma) \cong \C [X_{\mf f}]
\end{equation}
for a lattice $X_{\mf f}$ as in \cite[Proposition 3.1]{SolLLCunip}. Furthermore 
$\C [X_{\mf f}]$ embeds canonically in $\mc H (G,\hat P_{\mf f}, \hat \sigma)$ as the 
commutative subalgebra from the Bernstein presentation. By \cite[Lemma 3.4.b]{SolLLCunip}
the group $W_{\mf s}$ is naturally isomorphic to the finite Weyl group of the root system
underlying $\mc H (G,\hat P_{\mf f}, \hat \sigma)$. As in \cite[Proposition 3.3.c and 
Lemma 3.4]{SolLLCunip}, we pick a $W_{\mf s}$-equivariant homeomorphism
\begin{equation}\label{eq:6.26}
\Irr (L)_{(\hat P_{L,\mf f}, \hat \sigma)} \to \Irr (X_{\mf f}) . 
\end{equation}
A local Langlands correspondence for supercuspidal unipotent representations of $G$
(and of its Levi subgroups) was exhibited in \cite{FOS}. It provides a bijection between
inertial equivalence classes for $\Irr_\unip (G)$ and inertial equivalence classes for
$\Phi_{\nr,e}(G)$. If we write it as $\mf s \leftrightarrow \mf s^\vee$, then 
$W_{\mf s} \cong W_{\mf s^\vee}$ \cite[Proposition 4.2]{SolLLCunip}. Suppose that 
$\Rep (G)_{\mf s} = \Rep (G)_{(\hat P_{\mf f},\hat \sigma)}$. By \cite[Theorem 4.4]{SolLLCunip} 
the LLC on the cuspidal level canonically determines an algebra isomorphism
\begin{equation}\label{eq:6.23}
\mc H (\mf s^\vee, \vec{\mb z}) / (\vec{\mb z} - \vec{z}) \to \mc H (G,\hat P_{\mf f},\hat \sigma) ,
\end{equation}
for a unique $\vec z \in \R_{>1}^d$. In particular $\mf R_{\mf s^\vee} = \mf R_{\mf s} = 1$. 

Together with \eqref{eq:6.21} and Theorem \ref{thm:1.1}, \eqref{eq:6.23} provides bijections
\begin{equation}\label{eq:6.24}
\begin{array}{ccccccc}
\Irr (G)_{\mf s} & \leftrightarrow & \Irr \big( \mc H (G,\hat P_{\mf f},\hat \sigma) \big) &
\leftrightarrow & \Irr_{\vec z} \big( \mc H (\mf s^\vee, \vec{\mb z}) \big) &
\leftrightarrow & \Phi_e (G)^{\mf s^\vee} \\
\pi (\phi,\rho) & \mapsto & \mr{Hom}_{\hat P_{\mf f}}(\hat \sigma, \pi (\phi,\rho)) &
\leftrightarrow & \overline{M}(\phi,\rho,\vec z) & \lmapsto & (\phi,\rho)
\end{array}
\end{equation}
The union of the bijections \eqref{eq:6.24}, over all eligible $\mf s$ and $\mf s^\vee$,
is a local Langlands correspondence for unipotent representations:
\[
\Irr_\unip (G) \longleftrightarrow \Phi_{\nr,e}(G) .
\]
In \cite[\S 5]{SolLLCunip} it was shown to possess many desirable properties, concerning
supercuspidality, discrete series, temperedness, central characters, parabolic induction
and the action of $H^1 (\mb W_K, Z(G^\vee))$. Amongst Borel's desiderata \cite[\S 10.3]{Bor}
only the functoriality with respect to homomorphisms of reductive groups was missing in
\cite{SolLLCunip}. We fix that here.

\subsection{Central quotient maps} \

Let us consider the case where $f$ is a quotient map 
\[
q : \tilde{\mc G} \to \mc G = \tilde{\mc G} / \mc N ,
\]
such that $q_\der : \tilde{\mc G}_\der \to \mc G_\der$ is a central isogeny. 

\begin{lem}\label{lem:6.4}
For $(\phi,\rho) \in \Phi_{\nr,\cusp} (G)$:
\[
q^* \pi (\phi,\rho) = 
\bigoplus_{\tilde \rho \in \Irr (\mc S_{\tilde \phi}) :
\tilde \rho |_{\mc S_\phi} \: \mr{contains} \: \rho} \pi (\tilde \phi, \tilde \rho) =
\bigoplus_{\tilde \rho \in \Irr (\mc S_{\tilde \phi})} \mr{Hom}_{\mc S_\phi}(\rho,
\tilde \rho) \otimes \pi (\tilde \phi, \tilde \rho) .
\]
\end{lem}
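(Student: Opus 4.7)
The plan is to realize $\pi(\phi,\rho)$ explicitly as a compactly induced representation from a normalizer of a maximal parahoric, and then to push this realization through $q$. By the Morris--FOS parameterization and \eqref{eq:6.19}, there exists a minimal facet $\mathfrak{f}$ of $\mathcal{B}(\mathcal{G},F)$, a cuspidal unipotent representation $\sigma$ of $P_{\mathfrak{f}}/U_{\mathfrak{f}}$ with some extension $\sigma^N$ to $N_G(P_{\mathfrak{f}})$, and a character $\psi_\rho \in \Irr(N_G(P_{\mathfrak{f}})/\hat P_{\mathfrak{f}})$ determined by $\rho$ under the LLC of \cite{FOS}, such that
\[
\pi(\phi,\rho) = \mathrm{ind}_{N_G(P_{\mathfrak{f}})}^G (\sigma^N \otimes \psi_\rho).
\]
By Lemma \ref{lem:5.5}, the restriction $q_\der : \tilde{\mathcal{G}}_\der \to \mathcal{G}_\der$ is a central isogeny, so $q$ induces a canonical equivariant identification of the reduced Bruhat--Tits buildings of $\tilde G$ and $G$. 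Thus $\mathfrak{f}$ corresponds to a facet $\tilde{\mathfrak{f}}$ of $\mathcal{B}(\tilde{\mathcal{G}},F)$, and $q$ yields an isomorphism $\tilde P_{\tilde{\mathfrak{f}}}/\tilde U_{\tilde{\mathfrak{f}}} \cong P_{\mathfrak{f}}/U_{\mathfrak{f}}$ of finite reductive groups. Consequently $\sigma$ pulls back to a cuspidal unipotent representation $\tilde\sigma$ of $\tilde P_{\tilde{\mathfrak{f}}}$ with an extension $\tilde\sigma^N$ to $N_{\tilde G}(\tilde P_{\tilde{\mathfrak{f}}})$.

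Next I would compute $q^*\pi(\phi,\rho)$ via a Mackey/Clifford argument. Since $q(\tilde G)$ is a closed, normal, cofinite-abelian subgroup of $G$ (with cokernel controlled by $H^1(F,\mathcal{N})$), the restriction $\pi(\phi,\rho)|_{q(\tilde G)}$ is completely reducible, and the preimage of $N_G(P_{\mathfrak{f}}) \cap q(\tilde G)$ under $q$ coincides with $N_{\tilde G}(\tilde P_{\tilde{\mathfrak{f}}}) \cdot \ker q$. Induction-in-stages combined with Clifford theory for the inclusion of $q(\tilde G)$ in $G$ should give
\[
q^* \pi(\phi,\rho) = \bigoplus_{\tilde\psi} \mathrm{ind}_{N_{\tilde G}(\tilde P_{\tilde{\mathfrak{f}}})}^{\tilde G}\bigl( \tilde\sigma^N \otimes \tilde\psi \bigr),
\]
where $\tilde\psi$ ranges over the characters of $N_{\tilde G}(\tilde P_{\tilde{\mathfrak{f}}})/\hat{\tilde P}_{\tilde{\mathfrak{f}}}$ whose restriction along the natural map
\[
N_{\tilde G}(\tilde P_{\tilde{\mathfrak{f}}})/\hat{\tilde P}_{\tilde{\mathfrak{f}}} \longrightarrow N_G(P_{\mathfrak{f}})/\hat P_{\mathfrak{f}}
\]
equals $\psi_\rho$. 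By the $\tilde G$-side of \eqref{eq:6.19} and the FOS-LLC, each summand is of the form $\pi(\tilde\phi, \tilde\rho_{\tilde\psi})$ for a uniquely determined enhancement $\tilde\rho_{\tilde\psi} \in \Irr(\mathcal{S}_{\tilde\phi})$.

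It remains to match the indexing set with $\{\tilde\rho \in \Irr(\mathcal{S}_{\tilde\phi}) : \tilde\rho|_{\mathcal{S}_\phi} \text{ contains } \rho\}$. The FOS construction identifies (the $G$-relevant part of) $\Irr(N_G(P_{\mathfrak{f}})/\hat P_{\mathfrak{f}})$ with the $G$-relevant enhancements of $\phi$, and analogously for $\tilde G$. By the naturality of the Kottwitz isomorphism under the central isogeny $q_\der$, these identifications should intertwine the canonical map $N_{\tilde G}(\tilde P_{\tilde{\mathfrak{f}}})/\hat{\tilde P}_{\tilde{\mathfrak{f}}} \to N_G(P_{\mathfrak{f}})/\hat P_{\mathfrak{f}}$ with the embedding $\mathcal{S}_\phi \hookrightarrow \mathcal{S}_{\tilde\phi}$ coming from ${}^L q$ (Lemma \ref{lem:4.7}). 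Then ``$\tilde\psi|_{\cdots} = \psi_\rho$'' translates to ``$\tilde\rho_{\tilde\psi}|_{\mathcal{S}_\phi}$ contains $\rho$'', giving the first equality; the second is Frobenius reciprocity applied to $\mathcal{S}_\phi \hookrightarrow \mathcal{S}_{\tilde\phi}$. The main obstacle is precisely this compatibility in Step 4: one must track the central-character normalizations $\zeta_{\mathcal{G}}^+, \zeta_{\tilde{\mathcal{G}}}^+$ and the various cocycle/character twists implicit in the FOS parameterization, and verify that they match under $q_\der$ and the identification $Z({G^\vee}_{\mathrm{sc}}) = Z(\tilde{G}^\vee_{\;\mathrm{sc}})$ from \eqref{eq:4.1}.
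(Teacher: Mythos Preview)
Your overall strategy---realize $\pi(\phi,\rho)$ as compact induction from $N_G(P_{\mf f})$ and then restrict via Mackey/Clifford---is the paper's strategy, but the displayed decomposition you arrive at is not the right one. The Mackey formula for $q^*\,\mr{ind}_{N_G(P_{\mf f})}^G(\sigma^N\otimes\psi_\rho)$ is indexed by double cosets $G/\bigl(N_G(P_{\mf f})\,q(\tilde G)\bigr)$, and the summand attached to a representative $g$ is
\[
\mr{ind}^{\tilde G}_{\mr{Ad}(g)^{-1}N_{\tilde G}(\tilde P_{\mf f})}\bigl(q^*(\sigma^N\otimes\psi_\rho)\circ\mr{Ad}(g)\bigr),
\]
i.e.\ induction from the normalizer of the \emph{translated} facet $g^{-1}\mf f$. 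These translated facets lie in pairwise distinct $\tilde G$-orbits, and by \cite[1.6.b]{LusUni1} the resulting summands live in pairwise distinct Bernstein components of $\tilde G$. Your formula instead has every summand induced from $N_{\tilde G}(\tilde P_{\tilde{\mf f}})$ for the \emph{fixed} facet $\tilde{\mf f}$, twisted by various $\tilde\psi$; all such representations share the type $(\hat{\tilde P}_{\tilde{\mf f}},\widehat{\tilde\sigma})$ and hence sit in a single Bernstein block. Worse, once one is in the semisimple situation the map $N_{\tilde G}(\tilde P_{\tilde{\mf f}})/\hat{\tilde P}_{\tilde{\mf f}}\to N_G(P_{\mf f})/\hat P_{\mf f}$ is injective, so exactly one $\tilde\psi$ pulls back to $\psi_\rho$ and your sum collapses to a single term---whereas $q^*\pi(\phi,\rho)$ is typically reducible.

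Two further ingredients from the paper are missing. First, a reduction step: one twists by a weakly unramified character to make $\pi$ trivial on the split centre, then passes to $\mc G_\der$ and $\tilde{\mc G}_\der$, so that $q$ becomes a central isogeny of semisimple groups and the building identification is literal. Second, and more substantially, the matching of the double-coset index set with $\{\tilde\rho\in\Irr(\mc S_{\tilde\phi}):\tilde\rho|_{\mc S_\phi}\supset\rho\}$ is not formal; the paper imports it from \cite[Lemma~13.5]{FOS}, which treats the passage from a semisimple group to its adjoint quotient, and then bootstraps to general $q$ by comparing both $\tilde{\mc G}\to\tilde{\mc G}_\ad$ and $\mc G\to\mc G_\ad=\tilde{\mc G}_\ad$. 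The multiplicity-one assertion (the second equality in the lemma) is likewise an input from \cite[(13.13)]{FOS}, not a consequence of abstract Clifford theory.
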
 
\begin{proof}
Write $\pi = \pi (\phi,\rho) \in \Irr_{\cusp,\unip}(G)$. For any weakly unramified
character $\chi \in X_\Wr (G)$, $q^* (\chi)$ is a weakly unramified character of $G$, and
\[
q^* (\pi \otimes \chi) = q^* (\pi) \otimes q^* (\chi) .
\] 
If $z$ is the image of $\chi$ in $Z(G^\vee)_\Fr$, $q^\vee (z)$ is the image of
$q^* (\chi)$ in $Z({\tilde G}^\vee)_\Fr$. By construction (cf. the proof of Theorem 2
in \cite[p. 37]{FOS}) $\pi (z \phi,\rho) = \pi (\phi,\rho) \otimes \chi$. For similar
reasons
\[
q^* (\pi \otimes \chi) = q^* (\pi (z \phi, \rho)) = q^* (\pi (\phi,\rho)) \otimes q^* (\chi)
\]
is obtained from $q^* (\pi)$ by applying $q^\vee (z)$ to the enhanced L-parameters of
the constituents of $q^* (\pi)$. For a suitable choice of $\chi$, $\pi \otimes \chi$ is
trivial on the split part $Z(\mc G)_s (F)$ of the centre of $G$ \cite[p. 37]{FOS}. 
Hence it suffices to prove the lemma in the case that $\pi \big|_{Z(\mc G)_s (F)} = 1$.

This allows us to replace $\mc G$ by $\mc G / Z(\mc G)_s$ similarly for $\tilde{\mc G}$.
In other words, we may assume that $Z(\mc G)^\circ$ and $Z(\tilde{\mc G})^\circ$ are
$F$-anisotropic. By \cite[Lemmas 15.3--15.5]{FOS} all relevant objects for $G$ can be 
identified with those for $G_\der$ (and similarly for $\tilde G$). Then we may replace
$\mc G$ by $\mc G_\der$ and $\tilde{\mc G}$ by $\tilde{\mc G}_\der$. So from now on we may 
assume that $\mc G$ and $\tilde{\mc G}$ are semisimple, and that the quotient map
$\tilde{\mc G} \to \mc G = \tilde{\mc G} / \mc N$ is a central isogeny. In particular
we can identify the Bruhat--Tits buildings $\mc B (\tilde{\mc G},F)$ and $\mc B (\mc G,F)$

With \eqref{eq:6.19} we can write $\pi = \mr{ind}_{N_G (P_{\mf f})}^G (\sigma^N)$. Then 
\[
q^{-1}(N_G (P_{\mf f})) = \mr{Stab}_{\tilde G}(\tilde{P_{\mf f}}) =
N_{\tilde G}(\tilde P_{\mf f})
\]
and $q^* ( \sigma^N ) \vphantom{\big|}$ is an extension of a cuspidal unipotent representation 
of the parahoric subgroup $\tilde{P_{\mf f}}$ to $N_{\tilde G}(\tilde P_{\mf f})$. From the
classification \eqref{eq:6.19} we see that 
\begin{equation}\label{eq:6.25}
q^* (\pi) = q^* \big( \mr{ind}_{N_G (P_{\mf f})}^G (\sigma^N) \big) =
\bigoplus_{g \in G / N_G (P_{\mf f}) q(\tilde G)} \!\!\! \mr{ind}^{\tilde G}_{\mr{Ad}(g)^{-1} 
N_{\tilde G}(\tilde P_{\mf f})} \big( q^* (\sigma^N) \circ \mr{Ad}(g) \big) . \vspace{-3mm}
\end{equation}
The sum is indexed by $\tilde G$-orbits of facets of $\mc B (\mc G,F)$ in the $G$-orbit of 
$\mf f$, and by \cite[1.6.b]{LusUni1} all summands live in different Bernstein components.

Cuspidal unipotent representations of finite reductive are essentially independent
of the isogeny type \cite[\S 3]{Lus-Che}, so $q^* (\sigma)$ is an irreducible 
$\tilde P_{\mf f}$-representation. It follows that $q^* (\hat \sigma)$ and 
$q^* (\sigma^N)$ are also irreducible representations on $V_\sigma$ (of $\hat{\widetilde P_{\mf f}}$
and $N_{\tilde G}(\tilde P_{\mf f})$ respectively). In particular every term on the right
hand side in \eqref{eq:6.25} is irreducible and supercuspidal.

When $\mc G = \tilde{\mc G}_\ad$, \cite[Lemma 13.5]{FOS} says that the set of indices
in \eqref{eq:6.25} is equivariantly in bijection with the set of extensions of $\rho$
to a $\tilde \rho \in \Irr (\mc S_{\tilde \phi})$. That proves the lemma in the case
$\mc G = \tilde{\mc G}_\ad$.

For other $\mc G$, we can consider the sequence
\[
\tilde{\mc G} \to \mc G \to \tilde{\mc G}_\ad = \mc G_\ad .
\]
Comparing \cite[Lemma 13.5]{FOS} for $\tilde{\mc G},\tilde{\mc G}_\ad$ and for
$\mc G, \mc G_\ad$, we see that it also holds for $\tilde{\mc G}, \mc G$. That
establishes the first equality of the lemma.

Furthermore \cite[(13.13)]{FOS} for $\tilde{\mc G},\tilde{\mc G}_\ad$ shows that
$\mr{Hom}_{\mc S_\phi}(\rho, \tilde \rho)$ has dimension $\leq 1$. The group
$\mc S_\phi$ for $\mc G_\ad (F)$ is contained in $\mc S_\phi$ for $\mc G (F)$, so
$\dim_\C \mr{Hom}_{\mc S_\phi}(\rho, \tilde \rho) \leq 1$ in our generality.
\end{proof}

Recall the notations from Section \ref{sec:4}. Let $(\phi_L, \rho_L) \in
\Phi_{\nr,\cusp}(L)$ and put $\tilde{\mc L} = q^{-1}(\mc L)$, a Levi $F$-subgroup 
of $\tilde{\mc G}$. When we write $\mr{ind}_{\mc S_{\phi_L}}^{\mc S_{\tilde \phi_L}}
(\rho_L) = \bigoplus_i \rho_i^{m_i}$, Lemma \ref{lem:6.4} says that $m_i = 1$ for
all $i$, and that
\[
q^* \pi (\phi_L, \rho_L) = \bigoplus\nolimits_i \pi (\tilde{\phi}, \rho_i) 
\in \Rep (\tilde L) .
\]
Fix a unipotent type $(\hat P_{L,\mf f}, \hat \sigma)$ such that 
$\pi (\phi_L,\rho_L) \in \Irr (L)_{(\hat P_{L,\mf f}, \hat \sigma)}$. For every
$\rho_i$ as above we choose a $l_i \in L$ such that, in \eqref{eq:6.25} for $L$,
\[
\pi (\tilde \phi, \rho_i) = \mr{ind}^{\tilde L}_{\mr{Ad}(l_i)^{-1} 
N_{\tilde L}(P_{\tilde L,\mf f})} \big( q^* (\sigma^N) \circ \mr{Ad}(l_i) \big) .
\]
We may take $l_1 = 1$, so that $\pi (\tilde \phi, \rho_1) = \mr{ind}^{\tilde L}_{ 
N_{\tilde L}(P_{\tilde L,\mf f})} ( q^* (\sigma^N))$.
Writing $\mf f_i = l_i^{-1} \mf f$, $\hat \sigma_i = q^* (\hat \sigma) \circ
\mr{Ad}(l_i)$ and $\sigma_i^N = q^* (\sigma^N) \circ \mr{Ad}(l_i)$, we obtain
\[
q^* \pi (\phi_L, \rho_L) = \bigoplus\nolimits_i \mr{ind}^{\tilde L}_{
N_{\tilde L}(P_{\tilde L,\mf f_i})} \big( \sigma_i^N \big) . 
\]
By Lemma \ref{lem:6.4} the pullback of $\Rep (G)_{(\hat P_{L,\mf f}, \hat \sigma)}$
along $q$ contains $\Irr (\tilde G)_{(\hat P_{\mf f'}, \hat \sigma')}$ precisely when
$(\hat P_{\mf f'}, \hat{\sigma'})$ is $\tilde G$-conjugate to one of the 
$(\hat P_{\mf f_i}, \hat \sigma_i)$. In \cite[(60)]{SolLLCunip} it was checked that $\mc H 
(G,\hat P_{\mf f}, \hat \sigma)$ and $\mc H (\tilde G, \hat P_{\mf f_i}, \hat \sigma_i)$
have the same Bernstein presentation, except for the lattices $X_{\mf f}$ and $X_{\mf f_i}$.
From \eqref{eq:6.26} and $q^*$ we get a morphism of algebraic varieties
\begin{equation}\label{eq:6.27}
\begin{array}{ccc}
\Irr (L)_{(\hat P_{L,\mf f},\hat \sigma)} \cong \Irr (X_\mf f) & \to & 
\Irr (\tilde L)_{(\hat P_{\tilde L,\mf f_i},\hat \sigma_i)} \cong \Irr (X_{\mf f_i}) \\
\pi (\phi_L,\rho_L) \otimes \chi & \mapsto & \pi (\tilde \phi_L, \rho_i) \otimes q^* (\chi)
\end{array} \qquad \chi \in X_\nr (L) .
\end{equation}
Dualizing \eqref{eq:6.27}, we see that $q$ descends to $X_{\mf f_i} \to X_{\mf f}$. 
That induces an algebra homomorphism
\begin{equation}\label{eq:6.30}
\mc H (q, \mf f_i, \hat \sigma_i) : \mc H (\tilde G, \hat P_{\mf f_i}, \hat \sigma_i) \to
\mc H (G,\hat P_{\mf f}, \hat \sigma) ,
\end{equation}
which is the identity on the span of the $N_w \; (w \in W_{\mf s})$. 

\begin{prop}\label{prop:6.5}
The pullback along $q$, the homomorphisms \eqref{eq:6.30} and the
equivalences of categories \eqref{eq:6.21} form a commutative diagram
\[
\begin{array}{ccc}
\Rep (G)_{\mf s} & \longrightarrow & \Mod \big( \mc H (G,\hat P_{\mf f}, \hat \sigma) \big) \\
\downarrow q^* & & \downarrow \bigoplus_i \mc H (q, \mf f_i, \hat \sigma_i)^* \\
\Rep (\tilde G)_{q^* (\mf s)} & \longrightarrow & 
\bigoplus_i \Mod \big( \mc H (\tilde G, \hat P_{\mf f_i}, \hat \sigma_i) \big) 
\end{array}. 
\]
The subcategories $\Rep (\tilde G)_{(\hat P_{\mf f_i}, \hat \sigma_i)}$ of $\Rep (\tilde G)$ 
associated here to the various $i$ are all different. 
\end{prop}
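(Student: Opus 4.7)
The plan is to reduce the commutativity of the diagram to two compatible pieces: the behaviour on the cuspidal (commutative) part and the behaviour on the finite Weyl group part of the Bernstein presentation. Both halves are already packaged in the setup preceding the statement.

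First I would unwind what each path of the diagram does to a representation $V \in \Rep(G)_{\mf s}$. Along the top and right edge, $V$ is sent to $\mr{Hom}_{\hat P_{\mf f}}(\hat\sigma, V)$, and then each $\mc H(q,\mf f_i,\hat\sigma_i)$ pulls this $\mc H(G,\hat P_{\mf f},\hat\sigma)$-module back to an $\mc H(\tilde G, \hat P_{\mf f_i}, \hat\sigma_i)$-module. Along the left and bottom edge, $V$ is sent to $q^* V$, and then we extract $\bigoplus_i \mr{Hom}_{\hat P_{\mf f_i}}(\hat\sigma_i, q^* V)$. Using $\hat\sigma_i = q^*(\hat\sigma)\circ\mr{Ad}(l_i)$ and $\hat P_{\mf f_i} = \mr{Ad}(l_i)^{-1} q^{-1}(\hat P_{\mf f})$, there are obvious natural identifications
\[
\mr{Hom}_{\hat P_{\mf f_i}}(\hat\sigma_i, q^* V) \cong \mr{Hom}_{\hat P_{\mf f}}(\hat\sigma, V)
\]
as vector spaces. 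The content of the proposition is that under this identification, the $\mc H(\tilde G, \hat P_{\mf f_i}, \hat\sigma_i)$-action on the left side coincides with the pullback action under \eqref{eq:6.30}.

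Second, I would verify this compatibility on the two halves of the Bernstein presentation separately. On the commutative subalgebras $\C[X_{\mf f_i}] \hookrightarrow \mc H(\tilde G, \hat P_{\mf f_i}, \hat\sigma_i)$ and $\C[X_{\mf f}] \hookrightarrow \mc H(G, \hat P_{\mf f}, \hat\sigma)$, the compatibility is exactly the definition: \eqref{eq:6.30} is built to be the $\C$-linear extension of the lattice map $X_{\mf f_i} \to X_{\mf f}$ dual to the morphism \eqref{eq:6.27}, and this morphism was defined to be what $q^*$ does to unramified twists of the cuspidal support, which is compatible with how $X_{\mf f}$ acts on $e_{\hat\sigma} V$ via \eqref{eq:6.22} and \eqref{eq:6.21}. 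On the finite Weyl group part spanned by the $N_w$ with $w \in W_{\mf s} \cong W_{q^*(\mf s)}$, \eqref{eq:6.30} is the identity, and the corresponding intertwiners on $\mr{Hom}_{\hat P_{\mf f}}(\hat\sigma,V)$ only depend on the root-datum combinatorics, which is preserved by $q$ (since $q_\der$ is a central isogeny, $\tilde{\mc G}$ and $\mc G$ have canonically identified root systems and affine Weyl groups at the facets $\mf f, \mf f_i$). The hard part will be bookkeeping the conjugations by the $l_i \in L$, but since each $l_i$ only permutes the factors indexed by $i$, and the finite Weyl group is shared, this reduces to the cuspidal-level comparison already supplied by Lemma \ref{lem:6.4} together with \eqref{eq:6.25}.

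Finally, for the last assertion that the $\Rep(\tilde G)_{(\hat P_{\mf f_i},\hat\sigma_i)}$ are pairwise distinct, I would invoke the disjointness property from \cite[1.6.b]{LusUni1} recalled just after the definition of these Bernstein blocks: different $\tilde G$-orbits of facets yield disjoint Bernstein components. By construction of the index set in \eqref{eq:6.25}, the facets $\mf f_i$ are representatives of the distinct $\tilde L$-orbits of facets in the single $L$-orbit of $\mf f$; a standard argument using that $P_{\mf f_i}$ is maximal parahoric in $L$ and that $L$-conjugacy forces $\tilde L$-conjugacy up to the indexing set $L / N_L(P_{\mf f})\, q(\tilde L)$ shows these facets remain in distinct $\tilde G$-orbits, so the resulting Bernstein components of $\Rep(\tilde G)$ are pairwise distinct.
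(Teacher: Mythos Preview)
Your proposal has a genuine gap at the step you call an ``obvious natural identification''
\[
\mr{Hom}_{\hat P_{\mf f_i}}(\hat\sigma_i, q^* V) \;\cong\; \mr{Hom}_{\hat P_{\mf f}}(\hat\sigma, V).
\]
Unwinding the left side for $i=1$ gives $\mr{Hom}_{q(\hat P_{\mf f_1})}(\hat\sigma, V)$, and because $q:\tilde G\to G$ need not be surjective on $F$-points, the inclusion $q(\hat P_{\mf f_1})\subset \hat P_{\mf f}$ can be strict. Consequently what you have a priori is only an inclusion
\[
\mr{Hom}_{\hat P_{\mf f}}(\hat\sigma, V)\;\hookrightarrow\;\mr{Hom}_{q(\hat P_{\mf f_1})}(\hat\sigma, V),
\]
and proving it is an isomorphism is precisely the heart of the proposition. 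Once you assume this identification, your check on the two halves of the Bernstein presentation is fine, but that check does not establish the identification itself.

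The paper handles this by passing to the algebra $\mc H^\vee(G)$ of essentially left-compact distributions, which \emph{is} functorial in $G$ (unlike $\mc H(G)$), to show that the dotted inclusion is compatible with $\mc H(q,\mf f_1,\hat\sigma_1)^*$ as in \eqref{eq:6.33}. It then runs a genericity argument: for $\chi$ in a Zariski-open subset of $X_\nr(L)$, the module $I_P^G(\pi(\phi_L,\rho_L)\otimes\chi)$ maps to irreducible modules on both sides, forcing the inclusion to be an equality for such $V$; since the $\hat P_{\mf f_1}$-isotypic structure of $I_P^G(\pi_L\otimes\chi)$ is independent of $\chi$, the equality extends to all $V\in\Rep(G)_{\mf s}$. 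Your proposal skips this entire mechanism.

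For the last assertion, your appeal to disjoint facet orbits via \cite[1.6.b]{LusUni1} is in the right spirit, but your justification that the $\mf f_i$ lie in distinct $\tilde G$-orbits (as opposed to distinct $\tilde L$-orbits, which is how they were chosen) is not complete. The paper instead deduces the distinctness directly from the diagram analysis: once the dotted arrow is known to be an isomorphism, the projection $\mr{pr}_{\mf s_1}\circ q^*$ annihilates the summands indexed by $i\neq 1$, which forces $\Rep(\tilde G)_{(\hat P_{\mf f_1},\hat\sigma_1)}$ to differ from the others.
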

\begin{proof}
Notice that 
\[
\mc H (q, \mf f_i, \hat \sigma_i) = \mc H (q, \mf f_1, \hat \sigma_1) \circ \mr{Ad}(l_i) . 
\]
As $\mr{Ad}(l_i) \circ q = q \circ \mr{Ad}(l_i)$, it suffices to consider the subdiagram
involving only $\mc H (q,f_1, \hat \sigma_1)$ and (instead of $q^*$)
the composition of $q^*$ and the projection of Rep$(\tilde G)$ on 
$\Rep (\tilde G)_{(\hat P_{\mf f_1},\hat \sigma_1)}$. For $V \in \Rep (G)_{\mf s}$, the
restricted version of the diagram works out to
\begin{equation}\label{eq:6.32}
\xymatrix{
V \ar@{|->}[r] \ar@{|->}[d] & \mr{Hom}_{\hat P_{\mf f}}(\hat \sigma, V) \ar@{|..>}[d] \\
\mr{pr}_{\mf s_1} \circ q^* (V) \ar@{|->}[r] & \mr{Hom}_{\hat P_{\mf f_1}}(\hat \sigma_1, q^* (V))
= \mr{Hom}_{q (\hat P_{\mf f_1})}(\hat \sigma, V) 
}.
\end{equation}
Here the dotted arrow is a natural inclusion (coming from $q(\hat P_{\mf f_1}) \subset 
\hat P_{\mf f}$). We have to check that this dotted arrow is an isomorphism and a 
specialization of the right hand column in the diagram of the proposition.

Here we run into the problem that in general (e.g. when $\dim \tilde{\mc G} < \dim
\mc G$) there is no good map $\mc H (\tilde G) \to \mc H (G)$. To overcome this
we use the algebra $\mc H^\vee (G)$ of essentially left-compact distributions on
$G$, which was introduced in \cite{BeDe}. It naturally contains both $\mc H (G)$
and a copy of $G$. From \cite[\S 1.2]{BeDe} it is known that Mod$(\mc H^\vee (G))$
is naturally equivalent with Rep$(G)$. Moreover $\mc H (G)$ is a two-sided ideal
in $\mc H^\vee (G)$, so 
\begin{equation}\label{eq:6.29}
\mc H^\vee (G) e_{\hat \sigma} = \mc H (G) e_{\hat \sigma}.
\end{equation}
An advantage of $\mc H^\vee (G)$ over $\mc H (G)$ is that it is functorial in $G$,
see \cite[Theorem 3.1]{Moy}. In particular $q$ induces an algebra homomorphism
\[
\mc H^\vee (q) : \mc H^\vee (\tilde G) \to \mc H^\vee (G) .  
\]
The algebra homomorphism $\mc H^\vee (q)$ maps 
\begin{equation}\label{eq:6.31}
\mc H (\tilde G,\hat P_{\mf f_1}, \hat \sigma_1) \otimes \mr{End}_\C (V_{\hat \sigma}) \cong
e_{\hat \sigma_1} \mc H (\tilde G) e_{\hat \sigma_1}  
\end{equation}
to a subalgebra of $\mc H^\vee (G)$ supported on 
\[
q ( \hat P_{\mf f_1} W_{\mf s} X_{\mf f_1} \hat P_{\mf f_1} )
= q ( \hat P_{\mf f_1}) W_{\mf s} q (X_{\mf f_1}) q ( \hat P_{\mf f_1}) .
\]
From the support and the multiplication relations we see that the image of \eqref{eq:6.31}
under $\mc H^\vee (q)$ is isomorphic to the subalgebra of 
$\mc H (G,\hat P_{\mf f}, \hat \sigma) \otimes \mr{End}_\C (V_{\hat \sigma})$ obtained
by replacing $X_{\mf f}$ by the sublattice $q(X_{\mf f_1})$. The canonical injection
\[
\mc H^\vee (q) \big( e_{\hat \sigma_1} \mc H (\tilde G) e_{\hat \sigma_1} \big) \to
e_{\hat \sigma} \mc H^\vee (G) e_{\hat \sigma} 
\]
is given by $h \mapsto e_{\hat \sigma} h e_{\hat \sigma}$. For an element $T$ of 
\eqref{eq:6.31}, the comparison of the above with \eqref{eq:6.30} results in
\begin{equation}\label{eq:6.33}
(\mc H (q,\mf f_1, \hat \sigma_1) \otimes \mr{id}) (T) =
e_{\hat \sigma} \mc H^\vee (q)(T) e_{\hat \sigma} .
\end{equation}
This shows that the dotted arrow in \eqref{eq:6.32} corresponds to 
$\mc H (q,\mf f_1,\hat \sigma_1)^*$.

Let $\mc P$ be a parabolic $F$-subgroup of $\mc G$ with Levi factor $\mc L$. 
Its inverse image $\tilde{\mc P}$ in $\tilde{\mc G}$ is a parabolic $F$-subgroup 
with Levi factor $\tilde{\mc L}$. As $\mc N \subset \mc P$, $q$ induces an isomorphism
of $F$-varieties $\tilde G / \tilde P \to G / P$. Consequently the (normalized) parabolic
induction functors for $G$ and $\tilde G$ are related by
\[
q^* (I_P^G (\pi_L)) = I_{\tilde P}^{\tilde G}(q^* (\pi_L)) \qquad \pi_L \in \Rep (L) . 
\]
For a fixed $\pi_L = \pi (\phi_L,\rho_L) \in \Irr_{\cusp,\unip}(L)$ and $\chi \in X_\nr (L)$,
Lemma \ref{lem:6.4} says that $q^* (\pi_L \otimes \chi) = q^* (\pi_L) \otimes q^* (\chi)$ is
multiplicity-free. More precisely:
\begin{equation}\label{eq:6.45}
\mr{pr}_{\mf s_{L,i}} (q^* (\pi (\phi_L,\rho_L) \otimes \chi)) = 
\pi (\tilde \phi_L, \tilde \rho_L) \otimes q^* (\chi)
\end{equation}
for a unique $\tilde \rho_L \in \Irr (\mc S_{\tilde \phi})$. 

For almost all central characters of $\mc H (\tilde G, \hat P_{\mf f_i}, \hat \sigma_i)$
(i.e. on a Zariski-dense open subset of the space of central characters), that algebra
has just one irreducible representation with the given central character. That follows
from standard representation theory of affine Hecke algebras \cite{Mat}. It is easy to
construct this irreducible representation: take a character of Bernstein's maximal 
commutative subalgebra, in generic position, and induce it to  
$\mc H (\tilde G, \hat P_{\mf f_i}, \hat \sigma_i)$. This is an analogue
of the \cite[Th\'eor\`eme IV.1]{Sau}, which says that parabolic induction for representations
of reductive $p$-adic groups preserves irreducibility of representations in generic position.

For $\chi$ in a non-empty Zariski-open subset of $X_\nr (L)$, the $L$-representation 
\eqref{eq:6.45} determines a central character of $\mc H (G,\hat P_{\mf f}, \hat \sigma)$ 
which is in generic position. For such $\chi$ the $G$-representation 
$I_P^G (\pi (\phi_L,\rho_L) \otimes \chi)$, its image in 
$\Mod \big( \mc H (G,\hat P_{\mf f}, \hat \sigma) \big)$ and the further image  
under $\mc H (q,\mf f_1,\hat \sigma_1)^*$ are irreducible. It follows that whenever 
$V =  I_P^G (\pi (\phi_L,\rho_L) \otimes \chi)$ for such a $\chi$, all terms in 
\eqref{eq:6.32} are irreducible. In particular, for such $V$ the diagram \eqref{eq:6.32} 
commutes, the dotted arrow is an isomorphism and 
\begin{equation}\label{eq:6.34}
\mr{pr}_{\mf s_i} \circ q^* \big( I_P^G (\pi (\phi_L,\rho_L) \otimes \chi) \big) =
I_{\tilde P}^{\tilde G} \big( \pi (\tilde \phi_L, \tilde \rho_L) \otimes q^* (\chi) \big) .
\end{equation}
But the structure of \eqref{eq:6.34} as a representation
of the compact group $\hat P_{\mf f_1}$ does not depend on $\chi \in X_\nr (L)$, so the
dotted arrow in \eqref{eq:6.32} is an isomorphism for all standard modules $I_P^G (\pi)$ with 
$\pi \in \Irr (L)_{\mf s_L}$. Hence $\mr{Hom}_{q (\hat P_{\mf f})}(\hat \sigma, V) = 
\mr{Hom}_{\hat P_{\mf f}}(\hat \sigma, V)$ for all $V \in \Rep (G)_{\mf s}$, which means
that the diagrams from \eqref{eq:6.32} and from the statement of the proposition always commute.

This also shows that the left hand map in \eqref{eq:6.32} kills all terms from 
$(\mf f_i, \hat \sigma_i)$ with $i \neq 1$. Hence 
$\Rep (\tilde G)_{(\hat P_{\mf f}, \hat \sigma)}$ differs from all the subcategories 
$\Rep (\tilde G)_{(\hat P_{\mf f_i}, \hat \sigma_i)}$ of $\Rep (\tilde G)$ with $i \neq 1$.
\end{proof}

\begin{lem}\label{lem:6.6}
Conjecture \ref{conj:5.7} holds for unipotent representations, with respect to the quotient 
map $q : \tilde{\mc G} \to \mc G = \tilde{\mc G} / \mc N$.
\end{lem}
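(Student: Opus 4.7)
The strategy is to combine the Hecke algebra result (Theorem \ref{thm:4.5}) with the dictionary between unipotent representations and Hecke algebra modules (via the types $(\hat P_{\mf f},\hat\sigma)$ and the isomorphism \eqref{eq:6.23}), using Proposition \ref{prop:6.5} to identify the two kinds of pullback. Since $q$ is a central quotient, the factorization \eqref{eq:5.1} reduces to the single step $f_2 = q$, so ${}^S q$ is simply the $\C$-linear extension of the injection ${}^L q : \mc S_\phi \to \mc S_{\tilde \phi}$. By Frobenius reciprocity,
\[
\mr{Hom}_{\mc S_\phi}\bigl(\rho,{}^S q^*(\tilde\rho)\bigr) =
\mr{Hom}_{\mc S_{\tilde \phi}}\bigl(\mr{ind}_{\mc S_\phi}^{\mc S_{\tilde \phi}}\rho,\tilde\rho\bigr),
\]
so the formula to be proved coincides with the output of Theorem \ref{thm:4.5}.

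First I would translate the problem entirely into Hecke algebra language. By the LLC for unipotent representations, $\pi(\phi,\rho)$ corresponds under \eqref{eq:6.24} to $\overline{M}(\phi,\rho,\vec z)$ viewed as a module over $\mc H(G,\hat P_{\mf f},\hat\sigma)$ via the isomorphism \eqref{eq:6.23}. Likewise, each constituent $\pi(\tilde\phi,\tilde\rho)$ of $q^*\pi(\phi,\rho)$ lies in some component $\Rep(\tilde G)_{(\hat P_{\mf f_i},\hat\sigma_i)}$ corresponding to a summand $\mf s_i^\vee$ of ${}^L q(\mf s^\vee)$, and it corresponds to $\overline{M}(\tilde\phi,\tilde\rho,\vec z)$. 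Proposition \ref{prop:6.5} then identifies the projection of $q^*\pi(\phi,\rho)$ onto each such Bernstein component with the pullback of the module $\overline{M}(\phi,\rho,\vec z)$ along the homomorphism $\mc H(q,\mf f_i,\hat\sigma_i)$ from \eqref{eq:6.30}.

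The core technical step is to match, for every $i$, the $p$-adic Hecke algebra map $\mc H(q,\mf f_i,\hat\sigma_i):\mc H(\tilde G,\hat P_{\mf f_i},\hat\sigma_i)\to\mc H(G,\hat P_{\mf f},\hat\sigma)$ of \eqref{eq:6.30} with the canonical homomorphism $\mc H(\mf s_i^\vee,\vec{\mb z})\to\mc H(\mf s^\vee,W_{\mf s_i^\vee},\vec{\mb z})$ of Proposition \ref{prop:5.2}.b (combined with the Morita inclusion $\mc H(\mf s^\vee,\vec{\mb z})\hookrightarrow\mc H(\mf s^\vee,W_{\mf s_i^\vee},\vec{\mb z})$), under the cuspidal-level LLC isomorphism \eqref{eq:6.23}. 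For unipotent parameters $\mf R_{\mf s^\vee}=1$, so both algebras admit Bernstein presentations with the same finite Weyl group $W_{\mf s^\vee}^\circ=W_{\mf s_i^\vee}^\circ$ and both homomorphisms are the identity on the $N_w$-generators. Thus it suffices to match them on the commutative parts, where each is induced by the dual of the inclusion $X_{\mf f_i}\hookrightarrow X_{\mf f}$ (equivalently, $\mf s_L^\vee\to\mf s_{i,L}^\vee$). This is precisely the content of Lemma \ref{lem:6.4} on the cuspidal level, translated through the LLC for tori and the construction of \eqref{eq:6.23} from \cite[Theorem 4.4]{SolLLCunip}. Care must also be taken with the multiplicities $m_i$ appearing in \eqref{eq:4.5}: Lemma \ref{lem:6.4} shows $m_i=1$ for all $i$, which is consistent with the multiplicity-one decomposition on the representation side.

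Once this identification is established, Theorem \ref{thm:4.5} directly computes the pullback of $\overline{M}(\phi,\rho,\vec z)$ along $\bigoplus_i \mc H(q,\mf f_i,\hat\sigma_i)$ as the module
\[
\bigoplus_{\tilde\rho\in\Irr(\mc S_{\tilde\phi})}\mr{Hom}_{\mc S_{\tilde \phi}}\bigl(\mr{ind}_{\mc S_\phi}^{\mc S_{\tilde\phi}}\rho,\tilde\rho\bigr)\otimes\overline{M}(\tilde\phi,\tilde\rho,\vec z)
\]
over $\mc H({}^L q(\mf s^\vee),\vec{\mb z})/(\vec{\mb z}-\vec z)$. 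Translating back through the equivalences \eqref{eq:6.21} and \eqref{eq:6.24} for each $\tilde G$-component yields the desired decomposition of $q^*\pi(\phi,\rho)$. The main obstacle is the technical compatibility in the previous paragraph: verifying that the $p$-adic type-theoretic homomorphism \eqref{eq:6.30} and the parameter-side homomorphism of Proposition \ref{prop:5.2}.b really are intertwined by the LLC-induced isomorphism \eqref{eq:6.23}. This amounts to checking that the construction of the LLC for cuspidal unipotent representations in \cite{FOS}, specifically the bijection \eqref{eq:6.19}, is natural under central quotient maps; Lemma \ref{lem:6.4} provides exactly this naturality, but one still needs to trace through \cite[\S 4]{SolLLCunip} to confirm that the resulting identification of commutative Bernstein subalgebras on both sides agrees with the dualization of \eqref{eq:4.4}.
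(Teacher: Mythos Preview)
Your proposal is correct and follows essentially the same route as the paper: translate to Hecke algebras via \eqref{eq:6.21} and \eqref{eq:6.23}, use Proposition \ref{prop:6.5} to identify $q^*$ with pullback along \eqref{eq:6.30}, match \eqref{eq:6.30} with the parameter-side map \eqref{eq:4.8} under \eqref{eq:6.23}, and apply Theorem \ref{thm:4.5}. The paper's proof is slightly more streamlined in one respect you leave implicit: since $\mf R_{\mf s^\vee} = \mf R_{\mf s_i^\vee} = 1$ (both from \eqref{eq:6.23}), Lemma \ref{lem:4.1}.a gives $W_{\mf s^\vee} = W_{\mf s^\vee}^\circ = W_{\mf s_i^\vee}^\circ = W_{\mf s_i^\vee}$, so the intermediate algebra $\mc H(\mf s^\vee, W_{\mf s_i^\vee}, \vec{\mb z})$ simply equals $\mc H(\mf s^\vee, \vec{\mb z})$ and the Morita step is vacuous; the paper then packages the compatibility you describe as the commutative square \eqref{eq:6.37}, whose commutativity on the Weyl part is trivial and on the torus part is exactly \eqref{eq:6.35}--\eqref{eq:6.36} via Lemma \ref{lem:6.4}.
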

\begin{proof}
It follows from \cite[(58)]{SolLLCunip} that the affine Hecke algebras 
$\mc H (\mf s^\vee,\vec{\mb z})$ and $\mc H (\mf s^\vee_i,\vec{\mb z})$ have almost the same
Bernstein presentation, only the tori can differ. The map
\begin{equation}\label{eq:6.35}
\mf s_L^\vee \to \mf s^\vee_{\tilde L,i} : (\phi_L, \rho_L) \mapsto (\tilde \phi_L, \rho_i)
\end{equation}
induces an algebra homomorphism 
\[
\mc H (\mf s_i^\vee ,\vec{\mb z}) \to \mc H (\mf s^\vee ,\vec{\mb z})
\]
as in \eqref{eq:4.8}. Notice that $\mc H (\mf s^\vee, W_{\mf s_i}, \vec{\mb z}) =
\mc H (\mf s^\vee, \vec{\mb z})$, because $W_{\mf s^\vee_i} = W_{\mf s^\vee}$. Lemma \ref{lem:6.4}
shows that, via the LLC \eqref{eq:6.24}, \eqref{eq:6.35} translates to
\begin{equation}\label{eq:6.36}
\mr{pr}_{\mf s_{\tilde L,i}} \circ q^* : \Irr (L)_{\mf s_L} \to \Irr (\tilde L)_{\mf s_{\tilde L,i}} . 
\end{equation}
From \eqref{eq:4.8}, \eqref{eq:6.23} and \eqref{eq:6.30} we get a commutative diagram
\begin{equation}\label{eq:6.37}
\begin{array}{ccc}
\mc H (\mf s^\vee, \vec{\mb z}) & \to & \mc H (G, \hat P_{\mf f}, \hat \sigma) \\
\uparrow & & \uparrow \\
\mc H (\mf s^\vee_i,\vec{\mb z}) & \to & \mc H (\tilde G, \hat P_{\mf f_i}, \hat \sigma_i) 
\end{array},
\end{equation}
in which the horizontal arrows become isomorphisms upon specializing $\vec{\mb z}$ to $\vec z$.
By Theorem \ref{thm:4.5} 
\begin{multline}\label{eq:6.38}
\bigoplus_i \mc H (q, \mf f_i, \hat \sigma_i)^* \, \overline{M}(\phi,\rho,\vec{z}) =
\bigoplus_{\tilde \rho \in \Irr (\mc S_{\tilde \phi})}
\mr{Hom}_{\mc S_{\tilde \phi}} (\mr{ind}_{\mc S_\phi}^{\mc S_{\tilde \phi}}(\rho), 
\tilde \rho) \otimes \overline{M} (\tilde \phi, \tilde \rho, \vec{z}) \\
= \bigoplus\nolimits_{\tilde \rho \in \Irr (\mc S_{\tilde \phi})} \mr{Hom}_{\mc S_\phi} 
(\rho, {}^S q^* (\tilde \rho)) \otimes \overline{M} (\tilde \phi, \tilde \rho, \vec{z}) .
\end{multline}
By Proposition \ref{prop:6.5} and the commutativity of \eqref{eq:6.37}, 
\eqref{eq:6.38} implies that
\[
q^* \pi (\phi,\rho) = \bigoplus\nolimits_{\tilde \rho \in \Irr (\mc S_{\tilde \phi})} 
\mr{Hom}_{\mc S_\phi} (\rho, {}^S q^* (\tilde \rho)) \otimes \pi (\tilde \phi, \tilde \rho) . 
\qedhere
\]
\end{proof}

\subsection{Isomorphisms} \

We move on to functoriality of the LLC for unipotent representations with respect to
isomorphisms of reductive groups. First we consider the action of $G_\ad$ on $G$ by
conjugation. Recall from \eqref{eq:2.33} that it suffices investigate Ad$(g)$ with
$g \in T_{AD} = (\mc T / Z(\mc G))(F)$.

\begin{lem}\label{lem:6.11}
Condition \ref{cond:2} holds in this setting.
\end{lem}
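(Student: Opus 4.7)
Condition \ref{cond:2} has two parts, and for the Bernstein components coming from unipotent enhanced L-parameters the first is trivial: the comment immediately following \eqref{eq:6.23} records $\mf R_{\mf s^\vee} = 1$, so the requirement that $\mf R_{\mf s^\vee}$ fix the basepoint $(\phi_L,\rho_L)$ of $\mf s_L^\vee$ is vacuous.

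The substance lies in Condition \ref{cond:2}(ii). Using $\mf R_{\mf s^\vee} = 1$ once more we have $W_{\mf s^\vee} = W^\circ_{\mf s^\vee}$, so the group is generated by the simple reflections $s_\alpha$ for the simple roots of $\Phi_{\mf s^\vee}$. Writing any $w \in W_{\mf s^\vee}$ as a product $w = s_{i_1}\cdots s_{i_k}$, the telescoping identity
\[
(w - 1)\,x \;=\; \sum_{j=1}^{k} s_{i_1}\cdots s_{i_{j-1}}\,\bigl( s_{i_j} - 1\bigr)\bigl( s_{i_{j+1}}\cdots s_{i_k}\, x \bigr),
\]
combined with the fact that $W_{\mf s^\vee}$ stabilizes $X^*(T_{\mf s^\vee})$ (it acts on $T_{\mf s^\vee}$ by algebraic automorphisms), reduces the task to showing $(s_\alpha - 1)\,x \in X^*(T_{\mf s^\vee})$ for a single simple reflection $s_\alpha$ and an arbitrary $x \in X^*(X_\nr(L_{AD}))$.

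For the simple-reflection step, Lemma \ref{lem:2.7} already produces $(s_\alpha - 1)\,x$ as a character of $X_\nr(L)$ trivial on $X_\nr(G)$. I would upgrade this to a character of the quotient $T_{\mf s^\vee} = X_\nr(L) / X_\nr(L,\phi_L)$ by making the reflection action explicit: $s_\alpha$ acts on $X_\nr(L) \cong (Z(L^\vee)^{\mb I_F})^\circ_{\Fr}$ via conjugation in $G^\vee$, and this conjugation restricts to the standard reflection $z \mapsto z - \langle\alpha, z\rangle\,\alpha^\vee$ attached to the root $\alpha \in \Phi_{\mf s^\vee}$ and coroot $\alpha^\vee \in X_*\bigl(Z(L^\vee_c)^{\mb W_F,\circ}\bigr) \subset X_*(X_\nr(L))$. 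Dualizing yields
\[
(s_\alpha - 1)\,x \;=\; -\langle x, \alpha^\vee \rangle\,\alpha \;\in\; \Z\cdot\alpha,
\]
and because $\alpha \in \Phi_{\mf s^\vee} \subset X^*(T_{\mf s^\vee})$ by the construction recalled around \eqref{eq:2.2}--\eqref{eq:2.3}, the right-hand side lies in $X^*(T_{\mf s^\vee})$, as required.

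The main obstacle I anticipate is the bookkeeping through the finite covering $Z(L^\vee_c)^{\mb W_F,\circ} \to X_\nr(L) \to T_{\mf s^\vee}$: one must confirm that the reflection formula written above on $X_\nr(L)$ is compatible with the reflection action of $s_\alpha$ on $T_{\mf s^\vee}$, so that the containment $(s_\alpha - 1)x \in \Z\cdot\alpha$ in $X^*(X_\nr(L))$ indeed descends to $T_{\mf s^\vee}$. This compatibility is built into the very definition of $\Phi_{\mf s^\vee}$ from $\Phi\bigl((G^\vee_{\phi_L})^\circ, Z(L_c^\vee)^{\mb W_F,\circ}\bigr)$, so once set up carefully no further input is required.
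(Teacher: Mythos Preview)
Your overall plan matches the paper's approach: part~(i) is vacuous because $\mf R_{\mf s^\vee}=1$, and part~(ii) reduces via the reflection identity $s_\alpha(x)-x=-\langle x,\alpha^\vee\rangle\,\alpha\in\Z\alpha\subset X^*(T_{\mf s^\vee})$. The paper carries out exactly this computation.

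However, your justification of the reflection identity has a real gap. You assert $\alpha^\vee\in X_*\bigl(Z(L_c^\vee)^{\mb W_F,\circ}\bigr)$ and then say the remaining compatibility is ``built into the very definition of $\Phi_{\mf s^\vee}$.'' That is not so: the general construction in \cite{AMS3} places the coroots only in $X_*(T_{\mf s^\vee})$, and there is no reason in general for them to lift to $X_*(X_\nr(T_{AD}))$ or to $X_*\bigl(Z(L_c^\vee)^{\mb W_F,\circ}\bigr)$. If your argument worked as written, it would establish Condition~\ref{cond:2}(ii) for \emph{every} $\mf s^\vee$, which would make the condition redundant throughout the paper. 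What you are missing is precisely the unramified hypothesis. The paper exploits that $\phi$ is unramified and $\mc G$ splits over an unramified extension to write $\phi(\Fr_F\gamma)=(s,\Fr_F\gamma)$ with $s\in T_c^\vee$, and thereby to identify
\[
X_\nr(T_{AD})\;=\;(T_c^{\mb I_F})^\circ_{\Fr_F}\;=\;(T_c^{\phi(\mb I_F)})^\circ_{\phi(\Fr_F)},
\]
which is exactly the torus $(T_J)_{\phi(\Fr_F)}$ of \cite[Proposition~3.13]{AMS3}. Then \cite[(89)]{AMS3} supplies the input you are missing: $\alpha^\vee\in X_*(X_\nr(T_{AD}))$. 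With that in hand the pairing $\langle x,\alpha^\vee\rangle$ is genuinely an integer for $x\in X^*(X_\nr(T_{AD}))$, and your reflection identity (and the telescoping reduction to general $w$) goes through.
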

\begin{proof}
From \eqref{eq:6.23} we already concluded that $\mf R_{\mf s^\vee} = 1$, so part (i) 
holds trivially.

Since $\phi$ is unramified and $\mc G$ splits over an unramified extension of $F$, there
exists $s \in G^\vee$ such that $\phi (\Fr_F \gamma) = (s, \Fr_F \gamma)$ for all 
$\gamma \in \mb I_F$. By \cite[Lemma 6.5]{Bor} we may assume that $s \in T_c^\vee$. Then
\[
X_\nr (T_{AD}) = ( T_c^{\mb I_F})^\circ_{\Fr_F} = ( T_c^{\phi (\mb I_F)})^\circ_{\phi (\Fr_F)}
\]
equals the torus called $(T_J )_{\phi (\Fr_F)}$ in \cite[Proposition 3.13]{AMS3}. Via the 
natural map $X_\nr (T_{AD}) \to T_{\mf s^\vee}$ every $\alpha \in R_{\mf s^\vee}$ lifts to 
a root for $X_\nr (T_{AD})$ and from \cite[(89)]{AMS3} we know that 
$\alpha^\vee \in X_* (X_\nr (T_{AD}))$. For every $x \in X^* (X_\nr (T_{AD}))$:
\[
s_\alpha (x) - x = -\langle x ,\, \alpha^\vee \rangle \alpha \in \Z \alpha .
\]
It follows that $w(x) - x$ lies in the root lattice $\Z R_{\mf s^\vee}$, for every
$w \in W_{\mf s^\vee} \cong W(R_{\mf s^\vee})$. 
In particular $w (x) - x \in X^* (T_{\mf s^\vee})$, which says that part (ii) holds.
\end{proof}

We fix a "fundamental" chamber $C_0$ in the apartment $\mh A$ of $\mc B (\mc G,F)$ 
corresponding to the tori $\mc S$ and $\mc T$. Upon replacing 
$P_{\mf f}$ by a $G$-conjugate, we may assume that $\mf f \subset \overline{C_0}$. 

\begin{lem}\label{lem:6.7}
Let $g \in T_{AD,\cpt}$. 
\enuma{
\item The action \eqref{eq:2.20} of $g$ on $\Phi_{\nr,e}(G)$ is trivial.
\item Ad$(g)$ stabilizes $(P_{\mf f},\sigma)$ and $(\hat P_{\mf f},\hat \sigma)$.
\item The action of Ad$(g)$ on $\Rep (G)_{(\hat P_{\mf f},\hat \sigma)}$ is trivial.
} 
\end{lem}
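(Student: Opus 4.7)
The proof has three parts. Part (a) is a cocycle calculation, part (b) is a Bruhat--Tits/finite group argument, and part (c) follows from (a) through the Hecke-algebra machinery of Section \ref{sec:2}.

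For (a), the plan is to use that $\mc G$ splits over an unramified extension of $F$, so $\mb I_F$ acts trivially on $G^\vee$ and on $Z({G^\vee}_\Sc)$. Since $\phi$ is unramified, one can choose an unramified lift $\phi_\Sc : \mb W_F \to {G^\vee}_\Sc \rtimes \mb W_F$ with $\phi_\Sc(\gamma) = (1,\gamma)$ for $\gamma \in \mb I_F$. Then for $h \in Z_{G^\vee}(\phi(\mb W_F))$ the cocycle $c_h$ in \eqref{eq:2.16} is trivial on $\mb I_F$, so it represents an unramified class in $H^1(\mb W_F, Z({G^\vee}_\Sc))$. Under the maps \eqref{eq:2.12}--\eqref{eq:2.13} such a class corresponds via the LLC for tori to an unramified character of $G_\ad/G$, and unramified characters vanish on the compact subgroup $T_{AD,\cpt}$. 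Hence $\tau_{\phi,\mc G}(g)(h) = \langle g, c_h \rangle = 1$ for every $h$, so $\tau_{\mc S_\phi}(g) = 1$ and \eqref{eq:2.20} acts trivially on $\Phi_{\nr,e}(G)$.

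For (b), the key geometric input is that $T_{AD}$ acts on the standard apartment of $\mc B(\mc G,F)$ by translations through the map $T_{AD} \to X_*(T_{AD}) \otimes_\Z \R$, whose kernel is exactly $T_{AD,\cpt}$. Thus $\mr{Ad}(g)$ fixes the apartment pointwise; in particular it fixes $\mf f$ and stabilizes $P_{\mf f}$, $U_{\mf f}$, $\hat P_{\mf f}$, and $N_G(P_{\mf f})$. The induced automorphism of $\overline{\mc G^\circ_{\mf f}}(k_F) = P_{\mf f}/U_{\mf f}$ is conjugation by the reduction of $g$, which is an element of the $k_F$-points of the adjoint group, so it preserves every cuspidal unipotent representation up to isomorphism. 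Thus $\mr{Ad}(g)^*\sigma \cong \sigma$. For the extension $\hat\sigma$, one notes that $\mr{Ad}(g)^*\hat\sigma$ is again an extension of $\sigma$ to $\hat P_{\mf f}$, so equals $\hat\sigma \otimes \chi$ for a unique $\chi \in \Irr(\hat P_{\mf f}/P_{\mf f})$; triviality of $\chi$ follows because the extension $\hat\sigma$ constructed in \cite[\S 1.16]{LusUni1} depends only on data attached to $\mf f$, which $\mr{Ad}(g)$ fixes pointwise.

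For (c), Lemma \ref{lem:6.11} verifies Condition \ref{cond:2}, so Theorem \ref{thm:2.9} applies and identifies the action of $\mr{Ad}(g)$ on $\Rep(G)_{\mf s}$ (through the Hecke-algebra picture) with pullback along the algebra automorphism $\alpha_g \circ \mr{Ad}(x_g)$ of $\mc H(\mf s^\vee,\vec{\mb z})$. Both factors collapse to the identity for $g \in T_{AD,\cpt}$: part (a) applied to the cuspidal Levi gives $\tau_{\phi_L}(g) = 1$, so $\alpha_g = \mr{id}$ in \eqref{eq:2.54}; and $x_g \in X^*(X_\nr(T_{AD}))$ is trivial since unramified characters vanish on $T_{AD,\cpt}$, so $\mr{Ad}(x_g) = \mr{id}$. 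Applied to $(\phi,\rho) \in \Phi_{\nr,e}(G)^{\mf s^\vee}$, Theorem \ref{thm:2.9} yields $\overline{M}(\phi,\rho,\vec z) \circ (\alpha_g \circ \mr{Ad}(x_g)) = \overline{M}(\phi,\rho,\vec z)$, which translates via \eqref{eq:6.21}--\eqref{eq:6.24} into $\mr{Ad}(g)^*\pi(\phi,\rho) \cong \pi(\phi,\rho)$.

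The main obstacle is precisely the compatibility invoked in that last translation step: one must verify that the equivalence \eqref{eq:6.24} intertwines $\mr{Ad}(g)^*$ with pullback along $\alpha_g \circ \mr{Ad}(x_g)$, equivalently that the specialization at $\mb z = q_F$ of $\alpha_g \circ \mr{Ad}(x_g)$ corresponds under the isomorphism \eqref{eq:6.23} to the conjugation automorphism $\mc H(\mr{Ad}(g))$ of $\mc H(G,\hat P_{\mf f}, \hat\sigma)$. The argument should mirror the principal-series computation \eqref{eq:6.10}--\eqref{eq:6.16} and \eqref{eq:6.40}: since $x_g = 0$ the unramified contribution is automatic, and for the remaining ``compact'' part one computes $\mc H(\mr{Ad}(g))(N_w)$ on Iwahori-type generators and identifies the resulting scalar with $\tau_{\phi}(g)(\dot w)$ via the cocycle description \eqref{eq:2.16}, using the $W_{\mf s}$-equivariance of the LLC for tori on the cuspidal support.
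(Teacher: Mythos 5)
Your parts (a) and (c) are in substance the paper's argument: for (a) the class $c_h$ is inflated from $\mb W_F/\mb I_F$ and therefore pairs trivially with the compact element $g$, and for (c) everything reduces to showing that the automorphism of $\mc H (G,\hat P_{\mf f},\hat\sigma)$ induced by $\mr{Ad}(g)$ is the specialization of $\alpha_g \circ \mr{Ad}(x_g)$ and that both factors collapse for compact $g$ (the paper gets $\alpha_g = \mr{id}$ even more cheaply from $\mf R_{\mf s^\vee}=1$, so your detour through Lemma \ref{lem:6.11} and Theorem \ref{thm:2.9} is harmless but unnecessary). The genuine gap is in part (b), at the step $\mr{Ad}(g)^*\hat\sigma \cong \hat\sigma$. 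You correctly reduce to showing that the character $\chi \in \Irr (\hat P_{\mf f}/P_{\mf f})$ with $\mr{Ad}(g)^*\hat\sigma \cong \hat\sigma\otimes\chi$ is trivial, but the justification ``the extension of \cite[\S 1.16]{LusUni1} depends only on data attached to $\mf f$'' proves nothing: the extensions of $\sigma$ to $\hat P_{\mf f}$ form a torsor under $\Irr (\hat P_{\mf f}/P_{\mf f})$, none of them is canonically singled out (and the lemma must hold for whichever $\hat\sigma$ was fixed in setting up the type), so a priori $\mr{Ad}(g)$ could permute them. Note that already for $\sigma$ you invoked the nontrivial fact that diagonal automorphisms fix cuspidal unipotent representations of the finite quotient; the same mechanism has to be pushed up to $\hat P_{\mf f}$, and that is exactly what your appeal to canonicity skips.

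The paper closes this by making $\mr{Ad}(g)$ inner at the level of the representation: by \cite[Proposition 3.15]{Lus-Che} the representation $\sigma$ is the pullback of a cuspidal unipotent representation $\sigma_\ad$ of the adjoint parahoric $P_{\mf f,\ad}$; since the natural map $\hat P_{\mf f}/P_{\mf f} \to \hat P_{\mf f,\ad}/P_{\mf f,\ad}$ of \eqref{eq:6.39} is injective, every extension $\hat\sigma$ of $\sigma$ is the restriction of some extension $\hat\sigma_\ad$ of $\sigma_\ad$ to $\hat P_{\mf f,\ad}$; and $g$, fixing $\mf f$ pointwise, lies in $\hat P_{\mf f,\ad}$, so that $(\mr{Ad}(g)^*\hat\sigma)(p) = \hat\sigma_\ad (g)\,\hat\sigma (p)\,\hat\sigma_\ad (g)^{-1}$ and $\hat\sigma_\ad (g)$ is the required intertwiner. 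You need to supply this (or an equivalent) argument; without it part (b), and hence part (c), which presupposes that $\mr{Ad}(g)$ stabilizes the type so that $\mc H (\mr{Ad}(g))$ exists, is not established.
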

\begin{proof}
(a) For $\phi \in \Phi_\nr (G)$ and $h \in (G^\vee )^{\phi (\mb W_F)}$, \eqref{eq:2.16} gives
\[
c_h \in H^1 (\mb W_F / \mb I_F, Z({G^\vee}_\Sc)) \cong X_\Wr (G_\ad) . 
\]
Since $g$ lies in the unique parahoric subgroup of the unramified torus $T_{AD}$, $g$ lies in
a parahoric subgroup of $G_\ad$. Hence it lies in the kernel of any weakly unramified character.
In particular
\[
1 = \langle g, c_h \rangle = \tau_{\phi,\mc G}(g)(h) . 
\]
This says that $\tau_{\phi,\mc G}(g) = 1$, and then \eqref{eq:2.20} reduces to the trivial action.\\
(b) The compact element $g$, or equivalently the automorphism Ad$(g)$ of $G$, fixes $\mh A$ pointwise. 
In particular Ad$(g)$ stabilizes $\mf f$, $P_{\mf f}$ and $\hat P_{\mf f}$. 
Let $P_{\mf f,\ad}$ be the parahoric subgroup of $G_\ad$ associated to $\mf f$. As $g$ fixes
$\mf f$ pointwise, $g \in \hat P_{\mf f,\ad}$.

As in \cite[(18) and (19)]{SolLLCunip}, we consider the group
\[
\Omega = \{ \omega \in N_G (S) / (N_G (S) \cap P_{C_0}) : \omega (C_0) = C_0 \} .
\]
It was remarked after \cite[(19)]{SolLLCunip} that $\Omega$ is isomorphic to the image of the Kottwitz
isomorphism for $G$. As domain of the Kottwitz isomorphism one can take $X_* (S) / \Z \Phi (G,S)$,
so $\Omega$ is naturally isomorphic to the abelian group $X_* (S) / \Z \Phi (G,S)$. Then
$\hat P_{\mf f} / P_{\mf f}$ is naturally isomorphic to the pointwise stabilizer of $\mf f$
in $\Omega$. Furthermore the torsion subgroup of $\Omega$ embeds naturally in its version for $G_\ad$,
so $G \to G_\ad$ induces an injection
\begin{equation}\label{eq:6.39}
\hat P_{\mf f} / P_{\mf f} \hookrightarrow \hat P_{\mf f,\ad} / P_{\mf f,\ad} .
\end{equation}
By \cite[Proposition 3.15]{Lus-Che} there exists a cuspidal unipotent representation $\sigma_\ad$
of $P_{\mf f,\ad}$, whose pullback to $P_{\mf f}$ is $\sigma$. Choose an extension $\hat \sigma_\ad$
of $\sigma_\ad$ to $\hat P_{\mf f,\ad}$ (still on the same vector space $V_\sigma$). 
Any two such extensions differ by a character of
$\hat P_{\mf f,\ad} / P_{\mf f,\ad}$. With \eqref{eq:6.39} we can arrange that the pullback of
$\hat \sigma_\ad$ to $\hat P_{\mf f}$ is $\hat \sigma$. Using $g \in \hat P_{\mf f,\ad}$ we get
\begin{multline*}
(\mr{Ad}(g)^* \hat \sigma) (p) = \hat \sigma (g p g^{-1}) = \hat \sigma_\ad (g p g^{-1}) \\
= \hat \sigma_\ad (g) \hat \sigma_\ad (p) \hat \sigma_\ad (g)^{-1} 
= \hat \sigma_\ad (g) \hat \sigma (p) \hat \sigma_\ad (g)^{-1} \qquad p \in \hat P_{\mf f} .
\end{multline*}
Hence $\mr{Ad}(g)^* \hat \sigma$ is equivalent with $\hat \sigma$. 
The same calculation shows that $\mr{Ad}(g)^* \sigma$ is equivalent with $\sigma$.\\
(c) As in the case of principal series representations \eqref{eq:6.10}, part (b) entails 
that Ad$(g)$ naturally defines an automorphism $\mc H (\mr{Ad}(g))$ of 
$\mc H (G,\hat P_{\mf f},\hat \sigma)$. By \eqref{eq:6.12} it implements Ad$(g)^*$ on 
Mod$(\mc H (G,P_{\mf f}, \hat \sigma))$. The same calculations as between \eqref{eq:6.12} and 
\eqref{eq:6.40} show that $\mc H (\mr{Ad}(g))$ is the specialization at $\vec{\mb z} = \vec{z}$
of $\alpha_g$ from \eqref{eq:2.54}. Here $\mf R_{\mf s} = 1$, so both $\alpha_g$ and
$\mc H (\mr{Ad}(g))$ are the identity maps. 
\end{proof}

Recall from \eqref{eq:2.51} and \eqref{eq:2.53} that $X_* (T_{AD})$ is embedded in $T_{AD}$
via evaluation at $\varpi_F$, and that $T_{AD} = T_{AD,\cpt} \times X_* (T_{AD})$.

\begin{lem}\label{lem:6.8}
Conjecture \ref{conj:5.7} holds for unipotent representations, for Ad$(g)$
with $g \in X_* (T_{AD})$.  
\end{lem}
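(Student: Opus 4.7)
The proof plan follows the template used for $g_x$ at the end of the proof of Theorem \ref{thm:6.3}, with the Hecke algebra $\mc H (G,J_\chi,\tau_\chi)$ replaced by the unipotent type Hecke algebra $\mc H (G,\hat P_{\mf f},\hat \sigma)$ and its isomorphism \eqref{eq:6.23} with $\mc H (\mf s^\vee,\vec{\mb z})/(\vec{\mb z} - \vec z)$. First, write $g = \lambda (\varpi_F)$ for $\lambda \in X_* (T_{AD})$, so that $\mr{Ad}(g)$ acts on the apartment $\mh A$ by a translation and sends $\mf f$ to $g \mf f$ in a different chamber of $\mh A$. Thus $\mr{Ad}(g)$ does not stabilize $\hat P_{\mf f}$, but maps it to $\hat P_{g \mf f}$; since $g$ normalizes $L$ and the natural isomorphism of parahoric reductive quotients $\overline{\mc G^\circ_{g \mf f}}(k_F) \cong \overline{\mc G^\circ_{\mf f}}(k_F)$ matches $\mr{Ad}(g)^* \hat \sigma$ with $\hat \sigma$, both $(\hat P_{\mf f},\hat \sigma)$ and $(\hat P_{g \mf f},\hat \sigma') := (\mr{Ad}(g) \hat P_{\mf f}, \mr{Ad}(g)^* \hat \sigma)$ are unipotent types for the same Bernstein component $\mf s$.

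Second, as in \eqref{eq:6.13}--\eqref{eq:6.16}, identify the two Hecke algebras naively via their common Bernstein presentation (both contain $\C [X_{\mf f}]$ and $W_{\mf s}$ with the same parameters, since $g \mf f$ is $G_\ad$-conjugate to $\mf f$), and compose with the isomorphism $\mc H(G,\hat P_{g \mf f},\hat \sigma') \to \mc H(G,\hat P_{\mf f},\hat \sigma)$, $N_w \mapsto N_{g w g^{-1}}$, induced by $\mr{Ad}(g)$ on $\mc H^\vee (G)$. This yields an automorphism
\[
\beta_g : \mc H (G,\hat P_{\mf f},\hat \sigma) \to \mc H (G,\hat P_{\mf f},\hat \sigma), \qquad
N_w \mapsto N_{g w g^{-1}} ,
\]
whose pullback on $\mr{Mod}(\mc H (G,\hat P_{\mf f},\hat \sigma))$ implements $\mr{Ad}(g)^*$ on $\mr{Rep}(G)_{\mf s}$ through the equivalence \eqref{eq:6.21}.

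Third, Lemma \ref{lem:6.11} gives Condition \ref{cond:2}, so Proposition \ref{prop:2.8} produces $\mr{Ad}(x_g) \in \mr{Aut}(\mc H (\mf s^\vee,\vec{\mb z}))$ with $\mr{Ad}(x_g)(N_w) = N_{w(w^{-1}(x_g) - x_g)}$. Using the identification of $X_{\mf f}$ as a sublattice of $X^* (X_\nr (T_{AD}))$ from \eqref{eq:6.26} and the fact that $g \in X_* (T_{AD})$ corresponds to $x_g$, one checks that $w(w^{-1}(x_g) - x_g) = g w g^{-1}$ in $X_{\mf f} \rtimes W_{\mf s}$, so under \eqref{eq:6.23} the specialization of $\mr{Ad}(x_g)$ at $\vec{\mb z} = \vec z$ is exactly $\beta_g$. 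Since $g$ is purely unramified (no compact part) and $\mf R_{\mf s^\vee} = 1$ here, the twist $\alpha_g$ from \eqref{eq:2.54} is the identity, hence $\alpha_g \circ \mr{Ad}(x_g) = \mr{Ad}(x_g)$ in Theorem \ref{thm:2.9}. Applying Theorem \ref{thm:2.9} and the bijection \eqref{eq:6.24} gives
\[
\mr{Ad}(g)^* \pi (\phi,\rho) = \pi (\phi, \rho \otimes \tau_\phi (g)^{-1}) ,
\]
which by \eqref{eq:5.2} coincides with the statement of Conjecture \ref{conj:5.7}.

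The principal obstacle is justifying the naive identification in Step 2 and showing it correctly implements $\mr{Ad}(g)^*$ via the type-theoretic Morita equivalence \eqref{eq:6.21}. Concretely one must check that the composition of Morita equivalences for the two types, intertwined by the algebra isomorphism $\beta_g$, reproduces the pullback along $\mr{Ad}(g)$ on $\mr{Rep}(G)_{\mf s}$; this requires tracking how $\mr{Ad}(g)$ acts on the $\mc H (G) e_{\hat \sigma}$-bimodule structure and using the $\mc H^\vee (G)$-formalism as in Proposition \ref{prop:6.5}. A secondary check is the combinatorial identity $w(w^{-1}(x_g) - x_g) = g w g^{-1}$ in $X_{\mf f} \rtimes W_{\mf s}$, which relies on the explicit description of $X_{\mf f}$ inside $X^* (X_\nr (T_{AD}))$ via the embedding \eqref{eq:6.26} and the identification $X_\nr (T_{AD}) \cong (T_c^{\mb I_F})^\circ_{\Fr_F}$ from the proof of Lemma \ref{lem:6.11}.
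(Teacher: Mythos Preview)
Your proposal is correct and follows essentially the same approach as the paper's proof: both transport $\mr{Ad}(g)^*$ through the type-theoretic equivalence \eqref{eq:6.21} and the isomorphism \eqref{eq:6.23} to recognize it as the specialization of $\mr{Ad}(x_g)$, then invoke Theorem~\ref{thm:2.9} (using $\mf R_{\mf s^\vee}=1$ so that $\alpha_g=\mr{id}$). The paper handles your flagged ``principal obstacle'' by pointing to the analogous argument \eqref{eq:6.15}--\eqref{eq:6.18} in the principal series case, so your more explicit unpacking of that step is a reasonable elaboration rather than a deviation.
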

\begin{proof}
An argument analogous to \eqref{eq:6.15}--\eqref{eq:6.18}, now also using the isomorphism 
\eqref{eq:6.23}, gives a commutative diagram
\begin{equation}\label{eq:6.43}
\xymatrix{
\Rep (G)_{\mf s} \ar[r]^-{\sim} \ar[d]_{\mr{Ad}(g)^*} & 
\Mod \big( \mc H (G,\hat P_{\mf f}, \hat \sigma) \big) \ar[d]_{\mr{Ad}(x_g)^*} 
\ar[r]_{(\mr{ev}_{\vec{\mb z} = \vec{z}} )^* } &
\Mod (\mc H (\mf s^\vee, \vec{\mb z})) \ar[d]_{\mr{Ad}(x_g)^*} \\
\Rep (\tilde G)_{\tilde{\mf s}} \ar[r]^-{\sim}  & 
\Mod \big( \mc H (\tilde G, \hat P_{\tilde{\mf f}}, \hat{\tilde \sigma}) \big) 
\ar[r]_{(\mr{ev}_{\vec{\mb z} = \vec{z}} )^*} &
\Mod (\mc H (\tilde{\mf s^\vee}, \vec{\mb z})) 
} .
\end{equation}
As Artin reciprocity sends $\Fr_F$ to $\varpi_F$, $\tau_\phi (g)$ depends only on
$\phi (\Fr_F)$, see \eqref{eq:6.42}. 

Recall that Condition \ref{cond:2} was checked in Lemma \ref{lem:6.11}. The algebra
isomorphism $\alpha_g$ from \eqref{eq:2.54} is just the identity. Namely, as $W_{\mf s^\vee}$
is the Weyl group of the root system $\Phi_{\mf s^\vee}$, Lemma \ref{lem:2.1} tells us that
$\tau_{\mc S_{\phi_L}}(g)(\dot w) = 1$ for all $w \in W_{\mf s^\vee}$.

Now we apply Theorem \ref{thm:2.9}, which tells us that the effect of Ad$(x_g)^*$
on enhancements of an L-parameter $\phi$ is tensoring by $\tau_\phi (g)^{-1}$. The diagram
\eqref{eq:6.43} and \eqref{eq:6.24} then show that 
\[
\mr{Ad}(g)^* \pi (\phi,\rho) = \phi (\phi, \rho \otimes \tau_\phi (g)^{-1}) .
\]
As in the last lines of the proof of Theorem \ref{thm:6.3}, this formula is equivalent
to Conjecture \ref{conj:5.7} for Ad$(g)$.
\end{proof}

Let $\zeta_{\mc G}^+, \zeta^+_{\tilde{\mc G}} \in \Irr (Z({G^\vee}_\Sc))$ be the characters
used to define relevance of enhanced L-parameters in \eqref{eq:1.4}. Let $\mc G^*$ be the 
quasi-split inner form of $\mc G$ recall the notations from Section \ref{sec:3}. Consider a 
$\mb W_F$-equivariant isomorphism of based root data
\[
\tau : \mc R (\tilde{\mc G}^*, \tilde{\mc T}^*) \to \mc R (\mc G^*, \mc T^*) 
\]
such that $\tau (\zeta_{\tilde{\mc G}}^+) = \zeta_{\mc G}^+$. By Theorem \ref{thm:3.2}
$\tau^\vee$ induces a $\mb W_F$-equivariant isomorphism $\eta_\tau^\vee : G^\vee \to 
{\tilde G}^\vee$. Pick an isomorphism of $F$-groups $\eta_{\tau, \mc G} : 
\tilde{\mc G} \to \mc G$ as in Proposition \ref{prop:3.4}.c.

\begin{lem}\label{lem:6.9}
Conjecture \ref{conj:5.7} holds for unipotent representations, for $\eta_{\tau, \mc G}$ as above.
\end{lem}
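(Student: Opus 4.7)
The plan is to reduce the statement to the Hecke-algebra functoriality already established in Corollary \ref{cor:3.1}, using that our choice of $\eta_{\tau,\mc G}$ respects all the combinatorial data entering the LLC for unipotent representations. First I would verify that in the decomposition \eqref{eq:3.9} applied to $\eta_{\tau,\mc G}$, the element $g \in G_\ad$ may be taken to be $1$: indeed $\eta_{\tau,\mc G}$ satisfies the hypotheses of Proposition \ref{prop:3.4}.c, so we can legitimately take $\eta_{\mc R(\eta_{\tau,\mc G}),\mc G} = \eta_{\tau,\mc G}$. Consequently ${}^S \eta_{\tau,\mc G}$ is just the $\C$-linear extension of the bijection ${}^L \eta_{\tau,\mc G} : \mc S_\phi \to \mc S_{\tilde \phi}$ (Proposition \ref{prop:5.1}.b), so the asserted formula simplifies to $\eta_{\tau,\mc G}^* \pi(\phi,\rho) = \pi(\tilde \phi, \rho \circ ({}^L \eta_{\tau,\mc G})^{-1})$.

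Next I would exploit that $\eta_{\tau,\mc G}$ is an $F$-isomorphism preserving $\tilde{\mc S} \mapsto \mc S$, $\tilde{\mc P}_\emptyset \mapsto \mc P_\emptyset$ and $\tilde K_0 \mapsto K_0$. In particular $\eta_{\tau,\mc G}$ induces an isomorphism of enlarged Bruhat--Tits buildings $\mc B(\tilde{\mc G},F) \to \mc B(\mc G,F)$; given a facet $\mf f$ with unipotent type $(\hat P_{\mf f},\hat\sigma)$, take $\tilde{\mf f}$ to be its preimage. Then $\eta_{\tau,\mc G}(\hat P_{\tilde{\mf f}}) = \hat P_{\mf f}$, $\hat{\tilde\sigma} := \eta_{\tau,\mc G}^*(\hat\sigma)$ is a cuspidal unipotent type on $\hat P_{\tilde{\mf f}}$, and the pullback functor is implemented by a canonical algebra isomorphism
\[
\mc H(\eta_{\tau,\mc G}) : \mc H(G,\hat P_{\mf f},\hat\sigma) \to \mc H(\tilde G,\hat P_{\tilde{\mf f}},\hat{\tilde\sigma}),
\qquad
f \mapsto f \circ \eta_{\tau,\mc G}.
\]

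The crucial step is to match this with the isomorphism $\mc H({}^L \eta_{\tau,\mc G}) : \mc H(\tilde{\mf s}^\vee,\vec{\mb z}) \to \mc H(\mf s^\vee,\vec{\mb z})$ of \eqref{eq:3.2} under the specialization isomorphism \eqref{eq:6.23}. Here the main obstacle lies: I would need the LLC for cuspidal unipotent representations of \cite{FOS} to be functorial with respect to $\eta_{\tau,\mc G}$, in the sense that $\eta_{\tau,\mc G}^* \pi(\phi_L,\rho_L) = \pi({}^L \eta_{\tau,\mc G}(\phi_L), \rho_L \circ ({}^L \eta_{\tau,\mc G})^{-1})$ on the cuspidal level. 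This is plausible because that LLC is constructed from canonical data (parahoric subgroups, cuspidal unipotent local systems, Kottwitz parameters) which all transport through $\eta_{\tau,\mc G}$, but it must be checked against the construction in \cite{FOS}; in particular one uses the compatibility $\zeta_{\tilde{\mc G}}^+ \circ \eta_{\tau,\mc G}^\vee = \zeta_{\mc G}^+$ built into Condition \ref{cond:3}. Once the cuspidal level is settled, the construction of \cite{SolLLCunip} extending the bijection across a full Bernstein block, and the canonicity of \eqref{eq:6.23}, force the two Hecke-algebra isomorphisms to coincide.

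Finally, having matched the two isomorphisms, I would apply Corollary \ref{cor:3.1}, which tells us that $\mc H(\eta_{\tau,\mc G})^* \bar M(\phi,\rho,\vec z) = \bar M(\tilde\phi,\rho \circ ({}^L \eta_{\tau,\mc G})^{-1}, \vec z)$. Transporting this back through the LLC \eqref{eq:6.24} yields
\[
\eta_{\tau,\mc G}^* \pi(\phi,\rho) = \pi(\tilde\phi, \rho \circ ({}^L \eta_{\tau,\mc G})^{-1})
= \bigoplus_{\tilde\rho \in \Irr(\mc S_{\tilde\phi})} \mr{Hom}_{\mc S_\phi}\bigl(\rho, {}^S \eta_{\tau,\mc G}^*(\tilde\rho)\bigr) \otimes \pi(\tilde\phi,\tilde\rho),
\]
where the second equality uses that ${}^L \eta_{\tau,\mc G}$ is a bijection $\mc S_\phi \to \mc S_{\tilde\phi}$, so the Hom space is one-dimensional precisely when $\tilde\rho = \rho \circ ({}^L \eta_{\tau,\mc G})^{-1}$ and zero otherwise. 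This is the desired instance of Conjecture \ref{conj:5.7}.
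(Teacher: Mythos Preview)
Your proposal follows essentially the same route as the paper. The obstacle you flag---functoriality of the cuspidal unipotent LLC under $\eta_{\tau,\mc G}$---is exactly what the paper resolves by invoking \cite[Theorem 2.(3)]{FOS}, which asserts equivariance of that correspondence under root-datum automorphisms; with that in hand, the rest of your argument (matching the Hecke-algebra isomorphisms and applying Corollary \ref{cor:3.1}) proceeds as you describe. One point the paper adds that you pass over: before using ${}^S \eta_{\tau,\mc G}$ it verifies Condition \ref{cond:3} by noting (Proposition \ref{prop:3.4}.d) that $\eta_{\tau,\mc G}$ is determined only up to $Z_{G_\ad}(S)_\cpt$, and then invoking Lemma \ref{lem:6.7} to see that such compact elements act trivially on both $\Irr_\unip(G)$ and $\Phi_{\nr,e}(G)$, so the induced actions depend on $\tau$ alone.
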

\begin{proof}
By Proposition \ref{prop:3.4}.d, $\tau$ determines $\eta_{\tau,\mc G}$ uniquely up to
conjugation by compact elements $g_c \in Z_{G_\ad}(S)$. Considering $g_c$ in $G_\ad / G$,
\eqref{eq:2.33} and \eqref{eq:2.53} allow us to realize it in $T_{AD,\cpt}$. Then Lemma 
\ref{lem:6.7} says that Ad$(g_c)$ acts trivially on both $\Irr_\unip (G)$ and
$\Phi_{\nr,e}(G)$. As a consequence, the action of $\eta_{\tau, \mc G}$ on unipotent
representations and the action of ${}^L \eta_{\tau,\mc G}$ on enhanced unramified 
L-parameters are uniquely determined by $\tau$. This means that Condition \ref{cond:3}
is fulfilled. 

It was shown in \cite[Theorem 2.(3)]{FOS} that the LLC for supercuspidal unipotent 
representations is equivariant with respect to automorphisms of root data. The conventions
in \cite[p.8--9]{FOS} make this statement precise: it means that 
\begin{equation}\label{eq:6.41}
\begin{aligned}
& \eta_{\tau,\mc G}^* : \Irr_{\cusp,\unip}(G) \to \Irr_{\cusp,\unip}(\tilde G) 
\qquad \text{corresponds to} \\
& \Phi_e ({}^L \eta_{\tau,\mc G}) : \Phi_{\nr,\cusp}(G) \to \Phi_{\nr,\cusp}(\tilde G) .
\end{aligned}
\end{equation}
Write $\eta_{\tau,\mc G}^{-1}(\hat P_{\mf f}) = P_{\tilde{\mf f}} ,\; \eta_{\tau,\mc G}^* 
(\hat \sigma) = \hat{\tilde \sigma}$ and $\eta_{\tau,\mc G}^* (\mf s) = \tilde{\mf s}$.  
For Bernstein components and types, \eqref{eq:6.41} means that $\Phi_e ({}^L \eta_{\tau,\mc G})
(\mf s^\vee) = \tilde{\mf s}^\vee$ and
\[
\Phi_e ({}^L \eta_{\tau,\mc G}) (\phi_{\pi_L},\rho_{\pi_L}) = (\phi_{\eta_{\tau,\mc G}^* \pi_L}, 
\rho_{\eta_{\tau,\mc G}^* \pi_L})  \qquad \pi_L \in \Irr (L)_{\mf s_L} .
\]
Then $\Phi_e ({}^L \eta_{\tau,\mc G})$ restricts to a homeomorphism $\mf s_L^\vee \to
\tilde{\mf s}_{\tilde L}^\vee$. We define an algebra homomorphism
\[
\begin{array}{cccc}
\mc H ({}^L \eta_{\tau,\mc G}) : & \mc H (\tilde{\mf s}^\vee, \vec{\mb z}) & \to &
\mc H (\mf s^\vee, \vec{\mb z}) \\
& N_w f & \mapsto & 
N_{(\tau^\vee)^{-1} w} f \circ \Phi_e ({}^L \eta_{\tau,\mc G}) \big|_{\mf s_L^\vee}
\end{array} ,
\]
as in \eqref{eq:3.2}. Now we are in a setting analogous to that for principal series and
isomorphisms of root data. We can construct a diagram
\[
\xymatrix{
\Rep (G)_{\mf s} \ar[r]^-{\sim} \ar[d]_{\eta_{\tau,\mc G}^*} & 
\Mod \big( \mc H (G,\hat P_{\mf f}, \hat \sigma) \big) \ar[d]_{\mc H (\eta_{\tau,\mc G})^*} 
\ar[r]_{(\mr{ev}_{\vec{\mb z} = \vec{z}} )^* } &
\Mod (\mc H (\mf s^\vee, \vec{\mb z})) \ar[d]_{\mc H ({}^L \eta_{\tau,\mc G})^*} \\
\Rep (\tilde G)_{\tilde{\mf s}} \ar[r]^-{\sim}  & 
\Mod \big( \mc H (\tilde G, \hat P_{\tilde{\mf f}}, \hat{\tilde \sigma}) \big) 
\ar[r]_{(\mr{ev}_{\vec{\mb z} = \vec{z}} )^*} &
\Mod (\mc H (\tilde{\mf s^\vee}, \vec{\mb z})) 
} .
\]
The same arguments as in the principal series case (see the proofs of Theorem \ref{thm:6.3}
and \cite[Lemma 17.5]{ABPSprin}), the diagram commutes. Corollary \ref{cor:3.1} translates
the right column to the desired effect on enhanced L-parameters.
\end{proof}

With the previous lemmas at hand, it is only a small step to prove Conjecture \ref{conj:5.7}
for unipotent representations, with respect to all eligible homomorphisms.

\begin{thm}\label{thm:6.10}
Let $\mc G, \tilde{\mc G}$ be connected reductive $F$-groups which split over an unramified
extension of $F$. Let $f : \tilde{\mc G} \to \mc G$ be a homomorphism satisfying
Conditions \ref{cond:1}. Let $(\phi,\rho) \in \Phi_{\nr,e}(G)$ and write
$\tilde \phi = {}^L f \circ \phi \in \Phi_\nr (\tilde G)$. Then
\[
f^* (\pi (\phi,\rho)) = \bigoplus\nolimits_{\tilde \rho \in \Irr (\mc S_{\tilde \phi})}
\mr{Hom}_{\mc S_\phi} \big(\rho, {}^S f^* (\tilde \rho) \big) \otimes \pi (\tilde \phi, \tilde \rho) .
\]
\end{thm}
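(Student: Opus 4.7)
The plan is to reduce to the three cases already handled by Lemmas \ref{lem:6.6}, \ref{lem:6.8} and \ref{lem:6.9} via the factorization and transitivity results from Section \ref{sec:5}.

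First, I would invoke the factorization \eqref{eq:5.1}
\[
\tilde{\mc G} \xrightarrow{f_1} \tilde{\mc G} \times \mc T \xrightarrow{f_2}
(\tilde{\mc G} \times \mc T) / \ker (f \times \mr{id}_{\mc T}) \xrightarrow{f_3} \mc G,
\]
where $\mc T$ is a central $F$-torus of $\mc G$ chosen so that $\mc G = f(\tilde{\mc G})\mc T$ with finite intersection. Here $f_1$ is the inclusion of a direct factor, $f_2$ is a central quotient, and $f_3$ is an $F$-isomorphism. The formula in Conjecture \ref{conj:5.7} is transitive in $f$ by Proposition \ref{prop:5.1}.c, so it suffices to verify the statement for each $f_i$ separately. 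The case $f_1$ is essentially trivial: $\pi(\phi,\rho)$ pulls back to $\pi(\tilde\phi,\rho)$ in the inner tensor factor, $\mc S_{\tilde\phi}=\mc S_\phi$, and ${}^S f_1$ is the identity, so both sides of the formula coincide. The case $f_2$ is precisely the content of Lemma \ref{lem:6.6}.

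The remaining case is $f_3$, an isomorphism of connected reductive $F$-groups. By Theorem \ref{thm:C} (i.e.\ Proposition \ref{prop:3.4} and \eqref{eq:3.9}), I can write
\[
f_3 = \mr{Ad}(g)\circ \eta_{\mc R(f_3),\mc G}
\]
for a unique $g\in G_\ad$, where $\eta_{\mc R(f_3),\mc G}$ is the canonical isomorphism lifting the chosen root datum isomorphism. Lemma \ref{lem:6.9} handles $\eta_{\mc R(f_3),\mc G}$, so it remains to treat $\mr{Ad}(g)$. Since the actions on $\Irr(G)$ and on $\Phi_e(G)$ both factor through $G_\ad/G \cong T_{AD}/T$, I may and will take $g\in T_{AD}$. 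Using the decomposition $T_{AD}=T_{AD,\cpt}\times X_*(T_{AD})$ from \eqref{eq:2.53}, write $g=g_c g_x$.

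For the compact part $g_c$, Lemma \ref{lem:6.7} shows that $\mr{Ad}(g_c)^*$ acts trivially on $\Rep(G)_{(\hat P_{\mf f},\hat\sigma)}$ and that the induced character $\tau_{\phi,\mc G}(g_c)$ vanishes, so both sides of Conjecture \ref{conj:5.7} reduce to $\pi(\phi,\rho)$ itself. For the unramified part $g_x\in X_*(T_{AD})$, Lemma \ref{lem:6.8} gives exactly the required formula. Invoking once more the transitivity from Proposition \ref{prop:5.1}.c, combining the contributions of $g_c$ and $g_x$, then of $\mr{Ad}(g)$ and $\eta_{\mc R(f_3),\mc G}$, and finally of $f_1, f_2, f_3$, yields the theorem. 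The main technical obstacle has really already been overcome in the earlier lemmas: Lemma \ref{lem:6.6} (which relies on the canonical pullback through intermediate algebras, Theorem \ref{thm:4.5}, and the multiplicity computation of Lemma \ref{lem:6.4}) for $f_2$, and the verification of Condition \ref{cond:2} in Lemma \ref{lem:6.11} together with the analysis of $\mr{Ad}(x_g)$ on affine Hecke algebras in Theorem \ref{thm:2.9} for the translation part of Ad$(g)$. Given those ingredients, the present argument is a clean assembly via the factorization.
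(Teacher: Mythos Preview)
Your proposal is correct and follows essentially the same approach as the paper's own proof: factorize $f$ via \eqref{eq:5.1}, invoke transitivity (Proposition \ref{prop:5.1}.c), and then dispatch $f_2$ by Lemma \ref{lem:6.6}, $f_3$ by writing it as $\mr{Ad}(g_x)\circ\mr{Ad}(g_c)\circ\eta_{\mc R(f_3),\mc G}$ and applying Lemmas \ref{lem:6.7}, \ref{lem:6.8}, \ref{lem:6.9}, and $f_1$ by the trivial observation that $\mc S_{\tilde\phi}=\mc S_\phi$ with ${}^S f_1=\mr{id}$. The only cosmetic difference is that the paper spells out the $f_1$ case in slightly more detail (writing $\pi=\tilde\pi\otimes\chi_{T'}$ explicitly and noting that $\mc T'$ is unramified so the intermediate group still fits the hypotheses), whereas you treat it as essentially immediate.
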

\begin{proof}
We factorize $f$ as in \eqref{eq:5.1}, and to avoid confusion we add a prime to the 
involved torus. We consider the three factors of $f$ separately. For 
\[
f_2 : \tilde{\mc G} \times \mc T' \to \tilde{\mc G} \times \mc T' / \ker (f \times \mr{id}_{\mc T'})
\]
see Lemma \ref{lem:6.6}. By Proposition \ref{prop:3.4} and \eqref{eq:2.53} we can write
\[
f_3 = \mr{Ad}(g_x) \circ \mr{Ad}(g_c) \circ \eta_{\mc R (f_3),\mc G} :
\tilde{\mc G} \times \mc T' / \ker (f \times \mr{id}_{\mc T'}) \to \mc G ,
\]
where $g_x \in X_* (T_{AD})$ and $g_c \in T_{AD,\cpt}$. For Ad$(g_c)$ see Lemma \ref{lem:6.7},
for Ad$(g_x)$ see Lemma \ref{lem:6.8} and for $\eta_{\mc R (f_3),\mc G}$ see Lemma \ref{lem:6.9}.

It remains to prove the theorem for the inclusion
\[
f_1 : \tilde{\mc G} \to \tilde{\mc G} \times \mc T',  
\]
where $\mc T'$ is an unramified torus. 
Any $\pi \in \Irr_\unip (\tilde{G} \times T')$ is of the form 
$\tilde \pi \otimes \chi_{T'}$ with $\tilde \pi \in \Irr_\unip (\tilde{G})$
and $\chi_{T'} \in X_\Wr (\mc T (K))$. The restriction (or pullback) of $\pi$ to
$\tilde{G}$ is $\tilde \pi$, which is again irreducible. 

Let $(\tilde \phi,\tilde \rho)$ and $(\phi_{T'},1)$ be the enhanced L-parameters
of, respectively,  $\tilde \pi$ and $\chi_{T}$, via \cite[Theorem 5.1]{SolLLCunip}.
By \cite[Lemma 5.2]{SolLLCunip} the enhanced L-parameter of $\pi$ is 
$(\tilde \phi \times \phi_{T'},\tilde \rho \otimes 1)$. Since 
${(\tilde{\mc G} \times \mc T)^\vee}_\Sc = {\tilde{\mc G}^\vee}_\Sc$, we can identify 
$\mc S_{\tilde \phi \times \phi_{T'}}$ with $\mc S_{\tilde \phi}$ and 
$\tilde \rho \otimes 1$ with $\tilde \rho$. Summarizing:
\[
f_1^* (\pi) = f_1^* \pi (\tilde \phi \times \phi_{T'}, \tilde \rho \otimes 1) =
\tilde \pi = \pi (\tilde \phi, \tilde \rho) .
\]
As ${}^S f_1$ is just the identity, this can also be expressed as
\[
\bigoplus\nolimits_{\rho' \in \Irr (\mc S_{\tilde \phi})} \mr{Hom}_{\mc S_{\tilde \phi \times 
\phi_{T'}}} \big( \tilde \rho \otimes 1, {}^S f_1^* (\rho') \big) 
\otimes \pi (\tilde \phi, \rho') . \qedhere
\]
\end{proof}

\section{Well-known groups}
\label{sec:8}

In this section $F$ can be any local field. We verify that the LLC for several well-known 
groups is functorial in the sense of Conjecture \ref{conj:5.7}. 

\subsection{$GL_n$ and its inner forms} \
\label{par:GLn}

First we consider the LLC for $GL_n (F)$, established in \cite{Lan,LRS,HaTa,Hen1,Zel}.
We denote it by $\pi \mapsto \phi_\pi$ and $\phi \mapsto \pi (\phi,1)$ (where 1 means the
enhancement of $\phi$, which necessarily trivial).

The adjoint group of $GL_n$ is $PGL_n$ and the canonical map
\[
GL_n (F) \to PGL_n (F)
\]
is surjective. Hence the actions of $PGL_n (F)$ on $\Irr (GL_n (F))$ and on $\Phi_e (GL_n (F))$
are trivial. By Corollary \ref{cor:3.3} the actions of $\mr{Aut}_F (GL_n)$ on these sets factor
through the group of automorphisms of the based root datum of $GL_n$ (with respect to the diagonal
torus $\mc T$). The only nontrivial automorphism of $\mc R (GL_n, \mc T)$ is
\begin{equation}\label{eq:8.1}
\tau : x \mapsto (n \, n \!-\! 1 \cdots 2 \, 1) (-x)    \qquad x \in \Z^n .
\end{equation}
Denote the inverse transpose of a matrix $A$ by $A^{-T}$ and let $P_\sigma \in GL_n (\Z)$
be the permutation matrix associated to $\sigma \in S_n$. Then \eqref{eq:8.1} lifts to the 
automorphism
\begin{equation}\label{eq:8.2}
\begin{array}{clcl}
\eta_\tau : & GL_n (F) & \to & GL_n (F) \\ 
 & g & \mapsto & \mr{Ad} \big( \mr{diag}(1,-1,1,\cdots,(-1)^n) P_{(n \, n-1 \cdots 2 \, 1)} \big) g^{-T}
\end{array}.
\end{equation}
The advantage of including an adjoint action in $\eta_\tau$ is that now it preserves the 
standard pinning of $GL_n (F)$. We note that the same formula defines an automorphism
$\eta_\tau^\vee$ of $GL_n (\C)$. Recall that as Langlands parameters for $GL_n (F)$ we can take
homomorphisms $\mb W_F \times SL_2 (\C) \to GL_n (\C)$, that is $n$-dimensional representations of
$\mb W_F \times SL_2 (\C)$. More or less by definition 
\[
\Phi ({}^L \eta_\tau)(\phi) = \eta_\tau^\vee \circ \phi = \phi^\vee 
\qquad \phi \in \Phi (GL_n (F)) .
\]
It is well-known \cite[Theorem 1]{GeKa} that $\pi \circ \eta_\tau \cong \tau^\vee$ 
(the contragredient of $\tau$) for any $\pi \in \Irr (GL_n (F))$. 
By \cite[Theorem 1.3]{AdVo}, when $F$ is archimedean,
\begin{equation}\label{eq:8.3}
\pi (\phi^\vee,1) = \pi (\phi,1)^\vee  \in \Irr (GL_n (F)) 
\end{equation}
for all $\phi \in \Phi (GL_n (F))$. 

When $F$ is non-archimedean, \cite[Th\'eor\`eme 4.2]{Hen1} and \cite[Theorem 15.7]{LRS} show
\eqref{eq:8.3} for all irreducible $n$-dimensional representations $\phi$ of $\mb W_F$. Such $\phi$ 
(trivial on $SL_2 (\C)$ and with trivial enhancement) form precisely the cuspidal Langlands parameters 
for $GL_n (F)$, see \cite[Example 6.11]{AMS1}. With the work of Bernstein and Zelevinsky \cite{Zel},
the LLC is extended from the cuspidal level to the whole of $\Irr (GL_n (F))$ and $\Phi_e (GL_n (F))$.
It was shown in \cite[\S 2.9]{Hen2} that this LLC satisfies \eqref{eq:8.3} for all
$(\phi,1) \in \Phi_e (GL_n (F))$. In other words,
\begin{equation}\label{eq:8.4}
\pi (\Phi_e ({}^L \eta_\tau) (\phi,1)) = \eta_\tau^* (\pi (\phi,1)) 
\qquad (\phi,1) \in \Phi_e (GL_n (F)).
\end{equation}
Consider a division algebra $D$ with centre $F$. Assume that $\dim_F (D) = d^2$ and $m = n/d \in \N$.
Then $G = GL_m (D)$ is an inner twist of $GL_n (F)$, and every inner twist looks like this. 
The LLC for $GL_m (D)$ follows from the LLC for $GL_n (F)$ via the Jacquet--Langlands correspondence
\cite{JaLa,DKV,Bad}, see \cite[\S 2]{ABPSSL1}. 

We take a brief look at the construction. Recall that a $G$-representation
$\pi$ is called essentially square-integrable (indicated by a subscript "ess$L^2$") if the 
restriction of $\pi$ to the derived group $G_\der$ is square-integrable. According to \cite{DKV}
and \cite[Th\'eor\`eme 1.1]{Bad} there is a canonical bijection
\[
JL_D : \Irr_{\mr{ess}L^2}(GL_n (F)) \to \Irr_{\mr{ess}L^2}(GL_m (D)) 
\]
determined by the equality
\begin{equation}\label{eq:8.26}
(-1)^n \mr{tr}_\pi (g') = (-1)^m \mr{tr}_{JL_D (\pi)}(g) 
\end{equation}
whenever $g \in GL_m (D)$ and $g' \in GL_n (F)$ are matching regular elliptic elements. Here
matching is determined simply by an equality of characteristic polynomials.

The LLC on essentially square-integrable $G$-representations is given by 
\[
\pi (\phi,\zeta_D) = JL_D (\pi (\phi,1))  \qquad \phi \in \Phi_{\mr{disc}}(GL_m (D)) .
\]
In particular such representations correspond precisely to discrete L-parameters. 
The LLC for Irr$(G)$ is derived from that for $\Irr_{\mr{ess}L^2}(G)$ by parabolic induction
and Langlands quotients, see \cite[(13)]{ABPSSL1}. 

For this $G$ we have to be careful not to forget the enhancements of L-parameters. It is easy
to see that for any $\phi \in \Phi (GL_n (F))$ the natural map $Z(SL_n (\C)) \to \mc S_\phi$ 
is surjective. So any enhancement of $\phi$ can be regarded as a character of $Z(SL_n (\C)) 
\cong \Z / n \Z$. Then $(\phi,\rho)$ is relevant for $GL_m (D)$ if and only if $\rho$ equals
the character $\zeta_D \in \Irr \big( Z(SL_n (\C)) \big)$ determined by $GL_m (D)$ via the
Kottwitz homomorphism. We remark that $\zeta_D$ has order $d = n /m$. As the notation suggests, 
it depends only on $D$ -- it is also the parameter of $D^\times$ as inner twist of $GL_d (F)$.

The Kottwitz parameter for the opposite division algebra $D^{op}$ is $\zeta_{D^{op}} = \zeta_D^{-1}$.
Sometimes $D^{op} \cong D$ even when $D$ is not commutative. Namely, this happens when
$\zeta_D$ has order two, and then $D$ is a quaternion algebra over $F$. In that case there is
a canonical $F$-algebra isomorphism $D \to D^{op} : d \mapsto \bar d$ , a generalization of the 
conjugation map on the quaternion algebra over $\R$. It induces an isomorphism of $F$-groups 
\begin{equation}\label{eq:8.25}
GL_m (D) \to GL_m (D^{op}) : g \mapsto \bar g  \hspace{1cm} \text{when } \zeta_D \text{ has order 2.}
\end{equation}
If $g \in GL_m (D)$ matches $g' \in GL_n (F)$, then $\bar g \in GL_m (D^{op})$ also matches $g'$, 
because $g$ and $\bar g$ have the same characteristic polynomial. It follows that \eqref{eq:8.25} 
intertwines the local Langlands correspondences for $GL_m (D)$ and $GL_m (D^{op})$.

Like for $GL_n (F)$, the adjoint quotient map
\[
q : G = GL_m (D) \to G_\ad = PGL_m (D)
\]
is surjective, and $PGL_m (D)$ acts trivially on $\Irr (GL_m (D))$ and $\Phi_e (GL_m (D))$. 
As before, the only nontrivial action of $\mr{Aut}_F (GL_n)$ can come from \eqref{eq:8.1} and
\eqref{eq:8.2}. 
Since $\eta_\tau^\vee$ acts on $Z(GL_n (\C))$ by inversion, 
$\rho \circ (\eta_\tau^\vee)^{-1} = \rho^{-1}$. This means that $\Phi_e ({}^L \eta_\tau)$ sends
$\Phi_e (GL_m (D))$ to $\Phi_e (GL_m (D^{op}))$. 

Furthermore, on closer inspection $g \mapsto g^{-T}$ only is an automorphism of $GL_m (D)$ if
$D = D^{op}$. In general it gives an isomorphism $GL_m (D^{op}) \to GL_m (D)$. Hence we should
regard \eqref{eq:8.2} as an isomorphism of $F$-groups
\[
\eta_\tau : GL_m (D^{op}) \to GL_m (D) .
\]
When $\zeta_D$ has order two, we can (implicitly) precompose $\eta_\tau$ with \eqref{eq:8.25}, 
and regard it as an automorphism of $GL_m (D)$. In that case it is known that
$\eta_\tau^* \pi \cong \pi^\vee$ for all $\pi \in GL_m (D)$ \cite[Theorem 3.1]{Rag}.

\begin{prop}\label{prop:8.1}
For any isomorphism of $F$-groups $f : GL_m (D^{op}) \to GL_m (D)$:
\[
f^* \pi (\phi,\zeta_D) = \pi (\Phi_e ({}^L f) (\phi,\zeta_D)) = \pi (\phi^\vee, \zeta_{D^{op}}) .
\]
\end{prop}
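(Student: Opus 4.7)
The plan is to reduce the statement to two special cases: pullback along the specific isomorphism $\eta_\tau$ from \eqref{eq:8.2}, and the already-established identity \eqref{eq:8.4} for $GL_n(F)$. By Theorem \ref{thm:3.2} and Proposition \ref{prop:3.4}, every isomorphism $f : GL_m(D^{op}) \to GL_m(D)$ factors as $\mr{Ad}(g) \circ \eta_\tau$ for some $g \in PGL_m(D)$. Since the adjoint quotient $GL_m(D) \to PGL_m(D)$ is surjective, $g$ lifts to an element of $GL_m(D)$, so $\mr{Ad}(g)$ is inner and acts trivially on $\Irr(GL_m(D))$. For the same reason $\Phi_e({}^L f)$ depends only on $\mc R(f) = \tau$, and equals $\Phi_e({}^L \eta_\tau)$. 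Thus both sides of the claimed equality depend only on $\tau$, and it suffices to treat $f = \eta_\tau$.

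For $(\phi,\zeta_D) \in \Phi_e(GL_m(D))$ with $\phi$ discrete, I will compare Harish-Chandra characters of $\eta_\tau^* \pi(\phi,\zeta_D)$ and $\pi(\phi^\vee,\zeta_{D^{op}})$ on regular elliptic elements of $GL_m(D^{op})$. The crucial point is that $\eta_\tau$ preserves JL matching: if $g \in GL_m(D^{op})$ and $g' \in GL_n(F)$ share the same reduced characteristic polynomial, then so do $\eta_\tau(g) \in GL_m(D)$ and $\eta_\tau(g') \in GL_n(F)$, because $g \mapsto g^{-T}$ sends the reduced characteristic polynomial to that of $g^{-1}$ in both cases, and the inner conjugation factor in $\eta_\tau$ preserves reduced characteristic polynomials. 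Combining this with \eqref{eq:8.26} (applied twice) and \eqref{eq:8.4} yields, for any regular elliptic $g \in GL_m(D^{op})$ matching $g' \in GL_n(F)$,
\begin{align*}
(-1)^m \mr{tr}_{\eta_\tau^* \pi(\phi,\zeta_D)}(g)
&= (-1)^m \mr{tr}_{\pi(\phi,\zeta_D)}(\eta_\tau(g))
= (-1)^n \mr{tr}_{\pi(\phi,1)}(\eta_\tau(g')) \\
&= (-1)^n \mr{tr}_{\pi(\phi^\vee,1)}(g')
= (-1)^m \mr{tr}_{\pi(\phi^\vee,\zeta_{D^{op}})}(g).
\end{align*}
Since essentially square-integrable representations are characterized by their characters on regular elliptic elements (this is a reformulation of the Deligne--Kazhdan--Vigneras / Badulescu theorem), this proves the proposition when $\phi$ is discrete.

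For arbitrary $\phi$, I will extend via the Langlands classification: every $\pi(\phi,\zeta_D)$ is the unique irreducible quotient of a standard module $I_P^{GL_m(D)}(\sigma)$, where $\sigma$ is essentially square-integrable on a standard Levi $M = \prod_j GL_{m_j}(D)$ in Langlands position. Under $\eta_\tau$ the parabolic $P$ is matched (after the Weyl-group twist already built into $\eta_\tau$) with a standard parabolic of $GL_m(D^{op})$ whose Levi is $\prod_j GL_{m_j}(D^{op})$, and the restriction of $\eta_\tau$ to each Levi factor is again an isomorphism of the same type $GL_{m_j}(D^{op}) \to GL_{m_j}(D)$ treated by the discrete case. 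By compatibility of parabolic induction with pullback along $\eta_\tau$, together with the uniqueness of Langlands quotients, one reads off $\eta_\tau^* \pi(\phi,\zeta_D) = \pi(\phi^\vee,\zeta_{D^{op}})$ for all $(\phi,\zeta_D) \in \Phi_e(GL_m(D))$.

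The main obstacle is the matching lemma in the second step: checking that $\eta_\tau$ preserves the JL matching of regular elliptic conjugacy classes. This reduces to verifying that the reduced characteristic polynomial of $g^{-T}$ equals that of $g^{-1}$ (both over $F$ and over $D$), and that the conjugation factor in \eqref{eq:8.2} preserves reduced characteristic polynomials; both are elementary. The remaining bookkeeping for the induction step is routine once one fixes pinnings, the only subtlety being the reordering of Levi factors produced by the inverse-transpose automorphism $\tau$, which is absorbed by the Weyl element already present in \eqref{eq:8.2}.
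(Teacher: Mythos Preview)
Your proposal is correct and follows essentially the same strategy as the paper's proof: reduce to $f=\eta_\tau$ via the surjectivity of $GL_m(D)\to PGL_m(D)$, handle the discrete case by combining the Jacquet--Langlands character identity \eqref{eq:8.26} with \eqref{eq:8.4} and the fact that $\eta_\tau$ preserves matching of regular elliptic elements, and then extend to all parameters via the Langlands classification applied factorwise on standard Levi subgroups. The only cosmetic difference is that the paper packages the discrete step as the one-line identity $\eta_\tau^*\pi(\phi,\zeta_D)=JL_{D^{op}}(\eta_\tau^*\pi(\phi,1))=JL_{D^{op}}(\pi(\phi^\vee,1))=\pi(\phi^\vee,\zeta_{D^{op}})$, whereas you spell out the underlying trace computation; your appeal to ``essentially square-integrable representations are characterized by their elliptic characters'' is slightly stronger than necessary, since the uniqueness clause of the Jacquet--Langlands correspondence already suffices.
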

\begin{proof}
By Corollary \ref{cor:3.3} and the surjectivity of $q$, it suffices to consider $f = \eta_\tau$.

In \eqref{eq:8.26} $\eta_\tau (g')^{-1} \in GL_n (F)$ matches with $\eta_\tau (g)^{-1} \in 
GL_m (D^{op})$. Combining that with \eqref{eq:8.4}, we see that
\begin{multline*}
(\eta_\tau)^* \pi (\phi,\zeta_D) = JL_{D^{op}}(\eta_\tau^* \pi (\phi,1)) = 
JL_{D^{op}}(\pi (\phi^\vee,1)) \\
= \pi (\phi^\vee, \zeta_{D^{op}}) = \pi \big( \Phi_e ({}^L \eta_\tau) (\phi,\zeta_D ) ) \big) .
\end{multline*} 
A standard Levi subgroup of $G$ has the form 
\[
M = \prod\nolimits_i GL_{m_i}(D) \qquad \text{with } \sum\nolimits_i m_i = m . 
\]
Then $\eta_\tau^{-1}(M) = \prod_i GL_{m_i}(D^{op})$ and one checks that
$\eta_\tau : \eta_\tau^{-1}(M) \to M$ equals the product of the maps $\eta_\tau$ for the 
various $GL_{m_i}(D^{op})$. We denote the LLC for $\Irr_{\mr{ess}L^2}(M)$ by $\pi_M$.

For $\phi = \prod_i \phi_i$ with $\phi_i \in \Phi_{\mr{disc}}(GL_{m_i}(D))$, we find
\begin{equation}\label{eq:8.5}
\eta_\tau^* \pi_M (\phi,\zeta_D) = 
\eta_\tau^* \big( \bigotimes\nolimits_i \pi (\phi_i,\zeta_D) \big)
= \bigotimes\nolimits_i \pi ( \phi_i^\vee, \rho_D^{-1}) = 
\pi_{\eta_\tau^{-1}(M)} (\phi^\vee, \zeta_{D^{op}}) .
\end{equation}
Choose the parabolic subgroup $P = M U$ as in \cite[(13)]{ABPSSL1}. By construction 
$\pi (\phi,\zeta_D)$ is the unique Langlands quotient $L(P,\pi_M (\phi,\zeta_D))$ of
$I_P^G (\pi_M (\phi,\zeta_D))$. From \eqref{eq:8.5} we deduce
\[
\eta_\tau^* I_P^G (\pi_M (\phi,\zeta_D)) \cong I_{\eta_\tau^{-1}(P)}^{GL_m (D^{op})}
\big( \pi_{\eta_\tau^{-1}(M)} (\phi^\vee, \zeta_{D^{op}}) \big)
\]
and
\begin{multline*}
\eta_\tau^* \pi (\phi,\zeta_D) = \eta_\tau^* L(P,\pi_M (\phi,\zeta_D)) =
L \big( \eta_\tau^{-1}(P), \pi_{\eta_\tau^{-1}(M)} (\phi^\vee, \zeta_{D^{op}}) \big) \\
= \pi (\phi^\vee, \zeta_{D^{op}}) = \pi \big( \Phi_e ({}^L \eta_\tau) (\phi,\zeta_D) \big). \qedhere
\end{multline*}
\end{proof}

Notice that, because $PGL_m (D)$ is a quotient of $GL_m (D)$, $\Irr (PGL_m (D))$ can be considered
as a subset of $\Irr (GL_m (D))$. As L-parameter of any $\pi \in \Irr (PGL_m (D))$ one just takes
the L-parameter $\phi_{q^* \pi}$ of $q^* \pi \in \Irr (GL_m (D))$. 

Let us check that $\phi_{q^* \pi}$ really is a L-parameter for $PGL_m (D)$.
Via the LLC, the central character of $q^* \pi$ corresponds to $\det \circ \phi_{q^* \pi}$. 
(This is well-known for $GL_n (F)$, cf. the aforementioned references. It carries over to $GL_m (D)$ 
because every step in the construction of the LLC for that group preserves central characters.) 
But $q^* \pi$ is trivial on $\ker q = Z(GL_m (D)) = F^\times$, so $\det \circ \phi_{q^* \pi} = 1$.
In other words, the image of $\phi_{q^* \pi}$ lies in $SL_n (\C) = (PGL_n)^\vee = (PGL_m (D))^\vee$.

The component groups $\mc S_\phi$ for $PGL_m (D)$ are the same as for $GL_m (D)$. This allows one to
define the LLC for $PGL_m (D)$ as
\begin{equation}\label{eq:8.6}
\begin{array}{ccc}
\Irr (PGL_m (D)) & \longleftrightarrow & \Phi_e (PGL_m (D)) \\
\pi & \mapsto & (\phi_{q^* \pi}, \zeta_D) \\
\pi (q^\vee \circ \phi, \zeta_D) & \lmapsto & (\phi,\zeta_D) 
\end{array}.
\end{equation}
We note that by \eqref{eq:8.6}
\begin{equation}\label{eq:8.7}
\text{Conjecture \ref{conj:5.7} holds for the quotient map } q : GL_m (D) \to PGL_m (D) .
\end{equation}

\subsection{$SL_n$ and its inner forms} \
\label{par:SLn}

Let $n,F,D$ be as in the previous paragraph. Recall the reduced norm map 
$G = GL_m (D) \to F^\times$. We denote its kernel by $G^\sharp = SL_m (D)$. Then $SL_m (D)$ is
an inner twist of $SL_n (F)$, and every inner twist looks like this. A LLC for $SL_m (D)$ was
achieved in \cite{GeKn,HiSa,ABPSSL1}. It comes from the LLC for $GL_m (D)$, in agreement with
Conjecture \ref{conj:5.7}. 

Let us provide more background. For $\pi \in \Irr (G)$ we define 
\[
X^G (\pi) := \{ \chi \in \Irr (G / G^\sharp) : \pi \otimes \chi \cong \pi \} .
\]
For every $\chi \in X^G (\pi)$ we choose a nonzero $I_\chi \in \mr{Hom}_G (\pi \otimes \chi, 
\pi)$. By \cite[Lemma 2.4]{HiSa} the $I_\chi$ form a basis of $\mr{End}_{G^\sharp}(\pi)$. 
As each $I_\chi$ is unique up to scalars, it follows that there exists a 2-cocycle $\kappa_\pi$ 
and an algebra isomorphism 
\begin{equation}\label{eq:8.16}
\mr{End}_{G^\sharp}(\pi) \cong \C [X_G (\pi), \kappa_\pi] . 
\end{equation}
By \cite[Corollary 2.10]{HiSa} and \cite[(19)]{ABPSSL1}
\begin{equation}\label{eq:8.8}
\pi \cong \bigoplus\nolimits_{\rho \in \Irr (\mr{End}_{G^\sharp} (\pi))} \rho \otimes
\mr{Hom}_{\mr{End}_{G^\sharp}(\pi)}(\rho,\pi) ,
\end{equation}
as representations of $\mr{End}_{G^\sharp}(\pi) \oplus \C [G^\sharp]$.

Let $\imath : SL_m (D) \to GL_m (D)$ be the inclusion. Then
$\imath^\vee : GL_n (\C) \to PGL_n (\C)$ is the canonical projection. Every $\phi^\sharp \in
\Phi (SL_m (D))$ can be lifted to a $\phi \in \Phi (GL_m (D))$, and the choice of the lift
does not matter. Furthermore ${}^L \imath \circ \phi = \phi^\sharp$ and $\mc S_\phi \subset 
\mc S_{\phi^\sharp}$. The maps ${}^L \imath : \mc S_\phi \to \mc S_{\phi^\sharp}$ and
${}^S \imath : \C [\mc S_\phi] \to \C [\mc S_{\phi^\sharp}]$ are just the inclusions.

When $D$ is isomorphic to the quaternion algebra $\mh H$ (so $F = \R$), $X_G (\pi) = 1$ 
and $[\mc S_{\phi^\sharp} : \mc S_\phi] = 2$ for all $\pi \in \Irr (GL_m (D))$ and all
$\phi \in \Phi (GL_m (D))$, see \cite[Theorem 3.4]{ABPSSL1}.

Assume for the moment that $D \not\cong \mh H$. As explained in \cite[(21)]{ABPSSL1}, 
there is a canonical isomorphism
\begin{equation}\label{eq:8.14}
a : \mc S_{\phi^\sharp} / \mc S_\phi \to X^G (\pi (\phi,\zeta_D)) ,
\end{equation}
determined by $s \phi s^{-1} = a_s \phi = \hat \chi \phi$, where $\hat \chi = a_s$.
By \cite[Lemma 12.5]{HiSa} and \cite[Theorem 3.2]{ABPSSL1}, \eqref{eq:8.16} and 
\eqref{eq:8.14} can be combined to isomorphisms
\begin{equation}\label{eq:8.9}
\mr{End}_{G^\sharp}(\pi (\phi,\zeta_D)) \cong \C [X_G (\pi (\phi,\zeta_D)), 
\kappa_{\pi (\phi,\zeta_D)}] \cong e_{\zeta_D} \C [\mc S_{\phi^\sharp}] .
\end{equation}
In these sources $s \in \mc S_{\phi^\sharp}$ is mapped to a scalar multiple of
$I_\chi \in \mr{Hom}_G (\pi(\phi,\zeta_D) \otimes \chi, \pi (\phi,\zeta_D) )$. But it
is more natural to send $s$ to an element of $\mr{Hom}_G (\pi(\phi,\zeta_D), \\
\pi (\phi,\zeta_D) \otimes \chi)$, that agrees among others with \cite{Ree1,ABPSprin}.

We recall the crucial relation from \cite[Theorem 12.4]{HiSa}:
\begin{equation}\label{eq:8.17}
I_\chi I_{\chi'} I_\chi^{-1} I_{\chi'}^{-1} = \zeta_D (s s' s^{-1} (s')^{-1}) 
\qquad s, s' \in \mc S_{\phi^\sharp}, \hat \chi = a_s, \hat{\chi'} = a_{s'}.
\end{equation}
Notice that $I_\chi I_{\chi'} I_\chi^{-1} I_{\chi'}^{-1} \in \C \mr{Id}$ equals
$I_\chi^{-1} I_{\chi'}^{-1} I_\chi I_{\chi'}$. We rewrite \eqref{eq:8.17} as
\[
I_\chi^{-1} I_{\chi'}^{-1} I_\chi I_{\chi'} = \zeta_D \big( s s' s^{-1} (s')^{-1} \big) .
\]
As in \cite[Lemma 12.5]{HiSa}, this enables us to exhibit an isomorphism \eqref{eq:8.9}
which sends $e_{\zeta_D} s \in e_{\zeta_D} \C [\mc S_{\phi^\sharp}]$ to a scalar
multiple $I_s$ of $I_\chi^{-1} \in \mr{Hom}_G (\pi(\phi,\zeta_D), \pi (\phi,\zeta_D) \otimes \chi)$.
It is this isomorphism which we use to construct the LLC. In particular the $\rho$'s in 
\eqref{eq:8.8} can be regarded as representations of $\mc S_{\phi^\sharp}$ on which 
$\mc S_\phi$ acts as $\zeta_D \mr{id}_{V_\rho}$. Next we define
\begin{equation}\label{eq:8.10}
\pi (\phi^\sharp, \rho) = \mr{Hom}_{\mr{End}_{G^\sharp}(\pi (\phi,\zeta_D))}
(\rho,\pi (\phi,\zeta_D)) \in \Irr (G^\sharp).
\end{equation}
When $D \cong \mh H$, this does not fit entirely, because \eqref{eq:8.14} is not injective.
That is caused by the non-surjectivity of the reduced norm map $\mh H \to \R$. A solution
is provided by strong rational forms, in the sense of \cite[Definition 2.6 and Problem 9.3]{Vog}.
(See \cite[Chapter 2]{ABV} and \cite[\S 5.1]{Kal2} for closely related notions.)
As worked out in \cite[Example 2.11]{Vog}, there are exactly two non-split strong rational forms 
of $SL_{2m}(\R)$, parametrized by the 1-cocycles $\mr{Gal}(\C / \R) \to SL_{2m}(\C)$ sending
$\mr{id}_\C$ to 1 and complex conjugation to $\pm \matje{0}{1}{-1}{0}^{\oplus m}$.
These are equivalent as inner twists of $SL_{2m}$ but not as strong rational forms. 

Every $\phi^\sharp \in \Phi (SL_m (\mh H))$ can be put into a standard shape, as in the proof
of \cite[Theorem 2.2]{ABPSSL1}. Then $\phi$ comes from a discrete L-parameter for
$GL_1 (\mh H)^m$, with image in $GL_2 (\C)^m$. The computations in the proof of 
\cite[Theorem 3.4]{ABPSSL1} show that we can present $\mc S_{\phi^\sharp}$ as
\begin{equation}\label{eq:8.30}
\mc S_{\phi^\sharp} = \big\langle \big\{ 1, e^{\pi i / m}, \matje{i}{0}{0}{-i}^{\oplus m},
e^{\pi i / m} \matje{i}{0}{0}{-i}^{\oplus m} \big\} \big\rangle 
\big/ \langle e^{2 \pi i / m} \rangle .
\end{equation}
The $SL_m (\mh H)$-relevance condition becomes $\rho (e^{\pi i / m}) = -1$, so there are
precisely two relevant enhancements of $\phi^\sharp$. We match them with the two strong 
rational forms under consideration, in the same way for every $\phi^\sharp$. In other words,
we decree that every such $\rho$ is relevant for just one strong rational form, say 
$G^\sharp_\rho$. Then we can define a bijective LLC by
\begin{equation}\label{eq:8.29}
\pi (\phi^\sharp, \rho) := \pi (\phi, \zeta_{\mh H}) \in \Irr (G^\sharp_\rho) . 
\end{equation}

\begin{thm}\label{thm:8.2}\textup{\cite{GeKn,HiSa,ABPSSL1}} \\
With the above notations $\imath^* \pi (\phi,\zeta_D)$ equals (for $D \not\cong \mh H$)
\[
\bigoplus_{\rho \in \Irr (\mc S_{\phi^\sharp}) : 
\rho |_{\mc S_\phi} = \zeta_D } \rho \otimes \pi (\phi^\sharp, \rho) 
= \bigoplus_{\rho \in \Irr (\mc S_{\phi^\sharp})} \Hom_{\mc S_\phi} (\zeta_D, {}^S \imath^*(\rho)) 
\otimes \pi (\phi^\sharp, \rho) .
\]
When $D \cong \mh H$, this holds if we restrict the direct sums to relevant 
enhancements $\rho$.
\end{thm}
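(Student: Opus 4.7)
The plan is to combine the Clifford-theoretic decomposition \eqref{eq:8.8} with the algebra isomorphism \eqref{eq:8.9} and the definition \eqref{eq:8.10}, treating $D \not\cong \mh H$ and $D \cong \mh H$ separately.

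Suppose first that $D \not\cong \mh H$. Via \eqref{eq:8.9} I identify $\mr{End}_{G^\sharp}(\pi(\phi, \zeta_D))$ with $e_{\zeta_D}\C[\mc S_{\phi^\sharp}]$. Its irreducible representations correspond bijectively to those $\rho \in \Irr(\mc S_{\phi^\sharp})$ on which $\mc S_\phi$ acts through $\zeta_D$; since $\mc S_\phi$ is the image of the centre $Z({G^\vee}_\Sc)$ in $\mc S_{\phi^\sharp}$, it is central, so Schur's lemma forces $\rho|_{\mc S_\phi}$ to be isotypic and the condition becomes exactly $\rho|_{\mc S_\phi} = \zeta_D \cdot \mr{id}_{V_\rho}$. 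Substituting this indexing into \eqref{eq:8.8} and using \eqref{eq:8.10} to recognise the multiplicity space as $\pi(\phi^\sharp, \rho)$ produces the first equality in the theorem. For the second equality, recall from the discussion following Lemma \ref{lem:4.7} and from Proposition \ref{prop:5.1} that ${}^S \imath$ is just the inclusion $\C[\mc S_\phi] \hookrightarrow \C[\mc S_{\phi^\sharp}]$; hence ${}^S \imath^*(\rho) = \rho|_{\mc S_\phi}$, and with the Schur identification above $\mr{Hom}_{\mc S_\phi}(\zeta_D, {}^S \imath^*(\rho))$ is canonically $V_\rho$ when $\rho|_{\mc S_\phi} = \zeta_D$ and zero otherwise. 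This converts the first equality into the second.

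The quaternion case has to be handled by hand because \eqref{eq:8.14} is not injective when $D \cong \mh H$: one has $X^G(\pi(\phi, \zeta_{\mh H})) = 1$ while $[\mc S_{\phi^\sharp} : \mc S_\phi] = 2$. Consequently $\mr{End}_{G^\sharp}(\pi(\phi, \zeta_{\mh H})) = \C$ and $\imath^* \pi(\phi, \zeta_{\mh H})$ is already irreducible. The explicit description \eqref{eq:8.30} of $\mc S_{\phi^\sharp}$ shows that $\phi^\sharp$ admits exactly two relevant enhancements $\rho_0, \rho_1$, each satisfying $\rho_i|_{\mc S_\phi} = \zeta_{\mh H}$, so $\mr{Hom}_{\mc S_\phi}(\zeta_{\mh H}, {}^S \imath^*(\rho_i))$ is one-dimensional for both $i$. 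By the convention \eqref{eq:8.29}, each $\pi(\phi^\sharp, \rho_i)$ is defined to equal $\pi(\phi, \zeta_{\mh H})$ but realised as a representation of a different strong rational form $G^\sharp_{\rho_i}$. The restricted direct sum in the theorem therefore has one summand realising $\pi(\phi, \zeta_{\mh H})$ on the strong rational form $G^\sharp$ underlying our $G = GL_m(\mh H)$, and one summand on the other strong rational form, which does not contribute to a representation of the given $G^\sharp$.

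The main technical point, common to both cases, is the centrality of $\mc S_\phi$ in $\mc S_{\phi^\sharp}$, which both underpins the Schur-lemma matching between the indexing of \eqref{eq:8.9} and the Kottwitz-type relevance condition $\rho|_{\mc S_\phi} = \zeta_D$, and makes the multiplicity space $\mr{Hom}_{\mc S_\phi}(\zeta_D, {}^S \imath^*(\rho))$ canonically isomorphic to $V_\rho$. For the quaternion case the additional subtlety is the bookkeeping of strong rational forms, which is already carried out in \cite[Theorem 3.4]{ABPSSL1}; given that input, the statement becomes essentially tautological.
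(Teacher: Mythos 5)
Your proposal is correct and follows essentially the same route as the paper: for $D \not\cong \mh H$ one rewrites the Clifford decomposition \eqref{eq:8.8} via the isomorphism \eqref{eq:8.9} and the definition \eqref{eq:8.10}, and the centrality of $\mc S_\phi$ in $\mc S_{\phi^\sharp}$ together with the fact that ${}^S\imath$ is the inclusion gives $\rho \cong \Hom_{\mc S_\phi}(\zeta_D, {}^S\imath^*(\rho))$. The quaternion case likewise reduces to the convention \eqref{eq:8.29}; your extra bookkeeping of the two strong rational forms is consistent with (and slightly more explicit than) the paper's two-line treatment.
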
 
\begin{proof}
When $D \cong \mh H$, \eqref{eq:8.30} shows that $\mc S_{\phi^\sharp}$ has order four.
In particular it is commutative, so $\dim \rho = 1$ and the theorem reduces to \eqref{eq:8.29}.

Suppose that $D \not\cong \mh H$. 
Via our isomorphism \eqref{eq:8.9}, we can reformulate \eqref{eq:8.8} as 
\begin{equation}\label{eq:8.11}
\imath^* \pi (\phi,\zeta_D) = \bigoplus\nolimits_{\rho \in \Irr (e_{\zeta_D} 
\C [\mc S_{\phi^\sharp}] )} \rho \otimes \pi (\phi^\sharp, \rho) .
\end{equation}
Since $\mc S_\phi$ is central in $\mc S_{\phi^\sharp}$ and ${}^S \imath$ is just the inclusion,
for any $\rho$ in \eqref{eq:8.11} :
\[
\rho \cong \Hom_{\mc S_\phi}(\zeta_D, \rho) = \Hom_{\mc S_\phi} (\zeta_D, {}^S \imath^* (\rho)) . 
\qedhere
\]
\end{proof}

The above can be pushed to one further instance of Conjecture \ref{conj:5.7}. We note that
the canonical map 
\begin{equation}\label{eq:8.12}
f : SL_m (D) \to PGL_m (D)
\end{equation}
factors as 
\[
SL_m (D) \xrightarrow{\imath} GL_m (D) \xrightarrow{q} PGL_m (D) .
\]
From Theorem \ref{thm:8.2} and \eqref{eq:8.7} we see that 
\begin{equation}\label{eq:8.13}
\text{Conjecture \ref{conj:5.7} holds for } f : SL_m (D) \to PGL_m (D) .
\end{equation}
Notice that $PGL_m (D)$ is the adjoint group of $SL_m (D)$. This is a case where the adjoint
quotient map \eqref{eq:8.12} can be far from surjective. Like in the split case, the image
of $f$ consists of all elements of $PGL_m (D)$ whose reduced norm is $1 \in F^\times / F^{\times n}$.
(Dividing out to the subgroup of $n$-th powers in $F^\times$ makes the reduced norm map
well-defined on $PGL_m (D)$.) When $D \cong \mh H$, Nrd$(GL_m (D)) = \R_{>0} = \R^{\times 2}$,
so $f$ is surjective. Otherwise Nrd$: GL_m (D) \to F^\times$ is surjective, and we find  
\[
PGL_m (D) / f (SL_m (D)) \cong F^\times / F^{\times n} .
\]
This group is finite if $n$ is coprime to the characteristic of $F$. If char$(F)$ divides $n$,
then $F^\times / F^{\times n}$ is infinite (but still compact).

\begin{prop}\label{prop:8.3}
Let $g \in PGL_m (D)$ and $(\phi^\sharp, \rho) \in \Phi_e (SL_m (D))$. Then
\[
\mr{Ad}(g)^* \pi (\phi^\sharp,\rho) = \phi (\phi^\sharp, \rho \otimes \tau_{\phi^\sharp}(g)^{-1}) 
= \bigoplus_{\tilde \rho \in \Irr (\mc S_{\phi^\sharp})} \Hom_{\mc S_\phi} \big( \rho, 
{}^S \mr{Ad}(g)^*(\tilde \rho) \big) \otimes \pi (\phi^\sharp, \tilde \rho) . 
\]
\end{prop}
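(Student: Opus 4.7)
The second equality of Proposition~\ref{prop:8.3} is a formal consequence of the first. Writing $\mr{Ad}(g)\colon SL_m(D) \to SL_m(D)$ as $f_3$ in the factorization \eqref{eq:5.1} with $f_3^\vee = \mr{id}$, formula \eqref{eq:5.4} gives ${}^S\mr{Ad}(g)(s) = \tau_{\phi^\sharp}(g)(s)\,s$, whence ${}^S\mr{Ad}(g)^*(\tilde\rho) = \tilde\rho \otimes \tau_{\phi^\sharp}(g)$. Then $\Hom_{\mc S_{\phi^\sharp}}(\rho, {}^S\mr{Ad}(g)^*(\tilde\rho))$ is nonzero (and one-dimensional) exactly when $\tilde\rho = \rho \otimes \tau_{\phi^\sharp}(g)^{-1}$. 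The case $D \cong \mh H$ is trivial: since $\mr{Nrd}(GL_m(\mh H)) = \R_{>0} = \R^{\times 2m}$, the map $f\colon SL_m(\mh H) \to PGL_m(\mh H)$ is surjective and $PGL_m(\mh H)/f(SL_m(\mh H)) = 1$. Thus $\mr{Ad}(g)$ is inner by an element of $SL_m(\mh H)$ itself, acting trivially on $\Irr$, and $\tau_{\phi^\sharp}(g) = 1$ because the homomorphism \eqref{eq:2.15} has trivial domain.

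Henceforth assume $D \not\cong \mh H$. By Hilbert 90, $H^1(F,\mh G_m) = 0$, so $GL_m(D) \to PGL_m(D)$ is surjective on $F$-points, and I lift $g$ to some $\tilde g \in GL_m(D)$. Set $\pi = \pi(\phi,\zeta_D)$. Since $\mr{Ad}(\tilde g)$ is inner on $GL_m(D)$, the operator $\pi(\tilde g)\colon V_\pi \to V_\pi$ is a canonical $GL_m(D)$-isomorphism $\pi \to \mr{Ad}(\tilde g)^*\pi$. Restricted, $T := \pi(\tilde g)$ gives an intertwiner $\imath^*\pi \to \mr{Ad}(g)^*\imath^*\pi$. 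Conjugation by $T$ defines an algebra automorphism $\alpha_{\tilde g}(a) := \pi(\tilde g) a \pi(\tilde g)^{-1}$ of $\mc A := \mr{End}_{SL_m(D)}(\imath^*\pi) \cong e_{\zeta_D}\C[\mc S_{\phi^\sharp}]$, independent of the lift. From the equivariance $I_\chi \pi(x) = \chi(x)^{-1}\pi(x) I_\chi$ for $I_\chi \in \Hom_G(\pi\otimes\chi,\pi)$, one computes directly $\alpha_{\tilde g}(I_\chi) = \chi(\tilde g)\, I_\chi$. Under the identification \eqref{eq:8.9} sending $e_{\zeta_D} s$ to a scalar multiple of $I_{a_s}^{-1}$, this becomes $\alpha_{\tilde g}(s) = \chi(\tilde g)^{-1} s$, where $\chi = a_s$ as in \eqref{eq:8.14}.

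The crux is to verify $\chi(\tilde g) = \tau_{\phi^\sharp}(g)(s)$, so that $\alpha_{\tilde g}$ acts on $\C[\mc S_{\phi^\sharp}]$ by multiplication by the character $\tau_{\phi^\sharp}(g)^{-1}$. The cocycle $a_s\colon \mb W_F \to Z(GL_n(\C))$ defined by $s\phi s^{-1} = a_s\phi$ in fact takes values in $Z(SL_n(\C)) = \mu_n$: lifts of $\phi^\sharp$ to $GL_n(\C)$ and to $SL_n(\C)$ differ by a central cocycle that commutes with $s$, so the ratio $s\phi s^{-1}/\phi$ equals $s\phi^\sharp_\Sc s^{-1}(\phi^\sharp_\Sc)^{-1}$, which by \eqref{eq:2.16} is precisely $c_s$. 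Both pairings $\chi(\tilde g) = \langle \tilde g, a_s\rangle$ and $\tau_{\phi^\sharp}(g)(s) = \langle g, c_s\rangle$ reduce, via the LLC for $\mh G_m$, to evaluating the same character of $F^\times/F^{\times n}$: the first at $\mr{Nrd}(\tilde g) \in F^\times$, the second at the image of $g$ in $PGL_m(D)/f(SL_m(D)) \cong F^\times/F^{\times n}$, which is $\mr{Nrd}(\tilde g) \bmod F^{\times n}$. Hence they coincide; this is the main non-trivial step, as it requires keeping straight the two component groups, the two pairings and the reduced-norm identifications.

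Finally, decompose $V_\pi = \bigoplus_\rho V_\rho \otimes \pi(\phi^\sharp,\rho)$ isotypically under $\mc A$. The intertwiner $T$ is $SL_m(D)$-equivariant with respect to the pair $(\pi, \mr{Ad}(g)^*\pi)$ but only $\alpha_{\tilde g}$-twisted equivariant with respect to $\mc A$ (since $Ta = \alpha_{\tilde g}(a) T$), so it carries the $\rho$-isotypic subspace of $V_\pi$ into the $(\rho\circ\alpha_{\tilde g}^{-1})$-isotypic subspace. Matching the $SL_m(D)$-module structures gives $\pi(\phi^\sharp,\rho) \cong \mr{Ad}(g)^*\pi(\phi^\sharp, \rho\circ\alpha_{\tilde g}^{-1})$; substituting $\rho' = \rho \circ \alpha_{\tilde g}^{-1}$ and using $\alpha_{\tilde g}(s) = \tau_{\phi^\sharp}(g)(s)^{-1} s$ yields $\mr{Ad}(g)^*\pi(\phi^\sharp, \rho') = \pi(\phi^\sharp, \rho' \circ \alpha_{\tilde g}) = \pi(\phi^\sharp, \rho' \otimes \tau_{\phi^\sharp}(g)^{-1})$, as claimed.
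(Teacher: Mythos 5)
Your proof is correct and follows essentially the same route as the paper's: lift $g$ to $\tilde g \in GL_m(D)$, use the operator $\pi(\phi,\zeta_D)(\tilde g)$ to permute the $\mr{End}_{G^\sharp}$-isotypic components of $\imath^* \pi(\phi,\zeta_D)$, and identify the resulting twist of the intertwining operators with $\tau_{\phi^\sharp}(g)$ by matching the cocycle $a_s$ of \eqref{eq:8.14} with $c_s$ from \eqref{eq:2.16} and comparing the two pairings (the paper phrases this as $\langle g, c_s\rangle = \langle g,\hat\chi\rangle = \chi(g)$, you as an evaluation at $\mr{Nrd}(\tilde g)$ in $F^\times/F^{\times n}$ -- the same computation). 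Your reading of the Hom-space in the final direct sum as taken over $\mc S_{\phi^\sharp}$ is the intended one.
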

\begin{proof}
When $D \cong \mh H$, $g$ lies in the image of $SL_m (D)$ and there is nothing to prove.
Therefore we may assume that $D \not\cong \mh H$.

Regard $g$ as a character of the group $X^G (\pi (\phi,\zeta_D))$. Via \eqref{eq:8.14} $g$ 
determines a character of $\mc S_{\phi^\sharp} / \mc S_\phi$, which we claim
is none other than $\tau_{\phi^\sharp}(g)$. 

Multiplying $\phi \in \Phi (GL_m (D))$ by a map $\mb W_F \to Z(GL_n (\C))$, we can adjust it to
a lift $\phi_\Sc : \mb W_F \times SL_2 (\C) \to SL_n (\C)$ of $\phi^\sharp$. Comparing 
\eqref{eq:2.16} and \eqref{eq:8.14}, we find
\[
\hat \chi = a_s = s \phi |_{\mb W_F} s^{-1} \phi^{-1} |_{\mb W_F} = 
s \phi_\Sc s^{-1} \phi_\Sc^{-1} = c_s . 
\]
By \eqref{eq:2.15} 
\[
\tau_{\phi^\sharp}(g) (s) = \langle g, c_s \rangle = \langle g, \hat \chi \rangle = \chi (g) ,
\]
which proves our claim. We plug this into \eqref{eq:8.10}:
\begin{equation}\label{eq:8.15}
\begin{aligned}
\pi (\phi^\sharp, \rho \otimes \tau_{\phi^\sharp}(g)^{-1}) 
& = \mr{Hom}_{e_{\zeta_D} \C [\mc S_{\phi^\sharp}]}(\rho \otimes 
\tau_{\phi^\sharp}(g)^{-1},\pi (\phi,\zeta_D)) \\
& = \mr{Hom}_{\C [X^G (\pi (\phi,\zeta_D)), 
\kappa_{\pi (\phi,\zeta_D)}]}(\rho \otimes g^{-1},\pi (\phi,\zeta_D)) .
\end{aligned}
\end{equation}
Recall that $\chi \in X^G (\pi (\phi,\zeta_D))$ acts on $\pi (\phi,\rho)$ via 
$I_s \in \mr{Hom}_G (\pi(\phi,\zeta_D), \pi (\phi,\zeta_D) \otimes \chi)$. 
If $\psi$ lies in the right hand side of \eqref{eq:8.15} and $v \in V_\rho$, then
\begin{multline*}
\pi (\phi,\zeta)(g) \psi \big( \rho (\chi) v \big) = \chi (g) \pi (\phi,\zeta)(g) 
\psi \big( (\rho \otimes g^{-1}) (\chi) v \big) \\
= (\pi (\phi,\zeta) \otimes \chi) (g) I_s \big( \psi (v) \big) 
= I_s \big( \pi (\phi,\zeta)(g) \psi (v) \big) ,
\end{multline*}
which shows that $\pi (\phi,\zeta)(g) \psi \in \mr{Hom}_{\C [X^G (\pi (\phi,\zeta_D)), 
\kappa_{\pi (\phi,\zeta_D)}]}(\rho,\pi (\phi,\zeta_D))$. It follows that \eqref{eq:8.15} 
can be identified with 
\begin{equation}\label{eq:8.18}
\pi (\phi,\zeta_D)(g^{-1}) \mr{Hom}_{\C [X^G (\pi (\phi,\zeta_D)), \kappa_{\pi (\phi,\zeta_D)}]}
(\rho,\pi (\phi,\zeta_D)) = \pi (\phi,\zeta_D)(g^{-1}) \pi (\phi^\sharp, \rho) . \hspace{-4mm}
\end{equation}
Recall that
\[
\mr{Ad}(g)^* \pi (\phi^\sharp,\rho) (g^\sharp) =  \pi (\phi^\sharp,\rho) (g g^\sharp g^{-1})
\qquad g^\sharp \in G^\sharp .
\]
Considering $\pi (\phi^\sharp,\rho)$ as a $G^\sharp$-subrepresentation of $\pi (\phi,\zeta_D)$,
$\mr{Ad}(g)^* \pi (\phi^\sharp,\rho)$ is the $G^\sharp$-representation on the vector subspace
$\pi (\phi,\zeta_D)(g^{-1}) \pi (\phi^\sharp,\rho)$. This agrees with the formulas \eqref{eq:8.15}
and \eqref{eq:8.18} for $\pi (\phi^\sharp, \rho \otimes \tau_{\phi^\sharp}(g)^{-1})$. 
The final equality in the lemma follows as in the last lines of the proof of Theorem \ref{thm:6.3}.
\end{proof}

\begin{ex}\label{ex:SLn}
We work out an example which also fits in Sections \ref{sec:principal} and \ref{sec:unip}.

Consider $G^\sharp = SL_n (F)$ and $G = GL_n (F)$. Let $T^\sharp = \mc T^\sharp (F)$ 
be the diagonal torus in $G^\sharp$ and let $B^\sharp$ be the Borel subgroup of upper 
triangular matrices. Write $\zeta_n = \exp (2 \pi i / n) \in \C^\times$, a primitive $n$-th root of 
unity. Let $\chi \in \Irr (T^\sharp)$ be the unramified character with parameter 
$(1,\zeta_n,\zeta_n^2, \ldots, \zeta_n^{n-1})$. It is well-known that the parabolically induced 
representation $I_{B^\sharp}^{G^\sharp}(\chi)$ is a direct sum of $n$ inequivalent irreducible 
subrepresentations. 

To analyse this more concretely, we can use the method from \cite{Lus-Gr}. The stabilizer of 
$\chi$ in the Weyl group $W(G^\sharp,T^\sharp) \cong S_n$ is 
$\langle 1 \, 2 \ldots n \rangle \cong \Z / n \Z$. Via some equivalences and reduction steps, 
$I_{B^\sharp}^{G^\sharp}(\chi)$ becomes a standard representation of the twisted graded Hecke 
algebra $\C[\C^n / \C] \rtimes W_\chi$, namely $\mr{ind}_{\C[\C^n / \C]}^{\C[\C^n / \C] \rtimes W_\chi} 
(\mr{ev}_0)$. The irreducible constituents of the latter representation have dimension one and 
parametrized canonically by the $W_\chi$-characters 
that they afford. In this way we associate to every $\tau \in \Irr (W_\chi)$ an irreducible 
direct summand $I_{B^\sharp}^{G^\sharp}(\chi)_\tau$ of $I_{B^\sharp}^{G^\sharp}(\chi)$.

The constituents of $I_{B^\sharp}^{G^\sharp}(\chi)$ form one L-packet, whose L-parameter 
$\phi^\sharp$ factors via ${}^L T^\sharp$ and encodes $\chi$. That is, $\phi^\sharp$ is trivial on 
$\mb I_F \times SL_2 (\C)$, and it sends $\Fr_F$ to $(1,\zeta_n,\zeta_n^2, \ldots, \zeta_n^{n-1}) 
\in (T^\sharp)^\vee$. When $\phi \in \Phi (G)$ is a lift of $\phi^\sharp$, we can identify
$I_{B^\sharp}^{G^\sharp}(\chi)$ with $\mr{Res}_{G^\sharp}^G \pi (\phi, \zeta_F)$.

One checks easily that $\mc S_{\phi^\sharp} / \mc S_\phi \cong W_\chi$. The action of
$\mc S_{\phi^\sharp} / \mc S_\phi$ on $I_{B^\sharp}^{G^\sharp}(\chi)$ can be reduced to an 
action on $\mr{ind}_{\C[\C^n / \C]}^{\C[\C^n / \C] \rtimes W_\chi} (\mr{ev}_0)$, 
and then we can identify it with the right regular representation of $W_\chi$. 
With \eqref{eq:8.29} we obtain
\[
\pi (\phi^\sharp, \tau^*) = \mr{Hom}_{W_\chi} (\tau^*, I_{B^\sharp}^{G^\sharp}(\chi))
= I_{B^\sharp}^{G^\sharp}(\chi)_\tau .
\]
This is the parametrization of $\Pi_{\phi^\sharp}(G^\sharp)$ from 
\cite{Ree1,ABPSprin,SolLLCunip}. In contrast, $I_{B^\sharp}^{G^\sharp}(\chi)_\tau$ is 
matched with $(\phi^\sharp,\tau)$ in \cite{HiSa}.

Consider $t = \mr{diag}(\varpi_F,1,1,\ldots,1) \in PGL_n (F) = G^\sharp_\ad$. For 
$w = \langle 1 \, 2 \ldots n \rangle \in W_\chi$ and $v \in I_{B^\sharp}^{G^\sharp}(\chi)_\tau$ 
which comes from $\mr{ind}_{\C[\C^n / \C]}^{\C[\C^n / \C] \rtimes W_\chi} (\mr{ev}_0)$:
\[
(\mr{Ad}(t) T_w) \cdot v = (T_w \theta_{e_n - e_1}) \cdot v = T_w \cdot \zeta_n^{-1} v = 
\zeta_n^{-1} (T_w \cdot v) .
\]
It follows that $\mr{Ad}(t)^* I_{B^\sharp}^{G^\sharp}(\chi)_\tau =
I_{B^\sharp}^{G^\sharp}(\chi)_{\zeta_n^{-1} \tau}$, where we identify $\tau$ with its value at
$\langle 1 \, 2 \ldots n \rangle$. In terms of enhanced L-parameters:
\begin{equation}\label{eq:8.31}
\mr{Ad}(t)^* \pi (\phi^\sharp, \tau^*) = \pi (\phi^\sharp, \zeta_n \tau^* ) . 
\end{equation}
The cocycles $c_h \in Z^1 (\mb W_F,Z(SL_n \C))$ for $\phi^\sharp$, from \eqref{eq:2.16}, 
are unramified and depend only on the image of $h$ in 
$\mc S_{\phi^\sharp} / \mc S_\phi \cong W_\chi$. We compute
\begin{multline*}
c_{\langle 1 \, 2 \ldots n \rangle}(\Fr_F) = \\
\langle 1 \, 2 \ldots n \rangle \:
\mr{diag}(1,\zeta_n,\zeta_n^2, \ldots, \zeta_n^{n-1}) \: \langle 1 \, 2 \ldots n \rangle^{-1}
\: \mr{diag}(1,\zeta_n,\zeta_n^2, \ldots, \zeta_n^{n-1})^{-1} \\
= \mr{diag}(\zeta_n^{n-1},\zeta_n^{n-1},\ldots, \zeta_n^{n-1}) .
\end{multline*}
Consequently $c_{\langle 1 \, 2 \ldots n \rangle^k}(\Fr_F^d) = \zeta_n^{-kd} I_n$ for all
$k,d \in \Z$, and
\[
\tau_{\phi^\sharp,SL_n}(t) (\langle 1 \, 2 \ldots n \rangle^k) =
\langle t, c_{\langle 1 \, 2 \ldots n \rangle^k} \rangle = \zeta_n^{-k} .
\]
As characters of $W_\chi$, this can be abbreviated to 
$\tau_{\phi^\sharp,SL_n}(t) = \zeta_n^{-1}$. Thus 
\[
\pi (\phi^\sharp, \tau^* \otimes \tau_{\phi^\sharp,SL_n}(t)^{-1}) = 
\pi (\phi^\sharp, \zeta_n \tau^* ) ,
\]
in agreement with \eqref{eq:8.31} and Conjecture \ref{conj:A}.\\
\end{ex}

The map \eqref{eq:8.2} defines an isomorphism of $F$-groups
\begin{equation}\label{eq:8.19}
\eta_\tau : SL_m (D^{op}) \to SL_m (D) . 
\end{equation}
Let us investigate the functoriality of the LLC with respect to this isomorphism. The only
non-canonical step in the construction of the LLC for these groups is the choice of the 
isomorphisms \eqref{eq:8.9}, that is, of intertwining operators $I_s$ for $s \in \mc S_{\phi^\sharp}$.

\begin{lem}\label{lem:8.4} 
The intertwining operators $I_s$ can be chosen such that Conjecture \ref{conj:5.7} holds for
all isomorphisms between the $F$-groups $SL_m (D)$ and $SL_m (D^{op})$.
\end{lem}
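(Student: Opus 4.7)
The plan is to reduce the lemma to a single distinguished isomorphism and then match the two sides of Conjecture \ref{conj:5.7} by comparing restrictions from $GL_m$ to $SL_m$.

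\textbf{Step 1 (Reduction to $\eta_\tau$).} By Proposition \ref{prop:3.4}.a, any isomorphism $\tilde\eta : SL_m(D^{op}) \to SL_m(D)$ can be written uniquely as $\mr{Ad}(g) \circ \eta_\tau$ with $g \in PGL_m(D) = (SL_m(D))_\ad$ and $\eta_\tau$ as in \eqref{eq:8.19}. Since Conjecture \ref{conj:5.7} is transitive in the homomorphism (Proposition \ref{prop:5.1}.c), and since Proposition \ref{prop:8.3} already establishes it for $\mr{Ad}(g)$, it suffices to verify the lemma for the single map $\eta_\tau$. Moreover $\eta_\tau$ preserves the standard pinnings of $GL_m(D^{op})$ and $GL_m(D)$, so in the decomposition $\eta_\tau = \mr{Ad}(g_3) \circ \eta_{\mc R(\eta_\tau), GL_m(D)}$ of \eqref{eq:3.9} we have $g_3 = 1$. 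Consequently ${}^S\eta_\tau$ coincides with ${}^L\eta_\tau$, i.e.\ it is induced by $\eta_\tau^\vee : SL_n(\C) \to SL_n(\C)$.

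\textbf{Step 2 (The commutative square).} Consider the diagram
\[
\xymatrix{
SL_m(D^{op}) \ar[r]^{\eta_\tau} \ar[d]_{\imath} & SL_m(D) \ar[d]^{\imath} \\
GL_m(D^{op}) \ar[r]^{\eta_\tau} & GL_m(D)
}
\]
Let $(\phi^\sharp, \rho) \in \Phi_e(SL_m(D))$ and lift $\phi^\sharp$ to $\phi \in \Phi(GL_m(D))$. Applying Proposition \ref{prop:8.1} and then Theorem \ref{thm:8.2}:
\[
\imath^* \eta_\tau^* \pi(\phi,\zeta_D) \;=\; \imath^* \pi(\phi^\vee, \zeta_{D^{op}})
\;=\; \bigoplus_{\tilde\rho} \mr{Hom}_{\mc S_{\phi^\vee}}(\zeta_{D^{op}}, \tilde\rho) \otimes \pi((\phi^\vee)^\sharp, \tilde\rho),
\]
while on the other path
\[
\eta_\tau^* \imath^* \pi(\phi,\zeta_D) \;=\; \bigoplus_{\rho} \mr{Hom}_{\mc S_\phi}(\zeta_D, \rho) \otimes \eta_\tau^* \pi(\phi^\sharp, \rho).
\]
By Proposition \ref{prop:8.1} the two left-hand sides agree, and since $\eta_\tau^\vee$ identifies $\mc S_{\phi^\sharp}$ with $\mc S_{(\phi^\vee)^\sharp}$ and $\zeta_D$ with $\zeta_{D^{op}}$, it bijects the indexing sets $\{\rho\}$ and $\{\tilde\rho\}$ of the two decompositions. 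Because both decompositions are multiplicity-free (Theorem \ref{thm:8.2}), matching corresponding isotypic components forces
\[
\eta_\tau^* \pi(\phi^\sharp, \rho) \;\cong\; \pi((\phi^\vee)^\sharp, \rho \circ (\eta_\tau^\vee)^{-1})
\;=\; \bigoplus_{\tilde\rho} \mr{Hom}_{\mc S_{\phi^\sharp}}\!\big(\rho, {}^S\eta_\tau^*(\tilde\rho)\big) \otimes \pi((\phi^\vee)^\sharp, \tilde\rho),
\]
which is precisely Conjecture \ref{conj:5.7} for $\eta_\tau$. The quaternion case $D \cong \mh H$ is handled similarly (and more easily), since there $\mc S_{\phi^\sharp}$ is abelian and the LLC is given directly by \eqref{eq:8.29}.

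\textbf{Step 3 (The choice of $I_s$ -- main obstacle).} The matching in Step 2 is only literally an isomorphism of $SL_m(D^{op})$-representations after an appropriate choice of intertwining operators $I_s$, which is the one non-canonical ingredient in the construction \eqref{eq:8.9}--\eqref{eq:8.10} of $\pi(\phi^\sharp, \rho)$ and $\pi((\phi^\vee)^\sharp, \tilde\rho)$. The ambiguity is parametrized by characters of $X^G(\pi(\phi,\zeta_D))$ and its counterpart for $D^{op}$. The hard point is therefore to exhibit choices that are simultaneously consistent across all $\phi$ and all isomorphisms in question. The plan is: fix the operators $I_\chi$ for $GL_m(D)$ arbitrarily (as in \cite{HiSa,ABPSSL1}), and then \emph{define} the corresponding operators for $GL_m(D^{op})$ by pullback via $\eta_\tau$ -- explicitly, $I^{op}_{\chi \circ \eta_\tau} := \eta_\tau^* I_\chi$. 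Since $\eta_\tau$ intertwines the actions $X^G(\pi) \to X^{G^{op}}(\eta_\tau^* \pi)$, these are intertwiners of the right type, and by \eqref{eq:8.17} they satisfy the same cocycle relation (the twisting character $\zeta_D$ transforms to $\zeta_{D^{op}}$ under $\eta_\tau^\vee$). The isomorphism \eqref{eq:8.9} constructed from these choices is then tautologically $\eta_\tau$-equivariant, which makes the matching in Step 2 into a genuine isomorphism of $SL_m(D^{op})$-representations and completes the proof.
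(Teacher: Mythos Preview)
Your reduction in Step 1 and the transport-of-structure idea in Step 3 are essentially the paper's argument for the case $D \neq D^{op}$: fix $I_s$ for $GL_m(D)$ and \emph{declare} the operators for $GL_m(D^{op})$ to be their transports via a fixed isomorphism $\lambda_\phi : \eta_\tau^* \pi(\phi,\zeta_D) \cong \pi(\phi^\vee,\zeta_{D^{op}})$. With that choice the isomorphisms \eqref{eq:8.9} on the two sides are $\eta_\tau$-conjugate by construction, and the conclusion follows.

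The gap is the split case $D = F$. Then $SL_m(D)$ and $SL_m(D^{op})$ are literally the same group $SL_n(F)$, and $\eta_\tau$ is an \emph{auto}morphism. Your recipe ``fix $I_\chi$ for $GL_m(D)$ arbitrarily, then define the operators for $GL_m(D^{op})$ by pullback'' now produces a second system $I^{op}_{\chi\circ\eta_\tau} = \eta_\tau^* I_\chi$ of intertwiners on the \emph{same} group, and there is no reason for it to agree with the original one. Concretely, two such systems differ by a character of $X^G(\pi(\phi,\zeta_F)) \cong \mc S_{\phi^\sharp}/\mc S_\phi$, so an arbitrary initial choice will in general permute the labelling of $\Pi_{\phi^\sharp}(SL_n(F))$ nontrivially under $\eta_\tau$. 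Your Step~2 cannot repair this: the two decompositions of $\imath^*\pi(\phi,\zeta_F)$ only tell you that $\{\eta_\tau^*\pi(\phi^\sharp,\rho)\}_\rho$ and $\{\pi((\phi^\vee)^\sharp,\tilde\rho)\}_{\tilde\rho}$ coincide as multisets of irreducibles with multiplicities $\dim\rho$, not which $\rho$ goes to which $\tilde\rho$.

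The paper closes this gap by choosing, from the outset, the Whittaker-normalized intertwiners of \cite[Chapter~3]{HiSa}. Since $\eta_\tau$ preserves the standard pinning of $SL_n(F)$, it preserves the Whittaker datum, so the Whittaker normalization forces $I^{op}_{\eta_\tau^\vee(s)} = I_{\eta_\tau^\vee(s)}$ and the two systems coincide. You need to insert this (or an equivalent normalization argument) to make Step~3 go through when $D = F$. Your treatment of $D \cong \mh H$ is also too brief: one must check that $\eta_\tau^\vee$ fixes $\mc S_{\phi^\sharp}$ pointwise and that the induced automorphism fixes the two strong rational forms, so that \eqref{eq:8.29} is $\eta_\tau$-equivariant; only then does the problem reduce to Proposition~\ref{prop:8.1}.
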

\begin{proof}
By Proposition \ref{prop:8.3} and Corollary \ref{cor:3.3}, we only have to establish 
Conjecture \ref{conj:5.7} for $\eta_\tau$. 

When $D \cong \mh H$, we can compose with \eqref{eq:8.25} to obtain an automorphism 
$\overline{\eta_\tau}$ of $SL_m (D)$. Since \eqref{eq:8.25} intertwines the local Langlands 
correspondences for $GL_m (D)$ and $GL_m (D^{op})$, the definition \eqref{eq:8.29} implies
that it does the same for $SL_m (D)$ and $SL_m (D^{op})$. Thus the lemma reduces to the case of 
the automorphism $\overline{\eta_\tau}$. It is easy to check, with \cite[Example 2.11]{Vog} 
at hand, that $\overline{\eta_\tau}$ fixes both involved strong rational forms. Similarly one 
checks with \eqref{eq:8.30} that $\eta_\tau^\vee$ fixes $\mc S_{\phi^\sharp}$ pointwise. 
This means that the lemma for $SL_m (D)$ becomes equivalent to a statement about $GL_m (D)$
and $\overline{\eta_\tau}$. The latter was proven in Proposition \ref{prop:8.1}.

Now we consider $D \not\cong \mh H$. From Proposition \ref{prop:8.1} we know that 
\[
\eta_\tau^* \pi (\phi,\zeta_D) \cong \pi (\eta_\tau^\vee \circ \phi, \zeta_D^{-1}) \in
\Irr (GL_m (D^{op})) .
\]
We fix such an isomorphism $\lambda_\phi$. The choice does not matter, because it is unique
up to scalars. Now
\begin{multline}\label{eq:8.20}
I_s \in \mr{Hom}_{GL_m (D^{op})} \big( \eta_\tau^* (\pi (\phi,\zeta_D)), 
\eta_\tau^* (\pi (\phi,\zeta_D)) \otimes \eta_\tau^* (\chi) \big) \\
\cong \mr{Hom}_{GL_m (D^{op})} \big( \pi (\eta_\tau^\vee \circ \phi,\zeta_D^{-1}), 
\pi ( \eta_\tau^\vee \circ \phi,\zeta_D^{-1}) \otimes \eta_\tau^* (\chi) \big) .
\end{multline}
The image of $I_s$ on the second line of \eqref{eq:8.20} is 
\begin{equation}\label{eq:8.27}
\lambda_\phi \circ I_s \circ \lambda_\phi^{-1} =: I_{\eta_\tau^\vee (s)}^{op} .
\end{equation}
For $s \in \mc S_\phi$ we have $\eta_\tau^\vee (s) = s^{-1} \in 
\mc S_{\eta_\tau^\vee \circ \phi}$ and
\begin{equation}\label{eq:8.22}
\zeta_D^{-1}(s^{-1}) \mr{Id} = \zeta_D (s) \mr{Id} = I_s = \lambda_\phi \circ I_s \circ 
\lambda_\phi^{-1} = I_{\eta_\tau^\vee (s)}^{op} = I_{s^{-1}}^{op} .
\end{equation}
Together with \eqref{eq:8.9} this means that 
$\eta_\tau^\vee (s) \mapsto I_{\eta_\tau^\vee (s)}^{op}$ extends to an isomorphism 
\begin{equation}\label{eq:8.21}
e_{\zeta_D^{-1}} \C [\eta_\tau^\vee \mc S_{\phi^\sharp}] \to
\mr{End}_{SL_m (D^{op})} \pi (\eta_\tau^\vee \circ \phi, \zeta_D^{-1}) .
\end{equation}
When $D \neq D^{op}$, we can first choose the intertwining operators $I_s$ for $GL_m (D)$,
and then decree that the $I^{op}_{\eta_\tau^\vee (s)}$ are the intertwining operators for 
$GL_m (D^{op})$.

When $D = D^{op}$, we have to check that the new $I_s^{op}$ agree with the old $I_s$.
In that case $D = F$ and $G^\sharp = SL_n (F)$ is split. We pick an additive character $\chi_F :
F \to \C^\times$ and we endow $SL_n (F)$ with the Whittaker datum such that the nondegenerate 
character of the unipotent group of upper triangular matrices of is given by
\[
X \mapsto \chi_F \Big( \sum\nolimits_{i=1}^{n-1} X_{i,i+1} \Big) .
\]
We normalize the intertwining operators $I_s$ with respect to this Whittaker datum, as in 
\cite[Chapter 3]{HiSa}. This means that, whenever the L-packet $\Pi_{\phi^\sharp}(SL_n (F))$
contains a generic representation, it is $\pi (\phi^\sharp, \mr{triv})$ \cite[Lemma 12.8]{HiSa}.
As $\eta_\tau$ stabilizes the standard pinning of $SL_n (F)$, it also stabilizes this Whittaker 
datum. Hence the intertwining operators $I_{\eta_\tau^\vee (s)}^{op}$ are normalized with respect 
to the same Whittaker datum, and it follows that $I_{\eta_\tau^\vee (s)}^{op} = 
I_{\eta_\tau^\vee (s)}$ for every $s \in \mc S_{\phi^\sharp}$.

With that in order, we may apply \eqref{eq:8.21}:
\begin{align*}
\eta_\tau^* \pi (\phi^\sharp,\rho) & = \eta_\tau^* \big( \mr{Hom}_{e_{\zeta_D} 
\C [\mc S_{\phi^\sharp}]}(\rho,\pi (\phi,\zeta_D)) \big) \\
& = \mr{Hom}_{\mr{End}_{SL_m (D^{op})} \pi (\eta_\tau^\vee \circ \phi, \zeta_D^{-1})}
\big( \rho \circ (\eta_\tau^\vee )^{-1} , \pi (\eta_\tau^\vee \circ \phi, \zeta_D^{-1}) \big) \\
& = \pi (\eta_\tau^\vee \circ \phi^\sharp, \rho \circ (\eta_\tau^\vee )^{-1} ) 
= \pi ( \Phi_e ({}^L \eta_\tau)(\phi^\sharp,\rho)) . \qedhere
\end{align*}
\end{proof}

\begin{rem}
When $\zeta_D$ has order two, the division algebra $D$ is isomorphic to its opposite $D^{op}$.
However, when $F$ is non-archimedean we treat $SL_m (D)$ and $SL_m (D^{op})$ as different groups 
in Lemma \ref{lem:8.4}. This leaves a bit to be desired, because 
\eqref{eq:8.25} provides a canonical isomorphism between these groups. It is 
reasonable to require that \eqref{eq:8.25} intertwines the local Langlands correspondences
for $SL_m (D)$ and $SL_m (D^{op})$. Then the intertwining operators $I_s$ for $SL_m (D)$ are
transferred automatically to intertwining operators for $SL_m (D^{op})$.

We have to check that these agree with the intertwining operators $I_{\eta_\tau^\vee (s)}^{op}$ 
defined in the proof of Lemma \ref{lem:8.4} -- at least when $\eta_\tau^\vee \circ \phi^\sharp$
is $SL_n (\C)$-conjugate to $\phi^\sharp$, otherwise we can pick the $I_s$ for $\phi^\sharp$ and 
then define them for $\eta_\tau^\vee \circ \phi^\sharp$ by \eqref{eq:8.27}.
As this appears to be cumbersome, we sketch an alternative argument to establish Lemma 
\ref{lem:8.4} in the stronger sense when $\zeta_D$ has order two. In view of the Langlands
classification (see the remarks at the start of Paragraph \ref{par:classical}), it suffices
to consider bounded L-parameters and tempered representations.

An equivalent formulation of Lemma \ref{lem:8.4} comes from \cite[Lemma 5.1]{Xu1}:
\begin{equation}\label{eq:8.28}
\langle \eta_\tau (s) , \eta_\tau^* (\pi) \rangle_{\eta_\tau^\vee \circ \phi^\sharp} 
= \langle s , \pi \rangle_{\phi^\sharp} . 
\end{equation}
Here $\pi \in \Pi_{\phi^\sharp}(SL_m (D))$ and $\langle s , \pi \rangle_{\phi^\sharp} = \mr{tr}\, 
\rho_\pi (s)$ with $\rho_\pi$ as in \cite[Lemma 12.6]{HiSa}. By \cite[Theorem 12.7]{HiSa}
there exists a $c(s) \in \C^\times$ such that
\[
\mr{Tran}_{H^\sharp}^{G^\sharp} J(\phi_{H^\sharp}) = c(s) 
\sum\nolimits_{\pi \in \Pi_{\phi^\sharp}(G^\sharp)} \langle s, \pi \rangle_{\phi^\sharp} J(\pi) ,
\]
where J indicates the distribution associated to a representation or an L-packet.
As explained in the proof of \cite[Lemma 5.1]{Xu1}, the same arguments lead to
\[
\mr{Tran}_{H^\sharp}^{G^\sharp} J(\phi_{H^\sharp}) = c(\eta_\tau^\vee (s)) 
\sum\nolimits_{\eta_\tau^* \pi \in \Pi_{\eta^\vee \circ \phi^\sharp}(G^\sharp)} \langle 
\eta_\tau^\vee (s), \eta_\tau^* \pi \rangle_{\eta_\tau^\vee \circ \phi^\sharp} J(\pi) .
\]
If $c(s) = c(\eta_\tau^\vee (s))$ for all $s \in \mc S_{\phi^\sharp}$, then the argument from
\cite{Xu1} applies and proves \eqref{eq:8.28}. The constants $c(s)$ and $c(\eta_\tau^\vee (s))$
come from the global trace formulas in \cite[Chapter 17]{HiSa}. Since $\eta_\tau$ provides 
an automorphism of the entire setting, $c(\eta_\tau^\vee (s))$ should equal $c(s)$ when all
choices are made $\eta_\tau$-equivariantly. We prefer not to work the details out here.
\end{rem}

\subsection{Quasi-split classical groups} \
\label{par:classical}

In this paragraph $F$ is a local field of characteristic zero and $E$ is a quadratic extension 
of $F$. Let $\mc G$ be one of the quasi-split groups 
\begin{equation}\label{eq:8.23}
Sp_{2n}, GSp_{2n}, PSp_{2n}, SO_{2n+1}, SO_{2n}, GSO_{2n},
PSO_{2n}, SO_{2n}^*, GSO_{2n}^*, PSO_{2n}^*, U_n . 
\end{equation}
Write $G = \mc G (F)$ when $\mc G$ is split and $G = \mc G (E/F)$ when $\mc G$ is non-split. 
For these groups a LLC is known, thanks to the work of Arthur and his PhD students. But it
is only written down for tempered representations, and several useful properties of these
LLCs are not worked out. We take this opportunity to fill in some of the details.

\begin{thm}\label{thm:8.8}
A local Langlands correspondence exists for $G$ as above, and for all its Levi subgroups.
It takes the form of a bijection
\[
\begin{array}{ccc}
\Phi_e (G) & \longleftrightarrow & \Irr (G) \\
(\phi,\rho) & \mapsto & \pi (\phi,\rho)
\end{array}
\]
and it enjoys the following properties:
\enuma{
\item The central character of $\pi (\phi,\rho)$ equals the $Z(G)$-character $\chi_\phi$
associated to $\phi$ in \cite[\S 10.1]{Bor}.
\item For $\chi \in X_\nr (G)$ with parameter $\hat \chi \in H^1 (\mb W_F, Z(G^\vee)^\circ)$:
$\pi (\hat \chi \phi, \rho) = \chi \otimes \pi (\phi,\rho)$.
\item $\pi (\phi,\rho)$ is essentially square-integrable if and only if $\phi \in \Phi (G)$
is discrete.
\item $\pi (\phi,\rho)$ is tempered if and only if $\phi \in \Phi (G)$ is bounded.
}
\end{thm}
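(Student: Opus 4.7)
The plan is to assemble the theorem from the existing tempered local Langlands correspondences in the literature and then extend to all of $\Irr(G)$ via the Langlands classification. For $\mc G$ a symplectic, special orthogonal or unitary group I would invoke the bijections between tempered $L$-packets and bounded enhanced $L$-parameters constructed in Arthur's work and in the unitary group analogues of Mok and Kaletha--Minguez--Shin--White; for the similitude groups $GSp_{2n}, GSO_{2n}, GSO_{2n}^*$ I would pass through the restriction/twist formalism of Xu relating them to $Sp_{2n}$ and $SO_{2n}^{(*)}$; for the adjoint groups $PSp_{2n}, PSO_{2n}, PSO_{2n}^*$ I would use the central quotient construction from \S\ref{sec:4} (just as was done in Paragraph~\ref{par:GLn} to pass from $GL_m(D)$ to $PGL_m(D)$). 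At this stage we have the bijection and the enhancements on bounded parameters, together with property (d) by construction.

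The next step is to set up the LLC on Levi subgroups. Every Levi subgroup $\mc L$ of a group in \eqref{eq:8.23} is canonically isomorphic to a product $\prod_i GL_{n_i}(E_i) \times \mc G_0$ where $\mc G_0$ is a classical group of the same type as $\mc G$ but smaller rank, and the $E_i$ are $F$ or $E$ depending on the case. The tempered LLC for $L = \mc L(F)$ is then obtained by combining the LLC for $GL_{n_i}(E_i)$ (Paragraph~\ref{par:GLn}) with the tempered LLC for $G_0$. Consequently bounded enhanced $L$-parameters for $L$ match tempered $L \in \Irr(L)$ compatibly with the product decomposition. I would then verify properties (a)--(d) on the tempered level: (a) reduces to compatibility of the LLC with the $Z(L^\vee)^{\mb W_F}$-action via \cite[\S 10]{Bor} and is visible in the endoscopic character identities; (b) reduces to the action of $H^1(\mb W_F, Z(G^\vee))$ on $\Phi_e(G)$ by \eqref{eq:2.9}, compatible with twisting by $X_\nr(G)$; (d) is automatic by construction.

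To pass from the tempered/bounded case to all enhanced $L$-parameters, I would invoke the Langlands classification in both its representation-theoretic form \cite{BoWa,Ren,SiZi} and its Galois-theoretic form \cite{SiZi,ABPSnontemp}. Recall that every $\phi \in \Phi(G)$ admits a unique (up to conjugacy) Langlands decomposition $\phi = \mr{Ad}(\chi) \phi_M$ where $\phi_M$ is a bounded $L$-parameter for a Levi subgroup $\mc M$ of $\mc G$ and $\chi \in X_\nr(M)$ lies in the positive chamber; moreover $\mc S_\phi \cong \mc S_{\phi_M}$ canonically, so enhancements transfer. On the representation side, every $\pi \in \Irr(G)$ is the unique irreducible (Langlands) quotient $L(P, \sigma \otimes \chi)$ of $I_P^G(\sigma \otimes \chi)$ for a unique triple $(P = MU, \sigma, \chi)$ with $\sigma \in \Irr_{\mr{temp}}(M)$ and $\chi$ in the positive chamber. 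We define
\[
\pi(\phi,\rho) := L\big(P, \pi_M(\phi_M,\rho) \otimes \chi\big).
\]
Bijectivity follows from bijectivity of both Langlands classifications together with the tempered LLC for $M$. Property (b) on the non-tempered level follows by compatibility of the Langlands quotient with unramified twists. Property (a) on the non-tempered level follows since $\chi_\phi = \chi_{\phi_M} \cdot \chi|_{Z(G)}$ and central characters are preserved by parabolic induction and Langlands quotients. For (c), a discrete parameter $\phi$ is in particular bounded, so $\pi(\phi,\rho)$ is tempered and then square-integrable iff $\phi$ is discrete by the tempered LLC; conversely an essentially square-integrable representation is of the form $\sigma \otimes \chi$ with $\sigma$ square-integrable, and its parameter is then discrete up to an unramified twist.

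The main obstacle I expect is assembling a properly canonical tempered LLC for the similitude and adjoint groups in \eqref{eq:8.23}, since these are usually obtained by restriction or central quotient from $Sp_{2n}, SO_{2n}^{(*)}$ and the resulting bijection depends on choices of intertwining operators (compare \eqref{eq:8.17} in the $SL_n$ case). Once this is fixed compatibly, the extension to non-tempered representations via the Langlands classification is entirely formal.
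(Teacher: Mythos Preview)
Your overall strategy matches the paper's: establish the tempered LLC by citing Arthur, Mok, and Xu (the paper uses Mok rather than KMSW, since the groups here are quasi-split), derive the adjoint cases formally from the similitude/symplectic/orthogonal ones, handle Levi subgroups as products with $GL_m$-factors, and then extend via the Langlands classification on both sides. So far so good.

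There is, however, a genuine error in your treatment of part (c). You write ``a discrete parameter $\phi$ is in particular bounded''. This is false for the similitude groups $GSp_{2n}$, $GSO_{2n}^{(*)}$: twisting a discrete bounded parameter by any unramified character $\hat\nu$ with $\nu:G\to\R_{>0}^\times$ keeps it discrete (since $\hat\nu$ is central in $G^\vee$ and does not change the minimal Levi containing the image) but destroys boundedness. So your forward implication in (c) breaks down precisely for the groups where $X_\nr(G)\neq 1$.

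The paper fixes this by inserting an intermediate step before invoking the Langlands classification: first extend the tempered LLC to \emph{essentially tempered} representations by setting $\pi(\hat\nu\phi_b,\rho)=\pi(\phi_b,\rho)\otimes\nu$ for $\phi_b$ bounded and $\nu:G\to\R_{>0}^\times$. This simultaneously establishes (b) for all unramified characters (not just unitary ones) and ensures that discrete parameters, which are exactly the bounded discrete ones twisted by such $\nu$, are already assigned essentially square-integrable representations before the Langlands classification is applied. Your Langlands decomposition with $M=G$ and $\chi\in X_\nr(G)$ in the ``positive chamber'' would in fact cover this case, but you then contradict yourself by asserting that discrete implies bounded. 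Replace that sentence with the observation that $\phi$ discrete forces $M=G$ in the Langlands datum, whence $\phi=\hat\chi\phi_b$ with $\phi_b$ discrete \emph{and} bounded, and (c) follows from the tempered case together with (b).
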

\begin{proof}
(d) First we only consider tempered representations and enhanced bounded L-parameters.
For those, a bijective LLC was established in \cite{Art} (for $Sp_{2n}, SO_{2n+1}$,
$SO_{2n}, SO_{2n}^*$), \cite{Mok} (for $U_n$) and \cite{Xu2} (for $GSp_{2n}, GSO_{2n},
GSO_{2n}^*$). For $PSp_{2n}$, $PSO_{2n}$ and $PSO_{2n}^*$ it is derived formally from the
LLC for tempered representations, just like the LLC for $PGL_n$ follows from that for
$GL_n$. 

Every Levi $F$-subgroup of $\mc G$ is the direct product of some factors $GL_m$ and one 
group of the same type as $\mc G$ (but of smaller rank). Thus the above references and
the LLC for $GL_n (F)$ also provide a bijective LLC for Levi subgroups of $G$.\\
(b) For the adjoint groups $PSp_{2n}, PSO_{2n}, PSO_{2n}^*$ this is trivial.
For $Sp_{2n}, SO_{2n}, SO_{2n}^*$ see \cite[Proposition 6.27]{Xu1}. For $U_n$,
see \cite[Theorem 2.5.1.c]{Mok}.

Recall that 
\begin{equation}\label{eq:8.35}
GSp_{2n} = \big( GL_1 \times Sp_{2n} \big) / \{ \pm 1 \} \quad \text{and} \quad
GSO_{2n}^{(*)} = \big( GL_1 \times SO_{2n}^{(*)} \big) / \{ \pm 1 \} .
\end{equation}
When $\mc G$ is one of these groups
\begin{equation}\label{eq:8.36}
G^\vee = \big( \C^\times \times Spin_N (\C) \big) / \{ \pm 1 \},
\end{equation}
where $N = 2n$ in the orthogonal case and $N = 2n+1$ in the symplectic case.
Let $\mc G^\sharp$ be the subgroup $Sp_{2n}, SO_{2n}$ or $SO_{2n}^*$ of $\mc G$.
The inclusion $\mc G^\sharp \to \mc G$ induces a surjection $G^\vee \to (G^\sharp)^\vee
= SO_N (\C)$, with kernel $Z(G^\vee) = \C^\times$.

L-packets for the similitude groups $GSp_{2n}, GSO_{2n}, GSO_{2n}^*$ are described in
\cite[Theorem 1.2]{Xu2}. Let $\phi \in \Phi_{\mr{bdd}}(G)$. The L-packet $\Pi_\phi (G)$ 
depends only on the image $\phi^\sharp$ of $\phi$ in $\Phi (G^\sharp)$ and on a character 
$\zeta : Z(G) \to \C^\times$ which extends the central 
character of any member of $\Pi_{\phi^\sharp}(G^\sharp)$. By part (a) for $G^\sharp$.
the latter equals $\chi_{\phi^\sharp} = \chi_\phi |_{Z(G^\sharp)}$. Hence one can choose
$\zeta = \chi_\phi$, and then \cite{Xu2} provides a LLC with the desired central characters.

Part (a) is also well-known for $GL_n$, see the references in Paragraph \ref{par:GLn}.
In view of the shape of the Levi subgroups of $\mc G$, the LLC for those has the same
property.\\
(b) Among the groups \eqref{eq:8.23}, only the similitude groups admit nontrivial
unramified characters. Let $\mc G$ be one of those and let $(\phi,\rho) \in \Phi_e (G)$
be bounded. For a unitary $\chi \in X_\nr (G)$, $\hat \chi \phi$ still bounded. Since
$\hat \chi$ lives in the factor $\C^\times$ of \eqref{eq:8.36}, $\hat \chi \phi$ has the
same image $\phi^\sharp$ in $\Phi (G^\sharp)$ as $\phi$. The constructions from \cite{Xu2}
entail that 
\[
\pi (\hat \chi \phi, \rho) = \zeta \otimes \pi (\phi,\rho)
\]
for some $Z(G)$-character $\zeta$ which is trivial on $Z(G^\sharp)$. With the conventions
from the proof of part (a), this central character $\zeta$ equals 
\[
\chi_{\hat \chi \phi} \chi_\phi^{-1} = \chi_{\hat \chi} = \chi .
\]
For $GL_n$, part (b) is an important and intrinsic part of the LLC, see the references in
Paragraph \ref{par:GLn}. Clearly part (b) is inherited by direct products of groups, so
it also holds for Levi subgroups of $G$.\\
(c) This is the starting point of the construction of the LLC, see \cite{Art,Mok,Xu2}. 
It also holds for $GL_n$, and hence for all Levi subgroups of $G$ as well.

Now we consider all $\pi \in \Irr (G)$ and all $(\phi,\rho) \in \Phi_e (G)$. We say that
$\pi$ is essentially tempered if its restriction to $G_\der$ is tempered. Then there is a
unique (necessarily unramified) character $\nu : G \to \R_{>0}^\times$ such that 
$\pi \otimes \nu^{-1}$ is tempered. Multiplying $\phi_b \in \Phi_{\mr{bdd}} (G)$ by $\hat \nu$
does not change $\mc S_{\phi_b}$. We extend the tempered LLC to such $\pi$ by defining
\[
\pi (\hat \nu \phi_b, \rho) = \pi (\phi_b,\rho) \otimes \nu .
\]
The same can be done for all Levi subgroups of $G$. 

Now part (b) holds for all essentially tempered representations and all unramified 
characters. This brings us in the right position to apply the Langlangs classification,
both for $G$-representations \cite{Lan,BoWa,Ren} and for (enhanced) L-parameters \cite{SiZi}.
As in \cite{ABPSnontemp}, this allows us to extend the LLC canonically to the whole of
$\Irr (G)$ (and the same for Levi subgroups of $G$).

Roughly speaking, this entails the following. Given any $(\phi,\rho) \in \Phi_e (G)$,
find a parabolic subgroup $P = MU$ of $G$ such that $\phi$ factors through ${}^L M$ and,
as an element of $\Phi (M)$, is of the form $\phi = \hat \nu \phi_b$ with $\phi_b \in
\Phi_{\mr{bdd}}(M)$ and $\nu : M \to \R_{>0}^\times$ strictly positive (with respect to $P$).
This can be done, $P$ is unique up to conjugation and
\[
\mc S_{\phi,G} \cong \mc S_{\phi,M} \cong \mc S_{\phi_b,M} .
\]
Above we associated to $\phi$ an essentially tempered representation
\[
\pi_M (\phi,\rho) = \pi_M (\phi_b,\rho) \otimes \nu \in \Irr (M) .
\]
With normalized parabolic induction we form $I_P^G (\pi_M (\phi,\rho)) \in \Rep (G)$.
Then $\pi (\phi,\rho) \\ \in \Irr (G)$ is the unique irreducible quotient of 
$I_P^G (\pi_M (\phi,\rho))$. 

This operation preserves the bijectivity of the LLC, by \cite[Proposition 7.3]{SiZi}. We also 
need to check that the properties (a), (b) and (c) still hold for this version of the LLC.

By the tempered case of part (a), $\pi_M (\phi,\rho)$ admits the $Z(M)$-character
\[
\chi_{\phi_b} \otimes \nu = \chi_{\hat \nu \phi_b} = \chi_\phi . 
\]
Then $I_P^G (\pi_M (\phi,\rho))$ admits the $Z(G)$-character $\chi_\phi |_{Z(G)}$. But by 
\cite[\S 10.1]{Bor} $\chi_\phi |_{Z(G)}$ is just $\chi_\phi$ for $G$. 

The argument for part (b) in the tempered case still applies.

Part (c) follows from the tempered case by two properties of the Langlands classification.
Firstly, $I_P^G (\pi_M (\phi,\rho)$ can only be essentially square-integrable if $P = G$
and $\pi_M (\phi,\rho)$ is essentially square-integrable. Secondly, $\phi$ is discrete
if and only if the $M$ constructed above equals $G$.
\end{proof}

For most homomorphisms between these groups, Conjecture \ref{conj:5.7} has already been 
proven with endoscopic methods. Here we collect the relevant results from various references.

\begin{lem}\label{lem:8.6}
Conjecture \ref{conj:5.7} holds for the canonical homomorphisms
\[
\xymatrix{
Sp_{2n} \ar[r] \ar[dr] & GSp_{2n} \ar[d] \\ & PSp_{2n} 
},
\xymatrix{
SO_{2n} \ar[r] \ar[dr] & GSO_{2n} \ar[d] \\ & PSO_{2n} 
},
\xymatrix{
SO_{2n}^* \ar[r] \ar[dr] & GSO_{2n}^* \ar[d] \\ & PSO_{2n}^* 
}.
\]
\end{lem}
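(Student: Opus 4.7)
The plan is as follows. First, by the transitivity of Conjecture~\ref{conj:5.7} in compositions (Proposition~\ref{prop:5.1}.c), each of the three triangles reduces to verifying the conjecture separately for its two defining edges: the natural inclusion $\iota \colon G^\sharp \hookrightarrow G^{\mr{sim}}$ and the central quotient $q \colon G^{\mr{sim}} \to G^{\mr{ad}}$, where $G^\sharp \in \{Sp_{2n}, SO_{2n}, SO_{2n}^*\}$, $G^{\mr{sim}}$ is the corresponding similitude group, and $G^{\mr{ad}}$ is the corresponding adjoint group. Moreover, as noted in the introduction and implemented in the proof of Theorem~\ref{thm:8.8}, the Langlands classification reduces the conjecture for all enhanced L-parameters to the case of bounded $\phi$ and tempered representations, so we may assume throughout that $\phi$ is bounded.

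For the central quotient $q$, the kernel is the connected torus $Z(G^{\mr{sim}})^\circ \cong GL_1$, and $q$ is surjective; hence $q^* \pi(\phi,\rho)$ is irreducible for every $\pi(\phi,\rho) \in \Irr(G^{\mr{ad}})$. By Theorem~\ref{thm:8.8}(a) its central character is $\chi_\phi$, which vanishes automatically on $Z(G^{\mr{sim}})^\circ$ since $\phi$ factors through $(G^{\mr{ad}})^\vee \hookrightarrow (G^{\mr{sim}})^\vee$. Under the canonical identification of ${G^\vee}_\Sc$ for the two groups, Lemma~\ref{lem:4.7} combined with the explicit description of L-packets for similitude groups in \cite[Theorem~1.2]{Xu2} shows that ${}^L q \colon \mc S_\phi \to \mc S_{\tilde \phi}$ is an isomorphism in this case; since $q$ is canonical the map ${}^S q$ coincides with its $\C$-linear extension (no twist appears). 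Accordingly the right-hand side of Conjecture~\ref{conj:5.7} collapses to the single term $\pi(\tilde\phi, \rho)$, which matches $q^* \pi(\phi,\rho)$.

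For the inclusion $\iota$, we invoke the endoscopic restriction results of Xu. The LLC for similitude groups in \cite{Xu2} is built precisely by extending L-packets for $G^\sharp$ to $G^{\mr{sim}}$ via the central character $\chi_\phi$. By \cite[Lemma~5.1]{Xu1} together with its extension to the similitude setting in \cite{Xu2}, the restriction of $\pi(\phi,\rho) \in \Pi_\phi(G^{\mr{sim}})$ to $G^\sharp$ decomposes into members of $\Pi_{\phi^\sharp}(G^\sharp)$ (with $\phi^\sharp = {}^L\iota \circ \phi$), with multiplicities prescribed by the branching of $\rho$ along the natural inclusion $\mc S_\phi \hookrightarrow \mc S_{\phi^\sharp}$. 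Since $\iota$ respects the canonical pinnings of $G^\sharp$ and $G^{\mr{sim}}$, the twist from \eqref{eq:3.9} is trivial and ${}^S\iota$ equals the $\C$-linear extension of ${}^L\iota$; hence Xu's branching formula transcribes directly into the expression of Conjecture~\ref{conj:5.7}.

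The main obstacle lies in matching conventions: for the even orthogonal similitude groups $GSO_{2n}$ and $GSO_{2n}^*$, the component groups $\mc S_\phi$ can be non-abelian and Xu normalizes L-packets using a chosen Whittaker datum. One must verify that this Whittaker datum can be chosen $\iota$-equivariantly, and that the inclusion $\mc S_\phi \hookrightarrow \mc S_{\phi^\sharp}$ implicit in Xu's restriction formula is the one induced functorially by $\iota$ (rather than differing by a character of $\mc S_\phi$). This is essentially bookkeeping, but it is the only delicate point in the argument.
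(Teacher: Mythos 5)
Your proposal is correct and follows essentially the same route as the paper: reduce to bounded parameters via the Langlands classification, observe that the quotient maps onto the adjoint groups satisfy the conjecture by the very construction of the LLC for $PSp_{2n}$, $PSO_{2n}^{(*)}$ (pullback along $q$, as in \eqref{eq:8.6}--\eqref{eq:8.7}), handle the inclusions into the similitude groups by Xu's endoscopic restriction results, and compose for the diagonal arrows. The only notable differences are that the paper cites \cite[Proposition 6.12]{Xu1} (with its hypotheses verified in \cite{Xu2}) rather than \cite[Lemma 5.1]{Xu1} for the inclusion step, and that your more detailed analysis of the quotient map is not needed there since the adjoint-group LLC is defined so as to make that case tautological.
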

\begin{proof}
By the canonicity of the extension of the LLC from tempered representations to the whole
of $\Irr (G)$ (see above), it suffices to prove the lemma for bounded L-parameters.

For $Sp_{2n} \to GSp_{2n}$ and $SO_{2n}^{(*)} \to GSO_{2n}^{(*)}$, see 
\cite[Proposition 6.12]{Xu1}. The assumptions made in \cite{Xu1} were verified in \cite{Xu2}. 
The same statement can be found in \cite[Theorem 4.20]{Cho}, but over there the working 
hypotheses are so strong that it is not clear whether they are fulfilled completely by 
\cite{Art,Xu2}.

For $GSp_{2n} \to PSp_{2n}$ and $GSO_{2n}^{(*)} \to PSO_{2n}^{(*)}$ Conjecture
\ref{conj:5.7} holds by construction, compare with \eqref{eq:8.6} and \eqref{eq:8.7}. Hence
it also holds for the composed maps $Sp_{2n} \to PSp_{2n}$ and $SO_{2n}^{(*)} \to PSO_{2n}^{(*)}$.
\end{proof}

To investigate Conjecture \ref{conj:5.7} for automorphisms of the groups \eqref{eq:8.23}
we tabulate some useful data:
\[
\begin{array}{cccc}
G & \mr{Inn}(G) & G_\ad / G & |\mr{Out}(G)| \\
\hline 
Sp_{2n} & PSp_{2n} & F^\times / F^{\times 2} & 1 \\
GSp_{2n} & PSp_{2n} & 1 & 2 \\
SO_{2n+1} & SO_{2n+1} & 1 & 1 \\
SO_{2n} & PSO_{2n} & F^\times / F^{\times 2} & 2  \; (6 \text{ if } 2n = 8) \hspace{-2cm} \ \\
GSO_{2n} & PSO_{2n} & 1 & 4  \; (12 \text{ if } 2n = 8) \hspace{-2cm} \ \\
SO_{2n}^* & PSO_{2n}^* & F^\times / F^{\times 2} & 2 \\
GSO_{2n}^* & PSO_{2n}^* & 1 & 4 \\
U_n & PU_n & 1 & 2
\end{array}
\]
We note in particular that, among the groups in this paragraph, the actions of $G_\ad / G$
on $\Irr (G)$ and $\Phi_e (G)$ can only be nontrivial for $Sp_{2n}, SO_{2n}$ and $SO_{2n}^*$.

\begin{lem}\label{lem:8.8}
Let $\mc G$ be one of the groups \eqref{eq:8.23} and let $g \in G_\ad$. 
For any $(\phi,\rho) \in \Phi_e (G)$: 
\[
\mr{Ad}(g)^* \pi (\phi,\rho) = \pi (\phi, \rho \otimes \tau_{\phi}(g)^{-1}) 
= \bigoplus_{\tilde \rho \in \Irr (\mc S_{\phi})} \Hom_{\mc S_\phi} \big( \rho, 
{}^S \mr{Ad}(g)^*(\tilde \rho) \big) \otimes \pi (\phi, \tilde \rho) . 
\]
\end{lem}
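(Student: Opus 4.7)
The plan is as follows. The second equality is immediate from \eqref{eq:5.2}: one has ${}^S\mr{Ad}(g)^*(\tilde\rho) = \tilde\rho \otimes \tau_\phi(g)$, so the $\mc S_\phi$-Hom space vanishes unless $\tilde\rho = \rho \otimes \tau_\phi(g)^{-1}$, in which case it is one-dimensional. It therefore suffices to establish the first equality.

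Consulting the table displayed just before the lemma, $G_\ad/G$ is trivial for $\mc G \in \{SO_{2n+1},\,GSp_{2n},\,GSO_{2n},\,GSO_{2n}^*,\,U_n,\,PSp_{2n},\,PSO_{2n},\,PSO_{2n}^*\}$. For such $\mc G$ every $g \in G_\ad$ lifts to an element of $G$, so $\mr{Ad}(g)^*\pi(\phi,\rho) = \pi(\phi,\rho)$; moreover Lemma \ref{lem:2.1}(a) gives $\tau_\phi(g) = 1$, so both sides agree. The remaining cases are $\mc G \in \{Sp_{2n},\,SO_{2n},\,SO_{2n}^*\}$, all with $G_\ad/G \cong F^\times/F^{\times 2}$; here I would argue in direct analogy with the proof of Proposition \ref{prop:8.3} for $SL_m(D)$.

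For these remaining cases, let $\mc G^+$ denote the associated similitude extension ($GSp_{2n}$, $GSO_{2n}$ or $GSO_{2n}^*$); by \eqref{eq:8.35} one has $\mc G^+/Z(\mc G^+) = \mc G_\ad$, and Hilbert 90 applied to $1 \to F^\times \to G^+ \to G_\ad \to 1$ makes $G^+ \to G_\ad$ surjective on $F$-points, so we may lift $g$ to $\tilde g \in G^+$. By Lemma \ref{lem:8.6}, Conjecture \ref{conj:5.7} holds for the inclusion $\imath: \mc G \to \mc G^+$; pick $(\phi^+,\rho^+) \in \Phi_e(G^+)$ with $\phi = {}^L\imath \circ \phi^+$ and such that $\pi(\phi,\rho)$ appears in $\imath^* \pi(\phi^+,\rho^+)$. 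Since $\mr{Ad}(\tilde g)$ is inner in $G^+$ and $\imath \circ \mr{Ad}(g) = \mr{Ad}(\tilde g) \circ \imath$, we get
\[
\mr{Ad}(g)^* \imath^*\pi(\phi^+,\rho^+) \;=\; \imath^* \mr{Ad}(\tilde g)^* \pi(\phi^+,\rho^+) \;\cong\; \imath^* \pi(\phi^+,\rho^+),
\]
so $\mr{Ad}(g)^*$ permutes the irreducible constituents $\{\pi(\phi,\tilde\rho) : {}^S\imath^*(\tilde\rho) \supset \rho^+\}$ provided by Lemma \ref{lem:8.6}.

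To pin down the permutation, I adapt the intertwiner computation from Proposition \ref{prop:8.3}. The isomorphism above is implemented by $v \mapsto \pi(\phi^+,\rho^+)(\tilde g)^{-1} v$, and conjugation by $\pi(\phi^+,\rho^+)(\tilde g)$ scales each intertwiner $I_s : \pi(\phi^+,\rho^+) \to \pi(\phi^+,\rho^+) \otimes \hat\chi_s$ by $\hat\chi_s(\tilde g)^{-1}$, where $\hat\chi_s$ is the character of $G^+$ attached to $s \in \mc S_\phi/\mc S_{\phi^+}$ via the analog of \eqref{eq:8.14}. A direct computation using \eqref{eq:2.15}--\eqref{eq:2.16} identifies $\hat\chi_s(\tilde g)$ with $\tau_\phi(g)(s)$, and passing through the Clifford-theoretic identification \eqref{eq:8.10}--\eqref{eq:8.11} then yields $\mr{Ad}(g)^*\pi(\phi,\rho) = \pi(\phi, \rho \otimes \tau_\phi(g)^{-1})$. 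The main obstacle is verifying the analogs of \eqref{eq:8.9}, \eqref{eq:8.14}, and the commutator relation \eqref{eq:8.17} for the endomorphism algebra $\End_G(\imath^*\pi(\phi^+,\rho^+))$ in this classical-group setting; these are established for bounded $\phi^+$ in the work of Xu \cite{Xu1,Xu2}, and the extension to all of $\Irr(G)$ proceeds via the Langlands classification exactly as in the proof of Theorem \ref{thm:8.8}(d).
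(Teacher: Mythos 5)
Your proposal is essentially correct and, at bottom, rests on the same external input as the paper's proof: the paper simply cites Kaletha \cite[\S 3]{Kal} and Xu \cite[Conjecture 2 and \S 3.5--3.6]{Xu1} for the first equality, whereas you reconstruct the Hiraga--Saito-style argument (lift $g$ to the similitude group, use Lemma \ref{lem:8.6} and the twisted-group-algebra structure of $\mr{End}_G(\imath^*\pi(\phi^+,\rho^+))$, then track the intertwiners) while still outsourcing the structural facts to \cite{Xu1,Xu2}. Your handling of the trivial cases via the table, the lifting via Hilbert 90, and the reduction to bounded parameters all match the paper's logic.

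The one point you gloss over is the convention discrepancy that the paper makes explicit (and devotes the Remark after the lemma to). What \cite{Kal,Xu1} actually prove, with the intertwining operators normalized as in endoscopy ($I_s : \pi\otimes\hat\chi_s \to \pi$ rather than $\pi \to \pi\otimes\hat\chi_s$), is $\mr{Ad}(g)^*\pi(\phi,\rho) = \pi(\phi,\rho\otimes\tau_\phi(g))$ \emph{without} the inverse. Your computation produces the inverse only because you chose the opposite normalization of the $I_s$; but the parametrization $\pi(\phi,\rho)$ in Theorem \ref{thm:8.8} is fixed by Arthur--Mok--Xu with \emph{their} normalization, so you cannot simply decree your own and claim to land on their labels --- you would have to verify the two agree, which is exactly the issue Example \ref{ex:SLn} illustrates can matter. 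The lemma survives regardless, because $G_\ad/G \cong F^\times/F^{\times 2}$ (or is trivial) in every case, so $\tau_\phi(g) = \tau_\phi(g)^{-1}$ and the two candidate answers coincide; this order-two observation is the missing sentence that makes your argument airtight, and it is the one the paper's proof supplies.
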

\begin{proof}
The second equality is formal, see the last lines of Section \ref{sec:principal}. Like in the
two previous lemmas, it suffices to prove the claims for bounded L-parameters.

The first equality is shown in \cite[\S 3]{Kal} and \cite[Conjecture 2 and \S 3.5--3.6]{Xu1}.
The assumptions needed in \cite{Xu1} can be found in \cite{Art,Mok,Xu2}. We note that actually
the outcome of \cite{Kal,Xu1} is 
\begin{equation}\label{eq:8.24}
\mr{Ad}(g)^* \pi (\phi,\rho) = \phi (\phi, \rho \otimes \tau_{\phi}(g)) .
\end{equation}
The above table shows that every element of $G_\ad / G$ has order one or two, so $\tau_\phi (g)
= \tau_\phi (g)^{-1}$.
\end{proof}

\begin{rem}
Interestingly, the proof of Lemma \ref{lem:8.8} reveals a discrepancy between the endoscopic
methods referred to in this section and the Hecke algebra techniques prominent in Sections
\ref{sec:1}--\ref{sec:unip}. With the former one arrives at \eqref{eq:8.24}, whereas the 
latter leads to 
\[
\mr{Ad}(g)^* \pi (\phi,\rho) = \pi (\phi, \rho \otimes \tau_{\phi}(g)^{-1}) .
\]
Of course there was no real issue in Lemma \ref{lem:8.8}, because Ad$(g)^*$ had order one or
two. A problem of this kind could have appeared in Proposition \ref{prop:8.3}, if we had followed
\cite{HiSa} naively, see Example \ref{ex:SLn}. Fortunately we could avoid the trouble,
by replacing $I_\chi$ with $I_\chi^{-1}$. 

This points to the source of the discrepancy: different conventions for intertwining operators.
Suppose that $s \in G^\vee, \phi \in \Phi (G)$ and $s \phi s^{-1} = a_s \phi$ with
$a_s \in H^1 (\mb W_F, Z(G^\vee))$. Via \cite[\S 10.1]{Bor}, $a_s$ determines a character
$\chi_s : G \to \C^\times$. In endoscopy it appears to be common to associate to $s$ and $\chi_s$
an intertwining operator
\[
\pi (s \phi s^{-1},\rho) \cong \pi (\phi,\rho) \otimes \chi_s \to \pi (\phi,\rho) ,
\]
see for instance \cite[p. 40]{HiSa} and \cite[(3.10)]{Xu1}. On the other hand, for modules
constructed via Hecke algebras, $s$ and $\chi_s$ typically give rise to an intertwining
operator $\pi (\phi,\rho) \to \phi (s\phi s^{-1}, \rho) \cong \pi (\phi,\rho) \otimes \chi_s$,
see for example \cite[(61)]{ABPSprin}. This dichotomy permeates to the action of 
$\mc S_{\phi}$ on a standard module associated to $\phi$, and causes different parametrizations
of the L-packets by $\Irr (\mc S_\phi)$.
\end{rem}

For a $\mb W_F$-automorphism of the based root datum of $\mc G$, let $\eta_\tau$
be as in Theorem \ref{thm:3.2}.a: the unique automorphism of $G$ which lifts $\tau$ and 
stabilizes a given pinning.

From \eqref{eq:8.35} we see that any of the similitude groups $\mc G$ has a unique automorphism 
$\eta_Z$ which is the identity on $\mc G^\sharp$ and the inverse map on $Z(\mc G) \cong GL_1$. 
This explains why $\mc G$ has twice as many automorphisms as $\mc G^\sharp$.

\begin{lem}\label{lem:8.7}
Let $\mc G$ be one of the algebraic groups from \eqref{eq:8.23} and let $\tau$ be a 
$\mb W_F$-equivariant automorphism of its based root datum. (For $SO_8, PSO_8, GSO_8$ we
assume that $\tau$ is not exceptional, that is, fixes the first two simple roots of
$D_4$ in the standard numbering.) Then, for any $(\phi,\rho) \in \Phi_e (G)$:
\[
\eta_\tau^* \pi (\phi,\rho) = \pi \big( \Phi_e ({}^L \eta_\tau)(\phi,\rho) \big) . 
\] 
\end{lem}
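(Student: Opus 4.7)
The plan is to reduce to bounded $L$-parameters and tempered representations via the Langlands classification, then invoke the equivariance of the endoscopic constructions of \cite{Art, Mok, Xu2} under the involutions $\eta_\tau$, and finally transport this equivariance across the quotient maps for the adjoint cases.

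First I would imitate the reduction in the proof of Theorem~\ref{thm:8.8}: write $\phi = \hat \nu \phi_b$ with $\phi_b$ bounded and $\nu$ strictly $P$-positive relative to a standard parabolic $P = MU$. Because $\eta_\tau$ preserves the chosen pinning (Theorem~\ref{thm:3.2}.a), it permutes standard parabolic subgroups of $G$, restricts to an isomorphism $\eta_\tau : \eta_\tau^{-1}(M) \to M$ of Levis which preserves pinnings, and transports $\nu$ to a strictly $\eta_\tau^{-1}(P)$-positive unramified character. By the naturality of normalized parabolic induction and of Langlands quotients under $F$-isomorphisms, together with the uniqueness of the Langlands data for enhanced $L$-parameters from \cite{SiZi}, the general case follows from the bounded one.

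Next I would dispose of the cases with $|\mr{Out}(\mc G)| = 1$ (namely $Sp_{2n}$, $SO_{2n+1}$, $PSp_{2n}$), where $\tau = \mr{id}$ forces $\eta_\tau = \mr{id}_G$ by the uniqueness clause of Theorem~\ref{thm:3.2}.a and the statement is vacuous. For the remaining groups, each non-trivial coset in $\mr{Out}(\mc G)$ (with triality for $D_4$ excluded) is represented by an involution of the based root datum of one of three types: (i) the Galois involution for $U_n$, where equivariance of the LLC is an explicit part of \cite[Theorem~2.5.1]{Mok}; (ii) conjugation by an element of $O_{2n}^{(*)}(F) \setminus SO_{2n}^{(*)}(F)$, which produces the non-trivial outer automorphism of $SO_{2n}^{(*)}$ and of its similitude/adjoint versions, and for which equivariance is built into Arthur's endoscopic construction \cite{Art} (and its similitude extension \cite{Xu2}) through the systematic use of $O_{2n}^{(*)}$-orbits; (iii) the automorphism $\eta_Z$ of each similitude group $GSp_{2n}, GSO_{2n}^{(*)}$ that inverts the central $GL_1$ and is trivial on the derived group, whose dual action on $L$-parameters inverts the $\C^\times$-factor of $G^\vee = (\C^\times \times \mr{Spin}_N(\C))/\{\pm 1\}$, and for which equivariance follows at once from the construction in \cite{Xu2} of $\Pi_\phi(G)$ via extensions of central characters (using Theorem~\ref{thm:8.8}.a).

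For the adjoint quotients $PSO_{2n}^{(*)}$, the LLC is defined by pullback along $GSO_{2n}^{(*)} \twoheadrightarrow PSO_{2n}^{(*)}$ in complete analogy with \eqref{eq:8.6} for $PGL_n$ versus $GL_n$, and equivariance under outer automorphisms descends automatically from the similitude case via Lemma~\ref{lem:8.6}. The principal obstacle will be the convention mismatch flagged in the remark following Lemma~\ref{lem:8.8}: the endoscopic formulas of \cite{Art, Mok, Xu2} effectively twist enhancements by $\rho \mapsto \rho \circ \eta_\tau^\vee$, whereas Definition~\eqref{eq:3.1} prescribes $\rho \mapsto \rho \circ (\eta_\tau^\vee)^{-1}$. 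This is harmless here because every $\tau$ under consideration is an involution of the based root datum, so by the uniqueness in Theorem~\ref{thm:3.2}.a both $\eta_\tau$ and the induced automorphism of $\mc S_\phi$ are involutions, and the two twists coincide.
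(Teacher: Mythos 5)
Your proposal follows essentially the same route as the paper: reduce to bounded parameters and tempered representations via the Langlands classification, then treat the central inversion $\eta_Z$ of the similitude groups through the central-character normalization of Theorem \ref{thm:8.8}.a, the $O_{2n}^{(*)}$-conjugation through Arthur's construction, and the unitary case through Mok, with the adjoint quotients descending from the similitude case.

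One small point to tighten: your claim that \emph{every} non-trivial coset of $\mr{Out}(\mc G)$ is represented by an involution of one of your three types is not literally true for $GSO_{2n}$ and $GSO_{2n}^*$, where $\mr{Out}$ has order $4$ and the product $\eta_Z \circ \eta_O$ of the central inversion with the $O_{2n}^{(*)}$-flip is none of (i)--(iii). The paper closes this by observing explicitly that the statement is multiplicative in $\tau$ (since $\eta_{\tau\tau'} = \eta_\tau \circ \eta_{\tau'}$ by the uniqueness of pinned lifts in Theorem \ref{thm:3.2}.a), which lets one first strip off $\eta_Z$ and then assume $\eta_\tau$ fixes $Z(\mc G)^\circ$ pointwise, leaving at most one non-trivial automorphism per group. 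Your argument needs that same multiplicativity remark to cover the composite coset; once it is added, the two proofs coincide in substance. Your closing discussion of the convention mismatch, resolved by the involutivity of $\tau$, is a reasonable extra precaution that the paper does not spell out in this proof.
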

\begin{proof}
Recall from the proof of Theorem \ref{thm:8.8} that that the LLC is derived from the 
tempered case with the Langlands classification. The functoriality properties of that 
procedure entail that it suffices to establish the lemma for tempered representations
and enhanced bounded L-parameters.

First we consider the case where $\mc G$ is a similitude group ($GSp_{2n}, GSO_{2n},
GSO_{2n}^*$) and $\eta_\tau = \eta_Z$.
Then $\eta_Z^* \pi (\phi,\rho)$ and $\pi (\phi,\rho)$ have the same restriction to
$G^\sharp$, whereas their central characters are inverse to each other. 

From the above description of $\eta_Z$ and \eqref{eq:8.36} we see that $\eta_Z^\vee$ 
is the identity on $Spin_N (\C)$ and the inverse map on $Z(G^\vee) \cong \C^\times$. 
Since the connected group $Z(G^\vee)$ becomes trivial in any of the $\mc S_\phi$,
$\Phi_e ({}^L \eta_Z)$ does not affect the enhancements. Further $\eta_Z^\vee \circ \phi$ 
and $\phi$ have the same image $\phi^\sharp$ in $\Phi (G^\sharp)$. In the setting of 
\cite{Xu2}, this means that $\pi (\eta_Z^\vee \circ \phi, \rho)$ and $\pi (\phi,\rho)$
are both obtained from $(\phi^\sharp, \rho)$ be choosing different extensions of
$\chi_{\phi^\sharp} \in \Irr (Z(G^\sharp))$ to $Z(G)$. In the proof of Theorem \ref{thm:8.8}.a
we imposed that $\pi (\phi,\rho)$ gets the central character $\chi_\phi$. Then
$\pi (\Phi_e ({}^L \eta_Z) (\phi,\rho)) = \pi (\eta_Z^\vee \circ \phi,\rho)$
gets the central character $\chi_{\eta_Z^\vee \circ \phi} = \chi_\phi^{-1}$, while
$\pi (\eta_Z^\vee \circ \phi, \rho)$ and $\pi (\phi,\rho)$ have the same restriction to
$G^\sharp$. In other words, $\pi (\Phi_e ({}^L \eta_Z) (\phi,\rho)) = \eta_Z^* \pi (\phi,\rho)$.

If the lemma holds for $\eta_\tau$ and $\eta_{\tau'}$, then it also holds for $\eta_{\tau \tau'}$.
This means that (upon possibly replacing $\eta_\tau$ by $\eta_\tau \eta_Z$), we may assume that
$\eta_\tau$ fixes $Z(\mc G)^\circ$ pointwise. For every $\mc G$ this leaves at most one
nontrivial $\eta_\tau$.

For $SO_{2n}^{(*)}$, $\eta_\tau$ comes from conjugation by an element of $O_{2n}^{(*)}$, and
the lemma is built into Arthur's constructions, see \cite[Theorem 8.4.1 and Corollary 8.4.5]{Art}. 
In the other remaining cases $\tau$ is trivial or $\mc G = U_n$ or $\mc G$ is a similitude group.
For those instances we refer to \cite[Lemma 5.1]{Xu1}. The assumptions for that result were 
proven in \cite{Mok,Xu2}.
\end{proof}

\end{document}